\numberwithin{equation}{section} 
\newtheorem{theorem}{Theorem}[section]
\newtheorem{corollary}[theorem]{Corollary}
\newtheorem{prop}[theorem]{Proposition}
\newtheorem{lemma}[theorem]{Lemma}
\newtheorem{remark}{Remark}[section]\theoremstyle{remark}
\newtheorem{definition}{Definition}[section]\theoremstyle{definition}
\newcommand{\ind}{\mathbbm{1}}
\newcommand{\gr}[1]{{\mathbf{#1}}}
\newcommand{\ui}{{\underline{i}}}
\newcommand{\ts}{{\tau}_{\text{stop}}}
\newcommand{\Zt}{{\text{Z}}}%\hspace{-6pt}{\smallsetminus}}}
\newcommand{\Zc}{{\mathsf{Z}}}
\newcommand{\N}{\mathcal{N}}
\newcommand{\so}{\overset{\circ}{\sim}}
\newcommand{\e}{\varepsilon}
\title[Long time validity of the linearized Boltzmann equation]{Long time validity of the linearized Boltzmann equation for hard spheres: a proof without billiard theory}
\begin{document}
	
	\author{Corentin Le Bihan}
	\email{corentin.le-bihan@ens-lyon.fr}
	\address{UMPA (UMR CNRS 5669), \'Ecole Normale Superieur de Lyon, 46 allée d'Italie, 69364 LYON, FRANCE}
	
	\maketitle
	\begin{abstract}
		We study space-time fluctuations of a hard sphere system at thermal equilibrium, and prove that the covariance converges to the solution of a linearized Boltzmann equation in the low density limit, globally in time. This result has been obtained previously in \cite{BGSS1}, by using uniform bounds on the number of recollisions of dispersing systems of hard spheres (as provided for instance in \cite{BFK}). We present a self-contained proof with substantial differences, which does not use this geometric result. This can be regarded as the first step of a program aiming to derive the fluctuation theory of the rarefied gas, for interaction potentials different from hard spheres.
	\end{abstract}
	\tableofcontents
	
	\section{Introduction}
	Consider a system of $N$ particles in a box $\Lambda\subset\mathbb{R}^d$ ($d\geq 3$), interacting by means of a two-body potential $\mathcal{V}_\e(\cdot):=\mathcal{V}(\cdot/\e)$. We are interested in the behavior of the system as the number of particles goes to infinity and the interaction length scale $\e$ is fixed by the Boltzmann-Grad scaling $N\e^{d-1} = 1$. It is a limit of low density where the mean free path of a particle between two collisions is of order $O(1)$.
	
	Away from equilibrium, it is expected that the system is governed by the Boltzmann equation in the low density limit. However most of the existing rigorous results are valid for short time, such that only a small fraction of the particles actually interact. The first convergence proofs were provided in the fundamental work of Lanford \cite{Lanford} for hard spheres and by King \cite{king_bbgky_1975} for different finite range potentials (see also \cite{CIR,Spohn3}). Quantitative convergence bounds have been obtained later (see \cite{GTS,PSS}).
	
	Illner and Pulvirenti proved a first long time convergence result in \cite{IP} (see also \cite{Delinger}), but only for a very diluted gas in the whole space, where dispersion is the dominant phenomena. Other long time results have been obtained later on for a system of one labeled particle evolving in a background at equilibrium (see \cite{vBHLS} for arbitrary kinetic times and \cite{BGS2} for diffusive times). The law of the tagged particle follows then the linear Boltzmann equation. See also \cite{A,Catapano} for  adaptations of the proof to interaction potentials different from hard core.
	
		Looking at a tagged particle in a background at equilibrium can be seen as a perturbation of order $O(1)$ of the equilibrium measure. The next natural step is to study small fluctuations around equilibrium which can be seen as perturbations of order $O(N)$ (we are interested in the square of the small fluctuations). Note that a "final step" would be to understand on long time non equilibrium factorized measures, which are $O(C^N)$ pertubations. 
		
		In the low density limit, the fluctuations behave like a Gaussian field with covariance governed by the linearized Boltzmann equation, as predicted in \cite{Spohn,Spohn2}. The rigorous proof is separated in two main parts: first the convergence of the covariance and second checking asymptotically the Wick's rules characterizing the higher order moments; treated first for short times, respectively in \cite{Spohn}, and \cite{BGSS3,BGSS} in the more general context of non equilibrium states. Concerning the global in time result, the Wick's rule has been treated recently in the case of hard spheres in \cite{BGSS2}. Convergence of the covariance has been obtained first for hard disks in dimension $2$ in the canonical ensemble (see \cite{BGS}), using that the partition function is uniformly bounded (independently of $\e$), which is a specificity of dimension $2$. Later on a proof has been given for dimension $3$ in \cite{BGSS1}, in the grand canonical ensemble.
	
	The purpose of the present work is to propose a different method of proof for the result in \cite{BGSS1}. As known, a crucial part of the argument leading to the Boltzmann equation amounts to showing that dynamical memory effects (called recollisions) are vanishing in the limit. The long time result is based then on a sampling checking the trajectories carefully and eliminating the recollisions on very small time scales (of order $\delta$, a power of $\e$). On these scales, it is used in \cite{BGSS1} that the dynamics is decomposed on independent clusters of finite size, each of which behaves as a dispersing billiard with uniformly bounded number of collisions. The latter property is unproved (possibly false) for arbitrary potentials with compact support (defining, say, a collision as a two-by-two interaction at distance $\e$). Even in the case of hard spheres, the property is delicate: explicit bounds have been provided in \cite{BFK} by means of refined geometric techniques.
	
	This motivates us to develop a different argument circumventing any uniform control on recollision numbers. The main ingredients are a subtle conditioning of the initial data forbidding explosions of the number of recollisions, together with a suitable dynamical cumulant decomposition method, inspired by \cite{BGSS}.
	
	\subsection{Definition of the system}
	Let $\Lambda:=\mathbb{R}^d/\mathbb{Z}^d$ (with $d\geq3$) be the domain. We denote $\mathbb{D}=\Lambda\times\mathbb{R}^d$ its tangent bundle and $\mathcal{D}_\e^n\subset\mathbb{D}^n$ the $n$-particle canonical phase space:
	\begin{equation}
	\mathcal{D}^n_\e:=\Big\{Z_n:=(x_1,v_1,\cdots,x_n,v_n)\in\mathbb{D}^n,~\text{for~} 1\leq i<j\leq n,~|x_i-x_j|>\e\Big\}.
	\end{equation}
	Here and in the following, we use the notation 
	\[X_n= (x_1,\cdots,x_n),~V_n=(v_1,\cdots,v_n),~\text{and}~z_i=(x_i,v_i).\]
	
	On each $\mathcal{D}^n_\e$ we construct the hard sphere dynamics as the Hamiltonian dynamics associated with the Hamiltonian
	\begin{equation}
	\mathcal{H}^\e_n(Z_n):= \frac{1}{2}\|V_n\|^2+\mathcal{V}_n^\e(X_n),~~\mathcal{V}_n^\e(X_n):=\sum_{1\leq i<j\leq n} \mathcal{V}\left(\frac{\|x_i-x_j\|}{\e}\right)
	\end{equation}
	where $\mathcal{V}$ is the hard core interaction potential
	\begin{equation}
	\mathcal{V}(r):= \left\{\begin{array}{l}0\text{~if~}|r|>1\\\infty\text{~else}\end{array}\right. .
	\end{equation}
	In this dynamics particles move along straight lines until they meet each other. If at time $\tau$ we have $|x_q(\tau)-x_{q'}(\tau)|=\e$, the outgoing velocities are given by the following scattering law:
	\begin{equation}\label{scattering}
	\left\{\begin{array}{rl}
	\displaystyle v_{q}(\tau^+)&\displaystyle = v_{q}(\tau^-) - \frac{x_{q'}(\tau)-x_q(\tau)}{|x_{q'}(\tau)-x_q(\tau)|}\cdot \Big(v_{q}(\tau^-)-v_{q'}(\tau^-)\Big)\frac{x_{q'}(\tau)-x_q(\tau)}{|x_{q'}(\tau)-x_q(\tau)|}\\
	\displaystyle v_{q'}(\tau^+)&\displaystyle= v_{q'}(\tau^-) + \frac{x_{q'}(\tau)-x_q(\tau)}{|x_{q'}(\tau)-x_q(\tau)|}\cdot \Big(v_{q}(\tau^-)-v_{q'}(\tau^-)\Big)\frac{x_{q'}(\tau)-x_q(\tau)}{|x_{q'}(\tau)-x_q(\tau)|}.
	\end{array}\right.
	\end{equation}
	This process is well defined for all times, almost everywhere in $\mathcal{D}_\e^n$ with respect to the Lebesgue measure (see \cite{Alexander}).
	
	We denote in the following $\mathcal{D}_\e:=\bigsqcup_{n\geq 0} \mathcal{D}_\e^n$ the grand canonical phase space and $\mathcal{N}$ the random number of particles.We can then extend the Hamiltonian dynamics to $\mathcal{D}_\e$ and denote $\gr{Z}_{\N}(t)$ the realization (defined almost surely) of the hard sphere flow on $\mathcal{D}_\e$ with random initial data $\gr{Z}_{\mathcal{N}}(0)$: for $\N = n$, $\gr{Z}_{\N}(t)$ follows the Hamiltonian dynamics on $\mathcal{D}^n_\e$.	
	
	The initial data is sampled according to the stationary measure introduced now. The \emph{grand canonical Gibbs measure} $\mathbb{P}_\e$ (and its expectation $\mathbb{E}_\e$) are defined on $\mathcal{D}_\e$ as follows. An application $G:\mathcal{D}_\e\rightarrow \mathbb{R}$ is a test function if there exists a sequence $(g_n)_{n\geq 0}$ with $g_n\in L^\infty(\mathbb{D}^n)$ and
	\[\text{for}~\mathcal{N}=n,~  G(\gr{Z}_{\N}):=g_n(\gr{Z}_{\N}).\]
	Then we define $\mathbb{E}_\e$ as
	\begin{equation}
	\mathbb{E}_\e[G(\gr{Z}_{\N})]:=\frac{1}{\mathcal{Z}_\e}\sum_{n\geq0} \frac{\mu_\e^n}{n!}\int_{\mathbb{D}^n}g_n(Z_n)\,\frac{e^{-\mathcal{H}^\e_n(Z_n)}}{(2\pi)^{nd/2}}dZ_n,
	\end{equation}
	where  $\mathcal{Z}_\e$ is a normalisation constant and $\mu_\e$ is tuned to respect the Boltzmann-Grad scaling $\mu_\e\e^{d-1} = 1$.
	
	The empirical distribution at time $t$ is defined as the average configuration of particles at time $t$: for $g$ some test function on $\mathbb{D}$,
	\begin{equation}
	\pi_t^\e(g):=\frac{1}{\mu_\e}\sum_{i= 1}^{\N} g(\gr{z}_i(t)).
	\end{equation}
	
	At equilibrium, we have the following law of large numbers. Denote
	\begin{equation}
	M(v) := (2\pi)^{-d/2}e^{-\|v\|^2/2}.
	\end{equation}

	\begin{theorem}
		For any continuous and bounded test function $g:\Lambda\times\mathbb{R}^d\to \mathbb{R}$, for all $t\in \mathbb{R}$ and for any $a>0$, 
		\begin{equation}
		\lim_{\e\to 0}\mathbb{P}_\e\left[\left|\pi_\e^t(g)-\int g(z) M(v) dz\right|\geq a\right]=0.
		\end{equation}
	\end{theorem}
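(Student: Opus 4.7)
The plan is to reduce the claim to equilibrium at time $t=0$ using stationarity, and then to establish the law of large numbers by a standard first and second moment computation for the Gibbs measure.

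\emph{Stationarity and Chebyshev reduction.} The hard sphere flow is Hamiltonian, the energy $\mathcal{H}^\e_n$ is conserved, and Liouville measure is preserved through elastic scattering; since the number of particles $\mathcal{N}$ is a.s.\ constant along trajectories, the grand canonical Gibbs measure $\mathbb{P}_\e$ is invariant. Therefore $\pi^t_\e(g)$ and $\pi^0_\e(g)$ have the same law, and it suffices to prove the statement at $t=0$. By Chebyshev's inequality, the convergence in probability follows from
\begin{equation*}
\mathbb{E}_\e\bigl[\pi^0_\e(g)\bigr] \longrightarrow \int g(z)\,M(v)\,dz, \qquad \mathrm{Var}_\e\bigl[\pi^0_\e(g)\bigr] \longrightarrow 0.
\end{equation*}

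\emph{Moment computation.} Both quantities are standard functionals of the grand canonical correlation functions:
\begin{align*}
\mathbb{E}_\e\bigl[\pi^0_\e(g)\bigr] &= \frac{1}{\mu_\e}\int g(z_1)\,f^{(1)}_\e(z_1)\,dz_1,\\
\mathrm{Var}_\e\bigl[\pi^0_\e(g)\bigr] &= \frac{1}{\mu_\e^2}\int g^2(z_1)\,f^{(1)}_\e(z_1)\,dz_1 + \frac{1}{\mu_\e^2}\iint g(z_1)g(z_2)\,\bigl(f^{(2)}_\e - f^{(1)}_\e\otimes f^{(1)}_\e\bigr)(z_1,z_2)\,dz_1 dz_2.
\end{align*}
The velocity dependence factors as a product of Maxwellians, so only the spatial ratios of partition functions with one or two positions frozen out need to be controlled. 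In the Boltzmann-Grad scaling, the natural small parameter is $\mu_\e\e^d=\e$, so a Mayer cluster expansion for the hard-core gas converges uniformly and yields $f^{(1)}_\e(z) = \mu_\e M(v)(1+O(\e))$ together with $f^{(2)}_\e(z_1,z_2)-f^{(1)}_\e(z_1)f^{(1)}_\e(z_2) = \mu_\e^2 M(v_1)M(v_2)\,U_\e(x_1-x_2)$ with $\|U_\e\|_{L^1}=O(\e)$. Plugging in, the mean converges to $\int g\,M\,dz$, the first term of the variance is $O(\mu_\e^{-1})$, and the second is $O(\e)$; the variance therefore vanishes.

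\emph{Main obstacle.} The entire technical content of the argument is this uniform cluster expansion estimate: showing that the ratios $\mathcal{Z}_\e^{x_1}/\mathcal{Z}_\e$ and $\mathcal{Z}_\e^{x_1,x_2}/\mathcal{Z}_\e$ converge to $1$ with an explicit rate, and that the truncated two-point function has $L^1$ norm $O(\e)$. This is the classical low-activity Penrose--Ruelle expansion for a hard-core gas, convergent precisely because the pairwise excluded volume $\mu_\e\e^d$ equals $\e\to 0$; once it is granted, the rest of the proof is a direct substitution.
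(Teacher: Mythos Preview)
Your argument is correct. The paper does not actually prove this statement: it only remarks that it is ``a simple corollary of the Lanford theorem and of the invariance of the measure'' and moves on. Your route is different and in fact more self-contained: you use invariance to reduce to $t=0$ (as the paper suggests), but then instead of invoking Lanford's dynamical theorem you control the first two moments directly via the static cluster expansion. This is more elementary for an equilibrium law of large numbers, since no BBGKY/propagation-of-chaos machinery is needed. The estimates you require --- that $f^{(1)}_\e=\mu_\e M(1+O(\e))$ and that the truncated two-point function has vanishing contribution --- are precisely the $n=m=1$ case of the quasi-orthogonality Theorem~\ref{theoreme de quasi orthogonalite} in Section~\ref{Quasi-orthogonality estimates}, which the paper develops anyway for the fluctuation analysis. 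So your proof slots naturally into the paper's own toolbox, whereas the paper's remark outsources the result to~\cite{Lanford}.
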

	\begin{remark}
		The previous result is a simple corollary of the Lanford theorem and of the invariance of the measure (see \cite{Lanford}).
	\end{remark}
	
	\subsection{Convergence to the linearized Boltzmann equation}
	
	The aim of this article is to investigate the next order, namely the fluctuation field  
	\begin{equation}
	\zeta_\e^t(g):=\mu_\e^{1/2}\bigg(\frac{1}{\mu_\e}\sum_{1\leq i\leq \N}g(\gr{z}_i(t))-\mathbb{E}_\e[\pi_0^\e(g)]\bigg).
	\end{equation}
	
	When $\e$ tends to $0$, collisions become rare and we expect that particles can see each other only a finite number of times in any bounded time interval. We define the linearized Bolzmann operator as
	\begin{equation}\label{linearized Boltzmann}
	\mathcal{L} g(v) := \int_{\mathbb{S}^{d-1}\times\mathbb{R}^d} \big(g(v')+g(v_*')-g(v)-g(v_*)\big)((v-v_*)\cdot\eta)_+M(v_*)d\eta\,dv_*,
	\end{equation}
	where $(v',v'_*)$ are given by the scattering of $(v,v_*,\eta)$
	\begin{equation}\label{scatering}
	\left\{\begin{array}{rl}
	v' &:= v- \eta\cdot(v-v_*)\eta\\
	v_*' &:= v_*+ \eta\cdot(v-v_*)\eta.
	\end{array}\right. 
	\end{equation}
	This operator describes the variation of mass due to changes of velocity of colliding particles. The operator $\mathcal{L}$ is a self-adjoint negative operator on $L^2(M(v)dz)$. We want to prove the following result
	\begin{theorem}\label{Main theorem}
		Let $f,g\in L^2(M(v)dz)$ be two test functions. Then we have the following convergence result: for all $t\geq 0$,
		\[\mathbb{E}_\e\left[\zeta_\e^t(h)\zeta_\e^0(g)\right]\underset{\e\rightarrow0}{\longrightarrow} \left<h,e^{t(-v\cdot\nabla_x+\mathcal{L})}g\right>\]
		where $<,>$ is the Hermitian product on $L^2(M(v)dz)$.
	\end{theorem}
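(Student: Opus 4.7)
The plan is to combine a Duhamel expansion of the first marginal evolved by the hard-sphere flow with the dynamical cumulant machinery of \cite{BGSS}, and to circumvent the geometric recollision estimates of \cite{BFK} by conditioning the Gibbs measure on an explicit good event chosen to locally rule out recollision cascades.

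First, using the invariance of $\mathbb{P}_\e$ under the flow together with the definition of $\zeta_\e^t$, I would rewrite the covariance as a pairing between $g$ and the first correlation function $F_1^\e(t,\cdot)$ of the hard-sphere system started from a perturbed equilibrium, the perturbation encoding the test function $h$. Iterating the BBGKY hierarchy of Duhamel identities then expresses $F_1^\e(t,\cdot)$ as a sum over binary pseudo-trajectory trees: at each collision time one adds a background particle drawn from the Gibbs weight, and the tree is transported backward by the free flow between collisions.

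Second, since the number of collisions cannot be controlled uniformly up to time $t$, I would partition $[0,t]$ into intervals of length $\delta=\e^\alpha$ with $\alpha$ small, truncate the Duhamel expansion at tree size $K=K(\e)\to\infty$ on each interval, and glue using the stationarity of $\mathbb{P}_\e$. Recollision-free trees with at most $K$ branches converge on a single $\delta$-interval to the short-time linearized Boltzmann semigroup $e^{\delta(-v\cdot\nabla_x+\mathcal{L})}$ by the classical geometric argument of Lanford; taking the product of $t/\delta$ such steps produces $e^{t(-v\cdot\nabla_x+\mathcal{L})}$ in the limit $\e\to 0$ followed by $\delta\to 0$.

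The remaining task is to dispose of trees with recollisions or with more than $K$ collisions per interval. Oversized trees are killed by the usual factorial gain from time integration. For recollisions, instead of relying on uniform bounds on the number of recollisions inside a cluster, I would condition the initial data on a good event $\mathcal{G}_\e$ designed to locally forbid the geometric configurations that lead to recollision cascades inside a single $\delta$-interval of a pseudo-trajectory of size at most $K$. The dynamical cumulant decomposition of \cite{BGSS} is the key reorganizing tool: it recasts the tree expansion as a sum of cumulant contributions indexed by \emph{connected} collision graphs, and each such contribution gains a small power of $\e$ per recollision, ensuring that only recollision-free graphs survive in the limit.

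The main obstacle will be the construction of $\mathcal{G}_\e$: one must arrange $\mathbb{P}_\e(\mathcal{G}_\e^c)\to 0$ while guaranteeing that every pseudo-trajectory of at most $K$ background particles on a $\delta$-interval emanating from $\mathcal{G}_\e$ is recollision-free, without ever invoking any global geometric bound on recollision numbers. The cumulant reduction makes this feasible because the conditioning only needs to control each connected component of the cumulant graph, which remains a finite-size object on the short time $\delta$; nevertheless, extracting uniform $\e$-gains from the conditioning will require delicate phase-space estimates on the initial distribution of such clusters under the grand canonical Gibbs weight, and this is where most of the technical work will lie.
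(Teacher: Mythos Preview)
Your outline captures the broad architecture (sampling in time, conditioning to tame recollisions, cumulant reorganisation), but there is a genuine gap at the heart of the conditioning step, and a structural difference in how the iteration is organised.

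\textbf{The conditioning cannot be symmetric.} You ask for an event $\mathcal{G}_\e$ with $\mathbb{P}_\e(\mathcal{G}_\e^c)\to 0$ such that \emph{every} pseudotrajectory of at most $K$ background particles issued from $\mathcal{G}_\e$ is recollision-free on a $\delta$-interval. Such an event, being symmetric in all particles, costs $\mu_\e^k$ to pin down a bad $k$-particle configuration, and the combinatorics of bad configurations grows at the same rate; this is exactly the obstruction that forces \cite{BGSS1} to fall back on the billiard bound of \cite{BFK}. The paper does \emph{not} try to forbid recollisions globally. It uses two nested conditionings: a cheap symmetric one (distance clusters of size at most a fixed $\gamma$, energy cutoff $\mathbb{V}$) which only bounds the \emph{number of distinct partners} a particle can meet in time $\delta$, and then an \emph{asymmetric} conditioning $\mathcal{X}_{\ui_n}$ which forbids local cycles only in clusters touching the $n$ particles already present in the expansion. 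Because one particle of each bad cluster is already chosen, the cost drops by a factor $\mu_\e^{-1}$, and this is precisely what makes the estimate close. Moreover the paper never asks for recollision-free trajectories: it allows up to $\gamma-1$ recollisions per particle on $[t_s,t_s+\delta]$ provided the collision graph has no cycle, and it is this weaker ``non-pathological'' condition that the asymmetric conditioning enforces.

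\textbf{The iteration is not a gluing of short-time Lanford blocks.} Composing $t/\delta$ approximate semigroup steps would require the per-step error to be $o(\delta)$, which short-time Lanford does not give. The paper instead runs a single forward pseudotrajectory expansion with \emph{stopping times}: at the first time the trajectory becomes pathological one freezes the expansion, centres, and bounds the remainder by Cauchy--Schwarz and the invariance of the Gibbs measure, so that what happens before the stopping time never needs to be expanded. This is where the $L^2$ structure is actually used, and it is coupled with a \emph{second, coarser} sampling at scale $\theta\sim 1/\log|\log\e|$ whose sole purpose is to control superexponential growth of the tree (the $G_\e^{\text{exp}}$ term); your single scale $\delta$ does not separate these two mechanisms.
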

	
	Since the two bilinear operators
	\begin{align*}
	(h,g)\mapsto \mathbb{E}_\e\left[\zeta_\e^t(h)\zeta_\e^0(g)\right],~(h,g)\mapsto \left<h, e^{t(-v\cdot \nabla_x+\mathcal{L})}g\right>
	\end{align*}
	are both continuous on $L^2(M(v)dz)$ (see \cite{BGSS1}), it sufficient to prove Theorem \ref{Main theorem} in a dense subset. This also allows to have a quantitative version of the theorem, which we state for completeness.
	 
	We define for $g$ smooth the norm
	\begin{equation}
		\|g\|:= \sup_{(x,v)\in\mathbb{D}} \left| M^{-1}(v) g(x,v)\right|
	\end{equation}
	and we consider test functions $g$ such that \begin{equation}\label{norme forte}\|g\|+\|\,|\nabla_x g|\,\|<\infty.\end{equation}
	\begin{theorem}
		Let $g$ and $h$ two $C^1(\mathbb{D})$ functions satisfying condition \eqref{norme forte}. Then there exist three constants $C>1$, $C'>1$ and $\alpha\in(0,1)$ independent of $g,h$ such that for any $\e$ small enough, $T>1$, $\theta<\tfrac{1}{C'T^2}$ 
		\begin{equation}\label{Borne principale}
		\begin{split}
		\sup_{t\in[0,T]}\bigg|\mathbb{E}_\e\Big[\zeta^t_\e(h)\zeta^0_\e(g)\Big] -& \Big<h,e^{t(-v\cdot\nabla_x+\mathcal{L})}g\Big>\bigg|\\
		&\leq C \Big( C T^{3/2} \theta^{1/2} + (CT)^{2^{T/\theta}}\e^\alpha\Big)\|h\|\big(\|g\|+\|\nabla g\|\big).
		\end{split}
		\end{equation}
		In particular we can choose $T=o((\log|\log\e|)^{1/3})$ and $\theta = \frac{1}{\beta\log|\log\e|}$, $\beta\in(0,1)$ small enough.
	\end{theorem}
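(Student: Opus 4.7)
I would prove the quantitative bound by reducing the long time covariance estimate to an iterated short time estimate over $T/\theta$ time blocks of duration $\theta$. The stationarity of $\mathbb{P}_\e$ under the hard sphere flow, together with the semigroup property of $e^{t(-v\cdot\nabla_x+\mathcal{L})}$, allows one to write a discrete Duhamel identity of the form
\begin{equation*}
\mathbb{E}_\e\!\left[\zeta_\e^T(h)\zeta_\e^0(g)\right]-\Bigl\langle h,e^{T(-v\cdot\nabla_x+\mathcal{L})}g\Bigr\rangle=\sum_{k=0}^{T/\theta-1}\Delta_k,
\end{equation*}
where each $\Delta_k$ compares one step of the hard sphere dynamics of length $\theta$ with one step of the limit semigroup applied to the updated test function $g_k:=e^{k\theta(-v\cdot\nabla_x+\mathcal{L})}g$. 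Because $-v\cdot\nabla_x+\mathcal{L}$ is negative semi-definite on $L^2(M\,dz)$, each $g_k$ retains the same $L^2$ bound as $g$; however the stronger norm $\|g\|+\|\,|\nabla g|\,\|$ can a priori grow, and its tracking through the iteration is what will cost the factor $(CT)^{2^{T/\theta}}$.

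\textbf{Short time expansion and cumulants.} On a single block $[k\theta,(k+1)\theta]$ I would expand the hard sphere expectation via the dynamical cumulant formalism of \cite{BGSS}: develop the BBGKY hierarchy into a sum over collision trees indexed by pseudo-trajectories, and isolate the ``good'' tree terms that produce the linear Boltzmann contribution $\bigl\langle h,e^{\theta(-v\cdot\nabla_x+\mathcal{L})}g_k\bigr\rangle$ after symmetrization and resummation. The ``bad'' remainder gathers trees involving recollisions, either between particles already marked in the tree or with the surrounding equilibrium background. The core of the argument is to show that this remainder is $O(\e^\alpha)$ per block, \emph{without} using any a priori geometric bound on the number of recollisions of a dispersing billiard.

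\textbf{Conditioning replacing the BFK bound.} Instead of the billiard estimate of \cite{BFK}, I would condition the initial data on a carefully chosen event $\mathcal{G}_\theta$ excluding configurations that lead to recollision cascades during an interval of length $\theta$. This event would for example impose a quantitative lower bound on mutual distances of particles involved in a pseudo-trajectory, a cap on the local density near tree vertices, and non-degeneracy constraints on the incoming collision parameters. One arranges $\mathbb{P}_\e(\mathcal{G}_\theta^c)\lesssim \theta^{1/2}$: by Cauchy--Schwarz against the $L^2$ norms of the fluctuation fields and summation over the $T/\theta$ blocks, the contribution of $\mathcal{G}_\theta^c$ produces the first error term $CT^{3/2}\theta^{1/2}$ in \eqref{Borne principale}. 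On $\mathcal{G}_\theta$ the number of collisions along each pseudo-trajectory is deterministically controlled, so a Lanford-type continuity estimate for pseudo-trajectories supplies the $\e^\alpha$ decay per block.

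\textbf{Iteration and main obstacle.} Iterating through the $T/\theta$ blocks, the collision trees produced at one step feed into the expansion at the next step: if the maximal tree size roughly doubles from one block to the next, then after $T/\theta$ steps it reaches order $2^{T/\theta}$, each vertex of the tree carrying a combinatorial factor of order $CT$ coming from the collision integral, which explains the doubly exponential factor $(CT)^{2^{T/\theta}}$ multiplying $\e^\alpha$. The hard part will be to make this iteration consistent: the conditional measure after one block of hard sphere evolution is no longer Gibbsian, and one has to show that it remains compatible with $\mathcal{G}_\theta$ on the next block, i.e.\ that the good event can be re-imposed at each step with only a polynomial loss. Designing $\mathcal{G}_\theta$ so that it is simultaneously large in probability, strong enough to suppress recollisions, and approximately stable under the flow, is the essential technical point that will replace the billiard theory used in \cite{BGSS1}.
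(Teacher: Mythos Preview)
Your proposal has a genuine structural gap: the discrete Duhamel iteration you describe, updating $g_k=e^{k\theta(-v\cdot\nabla_x+\mathcal{L})}g$ and comparing one $\theta$-block of hard sphere dynamics to one $\theta$-block of the linearized semigroup, creates precisely the obstacle you name at the end---after one block the reference measure is no longer Gibbsian, and you do not explain how to re-impose a good conditioning on it. The paper avoids this problem entirely by a different architecture. It never iterates block-by-block against the limit semigroup. Instead it pulls back $\sum_i h(\gr{z}_i(t))$ all the way to a single stopping time $t_s$ via a \emph{forward} pseudotrajectory expansion (the functionals $\Phi_{m\leftarrow n}^t$ of Section~\ref{development along pseudotrajectories and time sampling}), so that the observable becomes a sum of functions of $\gr{Z}_{\mathcal{N}}(t_s)$. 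At $t_s$ the law is exactly the Gibbs measure by stationarity, hence Cauchy--Schwarz against $\zeta_\e^0(g)$ gives $L^2(\mathbb{P}_\e)$ bounds on the remainders without ever leaving equilibrium. The two time scales $\theta$ and $\delta=\e^{1-1/2d}$ and the two conditionings (a symmetric one $\Upsilon_\e$ of complement probability $O(\e^d)$, not $\theta^{1/2}$, and an asymmetric one $\mathcal{X}_{\ui_n}$ handled by a cumulant expansion) are all imposed at time $t_s$ on the equilibrium ensemble; no propagation of a conditioned measure is needed.

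Your attribution of the two error terms is also off. In the paper the $T^{3/2}\theta^{1/2}$ term does not come from the probability of a bad conditioning event; it comes from truncating the pseudotrajectory expansion at trees whose number of annihilations grows faster than $2^k$ on the $k$-th $\theta$-interval (the piece $G_\e^{\text{exp}}$ in Section~\ref{Clustering estimations}), together with the analogous truncation of the Duhamel series for the linearized Boltzmann equation. The factor $(CT)^{2^{T/\theta}}$ multiplying $\e^\alpha$ arises because the total number of particles in an admissible pseudotrajectory can reach $\sum_{k\leq K}2^k\sim 2^{T/\theta}$, each contributing a factor $Ct$ in the clustering estimates of Sections~\ref{Clustering estimations}--\ref{$L^2$ estimation of the local recollision part}; it is not the growth of the norm $\|g_k\|+\|\nabla g_k\|$ under iteration.
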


	\noindent\textbf{Notations.} From now on we will use the following notations.
	
	We denote for $m<n$ two integers, $[m,n] := \{m,m+1,\cdots,n\}$ and $[n]:=[1,n]$.
	
	For $Z_n\in\mathbb{D}^n$, and $\omega\subset[n]$, we denote 
	\[Z_{\omega}:=(z_{\omega(1)},\cdots, z_{\omega(|\omega|)})\]
	where $\omega(i)$ is the $i$-th element of $\omega$ counted in increasing order. For $1\leq l<m\leq n$, $Z_{l,m}:=Z_{[l,m]}$.
	
	Given a family  particles indices $\{i_1,\cdots,i_n\}$, the notation $(i_1,\cdots,i_n)$ indicates the ordered sequence in which $\forall k\neq l$, $i_k\neq i_l$. In addition
	\begin{itemize}
		\item $\ui_n := (i_1,\cdots,i_n)$,
		\item for $m\leq n$, $\ui_m=(i_1,\cdots,i_m)$, and more generally for $\omega \subset [1,n]$, $\ui_\omega := (i_{\min \omega}, \cdots,i_{\max \omega})$,
		\item for $0\leq m< n$ and $(i_1,\cdots,i_m)$, $\underset{(i_{m+1},\cdots,i_n)}{\sum}$ denotes the sum over every family such that for $k<l\leq n$, $i_k\neq i_l$,
		\item $\gr{Z}_{\ui_n}:= (\gr{z}_{i_1},\cdots,\gr{z}_{i_n})$, as ordered sequence.
	\end{itemize}

	We also precise the sense of Landau notation: $A=B+O(D)$ means that there exists a constant $C$ depending only on the dimension such that $|A-B|< C \,D$.
	
	Finally let $h_n$ be a function on $\mathbb{D}^n$. We denote
	\[\mathbb{E}_\e\big[h_n\big]:=\mathbb{E}_\e\Bigg[\frac{1}{\mu_\e^n}\sum_{(i_1,\cdots,i_n)}h_n\big(\gr{Z}_{\ui}\big)\Bigg]\]
	and the associated truncated function defined on $\mathcal{D}_\e$
	\[\hat{h}_n(\gr{Z}_{\N}):=\frac{1}{\mu_\e^n}\sum_{(i_1,\cdots,i_n)}h_n\big(\gr{Z}_{\ui_n}\big)-\mathbb{E}_\e\big[h_n\big].\]

	\subsection{Strategy of the proof}
	We explain now the main ideas of the proof and of the improvement with respect to \cite{BGSS1}. 
	
	Because $\zeta_\e^t(g)$ is a centered random variable, 
	\begin{equation}
	\mathbb{E}_\e\left[\zeta_\e^t(h)\zeta_\e^0(g)\right] =\mathbb{E}_\e\left[\mu_\e^{-1/2}\sum_{i=1}^\N h(\gr{z}_i(t))\,\zeta_\e^0(g)\right].
	\end{equation}
	
	The first step is to find a family of functionals $\Phi_{\e,n}^t:L^\infty(\mathbb{D})\to L^\infty(\mathbb{D}^n)$ corresponding to the pullback of the test function $h$ at time $0$
	\begin{equation}\label{pull-back de h}
	\mathbb{E}_\e\left[\mu_\e^{-1/2}\sum_{i=1}^\N h(\gr{z}_i(t))\,\zeta_\e^0(g)\right] = \sum_{n\geq 1}\mathbb{E}_\e\left[\mu_\e^{-1/2}\sum_{(i_1,\cdots,i_n)}\Phi_{\e,n}^t[h]\big(\gr{Z}_{\ui_n}(0)\big)\,\zeta_\e^0(g)\right].
	\end{equation}
	 It turns out that the $\Phi_{\e,n}^t[h]$ are a sum over \emph{histories}. Loosely speaking, a history is defined as a way to remove (or not) particles at each collision, so that at time $t$ there remains only one particle (see the picture below). Then 
	 \begin{equation} 
	 \Phi_{\e,n}^t[h](Z_n) := \frac{1}{n!}\sum_{\text{history}} h(z_k(t))\ind_{\text{history}}\sigma_{\text{history}}
	 \end{equation}
	 where $\sigma_{\text{history}}=\pm1$ and $z_k(t)$ is the position of the last particle $k$ ($k$ depends of the history). This formula will be explained precisely in section \ref{development along pseudotrajectories and time sampling}. For the moment we mention that the signs $\sigma_{\text{history}}$ are related to a splitting of the collision operators  in positive and negative part (as in \eqref{linearized Boltzmann}).
	  \begin{figure}[h!]
	 	\centering\includegraphics[scale=0.2]{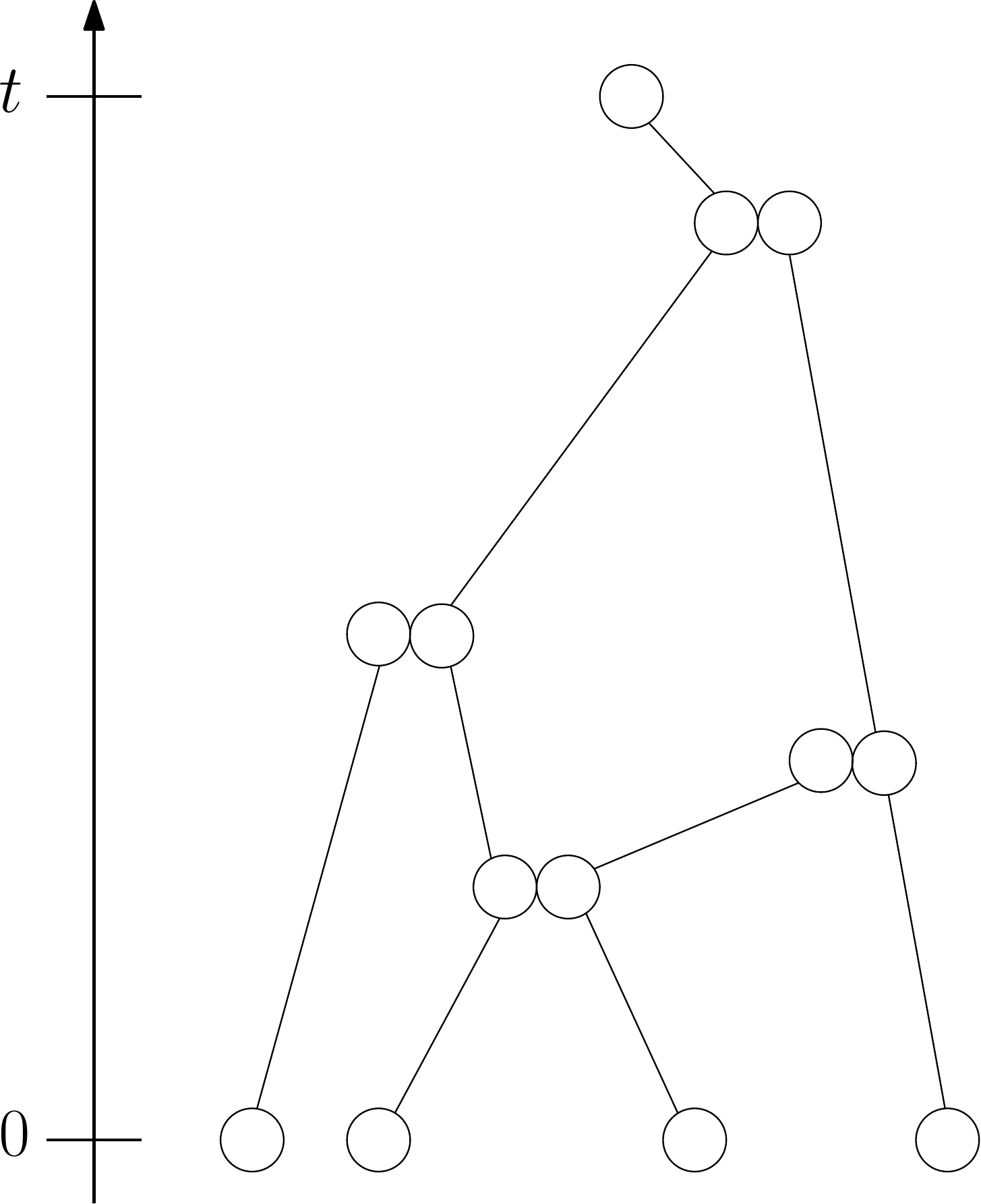}
	 	\caption{Exemple of history for four particles.}
	 \end{figure}

	The classical method to prove convergence of a hard sphere system to the Boltzmann equation (and here to the linearized equation) amounts to show that each term of the sum \eqref{pull-back de h} converges to its formal limit. This is the the way we compare the hard sphere process with the limit punctual process. In this procedure, one naturally separates a principal part containing a controlled number of collisions, from some rest terms encoding ill-behaved trajectories (for instance trajectories with more than $n-1$ collisions, which do not have a counterpart in the limit process).
	
	For the argument to be rigorous, we then need a bound on the rest terms of the sum. In usual derivations of the Boltzmann equation (see for instance \cite{Lanford,king_bbgky_1975,GTS,BGS2,PSS}) one resorts to $L^\infty$ bounds (and to a dual representation of the sum \eqref{pull-back de h}). In contrast  here we rely on the above pullback formula, together with suitable stopping times $t_{\text{s}}$ truncating the formula when the number of histories becomes uncontrolled. To implement this idea it is convenient to consider $L^2$ bounds as in \cite{BGS,BGSS1}. Indeed using notation introduced at the end of the previous section, because $\zeta_\e^0(g)$ is centered
	\begin{equation*}\begin{split}
	\left|\mathbb{E}_\e\left[\sum_{\ui_n}\Phi_{\e,n}^{t-t_{\text{stop}}}[h](\gr{Z}_{\ui_n}(t_{\text{stop}}))\zeta_\e^0(g)\right]\right|&\leq \mathbb{E}_\e\left[
\left(\widehat{\Phi_{\e,n}^{t-t_{\text{s}}}[h]}(\gr{Z}_{\ui_n}(t_{\text{s}}))\right)^2\right]^{\frac{1}{2}}\mathbb{E}_\e\left[\zeta_\e^0(g)^2\right]^{\frac{1}{2}}\\
	&\leq \mathbb{E}_\e\left[\left(\widehat{\Phi_{\e,n}^{t-t_{\text{s}}}[h]}(\gr{Z}_{\ui_n}(0))\right)^2\right]^{\frac{1}{2}}\mathbb{E}_\e\left[\zeta_\e^0(g)^2\right]^{\frac{1}{2}}
	\end{split}\end{equation*}
	using Cauchy-Schwarz and the invariance of the Gibbs measure. By virtue of such estimates, we do not need to take into account what happens for pathological histories  before $t_{\text{stop}}$. 

	 Unfortunately, in the bound for $\Phi_{\e,n}^t[h]$, we do not know how to take into account the cancellations due to the signs in $\sigma_{\text{history}}$. Thus we have to count the number of possible histories and collisions. We then need to distinguish two kinds of collisions: those where one particle is removed, and those where both particles are kept, called \emph{recollisions}. The second type is harder to control. 
	
	We need two different samplings to control each type of collision separately. The first sampling has a relative large step $\theta = \beta\log|\log\e|$ (with $\beta\in(0,1)$ set later) and enables to control a moderate growth of collisions with removal. The method (already used in \cite{BGSS1}) is an adaptation of \cite{BGS2,BGS} (and reminiscent of \cite{Erdos} in the context of the quantum Lorentz gas). This will be the source of the slow speed of convergence in \eqref{Borne principale}.
	
	The second sampling, which has a shorter step $\delta = \e^{\beta'}$ (with $\beta'\in(0,1)$ set later) is used to control possibly many recollisions on the short time scale. These collisions will be allowed only on the last time interval $[t_{\text{stop}},t_{\text{stop}}+\delta]$. In the present paper, two conditionings on initial data are used. The first one is symmetric on all the particles and forbids a group of more than a fixed integer $\gamma>0$ to interact altogether on each small time interval $[k\delta,(k+1)\delta]$ (for $k\in\mathbb{N}$). At this point, the paper \cite{BGSS1} uses the  billiard theory developed in \cite{BFK} to control the histories in clusters of  $\gamma$ particles. Notice that such result has no known analogue for other interaction potentials, even with compact support.
	
	The main goal of this paper is to avoid this geometrical argument. We defined the collision graph of a trajectory on a time interval $[\tau,\tau']$ as the graph where the vertices are the set of particles and to each collision happening on $[\tau,\tau']$ correspond an edges between the colliding particles. A trajectory on the time interval $[t_{\text{stop}},t]$ is said \emph{non-pathological} if
	\begin{itemize}
		\item its collision graph restricted to $[t_{\text{stop}}+\delta,t]$ is a tree (at each collision, one particle is removed),
		\item on $[t_{\text{stop}},t_{\text{stop}}+\delta]$ the collision graph has no cycle (but there can be recollisions).
	\end{itemize}
	Due to the symmetric conditioning, one particle can meet at most $\gamma$ other particles on $[t_{\text{stop}},t_{\text{stop}}+\delta]$, and thus there are at most $\gamma$ recollisions per particle. 
	Therefore the number of non-pathological trajectories is controlled by construction.
	
	\begin{figure}[h]
		\centering\includegraphics[scale=0.2]{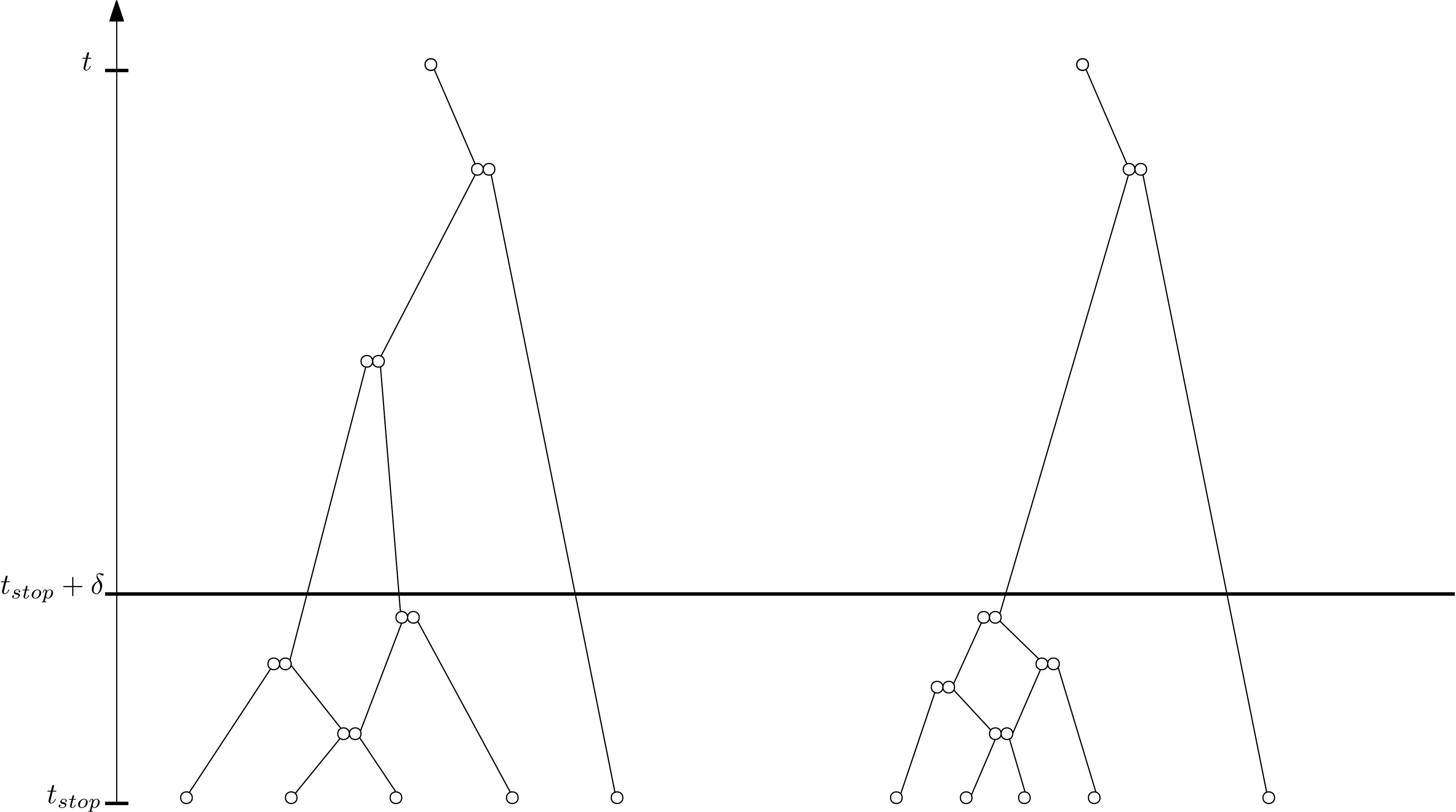}
		\caption{An example of one non-pathological pseudotrajectory (on the left) and a pathological one (on the right).}
	\end{figure}
	
	We therefore introduce a second conditioning forbidding pathological trajectories. One difficulty is that this conditioning will introduce asymmetry. Indeed, since there are approximately $\mu_\e$ particles in the system, choosing one particle costs roughly $\mu_\e$, and in a  symmetric conditioning the choice of $k$ particles would cost $\mu_\e^k$. However in the sum $\sum_{\ui_n}\Phi_n[h](\gr{Z}_{\ui_n}),$
	there are already $n$ chosen particles. We are interested in particles of the background which can influence these $n$ particles. Hence, it is sufficient to impose an asymmetric conditioning where one of the $k$ particles is chosen in $\ui_n$ and at most $k-1$ new particles have to be chosen. Such procedure  provides a gain of $\mu_\e^{-1}$ which turns out to be enough to control the error term, by means of a cumulant expansion.
	
	We conclude by describing the asymmetric conditioning. Let $\chi(Z_r)$ be the indicator function which takes value $1$ if there exist history parameters such that the graph with initial data $Z_r$ at time $t_{\text{stop}}$ has a cycle and is connected. Because the indicator function involves a bounded number of particles, its  weight $\|\chi(Z_r)\|_{L^1}$ is small. We then introduce an asymmetric conditioning $\mathcal{X}_{\ui_n}(\gr{Z}_{\N}(t_{\text{stop}}))$ imposing the existence of a set of particles $\omega$ containing at least one particle of $\{i_1,\cdots,i_n\}$ such that $\chi(\gr{Z}_{\omega}(t_{\text{stop}})$ is equal to $1$, \textit{id est} one trajectory containing a particle of $\ui_n$ is pathological. 
	
	Let us give an idea on how to bound $\mathcal{X}_{\ui_n}(\gr{Z}_{\N}(t_{\text{stop}}))$. We develop the constraint in cumulants over finite numbers of variables
	\[\mathcal{X}_{\ui_n}(\gr{Z}_{\N}(t_{\text{stop}})) =\sum_{p\geq n}\sum_{(i_{n+1},\cdots,i_{p})} \mathfrak{X}_{n,p}\big(\gr{Z}_{\ui_p}(t_{\text{stop}})\big).\]
	By definition, the $\mathfrak{X}_{n,p}\big(\gr{Z}_{\ui_p}(t_{\text{stop}})\big)$ are sums over families of particles $(\omega_1,\cdots,\omega_k)$, where $\omega_i$ is a subset of $\ui_p$, of terms \[(-\chi(\gr{Z}_{\omega_1}(t_{\text{stop}})))(-\chi(\gr{Z}_{\omega_2}(t_{\text{stop}})))\cdots(-\chi(\gr{Z}_{\omega_r}(t_{\text{stop}}))).\]
	The $\omega_i$ can intersect, hence the number of terms in $\mathfrak{X}_{n,p}$ is huge. But the (first) symmetric conditioning permits to bound the number of intersecting sets. If $\omega_1,\cdots,\omega_k$ intersect, all the particles in their union are close. Hence the size of $\omega_1\cup\cdots\cup\omega_k$ is bounded by $\gamma$ and $k$ is smaller than $2^\gamma$. This is sufficient to bound $\mathfrak{X}_{n,p}$.
	
	The paper is organized as follows. In section \ref{development along pseudotrajectories and time sampling} we give a proper definition of history and we use it to construct the functionals $\Phi_{\e,n}^t$. Then the two samplings mentioned above are constructed. This allows to decompose $\mathbb{E}_\e\left[\zeta_\e^t(h)\zeta_\e^0(g)\right]$ in a main term plus error terms of different nature: one is a development on trajectories without recollisions (bounded in Section \ref{Clustering estimations}), a second is a development on non pathological trajectories (actually called below pseudotrajectories) with recollisions (bounded in section \ref{Estimation of the long range recollisions.}) and the last part deals with pathological recollisions (bounded in Section \ref{Estimation of the long range recollisions.}). These estimations need standard $L^2(\mathbb{P}_\e)$ estimates based on static cumulant decompositions. They are given in Section \ref{Quasi-orthogonality estimates}. Finally, the convergence of the main term is proved in Section \ref{Treatment of the main part}.
	
	\section{Development along pseudotrajectories and time sampling}\label{development along pseudotrajectories and time sampling}
	
	\subsection{Definition of (forward) pseudotrajectories}
	Consider $n$ particles. To lighten notation for trajectories we will drop the dependence on $\e$.

	For  $m\leq n$, fix a family of \emph{pseudotrajectory parameters} (which was called history in the introduction)
	\[((s_i,\bar{s}_i)_{1\leq i\leq n-m},(\kappa_j)_{1\leq j \leq n})\in \{(\pm1,\pm1)\}^{n-m}\times\mathbb{N}^n,\]
	and an initial data $Z_n\in\mathcal{D}^n_\e$.
	
	We construct iteratively the pseudotrajectories $\Zt_n(\tau,((s_i,\bar{s}_i)_{1\leq i\leq n-m},(\kappa_j)_{1\leq j \leq n}),Z_n)$, the collision indices $\iota(\tau)$ and recollision indices $(\kappa_j(\tau))_{1\leq j \leq n}$. At time $\tau=0$, we set $\iota(0):=1$ and for all $j$, $\kappa_j(0):=\kappa_j$. Moreover at $\tau = 0$, $\Zt_n(0) = Z_n\in\mathcal{D}^n_\e$. The number of particles decreases with time and is equal to $(n+1-\iota(\tau))$.
	
	If $\iota(\tau)<n-m+1$, the remaining particles move along freely until there is a new collision between two of them at time $\tau$ (say $q$ and $q'$ with $q<q'$). If $s_{\iota(\tau^-)}=1$ (respectively $-1$) we look at $\kappa_q(\tau^-)$ (respectively $\kappa_{q'}(\tau^-)$):
	\begin{itemize}
		\item if it is strictly positive, we have a \emph{recollision}. The two particles scatter as in \eqref{scattering} and $\kappa_{q}(\tau^+) = \kappa_q(\tau)-1$ (respectively $\kappa_{q'}(\tau^+) = \kappa_{q'}(\tau)-1$),
		\item if it is $0$ we have an \emph{annihilation}: we remove the particle $q$ (and in the case where $s_{\iota(\tau^-)}=-1$ remove $q'$). The other particle scatters if $\bar{s}_{\iota(\tau^-)} =1$ or  continue in straight line else. Finally we increment $\iota(\tau)$.
	\end{itemize}
	When $\iota(\tau)=n-m+1$ (there are $m$ particles left), all the annihilations have been performed and particles evolve along the Hamiltonian flow.
	
	\begin{figure}[h]
		\centering
		\includegraphics[scale=0.35]{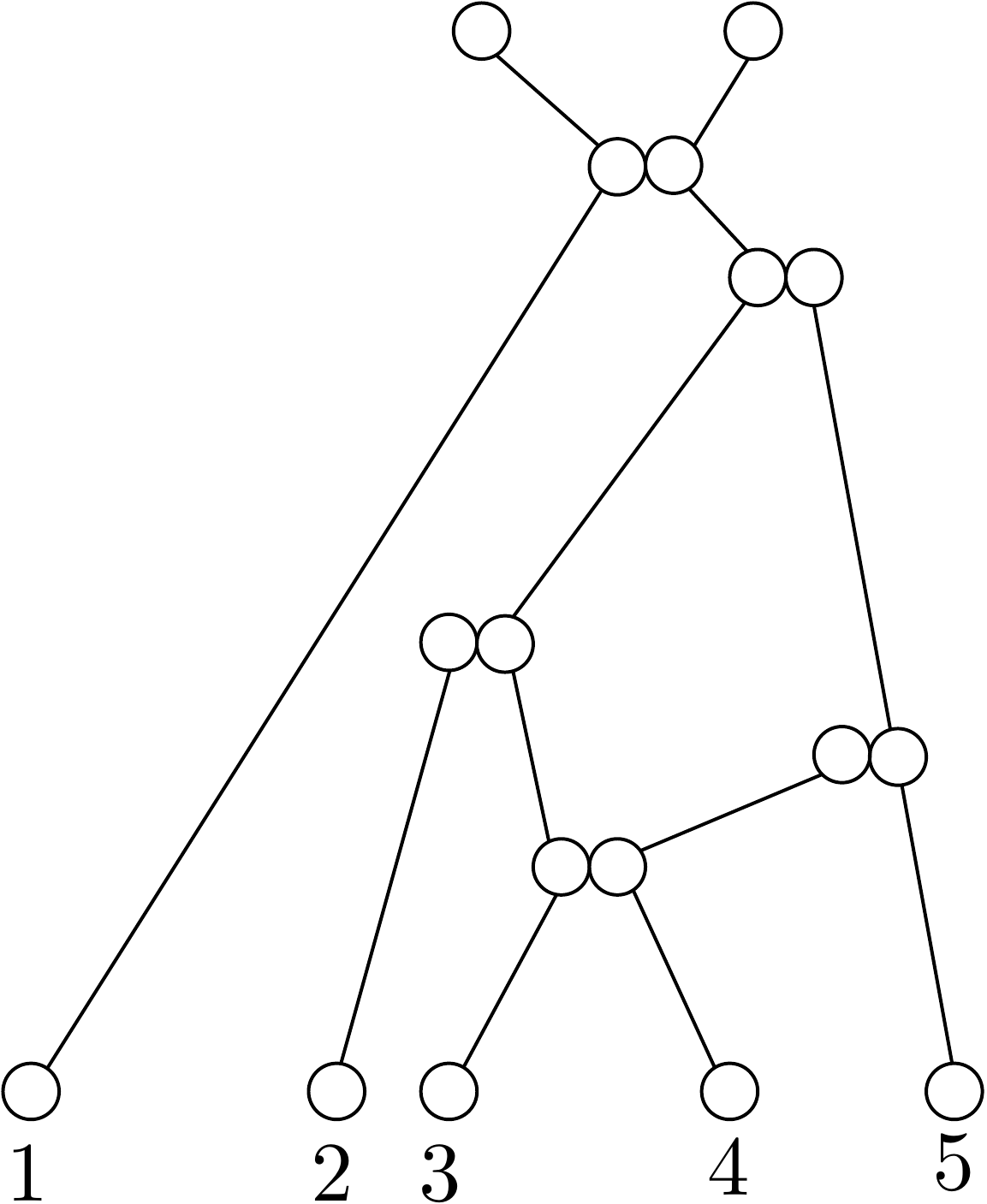}
		\caption{Pseudotractory associated with $(((-1,-1),(-1,1),(1,1)),(0,0,1,0,0))$.}\label{construction-pseudo}
	\end{figure}
	
	Let $\omega$ be a finite subset of $\mathbb{N}^*$. We will denote $\Zt_\omega(t,Z_\omega,((s_i,\bar{s}_i)_{i\leq |\omega|-m},(\kappa_j)_{j\in\omega}))$ the pseudotrajectory with particles of $\omega$ and $\Zt_\omega(t)$ when there is no ambiguity on the parameters. Note that $\gr{Z}_\omega(t)$ is the configuration of the particles $\omega$ in the dynamics of $\mathcal{D}_\e$ (the real trajectories).
	
	\begin{definition}[Collision graph]
		For $Z_r\in\mathcal{D}^r_\e$  and parameters $((s_i,\bar{s}_i),(\kappa_j)_j)$, we contruct the \emph{collision graph} $\mathcal{G}_r^{[0,t]}$ as the couple $(E,V)$, with $V := \{1,\cdots,r\}$ and
		\[E\subset \{(i,j)_\tau,\,\text{where}~(i,j)\in V^2,\, i<j,\,\tau\in[0,t]\}\]
		such that $(i,j)_\tau\in E$ if and only if there is a collision at time $\tau$ in the pseudotrajectory between particle $i$ and $j$. By standard properties of the hard sphere dynamics (see \cite{Alexander}), for almost all $Z_r$, $\mathcal{G}_r^{[0,t]}$ has a finite number of edges. We can order $\tau_1<\tau_2<\cdots<\tau_k$ the collision times of $G$.
		
		In the following we denote $E\left(\mathcal{G}_r^{[0,t]}\right):=E$. 
	\end{definition}
	
	\begin{figure}[h]
		\centering\includegraphics[scale=0.2]{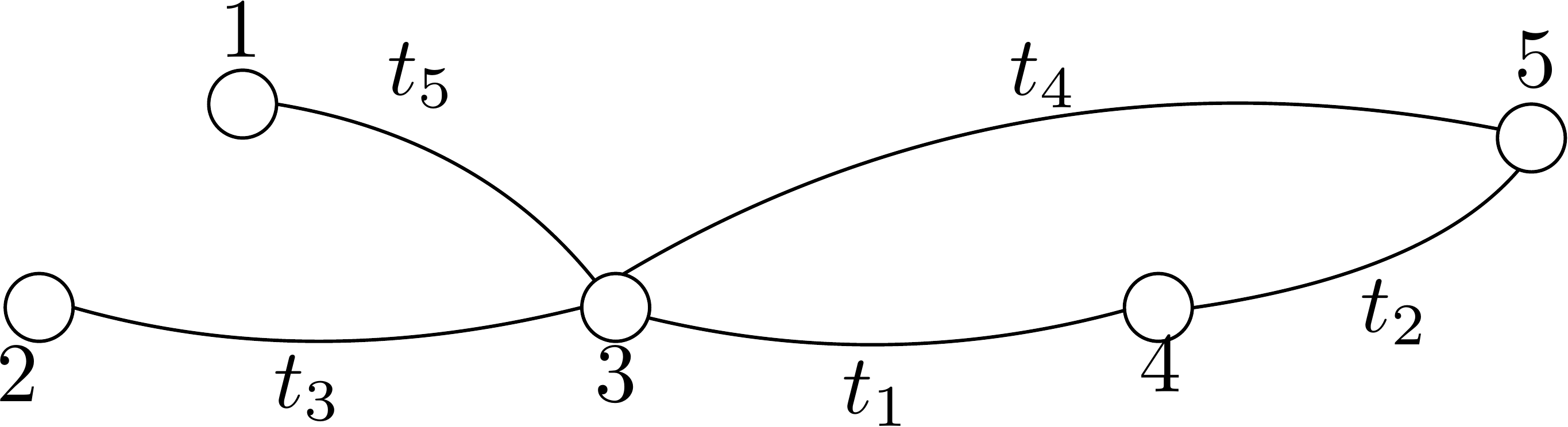}
		\caption{Collision graph associated with pseudotrajectory of figure \ref{construction-pseudo}, with $t_1<t_2<t_3<t_4<t_5$ the collision times.}
	\end{figure}
	\subsection{Developement along pseudotrajectories}
	The pseutrajectories are used to pull back a function evaluated at time $t$ to a previous time $0$.
	
	Let $m\leq n$ be two integers, and $((s_i,\bar{s}_i)_{1\leq i\leq n-m}),(\kappa_j)_{1\leq j\leq n})$ be collision parameters and $t>0$ the finite time. In order to not count twice the same pseudotrajectory, all parameters have to be take in to account. We define $\mathcal{R}_{((s_i,\bar{s}_i),(\kappa_j))}^{m\leftarrow n,t} \subset \mathcal{D}^n_\e$  as the set of initial parameters $Z_m$ such that at time $t$ the remaining parameter terms in the pseudory $\Zt_n(\tau,)$ verifies the following condition:
	$\{1,\cdots,m\}$ are the remaining particles of $\Zt_n(t,Z_n)$, and the recollision indices define in the previous section vanish at time $t$: for all $j$, $\kappa_j(t)$ vanishes.
	
	Let $h_m \in L^\infty(\mathbb{D}^m)$ some test function (not supposed symmetric under permutation of its parameters). We define the \emph{pseudotrajectory development} as the functional $\Phi^t_{m\leftarrow n} :L^\infty(\mathbb{D}^m)\to L^\infty(\mathbb{D}^n)$ with
	\begin{equation}
	\Phi^t_{m\leftarrow n}[h_m](Z_n) :=\frac{1}{(n-m)!} \sum_{\substack{(s_i,\bar{s}_i)_{i\leq n-m}\\(\kappa_j)_j}} \left(\prod_{i= 1}^{n-m} \bar{s}_i\right) \ind_{\mathcal{R}_{((s_i,\bar{s}_i),(\kappa_j))}^{m\leftarrow n,t}}h_m(\Zt_n(t,Z_n)).
	\end{equation}
	
	We have the following semigroup property:
	\begin{prop}
		 Consider $m \leq n$ two integers, $t>t'>0$ two evaluation times and $\ui_m$ a family of particles. Then for any function $h_m \in L^{\infty}(\mathbb{D}^m)$ and almost all initial data $Z_n\in\mathcal{D}^n_\e$, 
		 \begin{equation}
		 \sum_{(i_{m+1},\cdots i_n)}\Phi_{m\leftarrow n}^t[h_m](Z_n)=\sum_{n'=m}^n \sum_{(i_{m+1},\cdots,i_{n})} \Phi_{n'\leftarrow n}^{t'}\left[\Phi^{t-t'}_{m\leftarrow n'}[h_m]\right](Z_n).
		 \end{equation}
	\end{prop}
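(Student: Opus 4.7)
The plan is to prove the semigroup identity by constructing a bijection between pseudotrajectories on $[0,t]$ and pairs of consecutive pseudotrajectories on $[0,t']$ and $[t',t]$, then checking that the sum over parameters on each side matches.

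First I would fix parameters $((s_i,\bar{s}_i)_{i\leq n-m},(\kappa_j)_{j\leq n})$ and initial data $Z_n$ realizing a pseudotrajectory satisfying the end condition $\mathcal{R}^{m\leftarrow n,t}_{((s_i,\bar{s}_i),(\kappa_j))}$. Set $k := \iota(t^{\prime-})-1$, the number of annihilations occurring strictly before $t'$, and let $n' := n-k$. By construction the parameters split naturally: $(s_i,\bar{s}_i)_{i\leq k}$ govern the annihilations in $[0,t']$ while $(s_{k+i},\bar{s}_{k+i})_{i\leq n'-m}$ govern those in $[t',t]$; the recollision indices $(\kappa_j)_{j\leq n}$ evolve to residual counts $(\kappa_j(t'))_{j\in\omega}$ for the set $\omega\subset[n]$ of $n'$ surviving particles, and necessarily $[m]\subset\omega$ since only particles outside $[m]$ can be annihilated. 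After a relabeling bijection $\omega\to[n']$ that is the identity on $[m]$, the restriction of $\Zt_n$ to $[t',t]$ is itself a pseudotrajectory on $n'$ particles with initial data $\Zt_n(t')_{|\omega}$ and parameters $((s_{k+i},\bar{s}_{k+i}),(\kappa_j(t'))_{j\in\omega})$, ending in exactly the first $m$ particles surviving — i.e.\ it is in $\mathcal{R}^{m\leftarrow n',t-t'}$.

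Next, I would verify the converse: any pair of pseudotrajectories, the first in $\mathcal{R}^{n'\leftarrow n,t'}$ with parameters indexed up to $n-n'$ and the second in $\mathcal{R}^{m\leftarrow n',t-t'}$ with parameters indexed up to $n'-m$, glues into a single pseudotrajectory in $\mathcal{R}^{m\leftarrow n,t}$ by concatenation, identifying the residual recollision indices of the first with the initial recollision indices of the second. The combinatorial factors then balance via $(n-m)!=(n-n')!\,(n'-m)!\,\binom{n-m}{n-n'}$: the factors $(n-n')!$ and $(n'-m)!$ appear in the definitions of $\Phi^{t'}_{n'\leftarrow n}$ and $\Phi^{t-t'}_{m\leftarrow n'}$, while the binomial factor corresponds precisely to choosing which $n-n'$ of the $n-m$ non-final particles get annihilated in the first phase. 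This choice is exactly what the extra sum $\sum_{(i_{m+1},\dots,i_n)}$ on the right-hand side realizes: it relabels the $n'-m$ surviving-at-$t'$ indices as $(i_{m+1},\dots,i_{n'})$ and the $n-n'$ early-annihilated indices as $(i_{n'+1},\dots,i_n)$, generating through the symmetry of the sum the correct multiplicity.

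Finally, I would check that the signs and indicators multiply compatibly: $\prod_{i=1}^{n-m}\bar{s}_i = \prod_{i=1}^{n-n'}\bar{s}_i\cdot\prod_{i=1}^{n'-m}\bar{s}_{n-n'+i}$ and $\ind_{\mathcal{R}^{m\leftarrow n,t}} = \ind_{\mathcal{R}^{n'\leftarrow n,t'}}\cdot\ind_{\mathcal{R}^{m\leftarrow n',t-t'}}$ when one sums over residual $\kappa$-values at time $t'$ matching the initial $\kappa$-values of the second phase. I expect the main technical difficulty to lie in this final bookkeeping step — specifically, in ensuring that the sum over the initial recollision indices $(\kappa_j)_j$ on the left decomposes correctly into a sum over the intermediate values $(\kappa_j(t'))_{j\in\omega}$ (which become the initial data of the second pseudotrajectory) together with a sum over the recollisions consumed by annihilated particles during $[0,t']$, and that the condition ``$\kappa_j(t)$ vanishes for all $j$'' in $\mathcal{R}^{m\leftarrow n,t}$ factorizes cleanly across the two phases. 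Once this is done, the identity follows by exchanging the sums over $n'$ and the pseudotrajectory parameters.
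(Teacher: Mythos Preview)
Your approach is the same as the paper's: split a full-time pseudotrajectory at the intermediate time $t'$ and show that the parameters decompose into first-phase and second-phase parameters in a bijective way, with the sign and combinatorial factors matching. Your handling of the annihilation parameters $(s_i,\bar s_i)$, the surviving set $\omega$, the sign factorization, and the combinatorial identity $(n-m)!=\binom{n-m}{n-n'}(n-n')!(n'-m)!$ is correct and mirrors the paper.

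The gap is in the recollision-counter bookkeeping, precisely where you flagged the difficulty but described the decomposition incorrectly. You propose that the sum over $(\kappa_j)_j$ splits into the residual values $(\kappa_j(t'))_{j\in\omega}$ for the second phase together with ``recollisions consumed by annihilated particles during $[0,t']$''. This misses the recollisions consumed by \emph{surviving} particles on $[0,t']$, and more importantly it overlooks the following asymmetry in the definition of pseudotrajectories: in the first-phase functional $\Phi^{t'}_{n'\leftarrow n}$, once all $n-n'$ annihilations have been performed (say at time $t_l<t'$), the remaining particles evolve by pure Hamiltonian flow and the counters $\kappa'_j$ stop decrementing. Hence the condition $\kappa'_j(t')=0$ in $\mathcal{R}^{n'\leftarrow n,t'}$ forces $\kappa'_j$ to equal the number of recollisions of particle $j$ on $[0,t_l]$, \emph{not} on $[0,t']$. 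The recollisions occurring in the window $(t_l,t']$ in the original pseudotrajectory are counted neither in $\kappa'_j$ nor in $\kappa''_j$; they correspond to honest Hamiltonian scatterings in the first-phase trajectory, and their number is entirely determined by $Z_n$, $(s'_i,\bar s'_i)_i$ and $(\kappa'_j)_j$. The paper makes exactly this point: it sets $\kappa'_j=\kappa_j-\kappa_j(t_l)$ and checks that on $[t_l,t']$ the original pseudotrajectory (where these collisions are recollisions) and the first-phase one (where they are Hamiltonian scatterings) coincide, since both produce scattering. With this correction the map $((s_i,\bar s_i),(\kappa_j))\mapsto (\omega,(s'_i,\bar s'_i,\kappa'_j),(s''_i,\bar s''_i,\kappa''_j))$ becomes a genuine bijection for almost every $Z_n$, and the rest of your argument goes through.
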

	\begin{proof}
		Fix collision parameters $((s_i,\bar{s}_i)_{i\leq n_m},(\kappa_j)_{j\leq n})$ and an initial data. In pseudotrajectory $\Zt_n(\tau,((s_i,\bar{s}_i),(\kappa_j)),Z_n)$, we consider $\omega$ the set of the remaining particles at time $t'$ and $t_l$ the time of the last annihilation before $t'$. We construct two set of collision parameters
		\[((s'_i,\bar{s}'_i)_{i\leq n-|\omega|},(\kappa'_j)_{j\leq n}):=((s_i,\bar{s}_i)_{i\leq n-|\omega|},(\kappa_j-\kappa_j(t_a))_{j\leq n}),\]
		\[((s''_i,\bar{s}''_i)_{i\leq |\omega|-m},(\kappa''_j)_{j\in \omega}):=((s_i,\bar{s}_i)_{i> |\omega|-m},(\kappa_j-\kappa_j(t'))_{j\in\omega}).\]
		
		We first prove the equality 
		\begin{multline*}\Zt_n\left(t,((s_i,\bar{s}_i)_{n-m},(\kappa_j))_n,Z_n\right)\\
		=\Zt_{|\omega|}\left(t-t',((s''_i,\bar{s}''_i)_{|\omega|-m},(\kappa''_j)_{\omega}),\Zt_n\left(t',((s'_i,\bar{s}'_i)_{ n-|\omega|},(\kappa'_j)_n),Z_n\right)\right).\end{multline*}
		Until time $t_l$ we have
		\[\Zt_n(\tau,((s_i,\bar{s}_i)_{n},(\kappa_j)_n),Z_n)=\Zt_n(\tau,((s'_i,\bar{s}'_i)_{ n-|\omega|},(\kappa'_j)_n),Z_n).\]
		Then on the time interval $[t_l,t']$ there is only recollisions (no annihilation) in the two pseudotrajectories. In the first one they are threaten by the parameters $\kappa_j(\tau)$ which decrease. In the second one because we have threaten all the annihilation, particles evolves along the Hamiltionian flow. Note that after time $t_l$, the $\kappa'_j(\tau)$ vanish. Hence at time $t'$
		\[\Zt_n(t',((s_i,\bar{s}_i),(\kappa_j)),Z_n)=\Zt_n(t',((s'_i,\bar{s}'_i),(\kappa'_j)),Z_n).\]
		and denoting $\iota(\tau)$ the collision indices associated with the first pseudotrajectory
		\[\iota(t') = n-|\omega|,~\forall j,~\kappa_j(t') = \kappa_j-\kappa_j(t').\]
		We can pursue the two pseudotrajectory constructions and finally 
		\[\Zt_n(t,((s_i,\bar{s}_i),(\kappa_j)),Z_n)=\Zt_{\omega}(t-t',((s''_i,\bar{s}''_i),(\kappa''_j)),\Zt_n(t',((s'_i,\bar{s}'_i),(\kappa'_j)),Z_n))\]
		which will be written shorter as
		\[\Zt_n(t,Z_n)=\Zt_{\omega}(t-t',\Zt_n(t',Z_n)).\]
		
		For each initial data, we have constructed an onto map 
		\[((s_i,\bar{s}_i)_{n},(\kappa_j)_n)\mapsto (\omega,((s'_i,\bar{s}'_i)_{ n-|\omega|},(\kappa'_j)_n),((s''_i,\bar{s}''_i)_{|\omega|-m},(\kappa''_j)_{|\omega|})\]
		with in addition 
		\[\prod_{i=1}^{n-m}s_i=\prod_{i=1}^{n-|\omega|}\bar{s}'_i\prod_{i=1}^{|\omega|-m}\bar{s}_i''.\]
		Hence denoting $\mathcal{R}_{((s'_i,\bar{s}'_i),(\kappa'_j))}^{\omega\leftarrow n,t}$ the set of initial data such that the set of remaining particles of $\Zt_n(t',((s'_i,\bar{s}'_i),(\kappa'_j)),Z_n)$ is $\omega$, and the corresponding recollision parameters $\kappa_i'(t')$ vanish, we have
		\[\begin{split}\sum_{\substack{(s_i,\bar{s}_i)_{i\leq n-m}\\(\kappa_j)_j}}  \prod_{i=1}^{n-m}s_i\, \ind_{\mathcal{R}_{((s_i,\bar{s}_i),(\kappa_j))}^{m\leftarrow n,t,\omega}}h_m(\Zt_n(t)) = \sum_{[m]\subset\omega\subset[n]}\sum_{\substack{(s'_i,\bar{s}'_i)_{i\leq n-|\omega|}\\(\kappa''_j)_{j\leq n}}} \prod_{i=1}^{n-|\omega|}\bar{s}'_i\, \ind_{\mathcal{R}_{((s'_i,\bar{s}'_i),(\kappa'_j))}^{\omega\leftarrow n,t}}\\
		\times\sum_{\substack{(s''_i,\bar{s}''_i)_{i\leq |\omega|-m}\\(\kappa''_j)_{j\leq|\omega|}}}\prod_{i=1}^{|\omega|-m}\bar{s}_i''\, \ind_{\mathcal{R}_{((s''_i,\bar{s}''_i),(\kappa''_j))}^{m\leftarrow \omega,t}}\big(\Zt_n(t')\big)h_m\big(\Zt_{\omega}\big((t-t',\Zt_n(t'))\big)\big).
		\end{split}\]
		We prove that for $Z_n\in\mathcal{D}^n_\e$,
		\[\begin{split}(n-m)!&\Phi_{m\leftarrow n}^t[h_m](Z_n)\\
		&=\sum_{n'=m}^n ~\sum_{\substack{\omega\subset[m+1,m+n]\\|\omega|=m-n'}} (n-n')!\Phi_{n'\leftarrow n}^{t'}\left[(n'-m)!\Phi^{t-t'}_{m\leftarrow n'}[h_m]\right](Z_m,Z_\omega,Z_{\omega^c}).\end{split}\]
		
		Then summing on all family of particles
		\[\begin{split}
		\sum_{(i_{m+1},\cdots i_n)}&\Phi_{m\leftarrow n}^t[h_m](\gr{Z}_{\ui_n}(0))\\
		&=\sum_{(i_{m+1},\cdots i_n)}\sum_{n'=m}^n\frac{1}{\binom{n-m}{n'-m}} ~\sum_{\substack{\omega\subset[m+1,m+n]\\|\omega|=m-n'}} \Phi_{n'\leftarrow n}^{t'}\left[\Phi^{t-t'}_{n\leftarrow n'}[h_m]\right]\big((\gr{Z}_{\ui_m},\gr{Z}_{\ui_\omega},\gr{Z}_{\ui_{\omega^c}})(0)\big)\\
		&=\sum_{(i_{m+1},\cdots,i_{n})}\sum_{n'=m}^n  \Phi_{n'\leftarrow n}^{t'}\left[\Phi^{t-t'}_{n\leftarrow n'}[h_m]\right](\gr{Z}_{\ui_n}(0)).
		\end{split}\]
	\end{proof}
	
	Now we can write the pseudotrajectory development of the marginal (see \cite{SP}):	
	\begin{theorem}
		Let $(i_1,\cdots,i_m)$ be a family of particles,with $i_{\max} := \max\{i_1,\cdots,i_n\}$. For almost all $\gr{Z}_\N\in\mathcal{D}_\e\cup\{\N\geq i_{\max}\}$ we have
		\begin{equation}
		h_m\left((\gr{Z}_{(i_1,\cdots,i_m)}(t)\right) = \sum_{n\geq m} \sum_{(i_{n+1},\cdots,i_{m})} \Phi^t_{m\leftarrow n}[h_m](\gr{Z}_\ui(0)).
		\end{equation}
		
		In addition if we do not fix $(i_1,\cdots,i_m)$ we have
		\begin{equation}
		\sum_{\ui_m}h_m\left((\gr{Z}_{\ui_m}(t)\right) = \sum_{n\geq m} \sum_{\ui_m} \Phi^t_{m\leftarrow n}[h_m](\gr{Z}_{\ui_n}(0)).
		\end{equation}
	\end{theorem}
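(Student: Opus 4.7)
The plan is to reduce the right-hand side to an almost-surely finite sum, and then prove the identity by induction on the number of external collisions (between a tracked particle and a non-tracked one) occurring in $[0,t]$, using the semigroup property just established together with a single-collision telescoping identity.

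Fix a realization $\gr{Z}_{\N}$ with $\N \geq i_{\max}$, and let $\omega^{\star} \supseteq \{i_1,\cdots,i_m\}$ be the cluster of influence of the tracked set: the smallest set closed under the collision relation of the real dynamics on $[0,t]$. By Alexander's almost-sure well-posedness (cited in the definition of $\mathcal{G}_r^{[0,t]}$), $|\omega^{\star}|$ is a.s.\ finite. Any tuple $(i_{m+1},\cdots,i_n)$ containing an index outside $\omega^{\star}$ gives a vanishing contribution on the right-hand side, because such an isolated particle never undergoes a collision in the pseudotrajectory and hence cannot be annihilated, contradicting the condition encoded in $\mathcal{R}^{m\leftarrow n,t}_{((s_i,\bar s_i),(\kappa_j))}$ that only the first $m$ particles remain at time $t$. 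The sum therefore reduces to finitely many nonzero terms.

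I then proceed by induction on $K$, the number of external collisions occurring in $[0,t]$. If $K=0$, the tracked set evolves as a closed Hamiltonian subsystem, so $\gr{Z}_{\ui_m}(t) = \Zt_m(t, \gr{Z}_{\ui_m}(0))$ (once $\iota(\tau)$ reaches $n-m+1 = 1$ in the $n = m$ pseudotrajectory, particles evolve along the Hamiltonian flow), which is exactly the $n = m$ term of the right-hand side with trivial $(\kappa_j) = 0$; all higher-$n$ terms vanish by the previous paragraph. For $K \geq 1$, let $\tau^{\star}$ be the first external collision time and apply the semigroup property at any $t' \in (0, \tau^{\star})$. On $[0,t']$ no external collision has yet occurred, so the intermediate sum collapses to the $n' = m$ term and $\gr{Z}_{\ui_m}(t') = \Zt_m(t', \gr{Z}_{\ui_m}(0))$, transporting the identity to the shorter interval $[t',t]$ with $K - 1$ external collisions ahead, where the induction hypothesis applies.

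The main obstacle, and the conceptual heart of the proof, is the single collision step at $\tau^{\star}$ between a tracked particle $q$ and a freshly incorporated outside particle $p$. Three contributions on the right-hand side must be reconciled at this instant: the $n = m$ term evaluates $h_m$ along the straight-line continuation of $q$ that ignores $p$; and the two $n = m + 1$ tuples that include $p$ with parameters $(s, \bar s)$ chosen to annihilate $p$ at $\tau^{\star}$ contribute $+h_m(\text{true scattered})$ when $\bar s = +1$ and $-h_m(\text{straight line})$ when $\bar s = -1$. The $\bar s = -1$ branch telescopes with and cancels the $n = m$ straight-line contribution, leaving exactly the true scattered value, which is what the real dynamics produces. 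Recollision branches with $\kappa_p \geq 1$ are ruled out at this instant by the constraint $\kappa_j(t) = 0$ imposed by $\mathcal{R}$, since $p$ does not collide again in this local window. Iterating this Duhamel-type cancellation along the $K$ external collisions via the semigroup identity closes the induction. The symmetric version of the theorem then follows by summing the first equality over ordered $m$-tuples with distinct labels.
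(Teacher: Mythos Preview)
Your telescoping argument in the third paragraph is correct and matches the paper's key computation. However, the surrounding structure has two genuine gaps.

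First, the $\omega^{\star}$ argument in paragraph one is not valid. You define $\omega^{\star}$ via the collision relation of the \emph{real} dynamics, but the pseudotrajectory $\Zt_n(\tau,\cdot,\gr{Z}_{\ui_n}(0))$ involves only the selected particles $i_1,\dots,i_n$ and may therefore produce collisions absent from the real flow. Concretely, a particle $j\notin\omega^{\star}$ that in the real dynamics is shielded from the tracked set by some third particle may, in a pseudotrajectory where that third particle is not selected, travel undeflected and hit a tracked particle, get annihilated, and give a nonzero contribution. The finiteness you want is nevertheless immediate, since $\N<\infty$ almost surely and for each fixed tuple only finitely many $(\kappa_j)$ satisfy $\kappa_j(t)=0$.

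Second, and more seriously, the induction in paragraph two does not close. With $t'\in(0,\tau^{\star})$ the interval $[t',t]$ still contains all $K$ external collisions, not $K-1$, so the inductive variable does not decrease. Moreover, the claim that ``the intermediate sum collapses to the $n'=m$ term'' is not what the base case provides: the base case gives the full identity on $[0,t']$ for any tuple $\ui_{n'}$ that has no external collisions there, but particles $i_{m+1},\dots,i_{n'}$ may well have external collisions on $[0,t']$ even though $i_1,\dots,i_m$ do not. Even if you reposition the cut to just after $\tau^{\star}$, your single-collision telescoping requires the window to contain no \emph{other} collisions (internal among tracked particles, or among the added particles), which you have not arranged. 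This is exactly why the paper's proof partitions $[0,t]$ into intervals so short that at most one collision of any kind in the full system occurs on each, then uses the semigroup property to reduce to those intervals; there the three-case analysis (no collision, internal collision, one external collision) and the telescoping go through without interference.
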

	\begin{proof}
		The proof is an adaptation of \cite{SP}.
		
		%il faut rajouter qqch sur les ensembles de données initiales autoriser (et faire la distinction il y a un triple collision et les indices ne sont pas légaux)
		We rewrite semi-group equality: For $m\leq n_1 \leq n$, $t>t'>0$,
		\[\begin{split}
		\sum_{(i_{m+1},\cdots i_n)}&\Phi_{m\leftarrow n}^t[h_m](\gr{Z}_{\ui_n}(0))=\sum_{n'=m}^n \sum_{(i_{m+1},\cdots,i_{n'})} \sum_{(i_{n'+1},\cdots,i_n)} \Phi_{n'\leftarrow n}^{t'}\left[\Phi^{t-t'}_{n\leftarrow n'}[h_m]\right](\gr{Z}_{\ui_n}(0)).
		\end{split}\]
		
		Thanks to Alexender's proof of wellposedness of the the hard sphere dynamic (see \cite{Alexander}), outside a set of zero measure the number of collision in finite on a finite interval. Hence $[0,t]$ can be cut into small time intervals  time interval $[t_k,t_{k+1}]$ such that on each one there is at most one collision between two particles $i$ and $j$, and if $i$ is removed there is no more collision. Using the semigroup property, one needs to prove the result only on each $[t_k,t_{k+1}]$.
		
		We fix the number of particle $\N$ and the initial configuration $\gr{Z}_\N$ and we consider a small time $t$ such that the preceding conditions are check. Let $\ui_n$ be a family of particle. We distinguish t cases. First on $[0,t]$ none of the particles in $\ui_m$ have a collision. Thus for any $n>m$ all the $\ind_{\mathcal{R}^{m\leftarrow n,t}_{((s_i,\bar{s}_i)_i,(\kappa_j)_j)}}(\gr{Z}_{\ui_n})$ vanish and
		\[h_m\left((\gr{Z}_{\ui_m}(t)\right) =\Phi^t_{m\leftarrow m}[h_m](\gr{Z}_{\ui_n}(0)).\]
		In the same way if the collision occurs between two particles of $\ui_m$, the $\ind_{\mathcal{R}^{m\leftarrow n,t}_{((s_i,\bar{s}_i)_i,(\kappa_j)_j)}}(\gr{Z}_{\ui_n})$ also vanish and the same equality holds. The last case is when the collision happens between one particle of $\ui_m$ and an other particle $i_{n+1}$. Up to a permutation of the indices, the collision happens between $i_1$ and $i_{m+1}$. Removing all the vanishing terms,
		\[\begin{split}\sum_{n\geq m} \sum_{(i_{n+1},\cdots,i_{m})} \Phi^t_{m\leftarrow n}[h_m](\gr{Z}_{\ui_m}) =&h_m\Big(\Zt_m(t,\big(t,\big(()_0,(0)_{m})\big)\gr{Z}_{\ui_m})\Big)\\
		&-h_m\Big(\Zt_{m+1}\big(t,\big(((1,-1)),(0)_{m+1}\big),\gr{Z}_{\ui_{m+1}}\big)\Big)\\
		&+h_m\Big(\Zt_{m+1}\big(t,\big(((1,1)),(0)_{m+1}\big),\gr{Z}_{\ui_{m+1}}\big)\Big).
		\end{split}\]
		In the two first terms particles move along straight lines because there is no scattering at the collision. Thus these two terms compensate. For the first one note that because particles in $\ui_m$ are deviated at each of their collisions,
		\[\Zt_{m+1}\big(t,\big(((1,1)),(0)_{m+1}\big),\gr{Z}_{\ui_{m+1}}\big)=\gr{Z}_{\ui_{m}}(t)\]
		and 
		\[\begin{split}h_m(\gr{Z}_{\ui_m})(t) =h_m\Big(\Zt_m(t,\big(t,\big((),(0)_{m})\big)\gr{Z}_{\ui_m})\Big)-h_m\Big(\Zt_{m+1}\big(t,\big(((1,-1)),(0)_{m+1}\big),\gr{Z}_{\ui_{m+1}}\big)\Big)\\
		+h_m\Big(\Zt_{m+1}\big(t,\big(((1,1)),(0)_{m+1}\big),\gr{Z}_{\ui_{m+1}}\big)\Big)
		\end{split}\]
		which conclude the proof.
	\end{proof}
	
	Applied to the covariance, it gives
	\begin{equation*}
	\mathbb{E}_\e\left[\zeta_\e^t(h)\zeta^0_\e(g)\right]=\sum_{n\geq 1}\frac{1}{{\mu_\e}^{\frac{1}{2}}}\mathbb{E}_\e\left[\sum_{(i_1,\cdots,i_n)}\Phi^t_{1\leftarrow n}[h](\gr{Z}_{\ui}(0))\zeta_\e^0(g)\right].
	\end{equation*}
	
	Because we do not know how take account of the vanishing due to the $\bar{s}_i$, we bound $\Phi^t_{1\leftarrow n}[h]$ by counting the number of collision parameter. The problem is that there no \textit{a priori} bound on the number of recollision parameters needen. To overcome this difficulty we have to introduce some conditioning in order to bound the number of recollision.
	
	\subsection{Conditioning}\label{Conditioning}
	We need two conditioning on the data.
	
	\paragraph{The first one is a symmetric conditioning on all the parameters.}
	
	\begin{definition}[Distance cluster]
		Let $L$ be a positive real number and $Z_n\in\mathbb{D}^n$ be a particle configuration. We consider the undirected graph of vertices $\{1,\cdots,n\}$ and of edges 
		\[\{(i,j)\in[1,m]^2,~d(x_i,x_j)<L\}.\]
		A \emph{$L$-distance cluster} is one of its connected component. In the further we only look at $2\delta\mathbb{V}$-distance distance cluster so we drop the "$2\delta\mathbb{V}$".
	\end{definition}

	Let $\gamma>0$ be an integer depending only on the dimension, a time scale $\delta$ (which will be a power of $\e$) and $\mathbb{V}$ a velocity bound. We construct $\Upsilon_\e\subset\mathcal{D}_\e$ the set of particles configurations such that for any time $\tau\in\{0,\delta,2\delta,\cdots,t\}$, there is no distance cluster of size bigger than $\gamma$ at time and inside any subset of particles $\omega\subset[1,\N]$ whith less than $\gamma$ element, $\tfrac{1}{2}\|\gr{V}_\omega(\tau)\|^2$ is bounded by $\tfrac{1}{2}\mathbb{V}^2$. We have the following bound on the measure of the complement of $\Upsilon_\e$:
	\begin{prop}
		There exists a constant $C_\gamma$ depending only on $\gamma$ and on the dimension such that
		\begin{equation}
		\mathbb{P}_\e\left(\Upsilon^c_\e\right)\leq C_\gamma\frac{t}{\delta}\left(\mu_\e\delta^{-1}\left(\mu_\e\delta^d\mathbb{V}^d\right)^\gamma+\mu_\e^\gamma e^{-\mathbb{V}^2/4}\right).
		\end{equation}
	\end{prop}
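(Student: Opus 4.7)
The plan is to perform two successive union bounds: first over the $O(t/\delta)$ sampling times (exploiting that $\mathbb{P}_\e$ is invariant under the hard sphere Hamiltonian flow), then over the two defining events of $\Upsilon_\e$, thereby reducing each to a static problem at time $\tau=0$. After this reduction, two independent estimates remain: a Gaussian tail bound for the velocity event and a Cayley/cluster-type count for the clustering event.

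For the velocity event, the velocity marginal of $\mathbb{P}_\e$ is a product of Maxwellians $M(v)$. A classical Gaussian tail estimate on $\tfrac{1}{2}\|V_\omega\|^2$ gives a bound $C_\gamma e^{-\mathbb{V}^2/4}$ uniformly in $|\omega|\leq\gamma$ (separating the integrand as $e^{-\mathbb{V}^2/4}\cdot e^{-\|V\|^2/4}$). A Markov first-moment bound summing over subsets produces the extra factor $\sum_{k\leq\gamma}\mathbb{E}_\e\!\left[\binom{\N}{k}\right]\leq C_\gamma\mu_\e^\gamma$, reproducing the second term in the statement once multiplied by the $t/\delta$ prefactor.

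For the clustering event, the key observation is that the existence of a distance cluster of size $>\gamma$ implies the existence of a connected subgraph on some subset $\omega$ of $\gamma+1$ particles in the $2\delta\mathbb{V}$-distance graph, and any such subgraph contains a spanning tree. Cayley's formula counts labeled trees on $\gamma+1$ vertices as $(\gamma+1)^{\gamma-1}$. Writing the indicator of ``cluster exists'' as a sum of indicators of spanning trees and applying Markov's inequality, I need to bound
\[\sum_{T}\mathbb{E}_\e\!\left[\sum_{(i_0,\ldots,i_\gamma)}\prod_{(a,b)\in E(T)}\ind\big\{|x_{i_a}-x_{i_b}|<2\delta\mathbb{V}\big\}\right].\]
Here the Penrose tree-graph inequality, which holds for the grand canonical hard sphere Gibbs measure, dominates the $(\gamma+1)$-point correlation function by the ideal-gas one, so the inner expectation is at most $\mu_\e^{\gamma+1}$ times the integral of the indicator against independent Maxwellians. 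Integrating positions along the tree root-to-leaf contributes $|\Lambda|\cdot(c_d(2\delta\mathbb{V})^d)^\gamma$, the Gaussians in $v$ integrate to one, and summing over the $(\gamma+1)^{\gamma-1}$ trees gives $C_\gamma\mu_\e(\mu_\e\delta^d\mathbb{V}^d)^\gamma$. Multiplication by the $t/\delta$ sampling factor reproduces the first term of the statement (the extra $\delta^{-1}$ is a harmless safety margin).

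The main obstacle I anticipate is invoking a clean, uniform-in-$\e$ bound on the $k$-point correlation functions of the hard sphere Gibbs measure: Penrose's tree-graph inequality (or equivalently the Ruelle cluster expansion at the relevant density) does the job for repulsive potentials but requires a careful setup. Once this input is admitted, the remaining ingredients — the Cayley count, the iterated ball integration along a tree, the Gaussian tail on velocities, and the invariance of $\mathbb{P}_\e$ — are all standard and combine directly to yield the stated estimate.
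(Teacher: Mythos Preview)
Your proposal is correct and follows essentially the same skeleton as the paper: union bound over the $t/\delta$ sampling times, invariance of $\mathbb{P}_\e$ to reduce to $\tau=0$, then a first-moment (Markov) bound on each of the two bad events separately.

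The only notable difference is in how you justify the two static estimates. For the correlation bound you flag Penrose/Ruelle as the ``main obstacle''; in fact for hard spheres this step is elementary: since $e^{-\mathcal{V}^\e_{n+p}(X_n,\underline X_p)}\leq e^{-\mathcal{V}^\e_p(\underline X_p)}$ pointwise, one gets directly
\[
\mathbb{E}_\e\!\left[\sum_{\ui_k}f(\gr Z_{\ui_k})\right]\leq \mu_\e^{k}\int f(Z_k)\,M^{\otimes k}(V_k)\,dZ_k
\]
for any nonnegative $f$, without any cluster expansion. For the geometric integration the paper likewise takes a shortcut: rather than decomposing into spanning trees and invoking Cayley, it just observes that in a connected $2\delta\mathbb{V}$-cluster of $\gamma+1$ points every particle lies within distance $\gamma\cdot 2\delta\mathbb{V}$ of $x_{\gamma+1}$, so the $\gamma$ remaining positions are confined to a single ball and integrate to $(\gr c_{\gr d}(\gamma\delta\mathbb{V})^d)^\gamma$. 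Your tree argument gives the same order with different constants, so both routes are fine; the paper's is simply shorter.
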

	\begin{proof}
		\begin{align*}
		\mathbb{P}_\e\Big(&\Upsilon_{\e}^c\Big)\leq \sum_{k=0}^{t/\delta}\mathbb{E}_\e\left[\sum_{(i_1,\cdots,i_{\gamma+1})}\ind_{\gr{X}_{\ui_{\gamma+1}}(k\delta)\text{\,form\,a\,distance\,cluster}}+\sum_{k'= 1}^\gamma\sum_{(i_1,\cdots,i_{k'})}\ind_{\|\gr{V}_{\ui_{k'}}(k\delta)\|\geq\mathbb{V}}\right]\\
		&\leq \frac{t}{\delta}\left(\mu_\e^{\gamma+1}\int\ind_{{X}_{\gamma+1}\text{\,form\,a\,distance\,cluster}}M^{\otimes (\gamma+1)}dZ_{\gamma+1}+\sum_{k'= 1}^\gamma\mu_\e^\gamma\int\ind_{\|V_k\|\geq\mathbb{V}}M^{\otimes (\gamma+1)}dZ_{\gamma+1}\right)\\
		&\leq \frac{t}{\delta}\left(\mu_\e^{\gamma+1}\left(\gr{c}_{\gr{d}}(\gamma\delta\mathbb{V})^d\right)^\gamma+\gamma 2^{\gamma/2}\mu_\e^\gamma e^{-\frac{\mathbb{V}^2}{2}}\right)
		\end{align*}
		where $\gr{c}_{\gr{d}}$ is the volume of sphere of diameter $\gamma$. We used that Gibbs measure is time invariant and that particles $X_\gamma$ have to be at distance less than $\gamma\delta\mathbb{V}$ of $x_{\gamma+1}$ in order to form a distance cluster.
	\end{proof}
	
	Thus for $\delta:= \e^{1-\frac{1}{2d}}$, $\mathbb{V}:=|\log\e|$ and $\gamma$ large enough, $\mathbb{P}_\e(\Upsilon_{\e}^c)$ is smaller than $\e^d$.
	
	\paragraph{The second conditioning is an asymmetric conditioning.}
	
	We look only at a finite number of particle $Z_r$. For fix pseudotrajectory parameters $((s_i,\bar{s}_i)_{1\leq i\leq n-1},(\kappa_j)_{1\leq j \leq n})$, the configuration $Z_r\in\mathcal{D}^r_\e$ form a \emph{collision cluster} if the collision graph of $\Zt_r(\tau,((s_i,\bar{s}_i)_{i},(\kappa_j)_{j}),Z_r)$ on the time interval $[0,\delta]$ is connected. We define \emph{local recollision} of $\Zt_r(\tau)$ as the first collision (in time order) which create a loop in the collision graph. 
	
	We define function $\chi_r:\mathcal{D}_\e^r \mapsto \{0,1\}$ the indicator function of:
	\[\left\{Z_r\in \mathcal{D}^r_\e,~\exists ((s_i,\bar{s}_i)_{1\leq i\leq n-1},(\kappa_j)_{1\leq j \leq n}),~\Zt_r(\tau)\,\text{form\,a\,collision\,cluster\,with\,local\,recollision}\right\}.\]
	
	We have the following $L^1$ bound on $\chi_r$:
	
	\begin{prop}
	There exists a positive constant $C_r$ depending only on the number of particle such that and some $a>0$ depending only on the dimension such that
	\begin{equation}\label{estimation recollision}
	\int_{\Lambda^r\times B_r(\mathbb{V})} \chi(Z_r)M^{\otimes r}(V_r)dV_rdX_{2,r}\leq C_r\mu_\e^{-r+1}\delta^2\left(\mu_\e \delta^d \mathbb{V}^d\right)^{r-3}\e^\alpha.
	\end{equation}
	\end{prop}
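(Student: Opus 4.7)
The plan is to decompose $\chi_r$ over all finitely many pseudotrajectory parameters and collision-graph topologies, extract a three-particle ``recollision core'' where the first cycle is created, and bound the remaining $r-3$ particles by the distance-cluster argument used in the previous proposition; the $\e^\alpha$ gain will come from a geometric recollision estimate applied only to the core.

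First I will note that, for $r$ fixed, the pseudotrajectory parameters $((s_i,\bar s_i),(\kappa_j))$ take only a number of values bounded by a constant $C_r$, and so do the possible collision graphs on $\{1,\dots,r\}$. By a union bound, it is enough to prove the estimate after fixing a specific parameter tuple together with a connected collision graph $G$ containing at least one cycle. I can then extract a spanning tree $T\subset G$ with $r-1$ edges and a cycle-closing edge $e^*=(i^*,j^*)$; writing $k^*$ for a particle lying on the unique $T$-path between $i^*$ and $j^*$ (or, in the case of a direct re-encounter, any $T$-neighbour of $i^*$), the set $\{i^*,j^*,k^*\}$ will play the role of the core, and the remaining $r-3$ particles are attached to it through the rest of $T$.

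Next I bound the core. Applying the classical hard-sphere change of variables to the two tree-edges incident to the core, parametrising each collision by its time in $[0,\delta]$ and its impact direction in $\mathbb{S}^{d-1}$ as in the derivation of the Boltzmann equation (see e.g.\ \cite{GTS,PSS}), produces a Jacobian factor of order $\mu_\e^{-1}\delta$ per edge, hence $\mu_\e^{-2}\delta^2$, with velocity integrals controlled by the Maxwellian under the velocity cutoff imposed by $\Upsilon_\e$. The key step will be the constraint coming from $e^*$: once the two scatterings prescribed by $T$ have been performed, the trajectories of $i^*$ and $j^*$ are determined up to the remaining free parameters (essentially the intermediate scattering angle and the velocity of $k^*$), and the requirement that they come within $\e$ again during $[0,\delta]$ defines a small set whose measure is bounded, by a classical geometric recollision estimate in dimension $d\geq 3$, by $\e^\alpha$ for some $\alpha\in(0,1)$ depending only on the dimension. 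Combining, the core contributes $\mu_\e^{-2}\delta^2\e^\alpha$.

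For the remaining $r-3$ external particles, connectivity of the collision graph together with the velocity bound $\mathbb{V}$ implies that each of them lies in the $2\delta\mathbb{V}$-distance cluster of the core at time $0$, so each position is confined to a ball of radius $O(\delta\mathbb{V})$ around an already-placed particle, contributing a factor $\delta^d\mathbb{V}^d$ (velocity integrals being bounded by $1$). Multiplying all contributions and rewriting $\mu_\e^{-2}(\delta^d\mathbb{V}^d)^{r-3}=\mu_\e^{-r+1}(\mu_\e\delta^d\mathbb{V}^d)^{r-3}$ yields the announced bound after absorbing the combinatorial factors (choice of parameter tuple, of $G$, of $T$, and of the core) into $C_r$. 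The hard part will be the geometric recollision estimate producing $\e^\alpha$, since it is precisely the non-trivial input that here replaces the use of the dispersing billiard bounds of \cite{BFK,BGSS1}: showing that after two intermediate scatterings the condition of a recollision within time $\delta$ defines a set of codimension $d-1$ in the scattering parameters with quantitative transversality is the main technical point, and must be done via elementary scattering arguments in order to bypass the billiard theory.
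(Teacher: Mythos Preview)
Your overall strategy matches the paper's, but there is one genuine gap: the assertion that the parameters $((s_i,\bar s_i),(\kappa_j))$ take only $C_r$ values is false as stated, since $\kappa_j\in\mathbb{N}$ is unbounded. The paper closes this by restricting to the trajectory up to the time $\ts$ of the \emph{first} local recollision: before $\ts$ the collision graph has no cycle, so each particle undergoes at most $r-1$ collisions, and one may replace $\kappa_j$ by $\kappa_j':=\kappa_j-\kappa_j(\ts)\le r-1$ without changing the trajectory on $[0,\ts]$. Only after this reduction is the union bound over parameters finite.

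There is also a difference in the choice of ``core''. Your three-particle set $\{i^*,j^*,k^*\}$ works when the cycle has length $3$, but for longer cycles the two tree edges you would integrate need not join these three particles directly, and the collision carrying the geometric gain (the \emph{tutor}, in the language of Section~\ref{Estimation of the long range recollisions.}) may involve a particle outside your core. The paper avoids this bookkeeping by taking the full connected component $\varpi$ of the collision graph containing the recolliding pair, applying the collision change of variables to all $|\varpi|-1$ tree edges to get $\mu_\e^{-|\varpi|+1}\delta^{|\varpi|-2}$, and invoking the recollision estimate (Propositions~\ref{prop: recollision}--\ref{prop: singularity}) to replace one $\delta$ by $\e|\log\e|$; particles in $\varpi^c$ are then handled by distance clustering exactly as you propose. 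Since $\e|\log\e|/\delta=O(\e^\alpha)$ this yields the stated bound. Finally, note that the recollision estimate you invoke is the standard one and does \emph{not} involve billiard theory; the bound of \cite{BFK} was used in \cite{BGSS1} for a different purpose (controlling the \emph{number} of collisions in a cluster), which the present paper bypasses via the conditioning $\mathcal{X}_{\ui_n}$, not via this proposition.
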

	
	\begin{proof}
		First if the pseudotrajectories $\Zt_r(\tau)$ form a collision cluster for some collision parameters, the initial position need to be close enough. Because the speed of each particles is globaly bounded by $\mathbb{V}$, there exists for any couple of particle $(i,i')$ a finite sequence $i=j_1,\,j_2,\cdots,j_k=i'$ two by two distinct with \[|x_{j_l}-x_{j_{l+1}}|\leq 2\mathbb{V}\delta.\] Thus the distance between two different particles of $Z_r$ is bounded by $2r\mathbb{V}\delta$. We need a more precise geometric conditioning.
		
		Let $Z_r\in \mathcal{D}_\e^r$ such that $\chi(Z_r)$ is non zero. Then there exists a set of pseudotrajectory parameters $((s_i,\bar{s}_i)_i,(\kappa_j)_j)$ such that the pseudotrajectory $\Zt_r(\tau,((s_i,\bar{s}_i)_i,(\kappa_j)_j))$ has a local recollision. We define $\ts$ the time of the first local recollision. We construct an other set of recollision parameter:
		\[\kappa'_j = \kappa_j(0)-\kappa_j(t).\]
		Then for any $\tau\in[0,\ts]$,
		\[\Zt_r(\tau,((s_i,\bar{s}_i)_i,(\kappa_j)_j)) = \Zt_r(\tau,((s_i,\bar{s}_i)_i,(\kappa'_j)_j))\]
		and for all $j$, $\kappa'_j(\ts)=0$. Secondly on $[0,\ts)$, the pseudotrajectory have no local recollision. Thus $\kappa'_j$ is lower than $r-1$. Indeed after $r$ collision a particles have meet an other one twice. 

		Let $\varpi\subset[1,r]$ be the connected component of the collision graph $\mathcal{G}^{[0,\ts]}_\e$ which contains the particles involve in the local recollision. Because particles in $\varpi$ do not interact with particles in $\varpi^c$, $\Zt_r(t,Z_r)$ restricted to particles $\varpi$ and to time interval $[0,\tau_{\text{stop}}]$ can be represented by a pseudotrajectory $Z_{|\varpi|}(\tau,((s''_i,\bar{s}''_i)_i,(\kappa_j'')_j),Z_\varpi)$ for some collision parameters $((s''_i,\bar{s}''_i)_i,(\kappa_j'')_j)$. Note that we can take the $\kappa_j''$ smaller than the maximum of the $\kappa'_j$ and thus smaller than $r-1$. This gives a more precise  constraints on  $Z_\varpi$: $\{Z_r,\,\chi_r(Z_r):=1\}$ is include in
		\[\bigcup_{\varpi\subset\{1,\cdots,r\}}\bigcup_{\substack{(s_i,\bar{s}_i)_{i\leq|\omega|}\\ (\kappa_j)\in[0,r-1]^{|\varpi|}}}\left\{Z_r \left|\begin{array}{c}\text{the~collision~graph~ of}\\\Zt_{|\varpi|}(\tau,((s_i,\bar{s}_i)_i,(\kappa_j)_j),Z_\varpi)\\\text{is connected}\end{array}\right. \right\}.\]
		Note this union is done on a finite set of parameters.
		
		We can now fix the pseudotrajectory's parameters. Usual estimation on the pseudotrajectories development gives that for some collision parameters (see estimation of Section \ref{Estimation of the long range recollisions.}),
		\[\begin{split}\int_{\Lambda^{|\varpi|}\times B_{|\varpi|}(\mathbb{V})}\ind_{\substack{\,\Zt_{|\varpi|}(\tau)\,\text{forms\,a\,cluster}\\ \text{with\,local\,recollision}}}e^{-\frac{1}{2}\|V_r\|^2}dV_\varpi dX_{\varpi\setminus\{\min \varpi\}}&\leq C_|\varpi| \mu_\e^{-|\varpi|+1}\delta^{|\varpi|-2}\e|\log\e|\\
		&\leq C_{|\varpi|} \mu_\e^{-|\varpi|+1}\delta^{2}\left(\mu_\e\delta^d\mathbb{V}^d\right)^{|\varpi|-3}\e^a\end{split}\]
		where we use that $|\varpi|\geq 2$ and that $\e|\log\e|/\delta = O(\e^\alpha)$ for some $\alpha$.
		
		Using the distance constraints on $Z_{\varpi^c}$ and summing on all possible parameters, we obtain the expected bound.
	\end{proof}
	
	Finally we denote $\mathcal{X}_{(i_1,\cdots,i_n)}:\{\gr{Z}_\N\in \mathcal{D}_\e,~\N\geq \max \ui_n\}\mapsto\{0,1\}$ the indicator of the set
	\[\left\{\gr{Z}_{\N}\in\mathcal{D}_\e\Big\vert \,\exists \varpi,\, \varpi\cap(i_1,\cdots,i_n)\neq\emptyset,\, \chi(\gr{Z}_\varpi)=1\right\}.\]
	Note that $\mathcal{X}_{\ui_n }$ depends on the background.
	
	We have:
	\begin{equation}
	\mathcal{X}_{(i_1,\cdots,i_n)}(\gr{Z}_{\N})~=~ 1~~~~ - \prod_{\substack{\varpi\subset\{1,\cdots,\N\\\varpi\cap\{i_1,\cdots,i_n\}\neq\emptyset}} \big(1-\chi(\gr{Z}_\varpi)\big).
	\end{equation}
%	
%	Let $h_m:\mathcal{D}_\e^n\rightarrow\mathbb{R}$ be a symmetric integrale function function. The function
%	\[\left\{\begin{array}{rl}\\[-5pt]
%	\mathcal{D}_\e&\rightarrow \mathbb{R}\\ [2pt] 
%	\gr{Z}_\N&\displaystyle\mapsto \sum_{(i_1,\cdots,i_n)} h_n(\gr{Z}_{\ui_n}(\tau))\mathcal{X}_{\ui_n}(\gr{Z}_{\N}(\tau-\delta))
%	\end{array}\right.\]
%	is measurable on $\{\N=N\}$ for every integer$N$, thus it is measurable on $\mathcal{D}_\e$. It is bounded by
%	\[\gr{Z}_\N\mapsto \sum_{(i_1,\cdots,i_n)} \big|h_n(\gr{Z}_{\ui}(\tau))\big|\]
%	which is integrable for probability $\mathbb{P}_\e$. Thus the sum is also integrable and have a rigorous sense.
	
	\paragraph{The two conditioning allow us to bound the number of recollision.} Let $Z_n\in\mathcal{D}^n_\e$ be an initial position such that there is no distance cluster of size bigger than $\gamma$ (first conditioning) and for any $\sigma\subset\{1,\cdots,n\}$, $\chi(Z_\sigma)=0$ (second conditioning). We fix collision parameters $((s_i,\bar{s}_i)_i,(\kappa_j)_j)$ such that pseudotrajectory $\Zt_n(t)$ has no recollision on $[\delta,t]$. Due to the symmetric conditioning a particle can only meet $\gamma-1$ different particles on $[0,\delta]$. Due to the asymetric conditioning there is no local recollision on $[0,\delta]$ and a particle has at most $\gamma-1$ collision on $[0,\delta]$. Finally there are at most $\gamma-1$ recollisions by particles.

	Thus any pseudotrajectories of this type can be parameterized by collision parameters \[((s_i,\bar{s}_i)_{1\leq i\leq n-1}(\kappa_j)_{1\leq j\leq n})\in\{\pm1\}^{2(n-1)}\times[0,\gamma-1]^n.\] 
	
	\subsection{Sampling}
	%In the following we denote $\Phi_{m\leftarrow n}$ rather than $\Phi_{m\leftarrow n}^\delta[h_m]$
	Using the two conditioning of the previous  part, for $\gr{Z}_{\N}\in \Upsilon_{\e}\cap\left\{\mathcal{X}_{\ui_m}\left(\gr{Z}_\N\right)=0\right\}$ we have
	\begin{equation}
	h_m\left((\gr{Z}_{\ui_m}(\tau)\right) = \sum_{n\geq m} \sum_{(i_{m+1},\cdots,i_{n})} \Phi_{m\leftarrow n}^{\gamma,\tau}[h_m](\gr{Z}_{\ui_n}(0))
	\end{equation}
	where
	\begin{equation}
	\Phi_{m\leftarrow n}^{\gamma,\tau}[h_m](Z_n) :=\frac{1}{(n-m)!} \sum_{\substack{(s_i,\bar{s}_i)_{i\leq n-m}\\(\kappa_j)_j\in[1,\gamma-1]^n}} \prod_{i= 1}^{n-m} \bar{s}_i \ind_{\mathcal{R}_{((s_i,\bar{s}_i),(\kappa_j))}^{m\leftarrow n,\gamma,\tau}}h_m(\Zt_n(\tau)),
	\end{equation}
	and ${\mathcal{R}_{((s_i,\bar{s}_i),(\kappa_j))}^{m\leftarrow n,\gamma,\tau}}\subset{\mathcal{R}_{((s_i,\bar{s}_i),(\kappa_j))}^{m\leftarrow n,\tau}}$ with no local recollision.
	
	We can then do the following decomposition on $\Upsilon_{\e}$
	\begin{equation}
	\begin{split}
	\sum_{\ui_m}h_m\left(\gr{Z}_{\ui_m}(t)\right)=&\sum_{\ui_m}h_m(\gr{Z}_{\ui_m}(t))\mathcal{X}_{\ui_m}(\gr{Z}_{\N}(t-\delta))+\sum_{\ui_m}h_m(\gr{Z}_{\ui_m}(t))\big(1-\mathcal{X}_{\ui_m}(\gr{Z}_{\N}(t-\delta))\big)\\
	=&\sum_{\ui_m}h_m(\gr{Z}_{\ui_m}(t))\mathcal{X}_{\ui_m}(\gr{Z}_{\N}(t-\delta))\\
	&+\sum_{n\geq m} \sum_{\ui_n}\Phi_{m\leftarrow n}^{\gamma,\tau}[h_m](\gr{Z}_{\underline{i}_n}(t-\delta))\big(1-\mathcal{X}_{\ui_m}(\gr{Z}_{\N}(t-\delta))\big)
	\end{split}
	\end{equation}
	
	Let $\Phi_{m, n}^{\gamma,\tau}[h_m]$ be the symmetrization  $\Phi_{m\leftarrow n}^{\gamma,\tau}[h_m]$:
	\[\Phi_{m, n}^{\gamma,\tau}[h_m](Z_n):=\frac{1}{n!}\sum_{\sigma\in\mathfrak{S}_n}\Phi_{m\leftarrow n}^\gamma[h_m](z_{\sigma(1)},\cdots,z_{\sigma(n)}).\]
	There is a more explicit formula for. We define $\mathcal{R}_{((s_i,\bar{s}_i),(\kappa_j))}^{m, n,\gamma,\tau} \subset \mathcal{D}^n_\e$ as the set of initial data such that pseudotrajectories $\Zt_n(\cdot)$ has $m$ remaining particles at time $\tau$, $\kappa_j(\tau)=0$ for all $j$ and no local recollision. Then 
	\begin{equation}
	\Phi_{m,n}^{\gamma,\tau}[h_m](Z_n) := \frac{1}{n!}\sum_{\substack{(s_i,\bar{s}_i)_{i\leq n-m}\\(\kappa_j)_j,\in[0,\gamma-1]}} \prod_{i= 1}^{n-m} \bar{s}_i \ind_{\mathcal{R}_{((s_i,\bar{s}_i),(\kappa_j))}^{m, n,\gamma,\tau}}h_m(\Zt_n(\tau)).
	\end{equation}
	
	Finally we want to separate pseudotrajectories without recollision. We define the development along pseudotrajectories without recollision
	\begin{equation}
	\Phi^{0,\tau}_{m, n}[h_m](Z_n) :=\frac{1}{n!} \sum_{(s_i,\bar{s}_i)_{ i\leq n-m}} \prod_{i= 1}^{n-m} \bar{s}_i \ind_{\mathcal{R}_{(s_i,\bar{s}_i)}^{m, n,\tau}}h_m(\Zt_n(\tau)),
	\end{equation}
	where $\mathcal{R}_{(s_i,\bar{s}_i)}^{m, n,\tau}\subset\mathcal{R}_{(s_i,\bar{s}_i),(0)_j}^{m, n,\gamma,\tau}$ such that the pseudotrajectories has no recollision and the part with development along pseudotrajectories with non pathological recollision
	\begin{equation}
	\Phi_{m,n}^{>,\tau}[h_m] := \Phi_{m,n}^{\gamma,\tau}[h_m]-\Phi_{m,n}^{0,\tau}[h_m].
	\end{equation}

	We bring together all these decomposition and obtain on $\Upsilon_\e$,
	\begin{equation}
	\begin{split}
	\sum_{\ui_m}h_m\left(\gr{Z}_{\ui}(\tau)\right)=&\sum_{n\geq m} \sum_{\ui_n}\Phi^{0,\delta}_{m,n}[h_m](\gr{Z}_{\underline{i}}(\tau-\delta))+\sum_{n\geq m} \sum_{\ui_n}\Phi^{>,\delta}_{m,n}[h_m](\gr{Z}_{\underline{i}_n}(\tau-\delta))\\
	&+\sum_{\ui_m}h_m(\gr{Z}_{\ui}(\tau))\mathcal{X}_{\ui_m}(\gr{Z}_{\N}(\tau-\delta))\\
	&-\sum_{n\geq m}\sum_{\ui_n}\Phi^{\gamma,\delta}_{m\leftarrow n}[h_m](\gr{Z}_{\ui_n}(\tau-\delta))\mathcal{X}_{\ui_m}(\gr{Z}_{\N}(\tau-\delta)).
	\end{split}
	\end{equation}
	
	The first term is the expansion with respect to pseudotrajectories with no recollision. It is the main part of the sum. The rest takes into accounts the recollision of the dynamics. 
	
	We iterate this decomposition:
	\begin{equation*}
	\begin{split}
	\sum_{\ui_m}h_m\left(\gr{Z}_{\ui}(\tau)\right)&=\sum_{n\geq m} \sum_{\ui_n}\Phi^{0,\tau}_{m,n}[h_m](\gr{Z}_{\underline{i}}(0))+\sum_{k=0}^{\tau/\delta}\sum_{n\geq m} \sum_{\ui_n}\Phi^{>,k\delta}_{m,n}[h_m](\gr{Z}_{\underline{i}_n}(\tau-k\delta))\zeta_\e^0(g)\ind_{\Upsilon_\e}\\
	&+\sum_{n\geq m}\sum_{\ui_n}\Phi^{0,(k-1)\delta}_{m,n}[h_m](\gr{Z}_{\ui_n}(\tau-(k-1)\delta))\mathcal{X}_{\ui}(\gr{Z}_{\N}(\tau-k\delta))\\
	&-\sum_{n'\geq n\geq m}\sum_{\ui_{n'}}\Phi^{\gamma,\delta}_{n\leftarrow n'}\Big[\Phi_{m,n}^{0,(k-1)\delta}[h_m]\Big](\gr{Z}_{\ui_{n'}}(\tau-k\delta))\mathcal{X}_{\ui_m}(\gr{Z}_{\N}(\tau-k\delta)).
	\end{split}
	\end{equation*}
	
	The final ingredient is to make a second sampling on longer time $\theta \sim |\log\e|^{-1}$ in order to control the growth of the number of collision in during time. We denote $K:=t/\theta\in\mathbb{N}$ and $K':= \theta/\delta\in\mathbb{N}$.
	
	We obtain the following decomposition	
	\begin{equation}
		\mathbb{E}_\e\left[\zeta_\e^t(h)\zeta_\e^0(g)\right]= G_\e^{\text{main}}(t) + G_\e^{\text{clust}}(t) + G_\e^{\text{exp}}(t)+G_\e^{\text{rec},1}(t)+G_\e^{\text{rec},2}(t)
	\end{equation}
	with  $G^{\text{main}}_\e(t)$ the main part:
	\begin{equation}
	G_\e^{\text{main}}(t) := \sum_{\substack{\underline{n}:=(n_j)_{j\leq K}\\0<n_j-n_{j-1}\leq 2^j}} \mathbb{E}_\e\left[\mu_\e^{-1/2}\sum_{\ui_{n_K}}\Phi^0_{\underline{n}}[h]\left(\gr{Z}_{\underline{i}_{n_K}}(0)\right)\zeta^0_\e(g)\right]
	\end{equation}
	where \[\Phi^0_{\underline{n}}[h]:=\Phi^{0,\theta}_{n_{K-1},n_K}\circ\Phi^{0,\theta}_{n_{K-2},n_{K-1}}\cdots\circ\Phi^{0,\theta}_{1,n_{1}}[h]\] is the development of $h$ along pseudotrajectories tree with $n_k$ creation on the time interval $[t-(k-1)\theta,t-k\theta]$ and no recollision, denoting $\underline{n}=(n_1,\cdots,n_K)$,
	\begin{equation}
	G_\e^{\text{clust}}(t) := \mathbb{E}_\e \left[\zeta_\e^t(h) \zeta_\e^0(g) \ind_{\Upsilon^c_\e}\right]- \sum_{\substack{n_1\leq\cdots\leq n_K\\n_j-n_{j-1}\leq 2^j}} \mathbb{E}_\e\left[\mu_\e^{-1/2}\sum_{\ui_{n_K}} \Phi^0_{\underline{n}}[h]\left(\gr{Z}_{\underline{i}_{n_K}}(0)\right)\zeta^0_\e(g)\ind_{\Upsilon^c_\e}\right]
	\end{equation}
	corresponding to the symmetric conditioning,
	\begin{equation}
	G_\e^{\text{exp}}(t) := \sum_{k=1}^K \sum_{\substack{n_1\leq\cdots\leq n_{k-1}\\n_j-n_{j-1}\leq 2^j}}\sum_{n_k>2^k+n_{k-1}} \mathbb{E}_\e\left[\mu_\e^{-1/2}\sum_{\ui_{n_k}} \Phi^0_{\underline{n}}[h]\left(\gr{Z}_{\underline{i}_{n_k}}(t-k\theta))\right)\zeta^0_\e(g)\ind_{\Upsilon_\e}\right]
	\end{equation}
	corresponding with tree with superexponential growth, and the part of (non local) recollision:
	\begin{equation}
	\begin{split}
	&G_\e^{\text{rec},1}(t):=\sum_{\substack{1\leq k\leq K-1\\1\leq k'\leq K'}} \sum_{\substack{n_1\leq\cdots\leq n_k\\n_j-n_{j-1}\leq 2^j}} \sum_{\substack{n''\geq n'\geq n_k}} \mathbb{E}_\e\left[\mu_\e^{-1/2}\sum_{\ui_{n''}}\Phi_{\underline{n},n',n''}^{>,k'}[h]\left(\gr{Z}_{\ui}(t_s)\right)\,\zeta^0_\e(g)\ind_{\Upsilon_\e}\right]
	\end{split}
	\end{equation}
	with \[\Phi_{\underline{n},n',n''}^{>,k'}[h]:=\Phi_{n',n''}^{>,\delta}\circ\Phi_{n_k,n'}^{0,k'\delta}\circ\Phi^0_{\underline{n}}[h],\] and the part of pathological pseudotrajectories $G_\e^{\text{rec},2}(t)$:
	\begin{equation}
	\begin{split}
	\sum_{\substack{1\leq k\leq K-1\\1\leq k'\leq K'}} &\sum_{\substack{n_1\leq\cdots\leq n_k\\n_j-n_{j-1}\leq 2^j}}\left(\sum_{\substack{n'\geq n_k}} \mathbb{E}_\e\left[\mu_\e^{-1/2}\sum_{(i_1,\cdots,i_{n'})}\Phi_{\underline{n},n'}^{0,k'}[h]\left(\gr{Z}_{\ui}(t_s+\delta)\right)\mathcal{X}_{\ui}\big(\gr{Z}_{\N}(t_s)\big)\,\zeta^0_\e(g)\ind_{\Upsilon_\e}\right]\right.\\
	&-\left.\sum_{n''\geq n'\geq n_k} \mathbb{E}_\e\left[\mu_\e^{-1/2}\sum_{(i_1,\cdots,i_{n''})}\Phi_{n'\leftarrow n''}^{\gamma}\left[\Phi_{\underline{n},n'}^{0,k'}\left[h\right]\right]\left(\gr{Z}_{\ui}(t_s)\right)\mathcal{X}_{\ui_{n'}}\big(\gr{Z}_{\N}(t_s)\big)\,\zeta^0_\e(g)\ind_{\Upsilon_\e}\right]\right).
	\end{split}
	\end{equation}
	In the last two terms, we denote $t_s:=t-(k-1)\theta-k'\delta$ (for \emph{stopping time}) and
	\begin{itemize}
		\item $\Phi_{\underline{n},n'}^{0	,k'}[h]:=\Phi_{n_k,n'}^{0,k'\delta}\circ\Phi^0_{\underline{n}}[h],$ the tree development with no recollision, $n'$ annihilations on $[0,(k'-1)\delta]$ and for $j<k$, $n_j$ annihilations on $[(k'-1)\delta+(k-j)\theta,(k'-1)\delta+(k-j+1)\theta]$,
		\item $\Phi_{\underline{n},n',n''}^{>,k'}[h]:=\Phi_{n',n''}^{>,\delta}\circ\Phi_{\underline{n},n'}^{0	,k'}[h],$  the tree development with no recollision on $[\delta,k'\delta+k\theta]$, $n''$ annihilations on $[0,\delta]$, $n'$ annihilations on $[\delta,k'\delta]$ and for $j<k$ and $n_j$ annihilations on $[k'\delta+(k-j)\theta,k'\delta+(k-j-1)\theta]$, and with the least one recollision.
		\end{itemize}
	In addition thanks to the conditioning, every pseudotrajectories have at most $\gamma$ recollision by particles.
	
	The convergence of $G_\e^{\text{main}}$ is treated in section \ref{Treatment of the main part}. The bound $G_\e^{\text{clust}}$, $G_\e^{\text{exp}}$ and $G_\e^{\text{rec},1}$ have already be done in the original paper. The section \ref{$L^2$ estimation of the local recollision part} is dedicated to the 
	
	\section{Quasi-orthogonality estimates}\label{Quasi-orthogonality estimates}

	The different error terms are of the form
	\begin{equation*}
	\mathbb{E}_\e\left[\sum_{\ui_n}\Phi_{n}[h](\gr{Z}_{\ui_n}(t_{\text{stop}}))\zeta_\e^0(g)\ind_{\Upsilon_\e}\right]
	\end{equation*}
	with $\Phi_n$ sum continuous functional $L^\infty(\mathbb{D})\to L^\infty(\mathbb{D}^n)$. In order to bound it we will need an $L^2(\mathbb{P}_\e)$ bound on $\sum_{\ui_n}\Phi_{n}[h](\gr{Z}_{\ui_n})$. Such bound is derived in the following section from estimation on the $\Phi_n[h]$. We use in particular that we can bounding the $\Phi_n[h](Z_n)$ by looking only at the relative positions of particles inside $Z_n$.
	
	In the following we denote for $y\in\Lambda$
	\begin{equation}
	\tau_a:\left\{\begin{array}{r@{~}c@{~}l} \mathbb{D}^n~~&\rightarrow&~~\mathbb{D}^n\\
	(X_n,V_n)&\mapsto& (x_1+a,\cdots,x_n+a,V_n).
	\end{array}\right.
	\end{equation} 
	\begin{theorem}\label{theoreme de quasi orthogonalite}
		Fix $m<n$ two positive integers, and $g_n$, $h_{m}$ two functions on $\mathbb{D}^n$ and $\mathbb{D}^{m}$ such that there exists a finite sequence $(c_0,c_0',c_1,\cdots,c_n)\in\mathbb{R}^{n+1}_+$  bounding $g_n,\,h_{m}$ in the following way:
		\begin{equation}\label{borne sur g_n}
		\int\limits_{x_1=0}\sup_{y\in\Lambda}\big|g_n\big(\tau_yZ_n\big)\big|M^{\otimes n}(V_n)dX_{2,n}dV_n\leq c_0,
		\end{equation}
		\begin{equation}\label{borne sur h_m}
		\int\limits_{x_1=0}\sup_{y\in\Lambda}\big|h_{m}\big(\tau_yZ_{m}\big)\big|M^{\otimes m}(V_m)dX_{2,m}dV_{m}\leq c'_0
		\end{equation}
		and for all $l \in [1,m]$
		\begin{equation}
		\begin{split}
		\int\limits_{x_1=0}\sup_{y\in\Lambda} \big|g_n\big(\tau_yZ_n\big)h_{m}\big(\tau_yZ_{n+1-l,n+m-l}\big)\big|M^ {\otimes (n+m-l)}(V_{n+m-l})dX_{2,n+m-l}dV_{n+m-l}\\
		\leq \frac{\mu_\e^{l-1}}{n^l}c_l.
		\end{split}
		\end{equation}
		There exits a constant $C>0$ depending only on dimension such that
		\begin{equation}\label{borne espérence}
		\big|\mathbb{E}_\e\big[g_n\big]\big|\leq C^n c_0,~\big|\mathbb{E}_\e\big[h_{m}\big]\big|\leq C^{m} c'_0
		\end{equation}
		and denoting 
		\begin{equation}
		g_n\circledast_lh_{m}(Z_{n+m-l})=\frac{1}{(n+m-l)!}\sum_{\sigma\in\mathfrak{S}_{n+m-l}} g_n(Z_{\sigma([1,n])})h_{m}(Z_{\sigma([n+1-l,n+m-l])}),
		\end{equation}
		\begin{equation}\label{quasi covariance}\begin{split}
		\mathbb{E}_\e\Big[\mu_\e\hat{g}_n\hat{h}_{m}\Big]=\sum_{l=1}^{m}\binom{n}{l}\binom{m}{l}\frac{l!}{\mu_\e^{l-1}}\mathbb{E}_\e\big[g_n\circledast_lh_{m}\big]	+\,O\big(C^{n+m} c_0c'_0 \e\,\big).
		\end{split}\end{equation}

		In particular
		\begin{equation}\label{borne variance}
		|\mathbb{E}_\e\big[\mu_\e\hat{g}_n\hat{h}_{m}\big]\leq C^{n+m} \sum_{l=1}^{m}c_l+C^nc_0c_0'\,\e.
		\end{equation}
	\end{theorem}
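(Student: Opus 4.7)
The plan is to reduce everything to grand canonical correlation functions and their Mayer cluster expansion at low density. For any symmetric function $H_p:\mathbb{D}^p\to\mathbb{R}$ one has $\mathbb{E}_\e[H_p]=\mu_\e^{-p}\int H_p(Z_p)F_p^\e(Z_p)\,dZ_p$, where $F_p^\e$ is the $p$-point correlation function. A Penrose/Ruelle tree inequality yields the uniform bound $F_p^\e(Z_p)\leq (C\mu_\e)^p M^{\otimes p}(V_p)$, valid thanks to the Boltzmann--Grad scaling $\mu_\e\e^{d-1}=1$. Combining this with translation invariance on $\Lambda$ and hypothesis~\eqref{borne sur g_n}, I first integrate out $x_1$ to obtain $|\mathbb{E}_\e[g_n]|\leq C^n c_0$; the bound on $h_m$ from~\eqref{borne sur h_m} is identical.

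For the covariance formula~\eqref{quasi covariance}, the strategy is to expand
\begin{equation*}
\mu_\e\mathbb{E}_\e\bigl[\hat g_n\hat h_m\bigr] = \mu_\e^{1-n-m}\mathbb{E}_\e\!\Biggl[\sum_{\ui_n,\underline j_m}g_n(\gr Z_{\ui_n})h_m(\gr Z_{\underline j_m})\Biggr] - \mu_\e\,\mathbb{E}_\e[g_n]\mathbb{E}_\e[h_m],
\end{equation*}
and split the double sum according to the set overlap $l=|\{\ui_n\}\cap\{\underline j_m\}|\in\{0,\ldots,m\}$. Exchangeability of the Gibbs measure implies that the expectation of a single pair with overlap $l$ depends only on $l$, not on which positions coincide. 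Enumerating those positions produces the combinatorial factor $\binom{n}{l}\binom{m}{l}l!$, and averaging over the resulting permutations reconstructs the symmetrized product $g_n\circledast_l h_m$. The $l\geq 1$ contributions therefore yield exactly the sum displayed on the right-hand side of~\eqref{quasi covariance}.

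It remains to analyse the $l=0$ piece restricted to disjoint index tuples and compare it with the centering. Rewritten via correlation functions, the discrepancy reduces to
\begin{equation*}
R=\mu_\e^{1-n-m}\!\int g_n(Z_n)h_m(Z_m')\bigl[F_{n+m}^\e(Z_n,Z_m')-F_n^\e(Z_n)F_m^\e(Z_m')\bigr]dZ_n\,dZ_m'.
\end{equation*}
The Mayer--Ursell expansion shows that the bracket vanishes in the ideal gas case, and in the hard-sphere setting is generated by at least one exclusion bridge connecting a particle of $Z_n$ to one of $Z_m'$, each such bridge contributing an excluded volume of order $\e^d$. After integration against Maxwellians, a single bridge costs $\mu_\e\e^d=\e$; standard Penrose-type bounds on the truncated correlations absorb the remaining combinatorics and give $|R|\leq C^{n+m}c_0c_0'\,\e$, the announced remainder.

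The variance estimate~\eqref{borne variance} is then immediate from~\eqref{quasi covariance}: the hypothesis on the symmetrized product yields $|\mathbb{E}_\e[g_n\circledast_l h_m]|\leq C^{n+m-l}\mu_\e^{l-1}n^{-l}c_l$, and the combinatorial prefactor $\binom{n}{l}\binom{m}{l}l!\mu_\e^{1-l}$ is absorbed using $\binom{n}{l}l!\leq n^l$ and $\binom{m}{l}\leq 2^m$. The main technical obstacle is the quantitative factorization for $l=0$: producing the $O(\e)$ error requires uniform-in-$\e$ control of truncated correlation functions of arbitrary order $n+m$, which hinges on absolute convergence of the Mayer series in the low-density regime $\mu_\e\e^d=\e$. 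Everything else is accounting of combinatorial factors and Gaussian integration.
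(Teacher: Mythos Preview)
Your proposal is correct and follows essentially the same route as the paper's proof: both split the covariance according to the overlap size $l$, identify the $l\geq 1$ contributions with the symmetrized products $g_n\circledast_l h_m$, and control the $l=0$ remainder via a cluster/Mayer expansion in which at least one exclusion link must connect the $Z_n$ and $Z_m'$ blocks, yielding the $O(\e)$ factor through the Penrose tree inequality. The only cosmetic difference is that you phrase things in terms of correlation functions $F_p^\e$ and truncated correlations, whereas the paper works directly with the partition function decomposition $e^{-\mathcal{V}^\e_{n+p}}=\sum_{\omega}e^{-\mathcal{V}^\e_{|\omega^c|}}\psi_p^n(X_n,\underline X_\omega)$; these are equivalent formulations of the same expansion.
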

	
	\begin{proof}[Proof of Theorem \ref{theoreme de quasi orthogonalite}]~
	
		$\bullet~~$We begin by the proof of \eqref{borne espérence}
		
		Using invariance under permutation, 
		\[\begin{split}
		\mathbb{E}_\e[g_n] &\frac{1}{\mu_\e^n\mathcal{Z}_\e}=\sum_{p\geq n}\frac{\mu_\e^p}{p!}\int \sum_{\substack{(i_1,\cdots i_n)\\\forall k, i_k\leq p}}g_n(Z_n)e^{-\mathcal{H}^\e_p(Z_p)}\frac{dZ_p}{(2\pi)^{dp/2}}\\
		&= \frac{1}{\mu_\e^n\mathcal{Z}_\e}\sum_{p\geq n}\frac{\mu_\e^p}{p!}\frac{p!}{(n-p!)}\int g_n(Z_n)e^{-\mathcal{H}^\e_p(Z_p)}\frac{dZ_p}{(2\pi)^{dp/2}}\\
		&= \frac{1}{\mathcal{Z}_\e}\sum_{p\geq 0}\frac{\mu_\e^p}{p!}\int g_n(Z_n)e^{-\mathcal{V}^\e_{n+p}(X_n,\underline{X}_p)}M^{\otimes n}dZ_nd\underline{X}_p.
		\end{split}\]
	
		We denote in the following $\Omega:=\{X_n,\underline{x}_1,\cdots,\underline{x}_p\}$ and for $X,Y\in\Omega$, 
		\[\varphi(X,Y):= -\ind_{d(X,Y)\leq\e}\]
		and we decompose $\exp\left({-\mathcal{V}_{n+1+p}^\e(X_{n+1},\underline{X}_p)}\right)$
		\[\begin{split}
		e^{-\mathcal{V}_{n+1+p}^\e(X_{n+1},\underline{X}_p)}
		&= e^{-\mathcal{V}_{n}^\e(X_{n})}\prod_{\substack{(X,Y)\in\Omega^2\\X\neq Y}}\left(1+\varphi(X,Y)\right)=e^{-\mathcal{V}_{n}^\e(X_{n})}\sum_{G\in \mathcal{G}(\Omega)}\prod_{(X,Y)\in E(G)}\varphi(X,Y)
		\end{split}\]
		where $\mathcal{G}$ is the set of non orientated graph on $\Omega$ and $E(G)$ the set of edges of $G$. We make the partition on the connected components of $X_n$. Denoting $\mathcal{C}(\omega)$ the set of connected graph,
		\begin{equation}\label{decomposition e^V_n+p}\begin{split}
		&\exp\left(-\mathcal{V}_{n+1+p}^\e(X_{n+1},\underline{X}_p)\right)\\
		&=\sum_{\substack{\omega\subset[1,p]}}\Bigg(e^{-\mathcal{V}_{n}^\e(X_{n})}\sum_{\substack{G\in \mathcal{C}(\omega\cup \{X_n\})}}\prod_{(X,Y)\in E(G)}\varphi(X,Y)\sum_{G\in \mathcal{G}(\omega^c)}\prod_{(X,Y)\in E(G)}\varphi(X,Y)\Bigg)\\
		&=\sum_{\substack{\omega\subset[1,p]}}\Bigg(e^{-\mathcal{V}_{n}^\e(X_{n})-\mathcal{V}_{|\omega^c|}^\e(\underline{X}_{\omega^c})}\sum_{\substack{G\in \mathcal{C}(\omega\cup\{X_n\})}}\prod_{(X,Y)\in E(G)}\varphi(X,Y)\Bigg)\\
		&=:\sum_{\substack{\omega\subset[1,p]}}e^{-\mathcal{V}_{|\omega^c|}^\e(\underline{X}_{\omega^c})}\psi_p^n(X_n,\underline{X}_\omega)
		\end{split}\end{equation}
		
		Thus using exchangeability,
		\begin{equation}\label{dévelopement de l'esperence}
		\begin{split}
		\mathbb{E}_\e[g_n] &= \frac{1}{\mathcal{Z}_\e}\sum_{p\geq 0}\sum_{p_1+p_2=p}\frac{\mu_\e^p}{p!}\frac{p!}{p_1!p_2!}\int g_n(Z_n)\psi^n_{p_1}(X_n,\underline{X}_{p_1})e^{-\mathcal{V}^\e_{p_2}(\underline{X}'_{p_2})}M^{\otimes n}dZ_nd\underline{X}_{p_1}d\underline{X}_{p_2}'\\
		&=\left(\frac{1}{\mathcal{Z}_\e}\sum_{p\geq0}\frac{\mu_\e^p}{p!}\int e^{-\mathcal{V}^\e_p(\underline{X}_p)}d\underline{X}_p\right)\left(\sum_{p\geq 0}\frac{\mu_\e^p}{p!}\int g_n(Z_n)\psi^n_{p}(X_n,\underline{X}_{p})M^{\otimes n}\frac{dZ_nd\underline{X}_{p}}{(2\pi)^{dn/2}}\right)\\
		&=\sum_{p\geq 0}\frac{\mu_\e^p}{p!}\int g_n(Z_n)\psi^n_{p}(X_n,\underline{X}_{p})M^{\otimes n}dZ_nd\underline{X}_{p}.
		\end{split}
		\end{equation}
		
		We recall Penrose tree inequality (see \cite{Penrose,BGSS, Jansen}),
		\begin{equation}
		\left|\sum_{C\in\mathcal{C}(\Omega)}\prod_{(X,Y)\in E(C)}\varphi(X,Y)\right|\leq \sum_{T\in\mathcal{T}(\Omega)}\prod_{(X,Y)\in E(T)}|\varphi(X,Y)|
		\end{equation}
		with $\mathcal{T}(\Omega)$ the set of trees (minimally connected graph) on $\Omega$. Fix  $\tau_{-x_1}{X_n}$ (the relative position between particles)for the moment. Integrating a constraints $\varphi(\underline{x}_i,\underline{x}_j)$ provides a factor $\gr{c}_d\e^d$, $\varphi(X_n,\underline{x}_j)$ a factor $n\gr{c}_d\e^d$ (where $\gr{c}_d$ is the volume of a sphere of diameter $1$). As there are
		\[\frac{(p-1)!}{(d_0-1)!(d_1-1)!\cdots(d_{p}-1)!}\]
		trees with specified vertex degrees $d_0,\cdots,d_{p}$ associated to vertices $X_n,\,\underline{x}_1,\cdots,\,\underline{x}_p$ (see \cite{Jansen,BGSS}), we get
		\begin{equation}\label{Borne de l'integrale de psi np}\begin{split}
		\bigg|\int \psi^n_p(&X_n\underline{X}_p)d\underline{X}_p dx_1\bigg|\leq \sum_{\substack{d_1,\cdots,d_{p}\geq 1\\d_0+\cdots+d_{p}=2p}}\frac{(p-1)!}{(d_0-1)!(d_1-1)!\cdots(d_{p}-1)!} n^{d_0}(\gr{c}_{\gr{d}}\e^d)^p\\
		&\leq (p-1)!(\gr{c}_{\gr{d}}\e^d)^p\left(\sum_{d_0\geq 1}\frac{n^{d_0}}{(d_0-1)!}\right)\left(\sum_{d_1\geq 1}\frac{1}{(d_1-1)!}\right)\cdots\left(\sum_{d_{p}\geq 1}\frac{1}{(d_p-1)!}\right)\\
		&\leq (p-1)!\big(\gr{c}_{\gr{d}}\e^d\big)^pne^{n+p}.
		\end{split}\end{equation}
		We can integrate on the rest of parameters using \eqref{borne sur g_n}. Hence		
		\[|\mathbb{E}_\e[g_n]|\leq\sum_{p\geq 0}\frac{(p-1)!\big(\gr{c}_{\gr{d}}e\e^d\big)^pne^n}{p!}\int |g_n(Z_n)|e^{-\frac{\|V_n\|^2}{2}}\frac{dZ_n}{(2\pi)^{dn/2}}\leq c_0\sum_{p\geq0} C^n(C\e)^p\]
		which converges for $\e$ small enough. This concludes the proof of \ref{borne espérence}.~~\\
		
		$\bullet~~$We treat now \eqref{quasi covariance}. Note first that
		\[\mathbb{E}_\e\Big[\mu_\e\hat{g}_n\hat{h}_{m}\Big] = \frac{1}{\mu_\e^{n+m-1}}\mathbb{E}_\e\left[\sum_{\ui_n}g_n(\gr{Z}_{\ui_n})\sum_{\underline{j}_{m}}h_{m}(\gr{Z}_{\underline{j}_{m}})\right]-\mu_\e\mathbb{E}_\e\left[g_n\right]\mathbb{E}_\e\left[h_{m}\right].\]
		Lets count the number of way such that $\ui_n$ and $\underline{j}_{m}$ can intersect on a set of length $l$. We have to choose two set $A\subset[n]$ and $A'\subset[m]$ of length $l$, and a bijection $\sigma:A\to A'$ such that for all indices $k\in A$, $i_k=j_{\sigma{k}}$ and  that $\ui_{A^c}$ does not interesect $\underline{j}_{(A^c)'}$. Thus using the symmetry, 
		
		\[\begin{split}\mathbb{E}_\e\Big[\mu_\e\hat{g}_n\hat{h}_{m}\Big] = &\sum_{l=1}^{m}\binom{n}{l}\binom{m}{l}\frac{l!}{\mu_\e^{l-1}}\mathbb{E}_\e\big[g_n\circledast_lh_{m}\big]\\
		&+\mu_\e\left(\mathbb{E}_\e\left[\frac{1}{\mu_\e^{n+m}}\sum_{\underline{i}_{n+m}}g_n(\gr{Z}_{\ui_{n}})h_{m}(\gr{Z}_{\ui_{n+1,n+m}})\right]-\mathbb{E}_\e\left[g_n\right]\mathbb{E}_\e\left[g\right]\right).
		\end{split}\]
		
		We have to estimates the error term.
		\[\begin{split}
		\mathbb{E}_\e\Bigg[&\frac{1}{\mu_\e^{n+m}}\sum_{\underline{i}_{n+m}}g_n(\gr{Z}_{\ui_{n}})h_{m}(\gr{Z}_{\ui_{n+1,n+m}})\Bigg]\\
		&=\frac{1}{\mathcal{Z}_\e}\sum_{p\geq 0} \frac{\mu_\e^p}{p!}\int g_n(Z_n)h_{m}(Z'_{m})\exp\left(-\mathcal{V}^\e_{n+m+p}(X_{n},X'_{m},\underline{X}_p)\right)M^{\otimes n}dZ_{n}M^{\otimes m}dZ'_{m}d\underline{X}_p.
		\end{split}\]
		
		We denote in the following 
		$\Omega:=\{X_n,X_m',\underline{x}_1,\cdots,\underline{x}_p\}$ .
		\[\begin{split}
		\exp\left(-\mathcal{V}^\e_{n+m+p}(X_{n},X'_{m},\underline{X}_p)\right)
		&= e^{-\mathcal{V}^\e_{n}(X_{n})}e^{-\mathcal{V}^\e_{m}(X_{m})}\prod_{\substack{(X,Y)\Omega^2\\X\neq Y}}\left(1+\varphi(X,Y)\right)\\
		&=e^{-\mathcal{V}^\e_{n}(X_{n})}e^{-\mathcal{V}^\e_{m}(X_{m})}\sum_{G\in \mathcal{G}(\Omega)}\prod_{(X,Y)\in E(G)}\varphi(X,Y)
		\end{split}\]
		where $\mathcal{G}$ is the set of non orientated graph on $\Omega$ and $E(G)$the set of edges of $G$. We make the partition on the connected components of $X_n$ and $X'_{m}$.
		\[\everymath={\displaystyle}\begin{array}{r@{}r@{}r@{}l}
		\exp&\multicolumn{2}{l}{\left(-\mathcal{V}^\e_{n+m+p}(X_{n},X'_{m},\underline{X}_p)\right)}\\
		&\multicolumn{2}{l}{=\sum_{\substack{\omega\subset[1,p]}}\Bigg(\exp\left(-\mathcal{V}_{n}^\e(X_{n})-\mathcal{V}_{m}^\e(X'_{m})-\mathcal{V}_{|\omega^c|}^\e(\underline{X}_{\omega^c})\right)\sum_{\substack{G\in \mathcal{C}(\omega\cup\\ \{X_n,X_{m}'\})}}\prod_{(X,Y)\in E(G)}\varphi(X,Y)\Bigg)}\\[10pt]
		&&+\sum_{\substack{\omega_1,\omega_1\subset[1,p]\\\omega_1\cap\omega_2= \emptyset}}\psi^n_{|\omega_1|}(X_n,\underline{X}_{\omega_1})\psi^{m}_{|\omega_2|}(X'_{m},\underline{X}_{\omega_2})e^{-\mathcal{V}^\e_{|(\omega_1\cup\omega_2)^c|}(\underline{X}_{(\omega_1\cup\omega_2)^c})}\\[25pt]
		&\multicolumn{2}{l}{=:\sum_{\substack{\omega\subset[1,p]}}\psi^{n,m}_{|\omega|}(X_{n},X_{m}',\underline{X}_\omega)e^{-\mathcal{V}_{||\omega^c|}^\e(\underline{X}_{\omega^c})}}\\[-5pt]
		&&+\sum_{\substack{\omega_1,\omega_1\subset[1,p]\\\omega_1\cap\omega_2= \emptyset}}\psi^n_{|\omega_1|}(X_n,\underline{X}_{\omega_1})\psi^{m}_{|\omega_2|}(X'_{m},\underline{X}_{\omega_2})e^{-\mathcal{V}^\e_{|(\omega_1\cup\omega_2)^c|}(\underline{X}_{(\omega_1\cup\omega_2)^c})}.
		\end{array}\]
		Using invariance under permutation and \eqref{dévelopement de l'esperence}
		\[\everymath={\displaystyle}\begin{array}{r@{}l@{}r}
		\frac{1}{\mathcal{Z}_\e}\sum_{p\geq 0} &\multicolumn{2}{l}{\frac{\mu_\e^p}{p!}\int g_n(Z_n)h_{m}(Z'_{m})\sum_{\substack{\omega_1,\omega_1\subset[1,p]\\\omega_1\cap\omega_2= \emptyset}}\psi^n_{|\omega_1|}(X_n,\underline{X}_{\omega_1})\psi^{m}_{|\omega_2|}(X'_{m},\underline{X}_{\omega_2})e^{-\mathcal{V}^\e_{|(\omega_1\cup\omega_2)^c|}(\underline{X}_{(\omega_1\cup\omega_2)^c})}}\\[-5pt]
		&&\times M^{\otimes (n+m)}dZ_{n}dZ'_{m}M^{\otimes n}dZ_{n}M^{\otimes n'}dZ'_{n'}d\underline{X}_p\\[5pt]
		&\multicolumn{2}{l}{=\frac{1}{\mathcal{Z}_\e}\sum_{p\geq 0}\sum_{p_1+p_2+p_3=p} \frac{\mu_\e^p}{p!}\frac{p!}{p_1!p_2!p_3!}\int g_n(Z_n)h_{n'}(Z'_{n'})\psi^{n}_{p_1}(X_{n},\underline{X}_{p_1})\psi^{1}_{p_2}(x_{n+1},\underline{X}'_{p_2})}\\
		&&\times\Big(M^{\otimes n}dZ_{n}d\underline{X}_{p_1}\Big)\Big(M^{\otimes n'}dZ'_{n'}d\underline{X}'_{p_2}\Big)\Big(e^{-\mathcal{V}^\e_{p_3}(\underline{X}''_{p_3})}d\underline{X}''_{p_3}\Big)\\
		&\multicolumn{2}{l}{=\mathbb{E}_\e[g_n]\mathbb{E}_\e[h_{n'}],}
		\end{array}\]
		and in the same way
		\[\everymath={\displaystyle}\begin{array}{r@{}l@{}r}
		\frac{1}{\mathcal{Z}_\e}&\multicolumn{2}{l}{\sum_{p\geq 0} \frac{\mu_\e^p}{p!}\int g_n(Z_n)h_{m}(Z'_{m})\sum_{\substack{\omega\subset[1,p]}}\psi^{n,m}_{|\omega|}(X_{n},X_{m}',\underline{X}_\omega)e^{-\mathcal{V}_{||\omega^c|}^\e(\underline{X}_{\omega^c})}M^{\otimes n}dZ_{n}M^{\otimes m}dZ'_{m}d\underline{X}_p}\\[15pt]
		&\multicolumn{2}{l}{=\frac{1}{\mathcal{Z}_\e}\sum_{p\geq 0}\sum_{p_1+p_2=p} \frac{\mu_\e^p}{p!}\frac{p!}{p_1!p_2!}\int g_n(Z_n)h_{m}(Z'_{m})\psi^{n,m}_{|\omega|}(X_{n},X_{m}',\underline{X}_{p_1})}\\
		&&e^{-\mathcal{V}^\e_{p_2}(\underline{X}'_{p_2})}M^{\otimes(n+m)}dZ_{n}dZ'_{m}d\underline{X}_{p_1}d\underline{X}'_{p_2}\\[3pt]
		&\multicolumn{2}{l}{=\sum_{p_1\geq 0} \frac{\mu_\e^p}{p_1!}\int g_n(Z_n)h_{m}(Z'_{m})\psi^{n,m}_{|\omega|}(X_{n},X_{m}',\underline{X}_{p_1})M^{\otimes(n+m)}dZ_{n}dZ'_{m}d\underline{X}_{p_1}d\underline{X}'_{p_2}.}
		\end{array}\]
		
		Using again Penrose tree inequality,
		\begin{equation}
		\left|\psi^{n,m}_{|\omega|}(X_{n},X_{m}',\underline{X}_{p_1})\right|\leq \sum_{T\in\mathcal{T}(\Omega)}\prod_{(X,Y)\in E(T)}|\varphi(X,Y)|
		\end{equation}
		Fix $\tau_{-x_1}{X_n}$ and $\tau_{-x'_1}{X'_m}$ for the moment. Integrating a constraints $\varphi(\underline{x}_i,\underline{x}_j)$ provides a factor $\gr{c}_d\e^d$, $\varphi(X_n,\underline{x}_j)$ a factor $n\gr{c}_d\e^d$, $\varphi(X'_m,\underline{x}_j)$ a factor $m\gr{c}_d\e^d$ and $\varphi(X_n,X'_m)$ a factor $nm\gr{c}_d\e^d$. Denoting $d_0,d'_0,d_1\cdots,d_{p}$ the degrees of $X_n,\,X_m',\,\underline{x}_1,\cdots,\,\underline{x}_m$ we get
		\begin{equation}\begin{split}
		\bigg|\int \psi^{n,m}_{|\omega|}&(X_{n},X_{m}',\underline{X}_{p_1})d\underline{X}_p dx_1dx'_1\bigg|\\
		&\leq \sum_{\substack{d'_0,d_0,\cdots,d_{p}\geq 1\\d'_0+d_0+\cdots+d_{p}=2p}}\frac{p!}{(d'_0-1)(d_0-1)!\cdots(d_{p}-1)!} n^{d_0}m^{d'_0}(\gr{c}_{\gr{d}}\e^d)^{+1}\\
		&\leq p!\big(\gr{c}_{\gr{d}}\e^d\big)^{p+1}nm\,e^{n+m+p}.
		\end{split}\end{equation}
		We can integrate on the rest of parameters using \eqref{borne sur g_n} and \eqref{borne sur h_m}.
		
		Finally		
		\[\begin{split}
		&\mu_\e\left(\mathbb{E}_\e\left[\frac{1}{\mu_\e^{n+m}}\sum_{\underline{i}_{n+m}}g_n(\gr{Z}_{\ui_{n}})h_{m}(\gr{Z}_{\ui_{n+1,n+m}})\right]-\mathbb{E}_\e\left[g_n\right]\mathbb{E}_\e\left[g\right]\right).\\
		&\leq c_0 c_0' \mu_\e\sum_{p\geq 0} \frac{\mu_\e^p}{p!}p!\big(\gr{c}_{\gr{d}}\e^d\big)^{p+1}nm\,e^{n+m+p} \\
		&\leq \mu_\e\e^dnm(\gr{c}_{\gr{d}}e)^{n+m}c_0 c_0' \sum_{p\geq 0} (e\gr{c}_{\gr{d}}\e)^{p}\\
		&\leq \e C^{n+m+1}\sum_{p\geq 0} (e\gr{c}_{\gr{d}}\e)^{p}
		\end{split}\]
		which converge for $\e$ small enough.
	\end{proof}

	Note also the following bound on $L^p$ norms of the fluctuation.
	\begin{theorem}
		For any $p\in[2,\infty)$, there exists a constant $C_p>0$ such that
		\begin{equation}\label{$L^p$ bound of the fluctuation field}
		\left(\mathbb{E}_\e\left[\zeta^0_\e(g)^p\right]\right)^{1/p}\leq C_p \|g\|_{L^p(M(v)dz)}.
		\end{equation}
	\end{theorem}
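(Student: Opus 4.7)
The plan is to reduce this $L^p$ estimate to a moment-cumulant computation of Poisson type, exploiting the cluster expansion machinery already developed in Theorem \ref{theoreme de quasi orthogonalite}. By definition $\zeta_\e^0(g)=\mu_\e^{1/2}\hat g$ where $\hat g$ is the centered empirical average in the sense of the notations, and $\zeta_\e^0(g)$ is centered. Expanding the $p$-th power gives
\[
\mathbb{E}_\e\bigl[\zeta_\e^0(g)^p\bigr]=\mu_\e^{p/2}\sum_{\pi\in\mathcal{P}([p])}\prod_{B\in\pi}\kappa_{|B|}^\e(g),
\]
where $\kappa_k^\e(g)$ is the truncated (connected) $k$-th moment of the centered empirical average; for $k=2$ this is exactly what is computed in formula \eqref{quasi covariance}, and for general $k$ it reduces to an iterated application of the same Penrose tree decomposition of $\exp(-\mathcal{V}^\e_{n+p})$.

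The first step is to generalize the proof of Theorem \ref{theoreme de quasi orthogonalite} to $p$ factors, writing the grand canonical expectation of $p$ empirical sums against $g$ via the cluster decomposition of Section \ref{Quasi-orthogonality estimates}. Splitting off the connected components that contain at least one of the $p$ chosen families of particles, and using the Penrose tree inequality to control each cluster integral by a product of $\gr{c}_d\e^d$ factors, one obtains
\[
\kappa_k^\e(g)=\mu_\e^{1-k}\int g(z)^k M(v)\,dz\;+\;O\bigl(C^k\e\,\|g\|_{L^k(Mdz)}^k\bigr),
\]
which is exactly the $k$-th cumulant of a compensated Poisson integral (intensity $\mu_\e M(v)dz$) against $g$, up to a correction of order $\e$ coming from the hard-core exclusion.

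Once the cumulant formula is established, the proof is straightforward bookkeeping. Since $\zeta_\e^0(g)$ is centered, $\kappa_1^\e=0$, so only partitions into blocks of size $\geq 2$ contribute. For such a partition with $r$ blocks of sizes $k_1,\ldots,k_r$ one has $r\leq p/2$, hence
\[
\mu_\e^{p/2}\prod_{j=1}^r\kappa_{k_j}^\e(g)\;\lesssim\;\mu_\e^{\,r-p/2}\prod_{j=1}^r\|g\|_{L^{k_j}(Mdz)}^{k_j}\;\leq\;\prod_{j=1}^r\|g\|_{L^{k_j}(Mdz)}^{k_j}.
\]
Since $M(v)dz$ is a probability measure on $\Lambda\times\mathbb{R}^d$ (the volume of $\Lambda$ is $1$), Jensen's inequality gives $\|g\|_{L^{k_j}(Mdz)}\leq \|g\|_{L^p(Mdz)}$ for $k_j\leq p$, and summing over the Bell number of set partitions of $[p]$ yields the bound with constant $C_p$ depending only on $p$.

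The main technical obstacle will be the first step: namely, showing that the $p$-fold truncated cumulant $\kappa_k^\e(g)$ is really close to its Poisson counterpart, with error dominated by a single factor of $\e$ regardless of $k$. This is delicate because for $k\geq 3$ the Penrose tree has more branching possibilities; however, the combinatorial factor $k!/\prod(d_i-1)!$ for trees of prescribed degree sequence compensates, exactly as in the bound \eqref{Borne de l'integrale de psi np}, so that the series in the number of background particles $p$ converges for $\e$ small enough and the total error is $O(C^{pk}\e)$, which is negligible for $k$ bounded.
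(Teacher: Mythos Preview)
The paper does not actually prove this theorem: immediately after the statement it simply says ``The proof can be found in Appendix A of \cite{BGSS1}.'' So there is no in-paper argument to compare against, and your proposal is being measured against a citation.

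That said, your strategy is the standard one and is almost certainly what \cite{BGSS1} does: expand the moment of $\zeta_\e^0(g)=\mu_\e^{1/2}\hat g$ via the moment--cumulant formula, identify the $k$-th cumulant of $\hat g$ with its Poisson value $\mu_\e^{1-k}\int g^k M\,dz$ up to an $O(\e)$ correction coming from the hard-core exclusion (handled by the same cluster expansion and Penrose tree inequality as in Theorem~\ref{theoreme de quasi orthogonalite}), and then use that only blocks of size $\geq 2$ survive centering so that the power of $\mu_\e$ is nonpositive. The final Jensen step and summation over set partitions are fine.

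Two points you should tighten. First, the moment--cumulant identity as you wrote it requires $p$ to be a positive integer; for general $p\in[2,\infty)$ you need to prove the bound for even integers and then interpolate (H\"older between consecutive even integers suffices, since $\zeta_\e^0(g)$ is linear in $g$). Second, your error term ``$O(C^k\e\,\|g\|_{L^k}^k)$'' is asserted rather than derived: the bounds in Theorem~\ref{theoreme de quasi orthogonalite} are stated in terms of the constants $c_0,c_l$, which are weighted $L^1$ quantities of $|g_n|$, not directly $L^k$ norms. For a one-particle test function this reduces correctly (the relevant integral is $\int |g|^k M\,dz$ when all indices coincide), but when you extend the argument to $k\geq 3$ factors you should check that every correction term produced by the tree expansion is indeed dominated by $\|g\|_{L^p}^p$ and not by, say, $\|g\|_\infty^{k-1}\|g\|_{L^1}$. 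The cleanest route is to prove the estimate first for bounded $g$, where all norms are comparable, obtain a constant independent of $\|g\|_\infty$, and then pass to $L^p$ by density.
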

	The proof can be found in Appendix A of \cite{BGSS1}.
	
	From these estimations we can deduce the following corollary:
	\begin{corollary}\label{Corollaire utilisant la quasi orthogonalite}
		Let $h_n$ such satisfying conditions of theorem \ref{theoreme de quasi orthogonalite}. Then there exists a constant $C>0$ such that
		\begin{equation}
		\begin{split}
		\Bigg|\mathbb{E}_\e\Bigg[\mu_\e^{-1/2}\sum_{(i_1,\cdots,i_n)}&h_n(\gr{Z}_\ui(t_s))\zeta^0_\e(g)\ind_{\Upsilon_{\e}}\Bigg]\Bigg|\\
		&\leq C^n\mu_e^{n-1} \mathbb{E}_\e \big[\zeta_\e^0(g)^2\big]^{1/2}\left(c_0+\left(\sum_{l=1}^nc_l\right)^{1/2}\right).
		\end{split}
		\end{equation}
		
	\end{corollary}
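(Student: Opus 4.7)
The plan is to reduce the statement to the quasi-orthogonality bound \eqref{borne variance} by centering and applying Cauchy--Schwarz, then to dispose of the indicator $\ind_{\Upsilon_\e}$ using the bound on $\mathbb{P}_\e(\Upsilon_\e^c)$ established in Section \ref{Conditioning}. Throughout I will write $H_n(\gr{Z}_\N):=\mu_\e^{-n}\sum_{(i_1,\dots,i_n)}h_n(\gr{Z}_{\ui_n})$ so that $\mu_\e^{-1/2}\sum_{\ui_n}h_n(\gr{Z}_{\ui_n})=\mu_\e^{n-1/2}H_n(\gr{Z}_\N)$, and $\hat h_n=H_n-\mathbb{E}_\e[h_n]$ in accordance with the notation introduced earlier.

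First I would split $H_n=\hat h_n+\mathbb{E}_\e[h_n]$ and correspondingly decompose the quantity to bound as
\[\mu_\e^{n-1/2}\mathbb{E}_\e\bigl[\hat h_n(\gr{Z}_\N(t_s))\,\zeta_\e^0(g)\ind_{\Upsilon_\e}\bigr]+\mu_\e^{n-1/2}\,\mathbb{E}_\e[h_n]\,\mathbb{E}_\e\bigl[\zeta_\e^0(g)\ind_{\Upsilon_\e}\bigr].\]
For the first term, Cauchy--Schwarz (discarding the indicator, $\ind_{\Upsilon_\e}\le 1$) and the invariance of the Gibbs measure under the hard sphere flow give
\[\bigl|\mathbb{E}_\e[\hat h_n(\gr{Z}_\N(t_s))\zeta_\e^0(g)\ind_{\Upsilon_\e}]\bigr|\le \mathbb{E}_\e[\hat h_n(\gr{Z}_\N(0))^2]^{1/2}\,\mathbb{E}_\e[\zeta_\e^0(g)^2]^{1/2}.\]
Applying Theorem \ref{theoreme de quasi orthogonalite} with $g_n=h_m=h_n$ in the form \eqref{borne variance} yields $\mathbb{E}_\e[\mu_\e \hat h_n^2]\le C^{2n}\sum_{l=1}^n c_l+C^n c_0^2\e$, so that $\mu_\e^{n-1/2}\mathbb{E}_\e[\hat h_n^2]^{1/2}\le C^n\mu_\e^{n-1}\bigl((\sum_l c_l)^{1/2}+c_0\e^{1/2}\bigr)$, which already has the desired form.

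For the second (product) term, since $\zeta_\e^0(g)$ is centered, $\mathbb{E}_\e[\zeta_\e^0(g)\ind_{\Upsilon_\e}]=-\mathbb{E}_\e[\zeta_\e^0(g)\ind_{\Upsilon_\e^c}]$, and Cauchy--Schwarz gives the bound $\mathbb{E}_\e[\zeta_\e^0(g)^2]^{1/2}\,\mathbb{P}_\e(\Upsilon_\e^c)^{1/2}$. Combining this with the bound $|\mathbb{E}_\e[h_n]|\le C^n c_0$ from \eqref{borne espérence} and the estimate $\mathbb{P}_\e(\Upsilon_\e^c)\le \e^d$ obtained in Section \ref{Conditioning} (for our choice of $\delta,\mathbb{V},\gamma$), and using $\mu_\e=\e^{-(d-1)}$ so that $\mu_\e^{1/2}\e^{d/2}=\e^{1/2}$, one gets
\[\mu_\e^{n-1/2}|\mathbb{E}_\e[h_n]|\cdot|\mathbb{E}_\e[\zeta_\e^0(g)\ind_{\Upsilon_\e}]|\le C^n\mu_\e^{n-1}c_0\,\e^{1/2}\,\mathbb{E}_\e[\zeta_\e^0(g)^2]^{1/2},\]
which is absorbed in the $c_0$ contribution of the stated bound.

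The only real work is the first step; after the centering trick all that remains is a routine application of Theorem \ref{theoreme de quasi orthogonalite} and invariance of $\mathbb{P}_\e$. The main obstacle is conceptual rather than technical: one must notice that the indicator $\ind_{\Upsilon_\e}$ prevents a direct use of $\mathbb{E}_\e[\zeta_\e^0(g)]=0$ to kill the $\mathbb{E}_\e[h_n]$ constant, and this is exactly why the decomposition $\ind_{\Upsilon_\e}=1-\ind_{\Upsilon_\e^c}$ combined with the smallness of $\mathbb{P}_\e(\Upsilon_\e^c)$ is needed. Since $\e^{1/2}\ll 1$, the residual $\e^{1/2}$ factor is harmless and the clean final bound $C^n\mu_\e^{n-1}\mathbb{E}_\e[\zeta_\e^0(g)^2]^{1/2}\bigl(c_0+(\sum_l c_l)^{1/2}\bigr)$ follows.
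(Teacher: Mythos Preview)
Your proof is correct and follows essentially the same approach as the paper: center $H_n$ by subtracting $\mathbb{E}_\e[h_n]$, apply Cauchy--Schwarz to the centered piece (using invariance of the Gibbs measure to pass from time $t_s$ to time $0$) together with \eqref{borne variance}, and handle the constant times $\zeta_\e^0(g)\ind_{\Upsilon_\e}$ via $\ind_{\Upsilon_\e}=1-\ind_{\Upsilon_\e^c}$, $\mathbb{E}_\e[\zeta_\e^0(g)]=0$, and the smallness of $\mathbb{P}_\e(\Upsilon_\e^c)$. Your write-up is in fact slightly more explicit than the paper's about the role of invariance and the absorption of the residual $\e^{1/2}$ factors.
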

	\begin{proof}
		\[\begin{split}\mathbb{E}_\e\Bigg[\mu_\e^{-1/2}&\sum_{(i_1,\cdots,i_n)}h_n(\gr{Z}_\ui(t_s))\zeta^0_\e(g)\ind_{\Upsilon_{\e}}\Bigg]=\mu_\e^{n-1}\mathbb{E}_\e\Bigg[\mu_\e^{1/2-n}\sum_{(i_1,\cdots,i_n)}h_n(\gr{Z}_\ui(t_s))\zeta^0_\e(g)\ind_{\Upsilon_{\e}}\Bigg]\\
		&~~~=\mu_\e^{n-1}\bigg(\mathbb{E}_\e\Big[\mu_\e^{1/2}\widehat{h_n}(\gr{Z}_\N(t_s))\,\zeta^0_\e(g)\ind_{\Upsilon_{\e}}\Big]+\mathbb{E}_\e\left[h_n\right]\mathbb{E}_\e\Big[\mu_\e^{1/2}\zeta^0_\e(g)\ind_{\Upsilon_{\e}}\Big]\bigg)\\
		&~~~=\mu_\e^{n-1}\bigg(\mathbb{E}_\e\Big[\mu_\e^{1/2}\widehat{h_n}(\gr{Z}_\N(t_s))\,\zeta^0_\e(g)\ind_{\Upsilon_{\e}}\Big]+\mathbb{E}_\e\left[h_n\right]\mathbb{E}_\e\Big[\zeta^0_\e(g)\mu_\e^{1/2}\left(1-\ind_{\Upsilon^c_{\e}}\right)\Big]\bigg)
		\end{split}\]
		because $\mathbb{E}_\e[\zeta_\e^0(g)]=0$. Using Cauchy-Schwartz inequality
		\[\begin{split}
		\Bigg|\mathbb{E}_\e\Bigg[&\mu_\e^{-1/2}\sum_{(i_1,\cdots,i_n)}h_n(\gr{Z}_\ui(t_s))\zeta^0_\e(g)\ind_{\Upsilon_{\e}}\Bigg]\Bigg|\\
		&\leq\mu_\e^{n-1}\left(\mathbb{E}_\e\left[\mu_\e\left[\widehat{h_n}\right]^2\right]^{\frac{1}{2}}\mathbb{E}_\e\big[\zeta^0_\e(g)^2\big]^{\frac{1}{2}}+\mathbb{E}_\e\left[h_n\right]\mathbb{E}_\e\big[\zeta^0_\e(g)^2\big]^{\frac{1}{2}}\big(\mu_\e\mathbb{P}_\e\big[\Upsilon_{\e}^c\big]\big)^{\frac1 2}.\right)
		\end{split}\]
		We apply now theorem \ref{theoreme de quasi orthogonalite}. The bound on $\mathbb{P}_\e\left[\Upsilon_{\e}^c\right]$ given in section \ref{Conditioning} and the bound on $L^p$ norm of $\zeta^0_\e(g)$ \eqref{$L^p$ bound of the fluctuation field} lead  to the expecting bound.
	\end{proof}
	
	\section{Clustering estimations}\label{Clustering estimations}
	The objective of this section is to this section is to bound $G^{\text{clust}}_\e(t)$ and $G^{\text{exp}}_\e(t)$ defined by
	\begin{equation*}
	G_\e^{\text{clust}}(t) := \mathbb{E}_\e \left[\zeta_\e^t(h) \zeta_\e^0(g) \ind_{\Upsilon^c_\e}\right]- \sum_{\substack{n_1\leq\cdots\leq n_K\\n_j-n_{j-1}\leq 2^j}} \mathbb{E}_\e\left[\mu_\e^{-1/2}\sum_{(i_1,\cdots,i_{n_K})} \Phi^0_{\underline{n}}[h]\left(\gr{Z}_{\underline{i}_{n_K}}(0)\right)\zeta^0_\e(g)\ind_{\Upsilon^c_\e}\right],
	\end{equation*}
	\begin{equation*}
	G_\e^{\text{exp}}(t) := \sum_{k=1}^K \sum_{\substack{n_1\leq\cdots\leq n_{k-1}\\n_j-n_{j-1}\leq 2^j}}\sum_{n_k>2^k+n_{k-1}} \mathbb{E}_\e\left[\mu_\e^{-1/2}\sum_{(i_1,\cdots,i_{n_k})} \Phi^0_{\underline{n}}[h]\left(\gr{Z}_{\underline{i}_{n_k}}(t-k\theta))\right)\zeta^0_\e(g)\ind_{\Upsilon_\e}\right].
	\end{equation*}
	
	\begin{prop}
		For $\e>0$ small enough, 
		\begin{equation}\label{Estimation morceau 1}
		\left|G^{\text{exp}}_\e(t)+G^{\text{rec}}_\e(t)\right| \leq C\|g\|\,\|h\|\,\left(\e^{1/2}2^{(t/\theta)^2}+t\theta^{1/2}\right)
		\end{equation}
	\end{prop}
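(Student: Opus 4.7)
The plan is to bound the two terms independently, with both estimates rooted in the quasi-orthogonality estimate of Corollary \ref{Corollaire utilisant la quasi orthogonalite} and in the sharp tail bound on $\mathbb{P}_\e(\Upsilon^c_\e)$ obtained in Section \ref{Conditioning}. (I read the statement as referring to $G^{\text{clust}}_\e$ rather than $G^{\text{rec}}_\e$ in light of the section's opening sentence.)

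For $G^{\text{clust}}_\e(t)$ I would first estimate the raw term $\mathbb{E}_\e[\zeta^t_\e(h)\zeta^0_\e(g)\ind_{\Upsilon^c_\e}]$ by two applications of Cauchy--Schwarz, which reduces it to controlling the fourth moments of $\zeta^t_\e(h)$ and $\zeta^0_\e(g)$ (both handled by \eqref{$L^p$ bound of the fluctuation field} together with the time-invariance of $\mathbb{P}_\e$) and the probability $\mathbb{P}_\e(\Upsilon^c_\e)^{1/2}$. With the choice $\delta=\e^{1-1/(2d)}$, $\mathbb{V}=|\log\e|$, and $\gamma$ large enough, Section \ref{Conditioning} yields $\mathbb{P}_\e(\Upsilon^c_\e)\leq\e^d$, so this piece is $O(\e^{d/2})$. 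For the pseudotrajectory sum I would apply Corollary \ref{Corollaire utilisant la quasi orthogonalite} term by term, absorbing the factor $\ind_{\Upsilon^c_\e}$ by an extra Cauchy--Schwarz against a fourth moment of $\zeta^0_\e(g)$. The cardinality of the index set $\{\underline{n}:n_j-n_{j-1}\leq 2^j\}$ is at most $\prod_{j\leq K}(2^j+1)\leq 2^{(K+1)^2}$; this is the combinatorial source of the factor $2^{(t/\theta)^2}$. Multiplying by $\mathbb{P}_\e(\Upsilon^c_\e)^{1/2}\leq \e^{d/2}\leq \e^{1/2}$ and standard per-layer bounds on $\Phi^0_{\underline{n}}[h]$ (which are uniform for the allowed tuples since $\theta$ is small) delivers the $\e^{1/2}2^{(t/\theta)^2}$ contribution.

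For $G^{\text{exp}}_\e(t)$ the strategy is to feed each inner expectation into Corollary \ref{Corollaire utilisant la quasi orthogonalite}, using the bounds $c_0,\dots,c_{n_k}$ produced by the iterated development $\Phi^0_{\underline{n}}$. The key per-layer estimate is that each $\Phi^{0,\theta}_{m,n}$, being a sum over $(s_i,\bar s_i)$ only (no recollision indices) with $\kappa_j=0$, satisfies, for the $L^1$-type norms appearing in Theorem \ref{theoreme de quasi orthogonalite}, a clean bound of the form $C^{n-m}\theta^{n-m}$ times the corresponding norm on $h_m$, owing to the $(n-m)!^{-1}$ in front of the sum and the time simplex. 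Composing $K$ layers gives $c_l$'s scaling like $\mu_\e^{-(n_K-l)}(C\theta)^{n_K-1}$ up to combinatorial constants, so that the $\mu_\e^{n_K-1}$ factor in Corollary \ref{Corollaire utilisant la quasi orthogonalite} cancels cleanly. The constrained sums over $n_j-n_{j-1}\leq 2^j$ for $j<k$ are bounded by absolute constants (geometric series in $C\theta$), while the tail $n_k-n_{k-1}>2^k$ can be summed as a geometric series whose leading $k=1$ contribution furnishes, after the square root in Corollary \ref{Corollaire utilisant la quasi orthogonalite}, a factor $\theta^{1/2}$. Summing over $k\in[1,K]$ contributes at most $K=t/\theta$, so multiplying by $\theta$ (from the time-simplex at the outermost layer) yields the overall $t\theta^{1/2}$.

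The principal obstacle is to carry out the cumulant/quasi-orthogonality bookkeeping cleanly on the composition $\Phi^{0,\theta}_{n_{K-1},n_K}\circ\cdots\circ\Phi^{0,\theta}_{1,n_1}[h]$. A naive composition produces nested sums over collision signs and time orderings whose factorials and combinatorial prefactors threaten to destroy the $(C\theta)^{n_K-1}$ scaling needed to make the geometric series converge. The right way, I expect, is to argue layer by layer and integrate out each time slice's simplex and velocity Gaussian before passing to the next; the factor $(n_j-n_{j-1})!^{-1}$ present in the definition of $\Phi^{0,\theta}$ exactly compensates the number of tree orderings, so that after this reduction the Theorem \ref{theoreme de quasi orthogonalite} hypotheses are met with explicit constants $c_l$ of the announced size. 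Once this step is accomplished, summing the two contributions gives the stated bound $C\|g\|\|h\|(\e^{1/2}2^{(t/\theta)^2}+t\theta^{1/2})$.
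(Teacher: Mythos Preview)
Your overall architecture matches the paper: feed each $\Phi^0_{\underline n}[h]$ into Corollary~\ref{Corollaire utilisant la quasi orthogonalite}, produce the constants $c_0,\dots,c_{n_k}$ required by Theorem~\ref{theoreme de quasi orthogonalite}, and then sum. The paper indeed obtains
\[
\Big|\mathbb{E}_\e\Big[\mu_\e^{-1/2}\!\!\sum_{\ui_{n_k}}\Phi^0_{\underline n}[h](\gr Z_{\ui_{n_k}})\,\zeta^0_\e(g)\ind_{\Upsilon_\e}\Big]\Big|\le \|g\|\|h\|\,C^{n_k}\theta^{(n_k-n_{k-1})/2}t^{n_k-1},
\]
and the summations you sketch do lead to $t\theta^{1/2}$ for $G^{\text{exp}}_\e$ and $\e^{1/2}2^{K^2}$ for $G^{\text{clust}}_\e$.

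The genuine gap is in how you produce the constants $c_l$. The hypothesis of Theorem~\ref{theoreme de quasi orthogonalite} that drives Corollary~\ref{Corollaire utilisant la quasi orthogonalite} is the \emph{coupled} bound
\[
\int_{x_1=0}\sup_y\big|\Phi^0_{\underline n}[h](\tau_y Z_{n_k})\,\Phi^0_{\underline n}[h](\tau_y Z_{n_k-m+1,2n_k-m})\big|\,M^{\otimes(2n_k-m)}\,dX_{2,2n_k-m}dV_{2n_k-m}\le \frac{c_m}{n_k^m\mu_\e^{m-1}},
\]
i.e.\ two pseudotrajectories sharing $m$ initial particles. A layer-by-layer integration of a single copy of $\Phi^0_{\underline n}[h]$ gives $c_0$, but it says nothing about this product; the shared particles couple the two copies across all layers simultaneously, so you cannot peel off one time slice at a time. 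The paper's device is different: it bounds $|\Phi^0_{\underline n}[h]|$ by a sum over collision parameters, extracts from the collision graph a \emph{clustering tree} $T^>=(\nu_i,\bar\nu_i)_i$, performs the global change of variables $X_{2,n_k}\mapsto(\hat x_i)_i$ with $\hat x_i=x_{\nu_i}-x_{\bar\nu_i}$, and integrates the clustering constraints one edge at a time. For the coupled estimate~\eqref{Estimation sans reco 2} the two collision graphs are \emph{merged} and a single clustering tree on $2n_k-m$ vertices is extracted; this is what makes the joint integral tractable and yields the correct power $\mu_\e^{-(2n_k-m-1)}$. Your proposal does not supply an analogue of this step, and without it the $L^2$ machinery cannot be applied.

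Two smaller points. First, for $G^{\text{clust}}_\e$ you do not need an extra Cauchy--Schwarz against a fourth moment of $\zeta^0_\e(g)$: Corollary~\ref{Corollaire utilisant la quasi orthogonalite} already contains the factor $(\mu_\e\mathbb{P}_\e[\Upsilon^c_\e])^{1/2}$ coming from $\mathbb{E}_\e[\zeta^0_\e(g)\ind_{\Upsilon^c_\e}]=-\mathbb{E}_\e[\zeta^0_\e(g)\ind_{\Upsilon_\e}]$. Second, the paper does not claim $c_l\sim(C\theta)^{n_K-1}$; it only extracts $\theta^{n_k-n_{k-1}}$ from the last layer and bounds the earlier layers by $t^{n_{k-1}-1}$, which is all that is needed since $n_k-n_{k-1}>2^k$ in the superexponential term.
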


	We need bound on the development on pseudotrajectories without recollision $\Phi_{\underline{n}}^0[h]$:
	\begin{prop}
		Fix $k\in\mathbb{N}$ and $\underline{n}:=(n_1,\cdots,n_k)\in\mathbb{N}^k$ with $n_1\leq n_2\leq\cdots\leq n_k$. Then 
		\begin{equation}\label{Estimation sans reco 1}
		\int_{x_{1}=0}\sup_{y\in\Lambda}\big|\Phi_{\underline{n}}^0[h](\tau_yZ_{n_k})\big|M^{\otimes n_K}dV_{n_K}dX_{2,n_K}\leq \frac{\|h\|}{\mu_\e^{n_k}}C^{n_k}\theta^{n_k-n_{k-1}}t^{n_{k-1}-1},
		\end{equation}
		for $m\in[1,n_k]$
		\begin{equation}\label{Estimation sans reco 2}
		\begin{split}
		\int_{x_{1}=0}\sup_{y\in\Lambda}\big|\Phi_{\underline{n}}^0[h](\tau_yZ_{n_k})\Phi_{\underline{n}}^0[h](\tau_yZ_{n_k-m+1,2n_k-m})&\big|M^{\otimes (2n_K-m)}dV_{2n_K-m}dX_{2,2n_K-m}\\
		&\leq \frac{\|h\|^2}{n_k^m\mu_\e^{2n_k-m}}C^{n_k}\theta^{n_k-n_{k-1}}t^{m+n_{k-1}-1}.
		\end{split}
		\end{equation}
	\end{prop}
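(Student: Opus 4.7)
The plan is to expand $\Phi_{\underline{n}}^0[h]$ explicitly as a sum over tree pseudotrajectories on $n_k$ particles and then perform the classical Boltzmann-Grad change of variables at each collision. Because $\Phi_{\underline{n}}^0[h]=\Phi^{0,\theta}_{n_{k-1},n_k}\circ\cdots\circ\Phi^{0,\theta}_{1,n_1}[h]$ involves only pseudotrajectories without recollisions, the collision graph is a tree with $n_k-1$ edges, each one corresponding to a creation in which a fresh particle joins an existing one; the layered composition forces exactly $n_j-n_{j-1}$ creations to occur in $[t-j\theta,t-(j-1)\theta]$ for $j\in[1,k]$ (with $n_0:=1$).

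The central step is the change of variables at each creation event. At the collision time $\tau_i$ of a parent of velocity $v$ with a fresh particle of velocity $v_*$, I replace the new particle's position by $(\eta_i,\tau_i)$ where $\eta_i$ is the unit vector joining the two particles at impact; the Jacobian is $\e^{d-1}|(v-v_*)\cdot\eta_i|=\mu_\e^{-1}|(v-v_*)\cdot\eta_i|$. Integrating $v_*$ against its Gaussian weight and $\eta_i$ over $\mathbb{S}^{d-1}$ leaves a factor bounded by $C\mu_\e^{-1}d\tau_i$ per creation. Restricting the ordered creation times to their layers yields a volume $\theta^{n_k-n_{k-1}}/(n_k-n_{k-1})!$ from the outermost layer and at most $t^{n_{k-1}-1}/(n_{k-1}-1)!$ from the earlier layers. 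The sum over the tree topology (which existing particle each child attaches to) and over the sign parameters $(s_i,\bar{s}_i)$ contributes only an exponential factor $C^{n_k}$ which, combined with the factorials from the $1/n!$ normalisation in $\Phi^{0,\theta}_{m,n}$, yields \eqref{Estimation sans reco 1} after anchoring $x_1=0$ and using $|h|\le\|h\|\,M$.

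The bound \eqref{Estimation sans reco 2} is proved by the same mechanism applied to a pair of trees $\mathcal{T},\mathcal{T}'$ sharing $m$ vertices among the $2n_k-m$ particles. I would first sum over the choice of shared vertices together with their matching between the two trees, which produces at most $C^{n_k}n_k^{2m}/m!$ labeled configurations and is the source of the $n_k^{-m}$ factor on the right-hand side. After anchoring $\mathcal{T}$ at $x_1=0$ and integrating the root of $\mathcal{T}'$ using the supremum in $y\in\Lambda$, the two trees contribute independent products of collision Jacobians and ordered-time integrals, so the final estimate is essentially the square of \eqref{Estimation sans reco 1} multiplied by the extra $n_k^{-m}$ coming from the forced identifications; the exponent $m+n_{k-1}-1$ in $t$ and $n_k-n_{k-1}$ in $\theta$ match because the $m$ overlapping creations in $\mathcal{T}'$ cost only $t$ each (their times are unconstrained by layering once the tree is anchored on $\mathcal{T}$).

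The main obstacle I expect is the combinatorial bookkeeping: one must ensure that the factorials $(n_k-n_{k-1})!$ and $(n_{k-1}-1)!$ produced by the ordered time integrals are exactly compensated by the sum over tree topologies and the $1/n!$ normalisation in $\Phi^{0,\theta}_{m,n}$, so that no factorial blowup survives and only the exponential factor $C^{n_k}$ remains. Once this counting is under control, the factorisation of the time integrals into $\theta^{n_k-n_{k-1}}t^{n_{k-1}-1}$ is a direct consequence of the layered composition.
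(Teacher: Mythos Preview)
Your approach to \eqref{Estimation sans reco 1} is essentially correct, though the paper works in the forward picture (all $n_k$ particles present at time $0$, annihilated one by one) and parametrises by the clustering tree $(\nu_i,\bar\nu_i)_i$ rather than by backward creations. One point you gloss over: the claim that integrating $v_*$ against its Gaussian and $\eta$ over the sphere yields a uniform $C\mu_\e^{-1}d\tau_i$ is not quite right, since the cross-section leaves a factor $C(1+|v_{\text{parent}}(\tau_i)|)$ and the parent's velocity depends on all earlier collisions. The paper keeps the velocities until the end and uses conservation of kinetic energy to bound $\sum_{(\nu_i,\bar\nu_i)}|\text v_{\nu_i}(t_i)-\text v_{\bar\nu_i}(t_i)|\le\tfrac{n_k}{2}(n_k+\|V_{n_k}\|^2)$, then integrates $(n_k+\|V_{n_k}\|^2)^{n_k-1}$ against the Maxwellians. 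Your backward scheme can be made rigorous by the borrowing trick of Lemma~\ref{Borne sur la taille des parrametres d'arbre}, but this is more than bookkeeping.

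Your approach to \eqref{Estimation sans reco 2} has a genuine gap. The two trees share $m$ vertices and therefore do \emph{not} contribute independent Jacobians: the root of $\mathcal T'$ (particle $n_k-m+1$) is one of the shared particles, so its position is already fixed by $\mathcal T$ and cannot be ``integrated over $\Lambda$ using the supremum in $y$''. If the trees were independent you would get $\mu_\e^{-2(n_k-1)}$ from $2(n_k-1)$ collision Jacobians, whereas the right-hand side carries only $\mu_\e^{-(2n_k-m)}$; this $\mu_\e^{m}$ gain is precisely the coupling and is what makes the estimate usable in Theorem~\ref{theoreme de quasi orthogonalite}. The paper instead merges the two collision graphs and extracts a \emph{single} spanning tree on the $2n_k-m$ vertices, with $2n_k-m-1$ clustering edges (discarding the $m-1$ redundant edges of $\mathcal T'$), then performs the change of variables $\hat x_i=x_{\nu_i}-x_{\bar\nu_i}$ along that tree. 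Only $n_k-n_{k-1}$ of these edges are guaranteed to lie in the last $\theta$-layer, which is why the exponent of $\theta$ is $n_k-n_{k-1}$ rather than $2(n_k-n_{k-1})$. Finally, the factor $n_k^{-m}$ does not come from counting shared-vertex matchings; it emerges from Stirling applied to $(2n_k-m)^{2n_k-m-1}/(n_k!)^2$.
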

	Using Corollary \ref{Corollaire utilisant la quasi orthogonalite} and the previous estimations, 
	\[\begin{split}
	\Bigg|\mathbb{E}_\e\Bigg[\mu_\e^{-1/2}\sum_{(i_1,\cdots,i_{n_k})}& \Phi^0_{\underline{n}}[h]\left(\gr{Z}_{\underline{i}_{n_k}}(t-k\theta)\right)\zeta^0_\e(g)\ind_{\Upsilon_\e}\Bigg]\Bigg|\\
	&\leq \|g\|\|h\| \left(C^{n_k}\theta^{n_k-n_{k-1}}t^{n_{k-1}-1}+\left(\sum_{m=1}^{n_k}C^{n_k}\theta^{n_k-n_{k-1}}t^{n_{k-1}-1+m}\right)^{1/2}\right)\\
	&\leq \|g\|\|h\|C^{n_k} \theta^{(n_k-n_{k-1})/2}t^{n_{k}-1},
	\end{split}\]
	and in the same way,
	\[\begin{split}
	\mathbb{E}_\e\Bigg[\mu_\e^{-1/2}\sum_{(i_1,\cdots,i_{n_K})}& \Phi^0_{\underline{n}}[h]\left(\gr{Z}_{\underline{i}_{n_K}}(0)\right)\zeta^0_\e(g)\ind_{\Upsilon^c_\e}\Bigg]= O\left(\e^{1/2}\|g\|\|h\|C^{n_k} t^{n_{k}-1}\right).
	\end{split}\]
	
	Summing on all possible $(n_1,\cdots,n_k)$,
	\begin{equation}\begin{split}
	\left|G_\e^{\text{exp}}(t)\right| &\leq \sum_{k=1}^K \sum_{\substack{n_1\leq\cdots\leq n_{k-1}\\n_j-n_{j-1}\leq 2^j}}\sum_{n_k>2^k+n_{k-1}} \|g\|\|h\|C^{n_k} \theta^{(n_k-n_{k-1})/2}t^{n_{k}-1}\\
	&\leq  \|g\|\|h\|\sum_{k=1}^K 2^{k^2}\big(t\theta^{1/2}\big)^{2^k}\leq C\|g\|\|h\|\,t\,\theta^{1/2}
	\end{split}\end{equation}
	because the series converges for $\theta$ small enough, and 
	\begin{equation}\begin{split}
	\left|G_\e^{\text{clust}}(t)\right| &\leq C\|g\|\|h\|\e^{1/2}+ \sum_{\substack{n_1\leq\cdots\leq n_K\\n_j-n_{j-1}\leq 2^j}}\e^{1/2}\|g\|\|h\|C^{n_k} t^{n_{k}-1}\leq C\|g\|\|h\|\e^{1/2}2^{K^2}.
	\end{split}\end{equation}
	This concludes the proof of \eqref{Estimation morceau 1}.

	\begin{proof}[Proof of \eqref{Estimation sans reco 1}]
	We recall that 
	\[\Phi_{\underline{n}}^0[h] = \Phi^{0,\theta}_{n_{K-1},n_K}\circ\Phi^{0,\theta}_{n_{K-2},n_{K-1}}\cdots\circ\Phi^{0,\theta}_{1,n_{1}}[h] = \frac{1}{n_k!} \sum_{(s_i,\bar{s}_i)_{ i\leq n_k-1}}  \bar{s}_i \ind_{\mathcal{R}_{(s_i,\bar{s}_i)}^{\underline{n}}}h(\Zt_n(k\theta))\]
	and thus 
	\begin{equation}\label{|Phi^0_n|}
	\left|\Phi_{\underline{n}}^0[h]\right| \leq \frac{\|h\|}{n_k!} 
	\sum_{(s_i,\bar{s}_i)_{ i\leq n_k-1}}  \ind_{\mathcal{R}_{(s_i,\bar{s}_i)}^{\underline{n}}}
	 \end{equation}
	where $\mathcal{R}_{(s_i,\bar{s}_i)}^{\underline{n}}\subset \mathcal{D}^{n_k}_\e$ the set of initial parameters $Z_{n_k}$ such that pseudotrajectory $\Zt_{n_k}(\tau, (s_i,\bar{s}_i)_i,$ $(0)_j,Z_{n_k})$ has $n_l$ remaining particles at time $(k-l)\theta$. Note that the left member of \eqref{|Phi^0_n|} it is stable under translation. Hence it is sufficient to fix $x_1= 0$ and integrate with respect to $(X_{2,n_k},V_{n_k})$.
	
	We define the the \emph{clustering tree} $T^{>}:=(\nu_i,\bar{\nu}_i)_{1\leq i \leq n_k-1}$ where the $i$-th collision happens between particles $\nu_i$ and $\bar{\nu}_i$ (and $\nu_i<\bar{\nu}_i$). Since in the present section seudotrajectories have no recollision, the clustering tree is just the collision graph where we forget the collisions times (but not there order). It constructs a partition of $\mathcal{R}_{(s_i,\bar{s}_i)_i}^{\underline{n}}$.
	
	Fix the clustering tree. We perform the following change of variables
	\[X_{2,n_k}\mapsto(\hat{x}_1,\cdots,\hat{x}_{n_k-1}),~\forall i \in [1,n_k-1],~\hat{x}_i:=x_{\nu_i}-x_{\bar{\nu}_i}\]
	
	Fix $t_{i+1}$ the time of the $(i+1)$-th collision and relative positions $\hat{x}_1,\cdots,\hat{x}_{i-1}$. We denote $T_i = \theta$ if $i\leq n_k-n_{k-1}$, $t$ else (at least $n_k-n_{k-1}$ clustering collisions happens before time $\theta$) and the $i$-th collision set as
	\[B_{T^>,i}:=\left\{\hat{x}_i\Big|\exists \tau\in(0,T_i\wedge t_{i+1}),~|\text{x}_{\nu_{i}}(\tau)-\text{x}_{\bar{\nu}_i}(\tau)|\leq \e\right\}.\]
	Because particles $\text{x}_{\nu_{i}}(\tau)$ and $\text{x}_{\bar{\nu}_i}(\tau)$ are independent until their first meeting, we can do the change of variable $\hat{x}_i\mapsto(t_i,\eta_i)$ where $t_i$ is the first meeting time and 
	\[\eta_i:=\frac{\text{x}_{\nu_{i}}(t_i)-\text{x}_{\bar{\nu}_i}(t_i)}{\left|\text{x}_{\nu_{i}}(t_i)-\text{x}_{\bar{\nu}_i}(t_i)\right|}.\]
	This send the Lebesgue measure $d\hat{x}_i$ to the measure $\mu_\e^{-1}((\text{v}_{\nu_{i}}(t_i)-\text{v}_{\bar{\nu}_i}(t_i))\cdot\eta_i)_+ d\eta_idt_i$ and 
	\[\int \ind_{B_{T^>,i}}d\hat{x}_i \leq \frac{4\pi}{\mu_\e}\left|\text{v}_{\nu_{i}}(t_i)-\text{v}_{\bar{\nu}_i}(t_i)\right|\int _0^{T_i\wedge t_{i+1}}dt_i.\]
	We sum now on every possible edges $(\nu_{i},\bar{\nu}_i)$:
	\[\sum_{(\nu_{i},\bar{\nu}_i)}\left|\text{v}_{\nu_{i}}(t_i)-\text{v}_{\bar{\nu}_i}(t_i)\right|\leq n_k\sum_{k}\left|\text{v}_{k}(t_i)\right|\leq n_k\left(n_k\sum_{k}\left|\text{v}_{k}(t_i)\right|^2\right)^{1/2}\leq \frac{n_k}{2}\left(n_k+\|V_{n_k}\|^2\right)\]
	using that the kinetic energy is decreasing for the pseudotrajectory, hence
	\[\begin{split}\int \ind_{\mathcal{R}^{\underline{n}}_{(s_i,\bar{s}_i)}}d\hat{x}_1\cdots d\hat{x}_{n_k-1}&\leq \left(\frac{Cn_k}{\mu_\e}\right)^{n_k-1}\left(n_k + \|V_{n_k}\|^2\right)^{n_k-1}\int_0^{T_{n_k-1}}dt_{n_k}\cdots\int_0^{T_1\wedge t_2}dt_1\\
	&\leq \left(\frac{Cn_k}{\mu_\e}\right)^{n_k-1}\left(n_k + \|V_{n_k}\|^2\right)^{n_k-1}\frac{t^{n_{k-1}-1}}{(n_{k-1}-1)!}\frac{\theta^{n_k-n_{k-1}}}{(n_k-n_{k-1})!}\\
	&\leq \left(\frac{6C}{\mu_\e}\right)^{n_k-1}\left(n_k + \|V_{n_k}\|^2\right)^{n_k-1}t^{n_{k-1}-1}\theta^{n_k-n_{k-1}},\end{split}\]
	using the Stirling's formula. For $A,B>0$, $x\in\mathbb{R}$, 
	\[\left(A+x^2\right)^B e^{-\frac{x^2}{4}} = B^B\left(\frac{A+x^2}{B}e^{-\frac{A+x^2}{4B}}\right)^B e^{\frac{A}{4}}\leq\left(\tfrac{4B}{e}\right)^B e^{\frac{A}{4}}.\]
	Thus for some constant $C>0$,
	\[\int \left(n_k + \|V_{n_k}\|^2\right)^{n_k-1}e^{-\frac{\|V_{n_k}\|^2}{2}}dV_{n_k}\leq (Cn_k)^{n_k-1}\int e^{-\frac{\|V_{n_k}\|^2}{4}}dV_{n_k}\leq \big(2^{d/2}Cn_k\big)^{n_k}\]
	and
	\[\begin{split}
	\int \ind_{\mathcal{R}^{\underline{n}}_{(s_i,\bar{s}_i)}} M^{\otimes n_k}&dX_{2,n_k}dV_{n_k}\leq \sum_{T^>}\int \prod_{i=1}^{n_k-1}\ind_{B_{T^>,i}}d\hat{x}_i~M^{\otimes n_k}dV_{n_k}\\
	&\leq C\left(\frac{C}{\mu_e}\right)^{n_k-1} t^{n_{k-1}-1}\theta^{n_k-n_{k-1}}\int \left(n_k + \|V_{n_k}\|^2\right)^{n_k-1}M^{\otimes n_k}dV_{n_k}\\
	&\leq C'\left(\frac{C'}{\mu_e}\right)^{n_k-1} t^{n_{k-1}-1}\theta^{n_k-n_{k-1}}n_k^{n_k-1},
	\end{split}\]
	where we denote $C$ an other constant.
	
	Finally we sum on the $4^{n_k-1}$ possible $(s_i,\bar{s}_i)_i$ and dividing by the remaining $(n_k)!$. This gives the expected estimation.
	\end{proof}

	\begin{proof}[Proof of \eqref{Estimation sans reco 2}]
		We begin as in the previous paragraph
		\begin{align*}\Big|\Phi_{\underline{n}}^0[h](Z_{n_k})\Phi_{\underline{n}}^0[h](&Z_{n_k-m+1,2n_k-m})\Big|\\
		&\quad \leq \frac{\|h\|^2}{(n_k!)^2} 
		\sum_{\substack{(s_i,\bar{s}_i)_{ i\leq n_k-1}\\(s'_i,\bar{s}'_i)_{ i\leq n_k-1}}}  \ind_{\mathcal{R}_{(s_i,\bar{s}_i)}^{\underline{n}}}(Z_{n_k})\ind_{\mathcal{R}_{(s'_i,\bar{s}'_i)}^{\underline{n}}}(Z_{n_k-m+1,2n_k-m}).\end{align*}
		
		We have two pseudotrajectories $\Zt(\tau):=\Zt(\tau,Z_{n_k})$ and $\Zt'(\tau):=\Zt(\tau,Z_{n_k-m+1,2n_k-m})$. Note again that the right member is invariant under translation, so we can fix $x_1= 0$.

		We construct the clustering tree $T^>$ as follows: we merge collision graph of the first and the second pseudo trajectory. Then we look at edges one by one in temporal order, keeping only one which do not create a cycle. This construct a tree which connect all the vertices. 
		\begin{figure}[h]
			\includegraphics[scale=0.22]{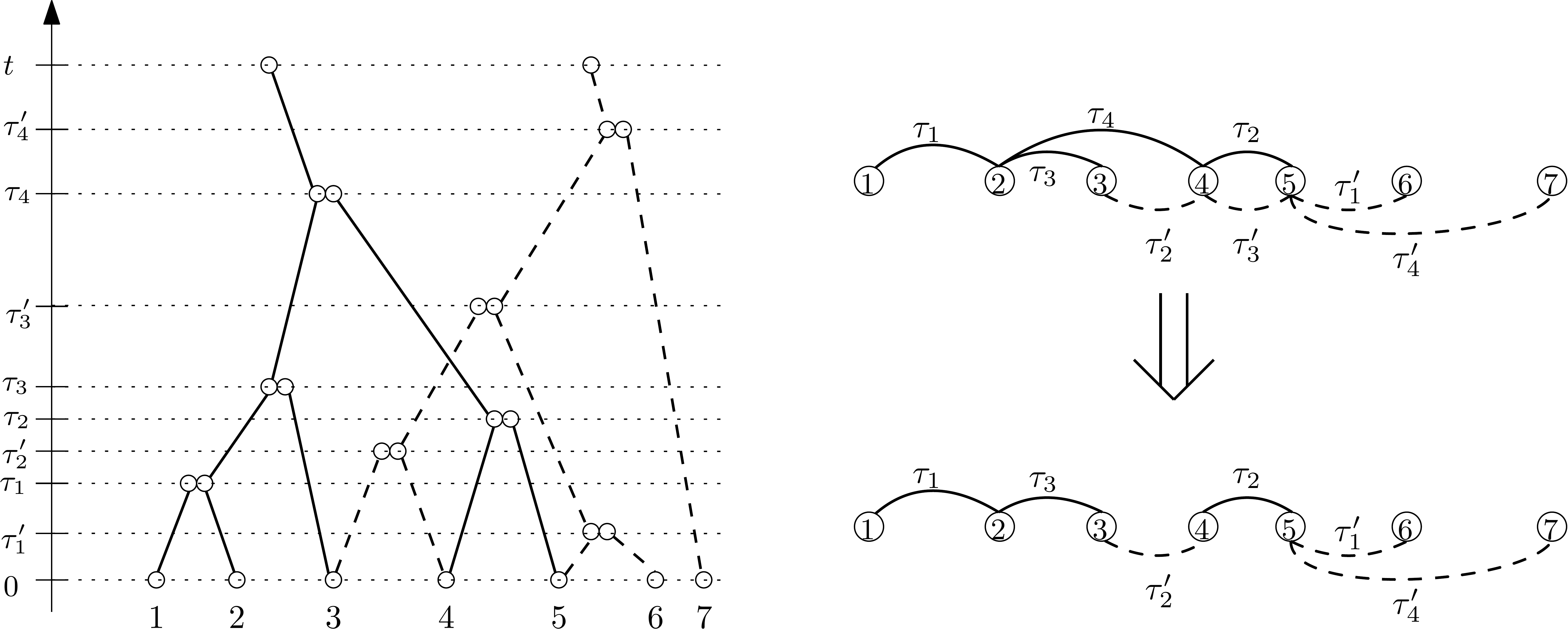}
			\caption{Example of construction of the clustering tree}
		\end{figure}
		
		This form a graph with ordered edges. We remove then the non-clustering collisions. This is the clustering tree $T^>:=(\nu_i,\bar{\nu}_i)$. They induce a partition of \[\left\{Z_{2n_k-m}\in(\Lambda\times\mathbb{R}^d)^{2n_k-m}\vert\,Z_{n_k}\in\mathcal{R}_{(s_i,\bar{s}_i)}^{\underline{n}},\,(Z_{n_k-m+1,2n_k-m})\in\mathcal{R}_{(s'_i,\bar{s}'_i)}^{\underline{n}}\right\}.\]
		
		The rest of the proof is almost the same than in the previous section. Fix the clustering tree. We perform the following change of variable
		\[X_{2,2n_k-m}\mapsto(\hat{x}_1,\cdots,\hat{x}_{2n_k-m-1}),~\forall i \in [1,2n_k-m-1],~\hat{x}_i:=x_{\nu_i}-x_{\bar{\nu}_i}.\]
		
		Fix $t_{i+1}$ the time of the $(i+1)$-th collision and relative positions $\hat{x}_1,\cdots,\hat{x}_{i-1}$. We define the $i$-th collision sets as
		\[B_{T^>,i}:=\left\{\hat{x}_i\Big|\exists \tau\in(0,T_i\wedge t_{i+1}),~|\text{x}_{\nu_{i}}(\tau)-\text{x}_{\bar{\nu}_i}(\tau)|\leq \e \text{\,or\,}|\text{x}'_{\nu_{i}}(\tau)-\text{x}'_{\bar{\nu}_i}(\tau)|\leq \e\right\}\]
		where $T_i = \theta$ for the $(n_k-n_{k-1})$ first collision, $t$ else. Using the same computation than i the previous section, denoting $t_i$ the minimal clustering time,
		\[\int \ind_{B_{T^>,i}}d\hat{x}_i \leq \frac{4\pi}{\mu_\e}\left(\left|\text{v}_{\nu_{i}}(t_i)-\text{v}_{\bar{\nu}_i}(t_i)\right|+\left|\text{v}_{\nu_{i}}(t_i)-\text{v}_{\bar{\nu}_i}(t_i)\right|\right)\int _0^{T_i\wedge t_{i+1}}dt_i.\]		
		Hence as in the previous section
		\[\begin{split}
		\int \ind_{\mathcal{R}_{(s_i,\bar{s}_i)}^{\underline{n}}}(Z_{n_k})\ind_{\mathcal{R}_{(s'_i,\bar{s}'_i)}^{\underline{n}}}&(Z_{n_k-m+1,2n_k-m}) M^{\otimes(2n_k-m)}dX_{2,2n_k-m}dV_{2n_k-m}\\
		&\leq \sum_{T^>}\int \prod_{i=1}^{2n_k-m-1}\ind_{B_{T^>,i}}d\hat{x}_i~ M^{\otimes(2n_k-m)}dV_{2n_k-m}\\
		&\leq C\left(\frac{C}{\mu_e}\right)^{2n_k-m} t^{n_{k-1}+m-1}\theta^{n_k-n_{k-1}}(2n_k-m)^{2n_k-m-1}\\
		&\leq C\frac{\left(2C\right)^{2n_k}}{\mu_e^{2n_k-m-1}} t^{n_{k-1}+m-1}\theta^{n_k-n_{k-1}}n_k^{2n_k-m-1}.
		\end{split}\]
		We sum on the possible $(s_i,\bar{s}_i)_i$ and $(s'_i,\bar{s}'_i)_i$ and
		\begin{align*}
		\int\Big|\Phi_{\underline{n}}^0[h](Z_{n_k})\Phi_{\underline{n}}^0[h](Z_{n_k-m+1,2n_k-m})\Big|M^{\otimes(2n_k-m)}dX_{2,2n_k-m}dV_{2n_k-m} \\
		\leq\|h\|^2 \frac{n_k^{2n_k-m-1}\tilde{C}^{2n_k}}{(n_k!)^2\mu_e^{2n_k-m-1}} t^{n_{k-1}+m-1}\theta^{n_k-n_{k-1}}
		\end{align*}
		Using Stirling formula gives expected estimation.
	\end{proof}

	\section{Estimation of non pathological recollisions}\label{Estimation of the long range recollisions.}
	
	The objective of this section is to bound
	\begin{equation*}
	\begin{split}
	&G_\e^{\text{rec},1}(t):=\sum_{\substack{1\leq k\leq K-1\\1\leq k'\leq K'}} \sum_{\substack{n_1\leq\cdots\leq n_k\\n_j-n_{j-1}\leq 2^j}} \sum_{\substack{n''\geq n'\geq n_k}} \mathbb{E}_\e\left[\mu_\e^{-1/2}\sum_{(i_1,\cdots,i_{n''})}\Phi_{\underline{n},n',n''}^{>,k'}[h]\left(\gr{Z}_{\ui}(t_s)\right)\,\zeta^0_\e(g)\ind_{\Upsilon_\e}\right].
	\end{split}
	\end{equation*}
	
	\begin{prop}
		For $\e$ small enough,
		\begin{equation}\label{Estimation morceau 3}\begin{split}
		\left|G_\e^{\text{rec},1}(t)\right|	&\leq  \|g\|\|h\| \e^{\alpha/2} (C't)^{2^{t/\theta}+2d+6}.
		\end{split}\end{equation}
	\end{prop}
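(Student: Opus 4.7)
The overall strategy mirrors the treatment of $G_\e^{\mathrm{exp}}$ and $G_\e^{\mathrm{clust}}$ in Section \ref{Clustering estimations}, replacing the straightforward clustering-tree estimate by a variant that accounts for at least one \emph{recollision} on the innermost time slice $[t_s,t_s+\delta]$. First, I would freeze the indices $(k,k',\underline{n},n',n'')$ and apply Corollary \ref{Corollaire utilisant la quasi orthogonalite} to the functional $\Phi_{\underline{n},n',n''}^{>,k'}[h]$, which reduces each summand of $G_\e^{\mathrm{rec},1}$ to controlling the $L^1$ and self-convolution bounds $c_0, c_1,\dots,c_{n''}$ for $\Phi_{\underline{n},n',n''}^{>,k'}[h]$. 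The factor $\mathbb{E}_\e[\zeta_\e^0(g)^2]^{1/2}\leq C\|g\|$ coming from \eqref{$L^p$ bound of the fluctuation field} then produces the $\|g\|$ in the final bound.

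Next, I would adapt the clustering tree construction of the proofs of \eqref{Estimation sans reco 1} and \eqref{Estimation sans reco 2} to the three-layer structure of $\Phi^{>,k'}_{\underline n,n',n''}$. On $[t_s+k'\delta,t]$ the pseudotrajectory is a forest grown step by step with $n_{k-1}\to n_k$ particles as in Section \ref{Clustering estimations}, contributing a clustering factor $t^{n_{k-1}-1}\theta^{n_k-n_{k-1}}$. On $[t_s+\delta,t_s+k'\delta]$ the no-recollision forest from $n_k$ to $n'$ contributes $(k'\delta)^{n'-n_k}$. On $[t_s,t_s+\delta]$ the cluster from $n'$ to $n''$ is allowed to contain recollisions (but no cycle thanks to $\mathcal X_{\ui}=0$), and here we must use the additional recollision edge to extract the small factor $\e^\alpha$. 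Thanks to $\ind_{\Upsilon_\e}$, the cluster has $\leq \gamma$ particles involved in recollisions per particle, so the cardinality of the admissible pseudotrajectory parameters $((s_i,\bar s_i),(\kappa_j))$ on this slice is controlled by $C^{n''-n'}\gamma^{n''-n'}$.

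The crucial geometric step is the recollision gain. Following the argument already used in the proof of the $\chi_r$ estimate \eqref{estimation recollision}, I would isolate the time $\tau_r$ of the first recollision on $[0,\delta]$, replace $(\kappa_j)$ by the truncated parameters $\kappa_j(0)-\kappa_j(\tau_r)$, and identify the connected subset $\varpi\subset[n',n'']$ of particles containing the two recolliding ones. Building the clustering tree on $\varpi$ plus the extra recollision edge and performing the change of variables $x_i\mapsto \hat x_i$ with $\hat x_i=x_{\nu_i}-x_{\bar\nu_i}$, each tree edge gives $\mu_\e^{-1}\delta\mathbb{V}$ as before, while the recollision edge gives an additional constraint of measure $O(\e \mathbb{V}|\log\e|)$ (the standard two-particle recollision volume in dimension $d\geq 3$). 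Using $\delta=\e^{1-\frac{1}{2d}}$ and $\mathbb{V}=|\log\e|$, the ratio $\e\mathbb{V}|\log\e|/\delta$ is bounded by $\e^\alpha$ for some $\alpha\in(0,1)$ depending only on $d$.

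Finally I would sum over parameters. After incorporating the $\e^\alpha$ factor and the Stirling bound on $(n_k!)^2$ from the integration against Gaussian velocities, the $l$-intersection bound $c_l$ inherits an $\e^\alpha$ multiplier. The Cauchy--Schwarz in Corollary \ref{Corollaire utilisant la quasi orthogonalite} therefore gives $\e^{\alpha/2}$. The remaining geometric combinatorics reproduce the factors of Section \ref{Clustering estimations}: the sum over $n_1,\dots,n_k$ with $n_j-n_{j-1}\leq 2^j$ and the sum over $n',n''\geq n_k$ of the resulting factors $(Ct)^{n_{k-1}}(C\theta)^{n_k-n_{k-1}}(Ck'\delta)^{n'-n_k}(C\delta)^{n''-n'}$ is convergent and bounded by $(C't)^{2^{K}+2d+6}$, where the $2d+6$ absorbs the polynomial corrections from the recollision slice and the various $\mathbb{V}^d$ factors. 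Multiplying by $\e^{\alpha/2}\|g\|\|h\|$ yields \eqref{Estimation morceau 3}.

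The main obstacle I anticipate is the geometric extraction of $\e^\alpha$ from a recollision occurring somewhere inside a cluster of size $n''-n'$ whose geometry is otherwise unconstrained. One must single out the recolliding pair among the $O(n''^2)$ candidates, reduce to a local two-particle event by absorbing the other collisions into the clustering tree, and verify that the loss from these choices is polynomial in $n''$ and hence absorbable into the constants $C^{n''}$. This is exactly the content of the proof of \eqref{estimation recollision}, and the hardest conceptual point is to carry it out uniformly in the cluster size while keeping the combinatorial bookkeeping compatible with the Penrose-tree type argument used in Section \ref{Quasi-orthogonality estimates}.
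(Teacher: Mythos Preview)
Your overall architecture is right: freeze $(k,k',\underline n,n',n'')$, apply Corollary~\ref{Corollaire utilisant la quasi orthogonalite}, and reduce to $L^1$ and self-convolution bounds on $\Phi_{\underline n,n',n''}^{>,k'}[h]$. The problem is the step where you extract the factor $\e^\alpha$.

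You propose to locate ``the first recollision on $[0,\delta]$'' and treat it via the argument of~\eqref{estimation recollision}. This does not work, for two reasons. First, the $\chi_r$ estimate~\eqref{estimation recollision} extracts smallness from a \emph{cycle} in the collision graph restricted to $[0,\delta]$; but in $\Phi^{>,\delta}$ the condition ``no local recollision'' (built into $\mathcal R^{m,n,\gamma,\tau}$, not coming from $\mathcal X_{\ui}$ as you write) means precisely that the collision graph on $[0,\delta]$ is a forest, so there is nothing for that argument to bite on. Second, a ``recollision'' in the pseudotrajectory sense (a collision with $\kappa_j>0$, both particles kept) is geometrically just a first encounter between two previously unconnected particles: it contributes the usual clustering factor $\mu_\e^{-1}\delta|v-v'|$ and carries no extra $\e$. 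The smallness has to come from a \emph{non-clustering} collision, i.e.\ the first edge that closes a cycle in the \emph{full} collision graph on $[0,k\theta+k'\delta]$; since the graph on $[0,\delta]$ is a forest, this cycle-closing edge is an annihilation occurring at some time $\tau_{\mathrm{cycle}}>\delta$, possibly of order $t$.

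Handling that long-time cycle is exactly what the paper does and what your sketch is missing: one singles out the cycle-closing pair $(q,\bar q)$ together with its \emph{tutor} (parent or connector, Definition~\ref{def-tutor}), and applies the geometric Propositions~\ref{prop: recollision}--\ref{prop: singularity}. Those produce the gain $\e|\log\e|$ at the price of velocity singularities of the form $|w_q-w_{\bar q}|^{-1}$ or $\sin(w_q-w_{\bar q},\zeta)^{-1}$, which must then be integrated out using one or two further parents (or the Gaussian at time $0$). This cascade is the origin of the extra $(\mathbb Vt)^{2d+4}$ in the bound, hence the exponent $2d+6$ in~\eqref{Estimation morceau 3}. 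Your outline produces no such velocity singularities and no mechanism generating the $t^{2d+\cdots}$ losses, which is a sign that the geometric constraint you are integrating is not the right one.
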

	
	It is sufficient to prove the two following estimations:
	\begin{prop}
		Fix $k\in\mathbb{N}$, $\underline{n}:=(n_1,\cdots,n_k)\in\mathbb{N}^k$  and $(n',n'')\in\mathbb{N}^2$ with $n_1\leq n_2\leq\cdots\leq n_k\leq n'\leq n''$. Then fixing $x_1=0$,
		\begin{equation}\label{Estimation avec reco 1}
		\int\sup_{y\in\Lambda}\big|\Phi_{\underline{n},n',n''}^{>,k'}[h](\tau_y Z_{n''})\big|M^{\otimes n''}dV_{n''}dX_{2,n''}\leq \e^\alpha\frac{\|h\|}{\mu_\e^{n''}}C^{n''}\theta^{(n''-n_{k}-2)_+}\delta^2t^{n_k+2d+4},
		\end{equation}
		for $m\in[1,n'']$
		\begin{equation}\label{Estimation avec reco 2}
		\begin{split}
		\int\sup_{y\in\Lambda}\big|\Phi_{\underline{n},n',n''}^{>,k'}[h](\tau_yZ_{n''})\Phi_{\underline{n},n',n''}^{>,k'}[h](\tau_yZ_{n''-m+1,2n''-m})\big|M^{\otimes (2n''-m)}dV_{2n''-m}dX_{2,2n''-m}\\
		\leq \e^\alpha\frac{\|h\|}{\mu_\e^{n''}}C^{n''}\theta^{(n''-n_{k}-2)_+}\delta^2t^{n_k+2d+4+m}.
		\end{split}
		\end{equation}
	\end{prop}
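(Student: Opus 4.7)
The plan is to adapt the clustering-tree argument used for \eqref{Estimation sans reco 1} and \eqref{Estimation sans reco 2} in Section \ref{Clustering estimations}, injecting the recollision smallness established in the proof of \eqref{estimation recollision}. First I would write
\[
\bigl|\Phi_{\underline{n},n',n''}^{>,k'}[h](Z_{n''})\bigr|\leq \frac{\|h\|}{n''!}\sum_{(s_i,\bar{s}_i),(\kappa_j)}\ind_{\mathcal{R}^{>}}(Z_{n''}),
\]
where $\mathcal{R}^{>}$ is the set of initial data whose pseudotrajectory has the prescribed layered tree structure on $[\delta,t_s]$ and at least one recollision on $[0,\delta]$. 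Because of the conditioning, $\kappa_j$ ranges over $[0,\gamma-1]$, so the parameter count is bounded by $(4\gamma)^{n''}$, absorbable in $C^{n''}$.

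Next I would build a clustering tree $T^{>}$ with $n''-1$ edges. Exactly as in Section \ref{Clustering estimations}, the change of variables $X_{2,n''}\mapsto(\hat x_1,\dots,\hat x_{n''-1})$ followed by $\hat x_i\mapsto (t_i,\eta_i)$ converts each clustering edge into an integral over a first-encounter time and a scattering normal, contributing $\mu_\e^{-1}\,|v_{\nu_i}-v_{\bar\nu_i}|\,\int_0^{T_i\wedge t_{i+1}} dt_i\, d\eta_i$. The time bound $T_i$ is $t$, $\theta$, $k'\delta$ or $\delta$ depending on which layer the edge belongs to; stacking these simplex-integrals produces the factors $t^{n_k+\cdots}\,\theta^{n''-n_k-2}\,\delta^{\dots}$, while the velocity integrand is controlled exactly as in section \ref{Clustering estimations} using $(A+\|V\|^2)^B e^{-\|V\|^2/4}\le (4B/e)^B e^{A/4}$.

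The only genuinely new input concerns the cluster carrying the recollision. By $\Upsilon_\e$, the particles split on $[0,\delta]$ into distance clusters $\omega_1,\dots,\omega_s$ with $|\omega_j|\leq \gamma$; write $\omega^*$ for the cluster containing the imposed local recollision. On clusters $\omega_j\neq\omega^*$ I apply the usual clustering bound, getting $(\delta/\mu_\e)^{|\omega_j|-1}$ per cluster. On $\omega^*$ I rerun the computation from the proof of \eqref{estimation recollision}: since $\Zt_{|\omega^*|}(\cdot)$ must have a local recollision, one of the $|\omega^*|-1$ time integrals is collapsed by the scattering constraint to a set of size $\e|\log\e|$. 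This yields a contribution bounded by $\mu_\e^{-|\omega^*|+1}\delta^{|\omega^*|-2}\e|\log\e|=\mu_\e^{-|\omega^*|+1}\delta^{|\omega^*|-3}\,\delta^2\e^\alpha$, exactly the source of the $\delta^2\e^\alpha$ factor. Summing over cluster decompositions and over clustering trees $T^{>}$ (bounded in number by the standard Cayley-type count and compensated by $(n''!)$ via Stirling) gives \eqref{Estimation avec reco 1}. The proof of \eqref{Estimation avec reco 2} is identical after one first merges the two pseudotrajectories into a single clustering tree on $2n''-m$ vertices, as in the proof of \eqref{Estimation sans reco 2}; the recollision still sits in exactly one cluster of one of the two copies, so the $\delta^2\e^\alpha$ factor is retained while the additional $t^m$ arises from the extra $m-1$ collision-time integrals on the long interval.

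The main obstacle is bookkeeping rather than analysis: the clustering tree straddles the time $\delta$, where it must be compatible with the distance-cluster decomposition, and one has to check that after summing over \emph{all} ways of partitioning $\{1,\dots,n''\}$ into clusters, of choosing the distinguished cluster $\omega^*$, and of placing a recollision inside it, the $\e^\alpha$ smallness is not eaten by combinatorial factors. The key point is $|\omega^*|\leq\gamma$, so there are at most $n''^{\,\gamma}$ ways to choose $\omega^*$; this polynomial growth is trivially absorbed in $C^{n''}$. A secondary care is that the factor $t^{n_k+2d+4}$ is a loose bound accommodating the additional velocity moments $(n''+\|V_{n''}\|^2)^{n''-1}$ created by integrating out the extra edges touching the recollision cluster; this follows by the same Stirling/Gaussian estimate used in Section \ref{Clustering estimations}, just with a slightly larger exponent.
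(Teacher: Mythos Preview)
Your approach has a genuine gap stemming from a misreading of what $\Phi^{>,k'}_{\underline{n},n',n''}$ encodes. You assert that some distance cluster $\omega^*$ on $[0,\delta]$ ``must have a local recollision'' and then invoke the estimate behind \eqref{estimation recollision}. But by definition $\Phi^{>,\delta}_{n',n''}=\Phi^{\gamma,\delta}_{n',n''}-\Phi^{0,\delta}_{n',n''}$: pseudotrajectories in its support have at least one recollision (some $\kappa_j>0$) yet \emph{no local recollision} on $[0,\delta]$, i.e.\ the collision graph restricted to $[0,\delta]$ is a forest. Hence $\chi$ vanishes on every subset and \eqref{estimation recollision} is simply inapplicable. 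Moreover, the inequality you are asked to prove is on the bare integral of $|\Phi^>|$ without any $\ind_{\Upsilon_\e}$, so the decomposition into distance clusters of size $\le\gamma$ is not available either; that device belongs to Section~\ref{$L^2$ estimation of the local recollision part}, not here.

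The actual source of $\e^\alpha$ is different. Because there is at least one recollision, the full collision graph on $[0,t-t_s]$ has at least $n''$ edges while the clustering tree has only $n''-1$; hence there is a first \emph{non-clustering} collision $(q,\bar q)$, recorded together with its position $c$ in the tree. The paper extracts the smallness from this event via the tutor/parent/connector machinery (Definition~\ref{def-tutor} and Propositions~\ref{prop: recollision}--\ref{prop: singularity}, imported from \cite{BGSS1}): integrating the tutor edge produces $(\mathbb{V}t)^d\,\e|\log\e|$ times a singularity in relative velocities, which is then removed by one or two further parent edges or by the Gaussian, each step costing additional powers of $\mathbb{V}t$. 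Altogether this yields $(\mathbb{V}t)^{2d+4}\e|\log\e|$, which --- since $\mathbb{V}=|\log\e|$ --- is the true origin of both $\e^\alpha$ and the exponent $2d+4$ in $t^{n_k+2d+4}$. Your explanation of $t^{2d+4}$ as ``accommodating additional velocity moments'' is therefore off the mark; without Propositions~\ref{prop: recollision}--\ref{prop: singularity} your argument has no mechanism to produce any power of~$\e$.
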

	
	Using these estimations and corollary \ref{Corollaire utilisant la quasi orthogonalite}, 
	\[\begin{split}
	\Bigg|\mathbb{E}_\e\Bigg[&\mu_\e^{-1/2}\sum_{(i_1,\cdots,i_{n''})} \Phi_{\underline{n},n',n''}^{>,k'}[h]\left(\gr{Z}_{\underline{i}_{n''}}(t-k\theta)\right)\zeta^0_\e(g)\ind_{\Upsilon_\e}\Bigg]\Bigg|\\
	&\leq \|h\| \|g\|\left(\e^{\alpha}C^{n''}\theta^{(n''-n_{k}-2)_+}\delta^2t^{n_k+2d+4}+\left(\sum_{m=1}^{n_k}\e^{\alpha}C^{n''}\theta^{(n''-n_{k}-2)_+}\delta^2t^{n_k+2d+4+m}\right)^{1/2}\right)\\
	&\leq \|g\|\|h\| \delta\e^{\alpha/2}C^{n''} \theta^{(n''-n_{k}-2)_+/2}t^{\frac{n''+n_k}{2}+2d+4}\\[3pt]
	&\leq \|g\|\|h\| \delta\e^{\alpha/2}(Ct)^{n_k+2d+5} (Ct\theta)^{(n''-n_{k}-2)_+/2}.
	\end{split}\]
	
	Thus
	\begin{equation}\begin{split}
	\left|G_\e^{\text{rec},1}(t)\right|&\leq\sum_{\substack{1\leq k\leq K-1\\1\leq k'\leq K'}} \sum_{\substack{n_1\leq\cdots\leq n_k\\n_j-n_{j-1}\leq 2^j}} \sum_{\substack{n''\geq n'\geq n_k}}\|g\|\|h\| \delta\e^{\alpha/2}(Ct)^{n_k+2d+5} (Ct\theta)^{(n''-n_{k}-2)_+/2} \\
	&\leq \|g\|\|h\| K'\delta\e^{\alpha/2} K^{K^2}(Ct)^{2^K+2d+5}\\
	&\leq  \|g\|\|h\| \e^{\alpha/2} (C't)^{2^K+2d+6}
	\end{split}\end{equation}
	using that $K'\delta\leq t$.

	\begin{proof}[Proof of \eqref{Estimation avec reco 1}]
		We recall that 
		\[\begin{split}
		\Phi_{\underline{n},n',n''}^{>,k'}[h](Z_{n''})&=\Phi_{n',n''}^{>,\delta}\circ\Phi_{n_k,n'}^{0,k'\delta}\circ\Phi^0_{\underline{n}}[h](Z_{n''}) \\
		&=
		\frac{1}{n''!}\sum_{\substack{((s_i,\bar{s}_i)_i,(\kappa_j)_j)\\\kappa_j\leq \gamma-1}}\ind_{\mathcal{R}_{\substack{((s_i,\bar{s}_i)_i,(\kappa_j)_j)}}}h\big(\Zt(k\theta+k'\delta,((s_i,\bar{s}_i)_i,(\kappa_j)_j),Z_{n''})\big)
		\end{split}\]
		where $\mathcal{R}_{\substack{((s_i,\bar{s}_i)_i,(\kappa_j)_j)}}$ is the set of initial parameters such that the pseudotrajectory has 
		\begin{itemize}
			\item $n'$ particles at time $\delta$,
			\item $n_l$ particles at time $k'\delta + (k-l)\theta$,
			\item at least one recollision,
			\item no recollision after time $\delta$.
		\end{itemize}
		
		We define the clustering tree $T^>$ as follows: let $\mathcal{G}$ be the collision graph of $\Zt(\tau)$. We look at the collision in temporal order and add only the clustering collision. 
		
		It will not sufficient to categorize initial data. Let $(q,\bar{q})$ (with $q<\bar{q}$) be the first two particles to have a non-clustering collision, $\tau_{\rm cycle}$ the time of this collision and $c\in [1,n''-1]$ such that it happens between time the $c$-th and the $(c+1)$-th clustering collision. 
		
		The data $(T^>,(q,\bar{q},c))$ gives a partition of the set of initial data. Note that the family \[(T^>,(q,\bar{q},c,((s_i,\bar{s}_i)_i),(\kappa_j)_j))\]
		construct the collision graph up to time of non clustering collision. Considering the change of variables 
		\[\forall i \in [1,n''-1],~\hat{x}_i:=x_{\nu_i}-x_{\bar{\nu}_i},~X_{2,n''}\mapsto(\hat{x}_1,\cdots,\hat{x}_{n''-1})\]
		with $T^>=:(\nu_i,\bar{\nu_i})_{i\leq n''-1}$, we can construct as in the previous section a sequence of set $B^i_{T^>,(q,\bar{q},c)}$ depending only on $V_{n''}$ and $\hat{x}_1,\cdots,\hat{x}_{i-1}$ which condition the relative position $\hat{x}_i$. The construction has to take into account the apparition of cycle. We define in the following $(T_i)_i$ by $T_i$ equal $\delta$ if $i$ is smaller than $n''-n'$, $\theta$ if $i$ is between $n''-n'+1$ and $n''-n_k$ and $t$ else (it count the number of clustering collision in $[0,\delta]$, $[\delta,k'\delta]$ and $[k'\delta,k'\delta+k\theta]$).
		
		\begin{figure}[h]
			\centering
			\includegraphics[scale=0.3]{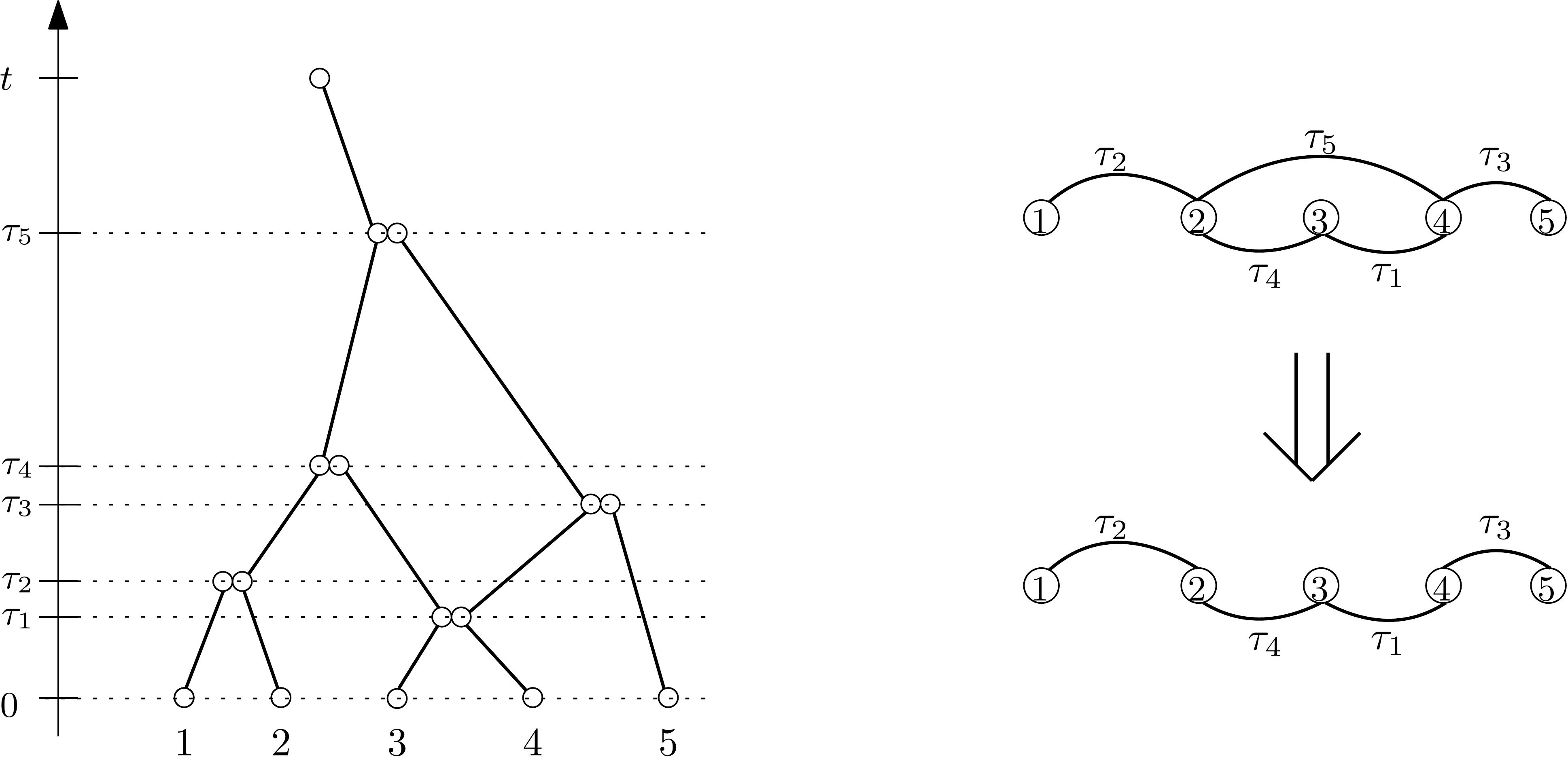}
			\caption{Example of construction of a clustering tree. Here $(q,\bar{q},c)= (2,4,4)$.}
		\end{figure}
		
		We need to characteristize particular collision in $T^>$ which conditions the apparition of the non-clustering collision.

		\begin{definition}\label{def-tutor}
			We call \underbar{parent}~$p$  of a group of particles $(q_k)_{k }$ at time $\tau$ the~$p$-th edge with the largest~$p$ such that one of the particles $(q_k)_{k }$ is deflected at $\tau_p \leq \tau$.      If such a parent does not exist, then we set~$\tau_p := 0$. 
		
		We define the   \underbar{connector}~$k$  of  two particles $(q,\bar q)$  the index of the first edge realizing a connected path between $q$ and $\bar q$.
		
		The \underbar{tutor}~$j$ of two particles $(q, \bar q)$ at time $\tau$  is the largest $j$ with $t_j \leq \tau$ such that $j$ is either the parent  at time $\tau$ or the connector of $(q, \bar q)$.
		\end{definition}
		
		\begin{figure}[h]
			\centering
			\includegraphics[scale=0.15]{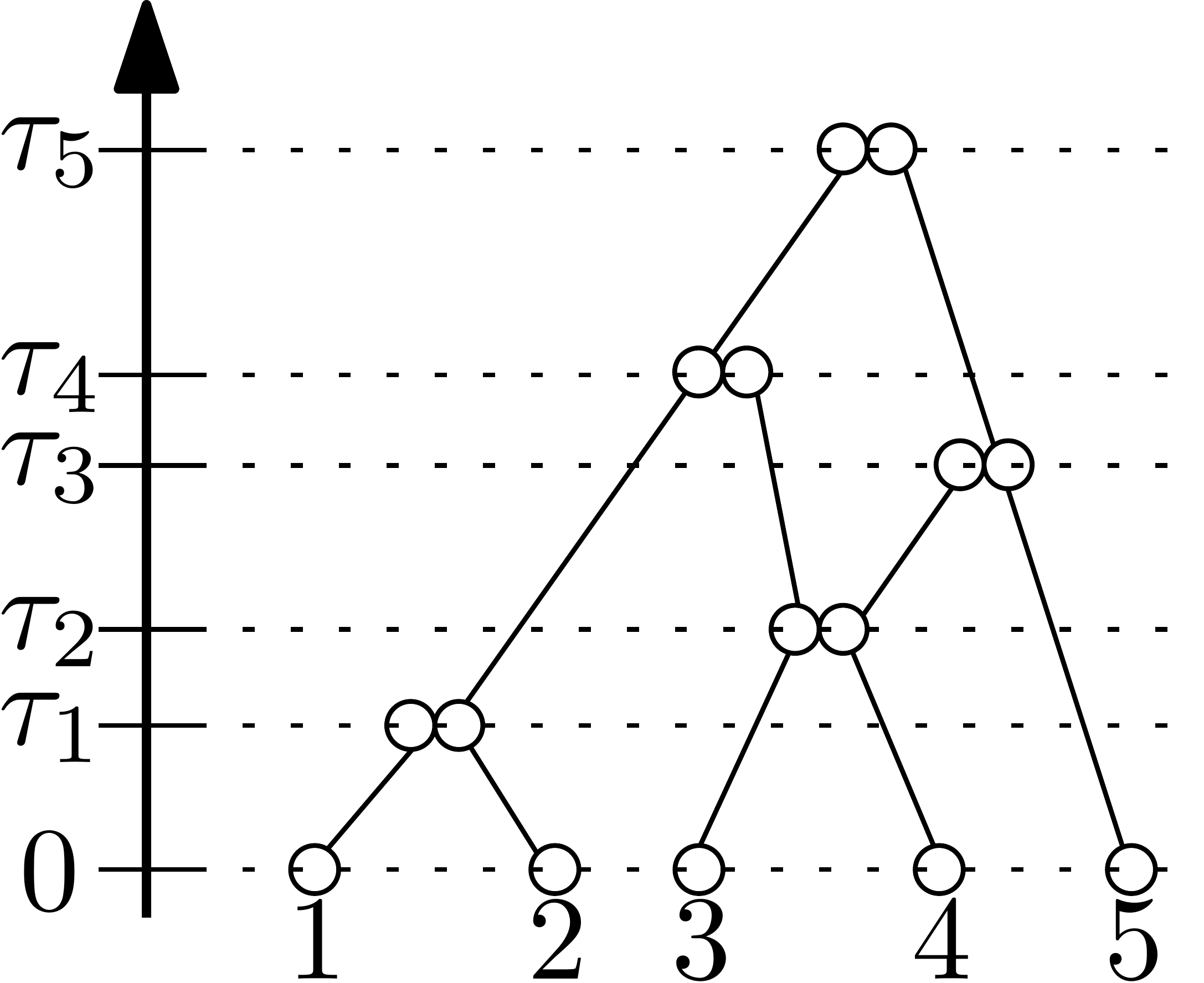}
			\caption{In this pseudotrajectory, the parent of collision between $2$ and $5$ at time $\tau_5$ is the collision between $1$ and $2$ at time $\tau_1$ and the connector the collision between $3$ and $4$ at time $\tau_2$.}
		\end{figure}
		
		The first particular collision is $j$ the tutor of $(q,\bar{q})$ before $\tau_{\rm cycle}$. We define
		\begin{align*}
		B^j_{T^>,(q,\bar{q},c)}:=\Big\{\hat{x}_j\Big|\exists \tau\in(0,T_j\wedge t_{j+1}),~|\text{x}_{\nu_{j}}&(\tau)-\text{x}_{\bar{\nu}_j}(\tau)|\leq \e\, \\
		&{\rm the\,}j{\rm -th\,collision\,is\,the\,tutor\,of\,the\,cycle}\Big\}.
		\end{align*}
		Note that after the clustering time $\tau_j$ particles $q$ and $\bar{q}$ do not change their velocities.
		
		\begin{prop}\label{prop: recollision} 
			Assume that~$d \geq 3$. Then, denoting by~$w_q ,  w_{ \bar q}, w_{q_j}, w_{\bar q_j}$ the velocities of $q, \bar q, q_j, \bar q_j$ at $t_{j-1}^+$ (which are the same than at time $t_{j}^-$), one has if the tutor $j$ is the parent of $(q,\bar{q})$,
			\begin{equation} \label{eq:integralparent1}
			\int  \ind_{B^j_{T^>,(q,\bar{q},c)}} \, d \hat x_j   \leq {C\over \mu_\e}  (\mathbb{V}  t)^d  \times  \left(  {\mathbb{V}\e |\log \e| \ind_{q \neq \bar q_j}\over  |w_q - w_{ \bar q_j }| }+ { \mathbb{V}\e |\log \e| \ind_{\bar q \neq \bar q_j}\over  |w_{\bar q} - w_{ \bar q_j }| } + {\mathbb{V} t \over \mu_\e} \right)  \, , 
			\end{equation}
			and if the tutor is a connector but not a parent 
			\begin{equation} 
			\label{eq:integralparent2} \int \ind_{B^j_{T^>,(q,\bar{q},c)}} \, d \hat x_j   \leq {C\over \mu_\e} ( \mathbb{V} \theta)^{d+1}  \times   \left[ \sum_{\zeta}\ind_{\sin (w_q - w_{\bar q}, \zeta) \leq \e} +(\mathbb{V}  \theta)^{d}   \min \Big(
			1, {\e \ind_{(q, \bar q) \neq (q_j, \bar q_j)} \over \sin\big(
				w_q - w_{\bar q},w_{q_j} -  w_{\bar q_j}\big)}
			\Big)\right]
			\end{equation}
			where the sum runs over $\zeta\in \mathbb{Z}^d\setminus\{0\}$ contained in the ball of radius $  \mathbb{V}\theta $.
			%, and where
			%\begin{equation} \label{eq:sinO}
			%\sin \Omega = {|(w_q - w_{\bar q}) \wedge (w_{q_j} -  w_{\bar q_j})|\over |w_q - w_{\bar q}| \, | w_{q_j} - w_{\bar q_j}|}\,.
			%\end{equation}
		\end{prop}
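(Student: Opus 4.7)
The plan is to perform, in the variable $\hat{x}_j$, the same change of variables $\hat{x}_j \mapsto (t_j,\eta_j)$ that was used in the clustering estimates of the previous section, where $t_j \in (0, T_j \wedge t_{j+1})$ is the time of the $j$-th clustering collision and $\eta_j \in \mathbb{S}^{d-1}$ is the unit vector from $\nu_j$ to $\bar\nu_j$ at that instant. The Jacobian produces $\mu_\e^{-1} ((v_{\nu_j}-v_{\bar\nu_j})\cdot \eta_j)_+ \, d\eta_j \, dt_j$, so bounding $\int \ind_{B^j_{T^>,(q,\bar q,c)}}\,d\hat{x}_j$ reduces to controlling the measure of those $(t_j,\eta_j)$ for which the cycle-creating collision between $q$ and $\bar q$ actually occurs at some $\tau_{\rm cycle} \in (t_j, T_j)$, on the torus $\Lambda$.

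For Case 1 (the tutor is the parent of $(q,\bar q)$), Definition \ref{def-tutor} gives that at least one of $q,\bar q$ is deflected at time $t_j$ and neither is deflected again before $\tau_{\rm cycle}$. Consequently $q$ and $\bar q$ move in straight lines on $(t_j,\tau_{\rm cycle})$, and the cycle condition is that their free trajectories meet on $\Lambda$. Decomposing $\Lambda$ by periodic copies $\zeta \in \mathbb{Z}^d$ with $|\zeta|\lesssim \mathbb{V} t$ yields the geometric factor $(\mathbb{V} t)^d$. For each fixed $\zeta \neq 0$, the alignment of the post-scattering relative velocity (which is a function of $\eta_j$) with the relative position forces $\eta_j$ into a thin strip of measure $O(\e/|w_q - w_{\bar q_j}|)$, respectively $O(\e/|w_{\bar q}-w_{\bar q_j}|)$, depending on which of $q,\bar q$ is deflected at $t_j$. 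Integrating in $t_j$ up to the logarithmic cut-off on small denominators produces the $|\log\e|$ factor, while the configuration $\zeta = 0$ (no wrapping around the torus) is controlled trivially by the $\mathbb{V} t/\mu_\e$ remainder.

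For Case 2 (the tutor is a connector but not a parent), neither $q$ nor $\bar q$ is deflected at $t_j$, so their velocities are already frozen. The condition on $\hat{x}_j$ therefore factorizes into the clustering condition at the $j$-th edge, which by the previous section's first-collision estimate contributes $(\mathbb{V}\theta)^{d+1}/\mu_\e$, multiplied by the purely geometric requirement that the free straight lines of $q$ and $\bar q$ meet on $\Lambda$ on the interval $(t_j, T_j)$. The latter translates to $x_q - x_{\bar q} + \tau(w_q - w_{\bar q}) \in \zeta + B(0,\e)$ for some $\zeta \in \mathbb{Z}^d$, and summing over admissible $\zeta$ gives $\sum_\zeta \ind_{\sin(w_q-w_{\bar q},\zeta)\leq \e}$ when $(q,\bar q)=(q_j,\bar q_j)$, since then the velocities controlling the clustering edge and the cycle edge are the same. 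In the case $(q,\bar q) \neq (q_j,\bar q_j)$, the clustering and cycle conditions involve two independent velocity differences and one gets the extra alignment constraint $\min(1, \e/\sin(w_q - w_{\bar q}, w_{q_j} - w_{\bar q_j}))$ by a standard tube estimate.

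The main obstacle is to give a unified book-keeping that correctly tracks the torus periodicity and the possibility that many geometric configurations degenerate simultaneously. In particular, the assumption $d \geq 3$ enters exactly because the lattice sums over $\zeta$ together with the small-denominator singularities $|w-w'|^{-1}$ and $\sin(\cdot,\cdot)^{-1}$ must be integrable when the argument is later closed against the Maxwellian weight on $V_{n''}$ in \eqref{Estimation avec reco 1}--\eqref{Estimation avec reco 2}. The geometry used here is an adaptation to the torus of the classical recollision estimates appearing in \cite{BGS2,BGSS1}, with the refinement that both the parent and the connector possibilities for the tutor must be kept in parallel.
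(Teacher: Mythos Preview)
The paper does not give its own proof of this proposition: immediately after stating Proposition~\ref{prop: singularity} it writes ``Propositions \ref{prop: recollision} and \ref{prop: singularity} have been proved in \cite{BGSS1}.'' So there is nothing in the present paper to compare your argument against beyond that citation; what you are attempting is to reconstruct the geometric recollision estimate from \cite{BGSS1}.

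Your general strategy---pass to collision coordinates $(t_j,\eta_j)$ for $\hat x_j$, then exploit that after the tutor collision $q$ and $\bar q$ move on straight lines so that the cycle condition becomes an alignment constraint, and finally unfold the torus via $\zeta\in\mathbb Z^d$---is exactly the approach of \cite{BGS2,BGSS1}. In that sense the roadmap is correct. However, what you have written is a sketch rather than a proof, and several of the quantitative claims are not justified as stated.

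In Case~1, the singularities in the bound are $|w_q-w_{\bar q_j}|^{-1}$ and $|w_{\bar q}-w_{\bar q_j}|^{-1}$, i.e.\ they involve the \emph{pre}-collisional velocity $w_{\bar q_j}$ of the particle being added/deflected at the $j$-th edge, not the post-collisional relative velocity of $q$ and $\bar q$. The mechanism is that the scattering map $\eta_j\mapsto v'$ is a diffeomorphism whose Jacobian degenerates like $|w-w_{\bar q_j}|^{-1}$, and the alignment condition is imposed on $v'$; your description (``alignment of the post-scattering relative velocity \dots forces $\eta_j$ into a thin strip'') does not identify this Jacobian, which is where the singularity actually comes from. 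Likewise, your attribution of the $\mathbb V t/\mu_\e$ remainder to ``$\zeta=0$'' is not right: that term accounts for the degenerate case where the particle created at the tutor is itself $q$ or $\bar q$ and the cycle happens immediately in a tube of size $\e$, costing an extra $\e^{d-1}=\mu_\e^{-1}$ rather than an angular constraint.

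In Case~2, your claim that ``the previous section's first-collision estimate contributes $(\mathbb V\theta)^{d+1}/\mu_\e$'' is incorrect: the clustering estimate for a single edge gives $C\mathbb V T_j/\mu_\e$, not a power $(\mathbb V\theta)^{d+1}$. The extra powers of $\mathbb V\theta$ in \eqref{eq:integralparent2} arise from counting admissible lattice translates $\zeta$ (since $|x_q-x_{\bar q}|\lesssim \mathbb V\theta$ once the components are connected) and from the two-dimensional transversality analysis of the tube $\{x:\,\mathrm{dist}(x,\mathbb R(w_q-w_{\bar q}))\le\e\}$ against the line $\mathbb R(w_{q_j}-w_{\bar q_j})$; the bound does not factor as ``clustering $\times$ cycle'' in the way you describe. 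You also do not explain why the connector case is restricted to the short time scale $\theta$ rather than $t$ (this is because a connector which is not a parent can only occur on the last interval $[0,\delta]\subset[0,\theta]$ in the construction of $\Phi^{>,k'}_{\underline n,n',n''}$).

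Finally, your remark that $d\ge 3$ is needed only for the later Gaussian integration in Proposition~\ref{prop: singularity} is incomplete: the exponent $d$ in $(\mathbb V t)^d$ and the structure of the $\sin$-constraint already reflect a dimension count that would give non-integrable singularities in $d=2$ at the level of \eqref{eq:integralparent1}--\eqref{eq:integralparent2} themselves. To turn your outline into a proof you would need to carry out the explicit tube/Jacobian computations as in \cite[Section~5]{BGSS1}; as written, the proposal identifies the right change of variables but does not establish the stated bounds.
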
 
		
		The over $B^j_{T^>,(q,\bar{q},c)}$ are defined as in the previous section
		\begin{align*}
		B^i_{T^>,(q,\bar{q},c)}:=\Big\{\hat{x}_i\Big|\exists \tau\in(0,T_i\wedge t_{i+1}),~|\text{x}_{\nu_{i}}&(\tau)-\text{x}_{\bar{\nu}_i}(\tau)|\leq \e\Big\}.
		\end{align*}
		
		The above proposition uses the tutor to gain some smallness from the strong geometric constraint. However,
		the estimates in \eqref{eq:integralparent1}-\eqref{eq:integralparent2} lead to singularities in the relative velocities. Those singularities have to be integrated out either by using available parents  (if any) or by using the Gaussian measure of the velocity distribution at time $0$. The following proposition summarises the different possibilities.
		\begin{prop} \label{prop: singularity}
			(i) Let $q \neq \bar q$ be two particles of velocities $w_q, w_{\bar q}$ with  parent $\ell$. Let~$\zeta\in \mathbb{Z}^d\setminus\{0\}$. Then one has that
			\begin{equation}
			\label{eq:integralgrandparent1}
			\int   \left( {\mathbb{V}\e |\log \e|  \over  |w_q - w_{ \bar q _j}| }\, 
			+\ind_{\sin (w_q - w_{\bar q_j}, \zeta) \leq \e}\right)
			\,   \ind_{B^\ell_{T^>,(q,\bar{q},c)}} \,d\hat x_{\ell} \leq    \frac C {\mu_\e} \,  ^
			\mathbb{V}\e |\log \e| \big( \delta \ind_{\ell = 1} +  t \ind_{\ell \not =  1} \big)\;.
			\end{equation}

			\noindent
			(ii) Let $q , {\bar q}, {q_j}, {\bar q_j}$ be particles with velocities $w_{q}, w_{\bar q}, w_{q_j}, w_{\bar q_j}$ and parent $\ell$ (say deflecting $q$), such that $\left(q,  q_j\right)$ and $\left(\bar q,  \bar q_j\right)$ belong to different connected components of the dynamical graph.
			\begin{equation}\label{eq:integralgrandparent2}
			\begin{split}
			 \int  \min \left(1, {\e  \ind_{\{q, \bar q\} \neq \{q_j, \bar q_j\}}  \over \sin \big(
				w_q - w_{\bar q},w_{q_j} -  w_{\bar q_j}\big)  } \right) \ind_{B^\ell_{T^>,(q,\bar{q},c)}} d\hat x_{\ell} 
			\leq  \frac C {\mu_\e} \,  
			\mathbb{V}\e |\log \e|\left( \delta \ind_{\ell= 1} +  t \ind_{\ell  \neq  1} \right)\quad\\
			\times
			\left( 1 +  {\theta\mathbb{V}\ind_{(q,q_j) \hbox{\rm {\tiny \ encounter at }}\tau_\ell}  \over |  u_{q}  + u_{q_j} - (w_{\bar q_j}+w_{\bar q}) |  }
			+ {t\mathbb{V} \ind_{q = q_j} \ind _{\bar q \neq \bar q_j} \over | w_{\bar q} - w_{\bar q_j}|  }
			\right)\;,
			\end{split}
			\end{equation}
			denoting by $u$ the pre-collisional velocities.
			
			\noindent
			(iii) Let $q , {\bar q}, {q_j}, {\bar q_j}$ be particles with velocities $w_{q}, w_{\bar q}, w_{q_j}, w_{\bar q_j}$ such that $(q), (q_j)$ and $\left(\bar q,  \bar q_j\right)$ belong to different connected components of the dynamical graph. Let $\ell$ be the first parent of~$q , {\bar q}, {q_j}, {\bar q_j}$ deflecting only one particle of the group.
			\begin{equation}
			\label{eq:integralgrandparent14part}
			\int    {\mathbb{V}\e |\log \e|  \over  |  w_q  + w_{q_j} - (w_{\bar q_j}+w_{\bar q}) | }\, \ind_{B^\ell_{T^>,(q,\bar{q},c)}}    \,  d\hat x_{\ell} \leq    \frac C {\mu_\e} \,
			\mathbb{V}\e |\log \e| \big( \delta \ind_{\ell = 1} +  t \ind_{\ell \not =  1} \big)\;.
			\end{equation}
			(iv) For  $q \neq \bar q$,  $\zeta\in \mathbb{Z}^d\setminus\{0\}$
			\begin{equation}
			\label{eq:integralgrandparent1 time 0}
			\begin{aligned}
			\int  M(w_q) M(w_{q_j} )M(w_{\bar q})M(w_{\bar q_j}) \Big({\mathbb{V} \e |\log \e| \over |w_q -  w_{\bar q}|}+ {\mathbb{V}\e |\log \e|  \over  |w_q + w_{q_j}- w_{ \bar q }- w_{\bar q_j}| }+  \ind_{\sin (w_q - w_{\bar q}, \zeta) \leq \e} \quad\\
			+\min \Big(1, {\e \ind_{(q, \bar q) \neq (q_j, \bar q_j)} \over \sin \big(
				w_q - w_{\bar q},w_{q_j} -  w_{\bar q_j}\big) }\Big)  \Big)
			d w_q dw_{q_j} dw_{\bar q} dw_{\bar q_j}\,  \leq     C  \,  
			\mathbb{V} \e | \log \e| \,.
			\end{aligned}
			\end{equation}
		\end{prop}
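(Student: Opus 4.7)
The plan is to reduce all four bounds to a standard Jacobian estimate on the parent edge~$\ell$, followed by an integration of the scattering direction $\eta_\ell$ against the singular factors in the velocities. For parts (i)--(iii), the starting point is the change of variables $\hat x_\ell \mapsto (t_\ell, \eta_\ell)$, which turns the geometric constraint $\ind_{B^\ell_{T^>,(q,\bar q,c)}}$ into an integral over $t_\ell \in [0, T_\ell]$ (with $T_\ell = \delta$ if $\ell = 1$ and $T_\ell = t$ otherwise) and $\eta_\ell \in \mathbb{S}^{d-1}$, with Jacobian $\frac{1}{\mu_\e}((v_{\text{in}} - v'_{\text{in}}) \cdot \eta_\ell)_+$. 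The time integration yields exactly the factor $\delta \ind_{\ell = 1} + t\ind_{\ell\neq 1}$ appearing on each right-hand side, so the task reduces to bounding the remaining $\eta_\ell$-integrals of the singular expressions.

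The decisive observation is that, by the scattering law \eqref{scattering}, the velocities at time $\tau_\ell^+$ and all subsequent velocities entering the singular denominators or sine indicators depend on $\eta_\ell$ in an explicit linear way. For (i), when $\ell$ is the parent of $\bar q_j$, the difference $w_q - w_{\bar q_j}$ is an affine function of $\eta_\ell$, and the $\eta_\ell$-integral $\int_{\mathbb{S}^{d-1}} ((v - v')\cdot\eta_\ell)_+ / |w_q(\eta_\ell) - w_{\bar q_j}(\eta_\ell)|\, d\eta_\ell$ is controlled using local integrability of $|w|^{-1}$ near the origin in dimension $d\geq 2$. For the sine term, the set $\{\eta_\ell : \sin(w_q - w_{\bar q_j},\zeta)\leq \e\}$ has surface measure $O(\e / |\zeta|)$; summing over lattice vectors $\zeta \in \mathbb{Z}^d \setminus \{0\}$ in a ball of radius $\mathbb{V}\theta$ against the convergent series $\sum_\zeta |\zeta|^{-(d-1)}$ (which uses $d \geq 3$, as in Proposition \ref{prop: recollision}) generates the additional $|\log\e|$ factor.

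Parts (ii) and (iii) use the same machinery, the key additional input being the hypothesis that $(q,q_j)$ and $(\bar q, \bar q_j)$ (resp.\ $(q)$, $(q_j)$, $(\bar q, \bar q_j)$) lie in distinct connected components of the dynamical graph: this guarantees that exactly one of the velocities in the expressions $w_q + w_{q_j} - (w_{\bar q_j} + w_{\bar q})$ and $\sin(w_q - w_{\bar q}, w_{q_j} - w_{\bar q_j})$ is actually modified by the parent scattering at $\tau_\ell$, so that the dependence on $\eta_\ell$ remains linear and the elementary estimates of part (i) apply verbatim. The extra parenthesised factors in \eqref{eq:integralgrandparent2} come from additional parents further up the cascade and are produced by iterating the same scheme. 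Finally, part (iv) is purely Gaussian: in $d\geq 2$ one has $\int M(v)\,dv/|v - v_0| \leq C$ uniformly in $v_0$ and $\int M(v) \ind_{\sin(v-v_0,\zeta)\leq \e}\,dv \leq C\e$, and after the orthogonal change of variables $(w_q, w_{q_j}) \mapsto (w_q + w_{q_j}, w_q - w_{q_j})$ the four-velocity integrals reduce to single-velocity ones of the same type.

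The principal difficulty lies in the bookkeeping for parts (ii) and (iii): one must carefully distinguish pre-collisional velocities $u$ from post-collisional velocities $w$ at $\tau_\ell$, determine which scattering actually introduces the $\eta_\ell$-dependence that cancels the singular factor, and split the cascade according to whether the parent sits in $[0,\delta]$ (giving $\ell = 1$) or further back in time (giving $\ell\neq 1$). Once these structural choices are fixed, each individual integral collapses to one of the elementary geometric or Gaussian estimates described above, and assembling them yields the four claimed bounds.
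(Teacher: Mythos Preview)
The paper does not supply a proof of this proposition: immediately after the statement it records that ``Propositions~\ref{prop: recollision} and~\ref{prop: singularity} have been proved in~\cite{BGSS1}'' and moves on to use the estimates. There is therefore no in-paper argument to compare your proposal against.

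That said, your outline is consistent with the standard route taken in~\cite{BGSS1} and the surrounding literature: the change of variables $\hat x_\ell \mapsto (t_\ell,\eta_\ell)$ with Jacobian $\mu_\e^{-1}((v_{\rm in}-v'_{\rm in})\cdot\eta_\ell)_+$, followed by integration of the singular velocity factors in $\eta_\ell$ using the affine dependence coming from the scattering law, is exactly how these bounds are obtained. One small inaccuracy: the sum over lattice vectors $\zeta$ that you invoke belongs to Proposition~\ref{prop: recollision}, not to the present statement, which is formulated for a single fixed $\zeta\in\mathbb{Z}^d\setminus\{0\}$; the $|\log\e|$ factor on the right of~\eqref{eq:integralgrandparent1} is already present in the integrand of the first term and does not have to be generated by a lattice sum here. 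Apart from this bookkeeping point, your description of the mechanism (linear dependence on $\eta_\ell$ guaranteed by the connected-component hypotheses in (ii)--(iii), and elementary Gaussian integrals in (iv)) matches the argument in the cited reference.
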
 
		Propositions \ref{prop: recollision} and \ref{prop: singularity} have been proved in \cite{BGSS1}. We can then sum on the $(\hat{x}_1,\cdots,\hat{x}_{n''-1})$ and $(T^>,(q,\bar{q},c))$.		
		\[\sum_{(q,\bar{q},c)} \sum_{T^>}
		% \int M^{\otimes N_k}(V_{N_k}) \,  dV_{N_k} \\& 
		\int d\hat x_1 \ind_{B^1_{T^>,(q,\bar{q},c)}} \int d\hat x_{ 2}\,  \dots\int d\hat x_{n_k-1} \ind_{B^{n_k-1}_{T^>,(q,\bar{q},c)}}\]
		
		We integrate the constraints iteratively using successively Propositions \ref{prop: recollision} and \ref{prop: singularity}, one obtain
		\begin{equation*}
		\begin{split}
		\int \ind_{\mathcal{R}_{((s_i,\bar{s}_i),(\kappa_j)_j)}} &M^{\otimes n''}dX_{2,n''}dV_{n''}\\
		&\leq \left(\frac{C}{\mu_\e}\right)^{n''-1}(n'')^{2n''+2}\frac{\delta^{\max(n''-n',1)}}{\max(n''-n',1)!}\frac{\theta^{(n'-n_k-1)_+}}{(n'-n_k-1)_+!}\frac{t^{n_k}}{n_k!}(\mathbb{V}t)^{2d+4}\e|\log\e|\\
		&\leq C'\left (\frac{C'}{\mu_\e}\right)^{n''-1}(n'')^{n''}\delta^{2}\theta^{(n''-n_k-2)_+}t^{n_k+2d+4}\e^\alpha
		\end{split}
		\end{equation*}
		using that $\mathbb{V}:=|\log\e|$.
		
		We obtain the expected result by summing on the \[((s_i,\bar{s}_i)_i,(\kappa_j)_j)\in\{\pm1\}^{2n''-1}\times[0,\gamma-1]^{n''},\]
		and dividing by $n''!$.
	\end{proof}

	\begin{proof}[Proof of \eqref{Estimation avec reco 2}]
		We use first the bound of the previous paragraph 
		\[\begin{split}
		&\Big|\Phi_{\underline{n},n',n''}^{>,k'}[h](Z_{n''})\Phi_{\underline{n},n',n''}^{>,k'}[h](Z_{n''-m+1,2n''-m})\Big|\\
		&\leq \frac{\|h\|^2}{(n''!)^2}
		\sum_{\substack{((s_i,\bar{s}_i)_i,(\kappa_j)_j)\\\kappa_j\leq \gamma-1}}\sum_{\substack{((s'_i,\bar{s}'_i)_i,(\kappa'_j)_j)\\\kappa'_j\leq \gamma-1}}\ind_{\mathcal{R}_{\substack{((s_i,\bar{s}_i)_i,(\kappa_j)_j)}}}(Z_{n''})\ind_{\mathcal{R}_{\substack{((s'_i,\bar{s}'_i)_i,(\kappa'_j)_j)}}}(Z_{n''-m+1,2n''-m}).
		\end{split}\]
		which is invariant under translation. We can fix $x_1=0$ and integrate with respect the over variable. 
		
		\begin{figure}[h]
			\includegraphics[scale=0.28]{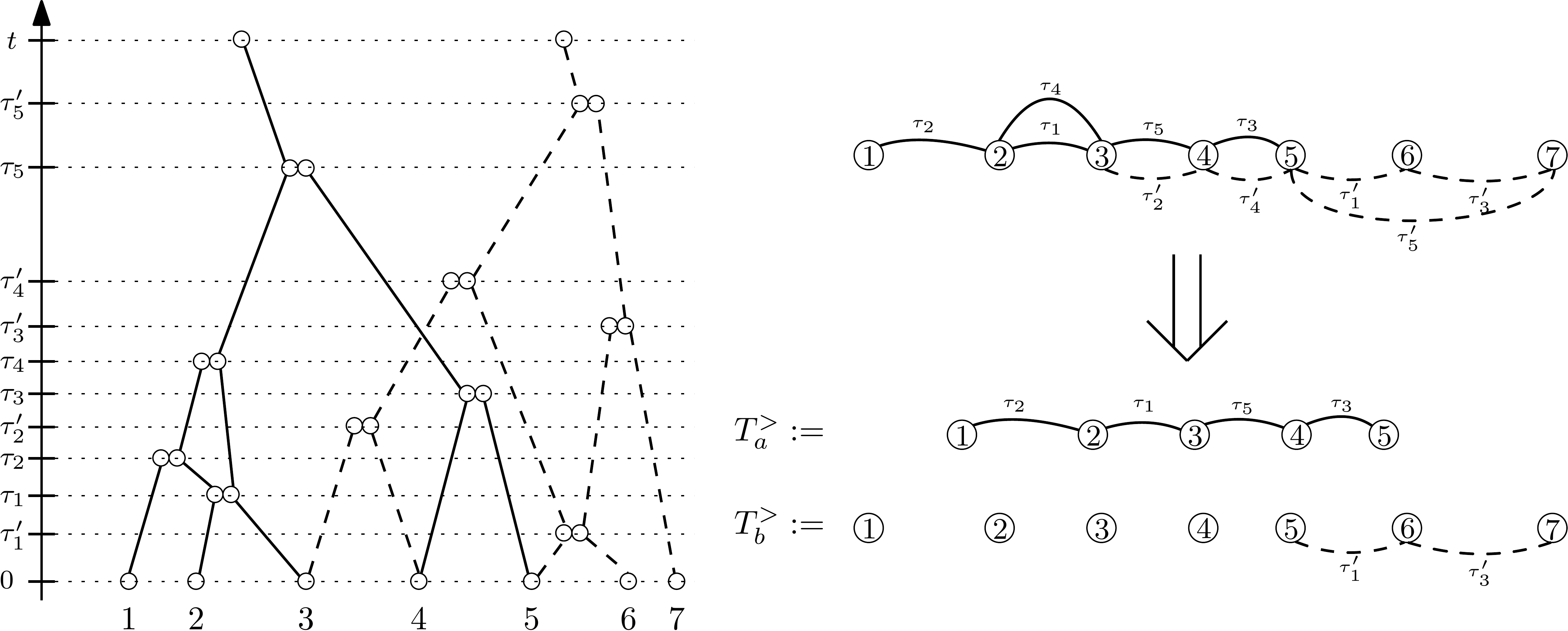}
			\caption{Example of construction of the clustering trees}
		\end{figure}
		
		Fix $((s_i,\bar{s}_i)_i,(\kappa_j)_j)$ and $((s'_i,\bar{s}'_i)_i,(\kappa'_j)_j)$. There are two pseudotrajectories. We construct as in the proof of \eqref{Estimation avec reco 1} the clustering tree $T^>_a$ and the recollisions parameter $(q,\bar{q},c)$ for the first recollision. We construct now the clustering graph $T^>_b$ of $\Zt'(\tau)$ by induction. Let $(\nu_i,\bar{\nu}_i)_{i\leq I}$ be the edges of the collision graph of $\Zt'(\tau)$, with temporal order. We begin by $T_0 = \emptyset$. A the $i$-th step, we add $(\nu_i,\bar{\nu}_i)$ to $T_{i-1}$ only if it does not create a cycle in the graph $T^{>}_a\cup T_{i-1}\cup\{(\nu_i,\bar{\nu}_i)\}$. At the end we have constructed the graph $T_b^>:=T_I$ and $T_a^>\cup T_b^>$ is a simply connected graph which links all the particles. Note that $T_b^>$ has $n''-m$ edges.
		
		We denote $T^>_a:=(\nu_i,\bar{\nu}_i)_{i\in[1,n''-1]}$ and  $T^>_b:=(\nu_i,\bar{\nu}_i)_{i\in[n'',2n''-m-1]}$ (with $\nu_i<\bar{\nu}_i$) and we make the change of variables 
		\[\forall i \in [1,2n''-m-1],~\hat{x}_i:=x_{\nu_i}-x_{\bar{\nu}_i},~X_{2,2n''-m}\mapsto(\hat{x}_1,\cdots,\hat{x}_{2n''-m-1}).\]
		
		We begin by fix $(\hat{x}_1,\cdots,\hat{x}_{n''-1})$ and we construct  construct a sequence of conditioning sets $B_{T^>_b}^i$ as in the proof of \ref{Estimation sans reco 1}. Then we can reproduce the same strategy and
		\[\begin{split}
		\sum_{T_b^>}\int_{B_{T^>_b}^{n''}}&d\hat{x}_{n''}\cdots\int_{B_{T^>_b}^{2n''-m-1}}d\hat{x}_{2n''-m-1}\\
		&\leq \left(\frac{C'(2n''-m)}{\mu_\e}\right)^{n''-m} \left(\|V_{2n''-m}\|^2+2n''-m\right)^{n''-m}\frac{t^{n''-m}}{(n''-m)!}.
		\end{split}\]
		
		In a second time we construct as in the proof of \eqref{Estimation avec reco 1} we construct a sequence of clustering sets $B_{T_a^>,(q,\bar{q},c)}^i$ (for $i\leq n''-1$) of relative position $\hat{x}_i$. Reproducing the same estimations,
		\begin{equation*}\everymath = {\displaystyle}
		\begin{array}{r@{}l@{}r}
		\int &\multicolumn{2}{l}{\ind_{\mathcal{R}_{\substack{((s_i,\bar{s}_i)_i,(\kappa_j)_j)}}}(Z_{n''})\ind_{\mathcal{R}_{\substack{((s'_i,\bar{s}'_i)_i,(\kappa'_j)_j)}}}(Z_{n''-m+1,2n''-m}) M^{\otimes(n''-m)}dX_{2,2n''-m}dV_{2n''-m}}\\[6pt]
		&\multicolumn{2}{l}{\leq \sum_{\substack{(T_a^>,T_b^>)\\(q,\bar{q},c)}}\int M^{\otimes(n''-m)}dV_{2n''-m}}\\[-19pt]
		&~\quad\quad\quad\quad\quad~&\times \int_{B_{T^>,(q,\bar{q},c)}^1}d\hat{x}_{1}\cdots\int_{B_{T^>,(q,\bar{q},c)}^{n''-1}}d\hat{x}_{n''-1}\int_{B_{T^>_b}^{n''}}d\hat{x}_{n''}\cdots\int_{B_{T^>_b}^{2n''-m-1}}d\hat{x}_{2n''-m-1}\\[8pt]
		&\multicolumn{2}{l}{\leq \left(\frac{C}{\mu_\e}\right)^{2n''-m-1}(2n''-m)^{4n''-2m}\frac{\delta^{\max(n''-n',1)}}{\max(n''-n',1)!}\frac{\theta^{(n'-n_k-1)_+}}{(n'-n_k-1)_+!}\frac{t^{n_k+m}}{n_k!(n''-m)!}}\\[10pt]
		&&\times(\mathbb{V}t)^{2d+4}\e|\log\e|\\
		&\multicolumn{2}{l}{\leq C'\left (\frac{C'}{\mu_\e}\right)^{2n''-m-1}(2n''-m)^{2n''-m-1}\delta^{2}\theta^{(n''-n_k-2)_+}t^{n_k+m+2d+4}\e^\alpha.}
		\end{array}
		\end{equation*}
		Where we use that for $(d_1,\cdots,d_k)\in\mathbb{N}^k$,
		\[\frac{1}{d_1!\cdots d_k!}\leq \frac{k^{d_1+\cdots+d_k}}{(d_1+\cdots+d_k)!}\]
		and the Stirling formula. Summing on the $(4\gamma)^{2(n''-1)}$ possible $((s_i,\bar{s}_i)_i,(\kappa_j)_j)$ and $((s'_i,\bar{s}'_i)_i,(\kappa'_j)_j)$ and then dividing by $(n'')!^2$, we obtain the expecting result.
	\end{proof}

	\section{Estimation of pathological recollisions}\label{$L^2$ estimation of the local recollision part}
	
	In the present section is to treat $G_\e^{\text{rec},2}(t)$ defined by
	\begin{equation*}
	\begin{split}
	\sum_{\substack{1\leq k\leq K-1\\1\leq k'\leq K'}} &\sum_{\substack{n_1\leq\cdots\leq n_k\\n_j-n_{j-1}\leq 2^j}}\left(\sum_{\substack{n'\geq n_k}} \mathbb{E}_\e\left[\mu_\e^{-1/2}\sum_{(i_1,\cdots,i_{n'})}\Phi_{\underline{n},n'}^{0,k'}[h]\left(\gr{Z}_{\ui_{n'}}(t_s+\delta)\right)\mathcal{X}_{\ui}\big(\gr{Z}_{\N}(t_s)\big)\,\zeta^0_\e(g)\ind_{\Upsilon_\e}\right]\right.\\
	&-\left.\sum_{n''\geq n'\geq n_k} \mathbb{E}_\e\left[\mu_\e^{-1/2}\sum_{(i_1,\cdots,i_{n''})}\Phi_{n'\leftarrow n''}^{\gamma}\left[\Phi_{\underline{n},n'}^{0,k'}\left[h\right]\right]\left(\gr{Z}_{\ui_{n''}}(t_s)\right)\mathcal{X}_{\ui_{n'}}\big(\gr{Z}_{\N}(t_s)\big)\,\zeta^0_\e(g)\ind_{\Upsilon_\e}\right]\right).
	\end{split}
	\end{equation*}
	
	We will ge the following bound:
	\begin{prop}
		For $\e>0$ small enough, we have
		\begin{equation}\label{Estimation morceau 4}
		\Big|G_\e^{\text{rec},2}(t)\Big|\leq C\|h\| \|g\| \left(K2^{K^2} (Ct)^{2^{K+1}}\right)\e^{a/2}.
		\end{equation}
	\end{prop}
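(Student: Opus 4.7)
The plan is to expand the asymmetric conditioning $\mathcal{X}_{\ui_{n'}}(\gr{Z}_\N(t_s))$ into cumulants involving a finite number of extra background particles, bound each cumulant using the $L^1$ estimate on $\chi_r$ combined with the quasi-orthogonality bound of Theorem \ref{theoreme de quasi orthogonalite}, and finally sum over the remaining parameters as in Section \ref{Estimation of the long range recollisions.}. Starting from
\[\mathcal{X}_{\ui_{n'}}(\gr{Z}_\N(t_s)) = 1 - \prod_{\omega \cap \ui_{n'}\neq\emptyset}\bigl(1-\chi(\gr{Z}_\omega(t_s))\bigr),\]
I will expand the product via inclusion–exclusion and regroup the result, as announced in the introduction, as
\[\mathcal{X}_{\ui_{n'}}(\gr{Z}_\N(t_s))\,\ind_{\Upsilon_\e} = \ind_{\Upsilon_\e}\sum_{p\geq n'}\sum_{(i_{n'+1},\dots,i_p)}\mathfrak{X}_{n',p}\bigl(\gr{Z}_{\ui_p}(t_s)\bigr),\]
where $\mathfrak{X}_{n',p}$ is a finite sum of products $\prod_j(-\chi(\gr{Z}_{\omega_j}))$ indexed by families of subsets $\omega_j\subset \ui_p$, each meeting $\ui_{n'}$ and jointly covering $\ui_p$.

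A crucial observation is that on $\Upsilon_\e$ two subsets with nonzero $\chi$ can intersect only if their union lies inside a single $2\delta\mathbb{V}$-distance cluster, which has size at most $\gamma$. Hence the connected components of the intersection graph of $\{\omega_j\}$ live inside disjoint $\gamma$-clusters, and each such cluster contributes at most $2^{2^\gamma}$ admissible families of subsets. This tames the combinatorics of $\mathfrak{X}_{n',p}$ and, together with \eqref{estimation recollision}, yields a bound of the form $C^p\,\mu_\e^{-p+1}\,\delta^2\bigl(\mu_\e\delta^d\mathbb{V}^d\bigr)^{p-3}\,\e^{\alpha}$ once one anchor particle in $\ui_{n'}$ is fixed. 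The asymmetric design of $\mathcal{X}$ is exactly what produces the missing $\mu_\e^{-1}$: each $\omega_j$ has at least one index already selected inside $\ui_{n'}$, so the cost of enumerating it drops by a factor of $\mu_\e$ relative to a fully symmetric conditioning.

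With this expansion in hand, the two expectations inside $G_\e^{\text{rec},2}(t)$ are treated by feeding the composite function $\Phi_{n'\leftarrow n''}^\gamma\bigl[\Phi_{\underline n,n'}^{0,k'}[h]\bigr]\cdot \mathfrak{X}_{n',p}$ into Corollary \ref{Corollaire utilisant la quasi orthogonalite}. The clustering-tree bounds of Section \ref{Clustering estimations} (and their counterparts for $\Phi^\gamma$) already control the $L^1$ norms of the $\Phi$-factors, and the $\e^\alpha$ smallness now comes from $\chi$ via \eqref{estimation recollision} rather than from the singular geometry exploited for non-pathological recollisions. The first term of $G_\e^{\text{rec},2}(t)$ (which propagates the data up to $t_s+\delta$) and the second term (in which the extra pseudotrajectory development between $t_s$ and $t_s+\delta$ is included via $\Phi^\gamma_{n'\leftarrow n''}$) are handled in exactly the same fashion, the latter absorbing the additional collisions in $[t_s,t_s+\delta]$ into the $\Phi^\gamma$-factor, whose $L^1$ bound is already available from Section \ref{Estimation of the long range recollisions.}.

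The final step is the summation. The $\Phi$-factors contribute at most $(Ct)^{2^K}$ as in Sections \ref{Clustering estimations}–\ref{Estimation of the long range recollisions.}; including the $\mathfrak{X}_{n',p}$ particles (whose number is bounded by $2^K$ thanks to the symmetric conditioning) doubles the exponent, producing $(Ct)^{2^{K+1}}$; summing over $k\in[1,K]$, $k'\in[1,K']$ and admissible $\underline n$ with $n_j-n_{j-1}\leq 2^j$ yields the $K\,2^{K^2}$ overhead; and enough of the $\e^\alpha$ gain survives after absorbing $\mu_\e\sim\e^{-(d-1)}$ to leave the announced $\e^{\alpha/2}$. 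The main obstacle will be step two: verifying that the combinatorial explosion of $\mathfrak{X}_{n',p}$ is uniformly controlled in $p$ by the symmetric $\gamma$-cluster constraint while still preserving enough of the $\mu_\e^{-1}$ gain from the asymmetric anchoring to render the $p$-sum convergent without destroying the $\e^{\alpha/2}$ smallness.
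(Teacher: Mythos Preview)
Your outline captures the cumulant expansion of $\mathcal{X}_{\ui_{n'}}$ and the role of the symmetric $\gamma$-cluster bound correctly, and the summation step is in the right spirit. There is, however, a genuine gap in your treatment of the \emph{first} summand of $G_\e^{\text{rec},2}(t)$.

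In that term the factor $\Phi_{\underline n,n'}^{0,k'}[h]$ is evaluated at $\gr{Z}_{\ui_{n'}}(t_s+\delta)$, whereas your cumulants $\mathfrak{X}_{n',p}$ are functions of $\gr{Z}_{\ui_p}(t_s)$. Corollary~\ref{Corollaire utilisant la quasi orthogonalite} (and the underlying Cauchy--Schwarz argument using the invariance of the Gibbs measure) requires a \emph{single} evaluation time: you need a function $h_n(\gr Z_{\ui_n}(\tau))$ of finitely many particles at one instant $\tau$. Saying that the two terms ``are handled in exactly the same fashion'' does not explain how to pass from $t_s+\delta$ to $t_s$. A pseudotrajectory development of the type $\Phi^\gamma_{n'\leftarrow n''}$ is \emph{not} the right tool here: it already throws away all background interactions except the ones along the chosen tree, but the product with $\mathfrak{X}_{n',p}$ forces you to keep track of \emph{exactly} which background particles collide with the $n'$ selected particles on $[t_s,t_s+\delta]$, and in precisely the regime where many local recollisions occur.

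The paper closes this gap with a separate tool, the \emph{dynamical cluster development}: one writes
\[
h_m(\gr Z_{\ui_m}(\delta))=\sum_{n\ge m}\sum_{(i_{m+1},\dots,i_n)}\mathsf f_{m\leftarrow n}[h_m](\gr Z_{\ui_n}(0)),
\]
where $\mathsf f_{m\leftarrow n}$ is built from exact dynamical clusters $\varphi_{|\lambda|}$ and their overlap cumulants $\psi_l$ (controlled via the Penrose tree inequality). This is an \emph{exact} identity that does not truncate recollisions, and it is what reduces the first term to a function of finitely many particles at the single time $t_s$. Only after this step can one symmetrize, form the composite object $\Phi^{k'}_{\underline n,n',p,l}$, and run the clustering-tree $L^1$ bounds \eqref{Estimation reco loc 3}--\eqref{Estimation reco loc 4} in parallel with the ones \eqref{Estimation reco loc 1}--\eqref{Estimation reco loc 2} you describe for the second term. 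Without this ingredient your proposal does not provide a legitimate input to Corollary~\ref{Corollaire utilisant la quasi orthogonalite} for the first summand.
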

	
	\subsection{Finite parameter expansion}
	In the sums
	\[\sum_{(i_1,\cdots,i_{n'})}\Phi_{\underline{n},n'}^{0,k'}[h]\left(\gr{Z}_{\ui_{n'}}(t_s+\delta)\right)\mathcal{X}_{\ui}\big(\gr{Z}_{\N}(t_s)\]
	and
	\[\sum_{(i_1,\cdots,i_{n''})}\Phi_{n'\leftarrow n''}^{\gamma}\left[\Phi_{\underline{n},n'}^{0,k'}\left[h\right]\right]\left(\gr{Z}_{\ui_{n''}}(t_s)\right)\mathcal{X}_{\ui_{n'}}\big(\gr{Z}_{\N}(t_s)\big),\]
	the indicator function $\mathcal{X}_{\ui_{n'}}(\gr{Z}_{\N})$ depends on all the particles of the system. In addition in the first sum we evaluate a function at time $t_s$ and an other at time $t_s+\delta$. In order to apply usual $L^2$ estimates we have to decompose the terms as sum of  functions evaluated on finitely many parameter

	\subsubsection{Decomposition of $\mathcal{X}_{(i_1,\cdots,i_{n'})}(\gr{Z}_{\N})$}
	We begin by expand  $\mathcal{X}_{(i_1,\cdots,i_{n'})}(\gr{Z}_{\N})$ as a sum of function of finite number of particles and to compute a sum. We can decompose it formally:
	\begin{equation}
	\begin{split}
	\mathcal{X}_{(i_1,\cdots,i_n)}(\gr{Z}_{\N}) &=  1 - \prod_{\substack{\omega\subset\{1,\cdots,\N\}\\\omega\cap\{i_1,\cdots,i_n\}\neq\emptyset}} \big(1-\chi(\gr{Z}_\varpi)\big)\\
	&=-\sum_{p\geq n}\sum_{(i_{n+1},\cdots,i_{p})}\frac{1}{(p-n)!} \sum_{\gr{p}\geq0} \,\sum_{\bar{\varpi}\in\mathcal{Q}^{\gr{p}} _{[1,n],[n+1,p]}}\,\prod_{j=1}^{\gr{p}} \left[-\chi\big(\gr{Z}_{\ui{}_{\varpi_j}}\big)\right]
	\end{split}
	\end{equation}
	where we define for $\omega_1$ and $\omega_2$ two subset of $\mathbb{N}$ with empty intersection 
	\[\mathcal{Q}^{\gr{p}}_{\omega_1,\omega_2} := \left\{(\varpi_1,\cdots,\varpi_{\gr{p}})\Big|\forall i,\,\varpi_i\subset\omega_1\cup\omega_2,\,\varpi_i\cap\omega_1\neq\emptyset\,;\,\omega_2\subset \bigcup_{j= 1}^{\gr{p}} \varpi_j\,;\,\forall i\neq j,~\varpi_i\neq\varpi_j\right\}.\]

	Defining
	\begin{equation}
	\mathfrak{X}_{n,p}\big(\gr{Z}_{\ui_p}\big):=-\frac{1}{(p-n)!} \sum_{\gr{p}\geq0} \,\sum_{\bar{\varpi}\in\mathcal{Q}^{\gr{p}} _{[1,n],[n+1,p]}}\,\prod_{j=1}^{\gr{p}} \left[-\chi\big(\gr{Z}_{\ui{}_{\varpi_j}}\big)\right],
	\end{equation}
	we have for any bounded and measurable function $h_n$
	\[\sum_{(i_1,\cdots,i_n)}h_n(\gr{Z}_{\ui_n}(\tau_1))\mathcal{X}_{\ui_n}(\gr{Z}_{\N}):= \sum_{p\geq n}\sum_{(i_1,\cdots,i_{p})}h_n\big(\gr{Z}_{\ui_n}(\tau_1)\big)\mathfrak{X}_{n,p}\big(\gr{Z}_{\ui_{p}}(\tau_2)\big).\]
	
	For any family  $(\varpi_1,\cdots,\varpi_{\gr{p}})\in \mathcal{Q}_{[1,n],[n+1,p]}^{\gr{p}}$ has all its terms disjoint. Thus ${\gr{p}}$ is smaller than the $|\{\varpi,~\varpi\subset\{1,\cdots,p\}\}| = 2^{p}$ and $|\mathfrak{X}_{n,p}|$ is bounded by $2^{2^{p}}$. This equality holds on $\{\N\leq N\}$ for every $N\in\mathbb{N}$ and he number of particles is bounded on $\Upsilon_{\e}$. So the decomposition is valid on $\Upsilon_{\e}$.

	We use this decomposition:
	\begin{equation}\label{Collision local première partie}
	\begin{split}&\mathbb{E}_\e\left[\mu_\e^{-1/2}\sum_{(i_1,\cdots,i_{n''})}\Phi_{n'\leftarrow n''}^{\gamma}\left[\Phi_{\underline{n},n'}^{0,k'}\left[h\right]\right]\left(\gr{Z}_{\ui_{n''}}(t_s)\right)\mathcal{X}_{\ui_{n'}}\big(\gr{Z}_{\N}(t_s)\big)\,\zeta^0_\e(g)\ind_{\Upsilon_\e}\right]\\
	&=\sum_{p\geq n''}\mathbb{E}_\e\left[\mu_\e^{-1/2}\sum_{(i_1,\cdots,i_{p})}\Phi_{n'\leftarrow n''}^{\gamma}\left[\Phi_{\underline{n},n'}^{0,k'}[h]\right]\left(\gr{Z}_{\ui_{n''}}(t_s)\right)\mathfrak{X}_{n',p}(\gr{Z}_{\ui_{[1,n']\cup[n''+1,p]}}(t_s))\,\zeta^0_\e(g)\ind_{\Upsilon_\e}\right]
	\end{split}\end{equation}
	and
	\begin{equation}\begin{split}
	\mathbb{E}_\e&\left[\mu_\e^{-1/2}\sum_{(i_1,\cdots,i_{n'})}\Phi_{\underline{n},n'}^{0,k'}[h]\left(\gr{Z}_{\ui_{n'}}(t_s+\delta)\right)\mathcal{X}_{\ui_{n'}}\big(\gr{Z}_{\N}(t_s)\big)\,\zeta^0_\e(g)\ind_{\Upsilon_\e}\right]\\
	&=\sum_{p\geq 0}\mathbb{E}_\e\left[\mu_\e^{-1/2}\sum_{(i_1,\cdots,i_{n'+p})}\Phi_{\underline{n},n'}^{0,k'}[h]\left(\gr{Z}_{\ui_{n'}}(t_s+\delta)\right)\mathfrak{X}_{n',p}(\gr{Z}_\ui(t_s))\,\zeta^0_\e(g)\ind_{\Upsilon_\e}\right].
	\end{split}\end{equation}
	
	\subsubsection{Dynamical cluster development}
	In the second member of $G_\e^{\text{rec},2}$ we look at function at time $t_s$ and $t_s-\delta$. To come back to one single evaluation time we have to do some pseudotrajectory development. But tree pseudotrajectories are not adapted since we are precisely where a lot of local recollision happened. So we use an other kind of pseudotrajectory development: dynamical cluster development (see \cite{Sinai} for more details).
	
	We denote $\Zc^\lambda(\tau) = (\mathsf{X}^\lambda(\tau),\mathsf{V}^\lambda(\tau))$ the trajectory of the particles $\lambda$ in hard sphere dynamics -isolated of the other particles- with initial data $Z_\lambda$. For any subset $\lambda'\subset\lambda$, $\Zc_{\lambda'}^\lambda(\tau)$ is the trajectory of particles $\lambda'$ in $\Zc^\lambda(\tau)$.
	
	We say that $\Zc^\lambda(\tau)$ forms a cluster if the collision graph on time interval $[0,\delta]$ is connected and $\varphi_{|\lambda|}(Z_\lambda)$ the indicator function that the trajectory $\Zc^\lambda(\tau)$ form a cluster. In the same way, for $\lambda'\subset\lambda$, $\Zc^\lambda(\tau)$ form a $\lambda'$-cluster if in the collision of $\Zc^\lambda(\tau)$, all the particles are in the same connected components than one of the particles of $\lambda'$. The function $\varphi_{|\lambda|}^{\lambda'}(Z_\lambda)$ is equal to $1$ if $\Zc^\lambda(\tau)$ is a $\lambda'$-cluster, $0$ else.
	
	We say that trajectories $\Zc^{\lambda}(\tau)$ and $\Zc^{\lambda'}(\tau)$ (with $\lambda\cap\lambda'=\emptyset$) have an overlap if there exits a couple of particle $(i,i')\in\lambda\times\lambda'$ and some time $\tau\in[0,\delta]$, $|\mathsf{x}_i^\lambda(\tau)-\mathsf{x}_j^{\lambda'}(\tau)|\leq \e$. Then we denote $\lambda\so\lambda'$.
	
	For $(Z_{\lambda_1},\cdots,Z_{\lambda_{\gr{l}}})\in \prod_{i=1}^{\gr{l}}\mathcal{D}^{|\lambda_i|}_\e$ initial data, we look at the indicator function that for any $i\neq j$, $\Zc^{\lambda_i}(\tau)$ and $\Zc^{\lambda_j}(\tau)$ has no overlap. As in section \ref{Quasi-orthogonality estimates} we can expand the function:
	\begin{equation}\begin{split}
	\prod_{1\leq i<j\leq \gr{l}}\big(1-\ind_{\lambda_i\so \lambda_j}\big)=\sum_{\substack{\omega\subset[1,l]\\1\in\omega}}&\underset{:=\psi_{|\omega|}(Z_{\lambda_1},Z_{\lambda_{\omega(2)}},\cdots,Z_{\lambda_{\omega(|\omega|)}})}{\underbrace{\sum_{C\in\mathcal{C}(\omega)}\prod_{(i,j)\in E(C)}-\ind_{\lambda_i\so \lambda_j}}}\prod_{\substack{(i,j)\in(\omega^c)^2\\i\neq j}}\big(1-\ind_{\lambda_i\so \lambda_j}\big).\\
	&
	\end{split}\end{equation}
	We have defined $(\psi_l)_l$ the cumulants of the overlap indicator. We make a partition of $\mathcal{D}_\e$ depending on particles interacting on the the time interval $[0,\delta]$: fixing $\N\in\mathbb{N}$ and $\ui_m$,
	\[\begin{split}
	&h_m(\gr{Z}_{\ui_m}(\delta))=\sum_{\gr{l}=1}^\N \sum_{\substack{\ui_m\subset\lambda_1\\ (\lambda_2,\cdots,\lambda_{\gr{l}})\in\mathcal{P}_{\lambda_1^c}^{\gr{l}-1}}}h_m(\gr{Z}_{\ui_m}(\delta))\varphi^{\ui_m}_{\lambda_1}(\gr{Z}_{\lambda_1})\prod_{i=2}^{\gr{l}}\varphi_{|\lambda_i|}(\gr{Z}_{\lambda_i})\prod_{1\leq i<j\leq \gr{l}}\big(1-\ind_{\lambda_i\so \lambda_j}\big)\\
	&=\sum_{\gr{l}=1}^\N \sum_{\substack{\ui_m\subset\lambda_1\\ (\lambda_2,\cdots,\lambda_{\gr{l}})\in\mathcal{P}_{\lambda_1^c}^{\gr{l}-1}}}h_m(\gr{Z}_{\ui_m}(\delta))\varphi^{\ui_m}_{\lambda_1}(\gr{Z}_{\lambda_1})\prod_{i=2}^{\gr{l}}\varphi_{|\lambda_i|}(\gr{Z}_{\lambda_i})\sum_{\substack{\omega\subset[1,l]\\1\in\omega}}\psi_{|\omega|}(\gr{Z}_{\underline{\lambda}_\omega})\prod_{\substack{(i,j)\in(\omega^c)^2\\i\neq j}}\big(1-\ind_{\lambda_i\so \lambda_j}\big).
	\end{split}\]
	where we have denoted $\mathcal{P}^r_\omega$ the set of the unordered partitions $(\rho_1,\cdots,\rho_r)$ of the set $\omega$.
	
	We make the change of variable
	\[\left(\gr{l},\left(\lambda_1,\cdots,\lambda_{\gr{l}}\right),\omega\right)\mapsto\left(\rho,\gr{l}_1,\left(\bar{\lambda}_1,\cdots,\bar{\lambda}_{\gr{l}_1}\right),\gr{l}_2,\left(\tilde{\lambda}_1,\cdots,\tilde{\lambda}_{\gr{l}_2}\right)\right)\]
	where
	\[\rho:=\bigcup_{i\in\omega}\lambda_i,~\gr{l}_2:=|\omega|,~\gr{l}_1 := \gr{l}-|\omega|,~\left(\bar{\lambda}_1,\cdots,\bar{\lambda}_{\gr{l}_1}\right):=(\lambda_j)_{j\in \omega^c} {\rm~and~}\left(\tilde{\lambda}_1,\cdots,\tilde{\lambda}_{\gr{l}_2}\right):=(\lambda_j)_{j\in \omega}\]
	
	The set $\rho$ is the set of particle which interact (in the dynamic or {\it via} an overlap) in $\ui_m$. Thus
	\[\begin{split}
	h_m(\gr{Z}_{\ui_m}(\delta))	=\sum_{\ui_m\subset\rho}\sum_{\gr{l}_1=1}^{|\rho|} \sum_{\substack{\ui_m\subset\bar{\lambda}_1\subset \rho\\ (\bar{\lambda}_2,\cdots,\bar{\lambda}_{\gr{l}_1})\in\mathcal{P}_{\bar{\lambda}^{c}_1}^{\gr{l}_1-1}}}h_m\left(\Zc^{\bar{\lambda}_1}_{\ui_m}(\delta)\right)\varphi^{\ui_m}_{\bar{\lambda}_1}\left(\gr{Z}_{\bar{\lambda}_1}\right)\prod_{i=2}^{\gr{l}_1}\varphi_{|\bar{\lambda}_i|}\left(\gr{Z}_{\bar{\lambda}_i}\right)\psi_{\gr{l}_1}\!\left(\gr{Z}_{\bar{\lambda}_1},\cdots,\gr{Z}_{\bar{\lambda}_{\gr{l}_1}}\right)\\[-10pt]
	\times\sum_{\gr{l}_2=1}^{|\rho^c|}\sum_{(\tilde{\lambda}_1,\cdots,\tilde{\lambda}_{\gr{l}_2})\in\mathcal{P}^{\gr{l}_2}_{\rho^c}}\prod_{i=1}^{\gr{l}_2}\varphi_{|\tilde{\lambda}_i|}(\gr{Z}_{\tilde{\lambda}_i})\!\!\prod_{\substack{(i,j)\in(\omega^c)^2\\i\neq j}}\!\!\big(1-\ind_{\tilde{\lambda}_i\so \tilde{\lambda}_j}\big).
	\end{split}\]
	The second line is the sum on all possible partition $(\tilde{\lambda}_1,\cdots,\tilde{\lambda}_{\gr{l}_2})$ of $\rho^c$ of the indicator function that they are effectively the dynamical cluster of the initial data. Hence it is equal to one. Thus defining the $n$-th \emph{dynamical cumulant} as
	\begin{equation}
	\begin{split}
		\mathsf{f}_{m\leftarrow n}[h_m](Z_n):=\frac{1}{(n-m)!}\sum_{{\gr{l}}=1}^n \sum_{\substack{\lambda_1 \subset [1,n]\\ [1,m] \subset \lambda_1}}\sum_{\substack{(\lambda_2,\cdots,\lambda_\gr{l})\\ \in\mathcal{P}^{\gr{l}-1}_{\lambda_1^c}}} h_m(\Zc^{\lambda_1}_{[1,m]}(\delta)) \psi_l(Z_{\lambda_1},\cdots,Z_{\lambda_{\gr{l}}})\\[-20pt]
		\times\varphi_{|\lambda_1|}^{[1,m]}(Z_{\lambda_1})\prod_{i=2}^{\gr{l}}\varphi_{|\lambda_i|}(Z_{\lambda_i}),
		\end{split}
	\end{equation}
	we obtain the dynamical cluster expansion:
		\begin{theorem}
		For almost all $\gr{Z}_\N\in\mathcal{D}_\e$ we have 
		\begin{equation}
		h_m\left((\gr{Z}_{\ui_m}(\delta)\right) = \sum_{n\geq m} \sum_{(i_{m+1},\cdots,i_{n})} \mathsf{f}_{m\leftarrow n}[h_m]\left(\gr{Z}_{\ui_n}(0)\right).
		\end{equation}
	\end{theorem}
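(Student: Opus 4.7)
The plan is to make rigorous the informal derivation given immediately before the theorem statement, which already sets up the dynamical cluster decomposition and the cumulant expansion of the no-overlap product. First I would invoke Alexander's well-posedness result to assert that for almost every $\gr{Z}_\N \in \mathcal{D}_\e$ the union of the collision and overlap events on $[0,\delta]$ is a finite set, so that $\{1,\ldots,\N\}$ admits a unique \emph{dynamical cluster} partition $(\lambda_1,\ldots,\lambda_{\gr{l}})$ with $\ui_m \subset \lambda_1$. By construction this partition is characterized by $\varphi^{\ui_m}_{|\lambda_1|}(\gr{Z}_{\lambda_1})=1$, $\varphi_{|\lambda_i|}(\gr{Z}_{\lambda_i})=1$ for $i\geq 2$, and $\prod_{i<j}(1-\ind_{\lambda_i\so\lambda_j})=1$, and since $\lambda_1$ has no overlap with the other clusters on $[0,\delta]$, the particles of $\ui_m$ evolve as if isolated inside $\lambda_1$, giving $\gr{Z}_{\ui_m}(\delta)=\Zc^{\lambda_1}_{\ui_m}(\delta)$. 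Consequently
\[h_m(\gr{Z}_{\ui_m}(\delta)) = \sum_{\gr{l}=1}^{\N} \sum_{\substack{\ui_m\subset\lambda_1\\ (\lambda_2,\ldots,\lambda_{\gr{l}})\in\mathcal{P}^{\gr{l}-1}_{\lambda_1^c}}} h_m(\Zc^{\lambda_1}_{\ui_m}(\delta))\, \varphi^{\ui_m}_{|\lambda_1|}(\gr{Z}_{\lambda_1}) \prod_{i=2}^{\gr{l}} \varphi_{|\lambda_i|}(\gr{Z}_{\lambda_i}) \prod_{i<j}\bigl(1-\ind_{\lambda_i\so\lambda_j}\bigr),\]
since the indicator factors pin the sum to the unique dynamical partition.

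Next I would apply the Penrose-type cumulant expansion isolating the connected component of the index $1$:
\[\prod_{1\leq i<j\leq \gr{l}}\bigl(1-\ind_{\lambda_i\so\lambda_j}\bigr)=\sum_{\substack{\omega\subset[1,\gr{l}]\\ 1\in\omega}} \psi_{|\omega|}\bigl(\gr{Z}_{\lambda_{\omega(1)}},\ldots,\gr{Z}_{\lambda_{\omega(|\omega|)}}\bigr) \prod_{\substack{(i,j)\in(\omega^c)^2\\ i\neq j}}\bigl(1-\ind_{\lambda_i\so\lambda_j}\bigr),\]
and perform the reindexing $(\gr{l},(\lambda_i)_i,\omega) \mapsto (\rho,\gr{l}_1,(\bar\lambda_j)_{j\leq \gr{l}_1}, \gr{l}_2,(\tilde\lambda_j)_{j\leq \gr{l}_2})$ with $\rho := \bigcup_{i\in\omega}\lambda_i$, the blocks $(\bar\lambda_j)$ being the $\omega$-pieces (so $\bar\lambda_1\supset\ui_m$) and $(\tilde\lambda_j)$ the $\omega^c$-pieces, which form a partition of $\rho^c$. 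The factors split cleanly into a $\rho$-dependent part carrying $h_m(\Zc^{\bar\lambda_1}_{\ui_m}(\delta))$, $\psi_{\gr{l}_1}$ and the $\varphi$ weights on the $\bar\lambda_j$, and a $\rho^c$-dependent part equal to $\sum_{\gr{l}_2,(\tilde\lambda_j)}\prod_i \varphi_{|\tilde\lambda_i|}(\gr{Z}_{\tilde\lambda_i})\prod_{i\neq j}(1-\ind_{\tilde\lambda_i\so\tilde\lambda_j})$.

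The crucial observation is that this second factor is precisely the partition-of-unity identity applied to $\gr{Z}_{\rho^c}$ (it simply selects the unique dynamical cluster decomposition of the background), hence equals $1$ almost surely. What remains on the right-hand side is exactly $(n-m)!\cdot \mathsf{f}_{m\leftarrow n}[h_m](\gr{Z}_\rho)$ when $|\rho|=n$; rewriting the sum over unordered $\rho\supset \ui_m$ with $|\rho|=n$ as $\frac{1}{(n-m)!}\sum_{(i_{m+1},\ldots,i_n)}$ cancels the factorial and produces the announced identity.

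The only delicate point, which I see as the main (but minor) obstacle, is justifying that the dynamical cluster decomposition is unambiguously defined outside a set of $\mathbb{P}_\e$-measure zero: one must rule out triple collisions, tangential encounters at the boundary $|x_i-x_j|=\e$, and accumulation of cluster-merge events on the finite interval $[0,\delta]$. This follows from Alexander's theorem together with standard genericity arguments on the Lebesgue distribution of velocities, exactly as in the classical Sinai analysis referenced for this construction. Once this is established, the proof is purely combinatorial bookkeeping, already essentially carried out in the derivation preceding the theorem.
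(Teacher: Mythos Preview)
Your proposal is correct and follows essentially the same approach as the paper: the argument you outline is precisely the derivation the paper carries out in the paragraphs immediately preceding the theorem statement (partition into dynamical clusters, cumulant expansion of the no-overlap product, reindexing via $\rho$, and the partition-of-unity identity on $\rho^c$). Your only addition is making explicit the measure-zero exceptional set via Alexander's theorem, which the paper leaves implicit.
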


	Applying it to \eqref{Collision local première partie}:
	\begin{equation}
	\begin{split}
	\mathbb{E}_\e\Bigg[\mu_\e^{-1/2}&\sum_{(i_1,\cdots,i_{n'})}\Phi_{\underline{n},n'}^{0,k'}[h]\left(\gr{Z}_{\ui_{n'}}(t_s+\delta)\right)\mathcal{X}_{\ui_{n'}}\big(\gr{Z}_{\N}(t_s)\big)\,\zeta^0_\e(g)\ind_{\Upsilon_\e}\Bigg]\\
	&=\sum_{l\geq p\geq n'}\mathbb{E}_\e\left[\mu_\e^{-1/2}\sum_{(i_1,\cdots,i_{l})}\,\mathsf{f}_{p\leftarrow l}\Big[\Phi_{\underline{n},n'}^{0,k'}[h]\Big]\left(\gr{Z}_{\ui_l}(t_s)\right)\mathfrak{X}_{n',p}(\gr{Z}_{\ui_{p}}(t_s))\,\zeta^0_\e(g)\ind_{\Upsilon_\e}\right]
	\end{split}
	\end{equation}
	where $\mathsf{f}_{p\leftarrow l}\big[\Phi_{\underline{n},n'}^{0,k'}\big]$ is the dynamical cumulant of $Z_{p}\mapsto\Phi_{\underline{n},n'}^{0,k'}(Z_{[1,n']})$.
	
	Finally we symmetrize these two functions:
	\begin{equation}\Phi_{\underline{n},n',p,l}^r(Z_{l}) := \frac{1}{l!}\sum_{\sigma\in\mathfrak{S}_{l}}\mathsf{f}_{p\leftarrow l}\big[\Phi_{\underline{n},n'}^{0,k'}[h]\big]\left(Z_{\sigma([1,l])}\right)\mathfrak{X}_{n',p}(Z_{\sigma([1,p])})\end{equation}
	\begin{equation}\Phi_{\underline{n},n',n'',p}^{k'}(Z_{p}) := \frac{1}{p!}\sum_{\sigma\in\mathfrak{S}_{p}}\Phi^{\gamma}_{n'\leftarrow n''}\left[\Phi_{\underline{n},n'}^{0,k'}[h]\right]\left(Z_{\sigma([1,n''])}\right)\mathfrak{X}_{n',p}(Z_{\sigma([1,n']\cup[n''+1,p])})\end{equation}
	
	We have rewrite $G_\e^{\text{rec},2}(t)$ as function evaluated on finitely many variable:
	\begin{equation}
	\begin{split}
	G_\e^{\text{rec},2}(t)	&= \sum_{\substack{1\leq k\leq K-1\\1\leq k'\leq K'}} \sum_{\substack{n_1\leq\cdots\leq n_k\\n_j-n_{j-1}\leq 2^j}}\sum_{n'\geq0}\Bigg(\sum_{\substack{l\geq0\\p\geq0}}\mathbb{E}_\e\Bigg[\mu_\e^{-1/2}\sum_{(i_1,\cdots,i_l)}\Phi_{\underline{n},n',p,l}^{k'}(\gr{Z}_{\ui_l}(t_s))\,\zeta^0_\e(g)\ind_{\Upsilon_\e}\Bigg]\\
	&~~~~~~~~~~~~~~~~~~~~~~~~~~~~~-\sum_{\substack{n''\geq 0\\p\geq0}} \mathbb{E}_\e\Bigg[\mu_\e^{-1/2}\sum_{(i_1,\cdots,i_p)}\Phi_{\underline{n},n',n'',p}^{k'}(\gr{Z}_{\ui_p}(t_s))\,\zeta^0_\e(g)\ind_{\Upsilon_\e}\Bigg]\Bigg).
	\end{split}
	\end{equation}
	\subsection{Geometrical estimation of local recollisions}
	The aim of this part is to prove the following bound on $\Phi^{k'}_{\underline{n},n',n'',p}$ and $\Phi^{k'}_{\underline{n},n',p,l}$:
	\begin{prop}
		Fix $n_1\leq\cdots\leq n_k\leq n'\leq n''<p$, and for $m\in \{1,\cdots p\}$ we have 
		\begin{equation}\label{Estimation reco loc 1}
		\int_{x_1=0} \sup_{y\in\Lambda}\big|\Phi_{\underline{n},n',n'',p}^{k'}(\tau_yZ_p)\big|M^{\otimes p}dX_{2,p}dV_{p}\leq \frac{\|h\|}{\mu_\e^{p-1}} C^{p}\delta^2\e^\alpha \theta^{(p-n_k-2)_+}t^{n_k-1},
		\end{equation}
		\begin{equation}\label{Estimation reco loc 2}
		\begin{split}
		\int_{x_1=0}\sup_{y\in\Lambda} \big|\Phi_{\underline{n},n',n'',p}^{k'}(\tau_yZ_p)\Phi_{\underline{n},n',n'',p}^{k'}(\tau_yZ_{p+1-m,2p-m})\big|M^{\otimes (2p-m)}dX_{2,2p-m}dV_{p-m}\\
		\leq \frac{\|h\|^2}{p^m\mu_\e^{2p-m-1}} C^{p}\delta^2\e^\alpha \theta^{(p-n_k-2)_+}t^{n_k-1+p-m}.
		\end{split}
		\end{equation}
		In the same way if we fix $n_1\leq\cdots\leq n_k\leq n'\leq n''<p\leq l$ and for $m\in \{1,\cdots l\}$ we 
		\begin{equation}\label{Estimation reco loc 3}
		\int_{x_1=0}\sup_{y\in\Lambda} \big|\Phi_{\underline{n},n',p,l}^{k'}(\tau_yZ_{l})\big|M^{\otimes l}dX_{l-1}dV_{l}\leq \frac{\|h\|}{\mu_\e^{l-1}} C^{l}\delta^2\e^\alpha \theta^{(l-n_k-2)_+}t^{n_k-1},
		\end{equation}
		\begin{equation}\label{Estimation reco loc 4}
		\begin{split}
		\int_{x_1=0}\sup_{y\in\Lambda} \big|\Phi_{\underline{n},n',p,l}^{k'}(\tau_yZ_{p})\Phi_{\underline{n},n',p,l}^{k'}(\tau_yZ_{l+1-m,2l-m})\big|M^{\otimes (2l-m)}dX_{2,2l-m}dV_{2l-m}\\
		\leq \frac{\|h\|^2}{l^m\mu_\e^{2l-m-1}} C^{l}\delta^2\e^\alpha \theta^{(l-n_k-2)_+}t^{n_k-1+l-m}.
		\end{split}
		\end{equation}
	\end{prop}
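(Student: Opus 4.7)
The plan is to combine the clustering-tree technology developed in Sections \ref{Clustering estimations} and \ref{Estimation of the long range recollisions.} with the $L^1$ bound on $\chi_r$ obtained in Section \ref{Conditioning} and the explicit cumulant form of $\mathfrak{X}_{n',p}$. First I would unfold $\mathfrak{X}_{n',p}(\gr{Z}_{\ui})$ as a sum over families $(\varpi_1,\ldots,\varpi_{\gr{p}}) \in \mathcal{Q}^{\gr{p}}_{[1,n'],[n''+1,p]}$ of products $\prod_j\bigl(-\chi(\gr{Z}_{\varpi_j})\bigr)$. On $\Upsilon_\e$ each $\chi$-factor forces the corresponding $\varpi_j$ to lie inside a distance cluster of size at most $\gamma$; hence the combinatorial sum is absorbed in the constant $C^p$ (respectively $C^l$), because the number of such families with prescribed total support of size $r$ is bounded by $(2^\gamma)^r$.

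Next I would build a single spanning tree $T^{>}$ on the vertex set $[p]$ (resp.\ $[l]$) by merging, in temporal order, the collision graphs involved: those of the tree development $\Phi^0_{\underline n}$ on $[k'\delta,k'\delta+k\theta]$, of $\Phi^{0,k'\delta}_{n_k,n'}$ on $[\delta,k'\delta]$, of $\Phi^{>,\delta}_{n',n''}$ (or of the collision/overlap structure carried by the dynamical cumulant $\mathsf f_{p\leftarrow l}$) on $[0,\delta]$, together with virtual ``$\chi$-edges'' linking the particles of each $\varpi_j$. At each temporal insertion I only keep edges that do not create a cycle, obtaining a partition of the set of admissible initial data. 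Then I perform the change of variables $\hat x_i := x_{\nu_i}-x_{\bar\nu_i}$ along $T^>$ and integrate edge by edge, exactly as in the proofs of \eqref{Estimation sans reco 1} and \eqref{Estimation avec reco 1}.

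The edge integrations fall into three categories. The edges corresponding to tree collisions on the longer time scales contribute, respectively, a factor $C\mu_\e^{-1}(1+|v|)\theta$ on $[\delta,k'\delta]$ and $C\mu_\e^{-1}(1+|v|)t$ on $[k'\delta,k'\delta+k\theta]$, producing $\theta^{(p-n_k-2)_+}t^{n_k-1}$ after ordering collision times and using the Stirling-type bound of Section \ref{Clustering estimations}. For the block inside $[0,\delta]$, I invoke Propositions \ref{prop: recollision}--\ref{prop: singularity} exactly as in the proof of \eqref{Estimation avec reco 1} to obtain the crucial $\delta^2\e^\alpha$ gain coming from a cycle that is guaranteed to be present: either because of the local recollision selected by $\Phi^{>,\delta}_{n',n''}$, or because of an overlap contained in $\psi_{\gr{l}}$, or because of one of the $\chi$-clusters. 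In the last case the $\delta^2\e^\alpha$ factor is produced directly by the bound \eqref{estimation recollision} applied to the corresponding $\varpi_j$, while the other components of $T^>$ contribute $\delta$ per edge.

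For the $L^2$ estimates \eqref{Estimation reco loc 2} and \eqref{Estimation reco loc 4}, I would merge the clustering trees of the two copies into one spanning tree on $2p-m$ (resp.\ $2l-m$) vertices, as in the proofs of \eqref{Estimation sans reco 2} and \eqref{Estimation avec reco 2}; the $m-1$ additional tree edges in the second copy each yield a factor $t$, giving the extra $t^{p-m}$ (resp.\ $t^{l-m}$). Velocity factors are absorbed via the Gaussian moment bound used throughout Section \ref{Clustering estimations}, and singularities near vanishing relative velocities are integrated out by part (iv) of Proposition \ref{prop: singularity}. The main obstacle is the bookkeeping: one must check that the $\delta^2\e^\alpha$ factor is extracted \emph{exactly once} per connected component of the combined clustering tree, and not several times (which would be wasteful but harmless) or zero times (which would be fatal) when the cumulant structure of $\mathfrak{X}_{n',p}$ interacts with the dynamical cluster structure of $\mathsf f_{p\leftarrow l}$. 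The symmetric conditioning $\Upsilon_\e$ is the critical ingredient here: it caps each connected cluster at $\gamma$ particles, so that the interplay between the various cumulant expansions produces only a bounded, $\gamma$-dependent number of terms, each of which is estimated by the same geometric mechanism and absorbed into the constant $C^p$ (resp.\ $C^l$).
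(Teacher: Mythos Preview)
Your broad strategy---expand $\mathfrak{X}_{n',p}$ into products of $\chi$'s, use $\Upsilon_\e$ to tame the combinatorics, build a clustering tree, and extract $\delta^2\e^\alpha$ from one $\chi$ via \eqref{estimation recollision}---is correct. But your organization differs from the paper's in a way that leaves a genuine gap, and you misidentify where the $\e^\alpha$ comes from.

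\textbf{The missing device: distance clusters as vertices.} You build a spanning tree on the particle set $[p]$, mixing collision edges with ``virtual $\chi$-edges''. The paper does not do this. It first introduces the $2\delta\mathbb{V}$-distance partition $\underline{\rho}=(\rho_1,\ldots,\rho_{\gr r})$ of $[p]$, then builds the clustering tree $T^>$ on the \emph{clusters} $\rho_i$ (by identifying, in the collision graph of $\Zt_{\omega_1\cup\omega_3}$, all particles lying in the same $\rho_i$). The change of variables splits accordingly: inter-cluster coordinates $\hat x_i=x_{\min\nu_{(i)}}-x_{\min\bar\nu_{(i)}}$ are integrated via the collision-time map, yielding $\theta^{(\gr r-n_k)_+}t^{n_k\wedge\gr r-1}/\mu_\e^{\gr r-1}$; intra-cluster coordinates $\tilde X_i=(x_j-x_{\min\rho_i})_{j\in\rho_i}$ are integrated separately against the local constraint $\Delta_{\mathfrak{p}_i}$, yielding $\mu_\e^{-|\rho_i|+1}(\mu_\e\delta^d\mathbb{V}^d)^{|\rho_i|-1}$ for generic clusters and, for the one cluster with $\underline\varpi^i\neq\emptyset$, the bound \eqref{estimation recollision}. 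The final $\theta^{(p-n_k-2)_+}$ then follows from $\sum_i(|\rho_i|-1)\ge p-n'$ and $\mu_\e\delta^d\mathbb{V}^d\le\theta$. Your ``virtual $\chi$-edges'' are not collision edges: there is no time $t_i$ attached, so the change of variables $\hat x_i\mapsto(t_i,\eta_i)$ does not apply to them, and it is unclear how you recover the correct power of $\theta$ from a tree on particles. The two-scale (inter/intra cluster) integration is precisely what your proposal lacks.

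\textbf{Where the $\e^\alpha$ comes from.} In all four estimates the gain $\delta^2\e^\alpha$ is produced \emph{only} by one $\chi$-factor through \eqref{estimation recollision}. There is no $\Phi^{>,\delta}_{n',n''}$ in $\Phi^{k'}_{\underline n,n',n'',p}$ (the local piece is $\Phi^\gamma_{n'\leftarrow n''}$, which allows but does not force recollisions), and the overlaps in $\psi_{\gr l}$ give only $\delta$-scale proximity, not $\e^\alpha$; the paper bounds $|\psi_{\gr l}|$ crudely by $\prod_i|\mathcal{T}(\rho_i)|$ and absorbs it into $C_\gamma$. So your list of three alternative sources for the cycle is incorrect here: for this proposition the source is always the asymmetric conditioning $\mathfrak{X}_{n',p}$, and Propositions~\ref{prop: recollision}--\ref{prop: singularity} are not invoked at all.
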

	
	Then using the quasi-orthongonality estimates we obtain:
	\begin{equation}\everymath={\displaystyle}\begin{array}{r@{}r@{}r}
		\Bigg|\mathbb{E}_\e\Bigg[\mu_\e^{-1/2}\sum_{\ui_p}\Phi_{\underline{n},n',n'',p}^{k'}(\gr{Z}_{\ui_p}(t_s))\,\zeta^0_\e(g)\ind_{\Upsilon_\e}\Bigg]\Bigg)\Bigg|&\multicolumn{2}{l}{\leq\|h\| \|g\| C^{p}\Big(\delta^2\e^\alpha\theta^{(p-n_{k}-2)_+}t^{n_k-1}\e^{\frac12}}\\[-8pt]
		&&+\Big(p\delta^2\e^\alpha \theta^{(p-n_k-2)_+}t^{n_k-1+p}\Big)^{\frac12}\Big)\\[8pt]
		&\multicolumn{2}{l}{\leq\delta\e^{\alpha/2}\|h\| \,\|g\| C^{p} (\theta t)^{(p-n_k-2)_+/2}t^{n_k}}
	\end{array}\end{equation}
	and in the same way
	\begin{equation}\Bigg|\mathbb{E}_\e\Bigg[\mu_\e^{-1/2}\sum_{\ui_l}\Phi_{\underline{n},n',p,l}^r(\gr{Z}_\ui(t_s))\,\zeta^0_\e(g)\ind_{\Upsilon_\e}\Bigg]\Bigg)\Bigg|\leq\delta\e^{a/2}\|h\|\|g\| C^{l} (\theta t)^{(l-n_k-2)_+/2}t^{n_k}.\end{equation}
	
	Because $\theta$ tends to $0$ as $\e$ goes to zero, for $\e$ small enough, the two previous series are sommable with respect to respectively $(l,n'',n')$ and $(l,p,n')$. We recall that $K'=\theta/\delta$ and we sum on $k$, $\underline{n}$ and $r$ to obtain that there exists a positive constant $C$ depending only on the dimension and $\gamma$ such that
	\begin{equation}\begin{split}\Big|G_\e^{\text{rec},2}(t)\Big|\leq C  \delta\e^{a/2}\|h\| \|g\|\frac{\theta}{\delta} \sum_{k =1}^K\sum_{\substack{n_1\leq\cdots\leq n_k\\n_j-n_{j-1}\leq 2^j}} (Ct)^{n_k}&\leq C\e^{a/2}\|h\| \|g\| \sum_{k =1}^K2^{k^2}  (Ct)^{2^{k+1}}\\
	 &\leq C\|h\| \|g\| \left(K2^{K^2} (Ct)^{2^{K+1}}\right)\e^{a/2}
	 \end{split}\end{equation}
	 which conclude the proof of \eqref{Estimation morceau 4}.

	We have almost to prove four times the same inequality. However there are some little difference and we will do in detail only the first one and then explain how to adapt it.
	\begin{proof}[Proof of \eqref{Estimation reco loc 1}]
		We recall that
			\begin{equation*}\Phi_{\underline{n},n',n'',p}^{k'}(Z_{p}) := \frac{1}{p!}\sum_{\sigma\in\mathfrak{S}_{p}}\Phi^{\gamma}_{n'\leftarrow n''}\left[\Phi_{\underline{n},n'}^{0,k'}[h]\right]\left(Z_{\sigma([1,n''])}\right)\mathfrak{X}_{n',p}(Z_{\sigma([1,n']\cup[n''+1,p])}).\end{equation*}
		
		In $\Phi_{n'\leftarrow n''}^\gamma\left[\Phi_{\underline{n},n'}^{k'}[h]\right]\left(Z_{n''}\right)\mathfrak{X}_{n',p}(Z_{[1,n']\cup[n''+1,p]})$ we see three sets of indices:
		\begin{itemize}
			\item $[1,n']$ the set of particles in "final" tree pseudotrajectories development,
			\item $[n'+1,n'']$ the particles added in the local tree development,
			\item  $[n''+1,p]$ the particles which make local recollision.
		\end{itemize}
		Any permutation $\sigma$ which sends $[1,n']$, $[n'+1,n'']$ and $[n''+1,p]$ onto themselves stabilizes $\Phi_{n'\leftarrow n''}^\gamma\left[\Phi_{\underline{n},n'}^{k'}[h]\right]\left(Z_{n''}\right)\mathfrak{X}_{n',p}(Z_{[1,n']\cup[n''+1,p]})$ and
		\[\Phi_{\underline{n},n',n'',p}^r(Z_{p}) = \frac{n'!\,(n''-n')! \,(p-n'')!}{p!} \sum_{\substack{\underline{\omega}\in\mathcal{P}^3_{p}\\|\omega_1|=n'\\|\omega_2| = p-n''}}\Phi_{n'\leftarrow n''}^\gamma\left[\Phi_{\underline{n},n'}^{k'}\right]\left(Z_{\omega_1},Z_{\omega_3}\right)\mathfrak{X}_{n',p}(Z_{\omega_1},Z_{\omega_2}).\]
		Let develop $\Phi_{n'\leftarrow n''}^\gamma\left[\Phi_{\underline{n},n'}^{k'}[h]\right]$ and $\mathfrak{X}_{n',l}$. For $((s_i,\bar{s}_i),(\kappa_i))$ a set of recollision parameters, we denote $\mathcal{R}_{((s_i,\bar{s}_i),(\kappa_j))}\subset\mathcal{D}_\e^{n''}$ the set of initial data such that there is 
		\begin{itemize}
			\item $n'$ particles at time $\delta$,
			\item $n_k$ particles at time $(k'+1)\delta$ and
			\item $n_j$ particles at time $(k'+1)\delta + (k-j)\theta$.
		\end{itemize}
		Then
		\begin{equation}\begin{split}\Phi_{\underline{n},n',n'',p}^r(Z_{p}) 
		= \frac{1}{p!}\sum_{((s_i,\bar{s}_i),(\kappa_j))}~\sum_{\substack{ \omega_1\sqcup \omega_2 \sqcup \omega_3= [p]\\|\omega_1|=n'\\|\omega_2| = p-n''}}&~\sum_{\gr{p}\geq1}\sum_{\underline{\varpi}\in\mathcal{Q}^\gr{p}_{\omega_1,\omega_2}}-\ind_{\mathcal{R}_{((s_i,\bar{s}_i),(\kappa_j))}}(Z_{\omega_1\cup\omega_3})\\[-20pt]
		&\times h\left(\Zt_{\omega_1\cup\omega_3}(t-t_s)\right)\prod_{i=1}^{n''-1}\bar{s}_i\prod_{i=1}^\gr{p}\left(-\chi(Z_{\varpi_i})\right).
		\end{split}
		\end{equation}
		where 
		\[\Zt_{\omega_1\cup\omega_3}(t-t_s):= \Zt(t-t_s,((s_i,\bar{s}_i),(\kappa_j)),Z_{\omega_1\cup\omega_3})\]
		and we have the estimation
		\begin{equation}\label{borne 1 sur phi l}\begin{split}
		\Big|\Phi_{\underline{n},n',n'',p}^{k'}\Big|(Z_{p}) 
		=\frac{\|h\|}{p!} \sum_{((s_i,\bar{s}_i),(\kappa_j))}\sum_{\substack{ \omega_1\sqcup \omega_2 \sqcup \omega_3= [p]\\|\omega_1|=n'\\|\omega_2| = p-n''}}~\sum_{\gr{p}\geq 1}\sum_{\underline{\varpi}\in\mathcal{Q}^{\gr{p}}_{\omega_1,\omega_2}}\ind_{\mathcal{R}_{((s_i,\bar{s}_i),(\kappa_j))}}(Z_{\omega_1\cup\omega_3})\\[-25pt]
		\times\prod_{i=1}^{\gr{p}}\chi(Z_{\varpi_i}).\end{split}\end{equation}

		Note that the right hand side is invariant under translation. Thus one can fix $x_1 = 0$ and integrate with respect the other variables.
		
		The set of parameters $\mathcal{Q}^{\gr{p}}_{\omega_1,\omega_2}$ is huge and we need the global conditioning to control the number of acceptable $\underline{\varpi}$.
		
		For parameters $Z_{l}$ we introduce $\underline{\rho}:=(\rho_1,\cdots,\rho_{\gr{r}})$ the $\delta\mathbb{V}$-distance partition: consider the graph $G$ with vertices $[1,p]$ with $(i,j)\in E(G)$ if and only if $|x_i-x_j|\leq 2\delta\mathbb{V}$. The $\rho_i$ are the connected components of $G$. We define $\mathcal{D}^{\underline{\rho}}_\e\subset\mathcal{D}^{p}_\e$ the set such that $\underline{\rho}$ is the distance partition, the $\big(\mathcal{D}^{\underline{\rho}}_\e\big)_{\underline{\rho}}$ form a partition of $\mathcal{D}^{p}_\e$. 
		
		Inside each cluster $\rho_i$, particles  can only interact with  the other particle as long the kinetic energy $\|V_{\rho_i}(\tau)\|^2$ is bounded by $\mathbb{V}^2$. Hence the system $\rho^i$ is isolated on $[0,\delta]$ and for any $\varpi\subset\omega_1\cup\omega_3$, if particles in $Z_\omega$ can have a pseudotrajectory with connected collision graph (and a local recollision), then there some $\rho_i$ containing $\omega$.
		
		We can do now the following parametrisation: for any $\rho_i$, we consider
		\begin{itemize}
			\item $\underline{\omega}^i := (\omega_1^i,\omega_2^i,\omega_3^i)$ the partition of $\rho_i$ defined by $\omega^i_j:=\omega_j\cap \rho_i$,
			\item$\underline{\varpi}^i:=\{\varpi_j {\rm ~such~that~}\varpi_j\subset\rho_i\}$,
			\item  $\mathfrak{p}_i:=(\underline{\omega}^i,\underline{\varpi}^i)$, and $\mathfrak{P}(\rho_i)$ the set of possible $\mathfrak{p}_i$.
		\end{itemize}
	Because $\rho_i$ is of size at most $\gamma$, there exists a constant $C_\gamma$ depending only on $\gamma$ such that $|\mathfrak{P}(\rho_i)|\leq C_\gamma$. Any particles in $\omega_2$ or $\omega_3$ has to be close to a particle in $\omega_1$ because they are in some pseudotrajectories on $[0,\delta]$ implying a particle in $\omega_1$. So for any $\rho_i$, $\omega_1^i$ is not empty. Finally note that if we fix $\underline{\rho}$, the map $(\underline{\omega},\underline{\varpi})\mapsto (\mathfrak{p}_i)_i$ is onto. 
	
	We have now the following bound
	\begin{equation*}\big|\Phi_{\underline{n},n',n'',p}^r(Z_{p})\big|
	\leq \frac{\|h\|}{p!}\sum_{{\gr{r}}=1}^{p}\sum_{\underline{\rho}\in\mathcal{P}^{\gr{r}}_{p}}\sum_{\substack{((s_i,\bar{s}_i),(\kappa_j))\\ \underline{\mathfrak{p}}\in\underset{i}{\prod}\mathfrak{P}(\rho_i)}}~\ind_{\mathcal{R}^{\underline{\rho},\underline{\mathfrak{p}}}_{((s_i,\bar{s}_i),(\kappa_i)}}(Z_{p})\prod_{i=1}^{\gr{r}}\Delta_{\mathfrak{p}_i}(Z_{\rho_i})
	\end{equation*}
	where the function
	\[\Delta_{\mathfrak{p}_i}(Z_{\rho_i}):=\ind_{\!\!\!\!\tiny\begin{array}{l}Z_{\rho_i}\text{\,form\,a}\\ \text{distance\,cluster}\end{array}}\prod_{j=1}^{|\underline{\varpi}^i|}\chi(Z_{\varpi_j^i})\]
	control the local cluster and 
	\[\mathcal{R}^{\underline{\rho},\underline{\mathfrak{p}}}_{((s_i,\bar{s}_i),(\kappa_i)} :=  \Big\{Z_{p}\in\mathcal{D}^{\underline{\rho}}_\e,\,Z_{\omega_1\cup\omega_3}\in\mathcal{R}_{((s_i,\bar{s}_i),(\kappa_i)}\Big\}.\]
	
	We use the same method than in \cite{BGSS} to control the first condition.
	
	For pseudotrajectories $\Zt_{\omega_1\cup\omega_3}(\tau)$, we consider its collision graph $\mathcal{G}_{\omega_1\cup\omega_3}^{[0,t-t_s]}$. Then we construct the graph $G$ by identifying in $\mathcal{G}_{\omega_1\cup\omega_3}^{[0,t-t_s]}$ the particles in a same cluster $\rho_i$. Finally we construct the \emph{clustering trees} $T^>:=(\nu_i,\bar{\nu}_i)_{1\leq i\leq \gr{r}-1}$ where the $i$-th clustering collision in $G$ happens between cluster $\rho_{\nu_i}$ and $\rho_{\bar{\nu_i}}$.
	
	\begin{figure}[h]
		\centering
		\includegraphics[scale=0.1]{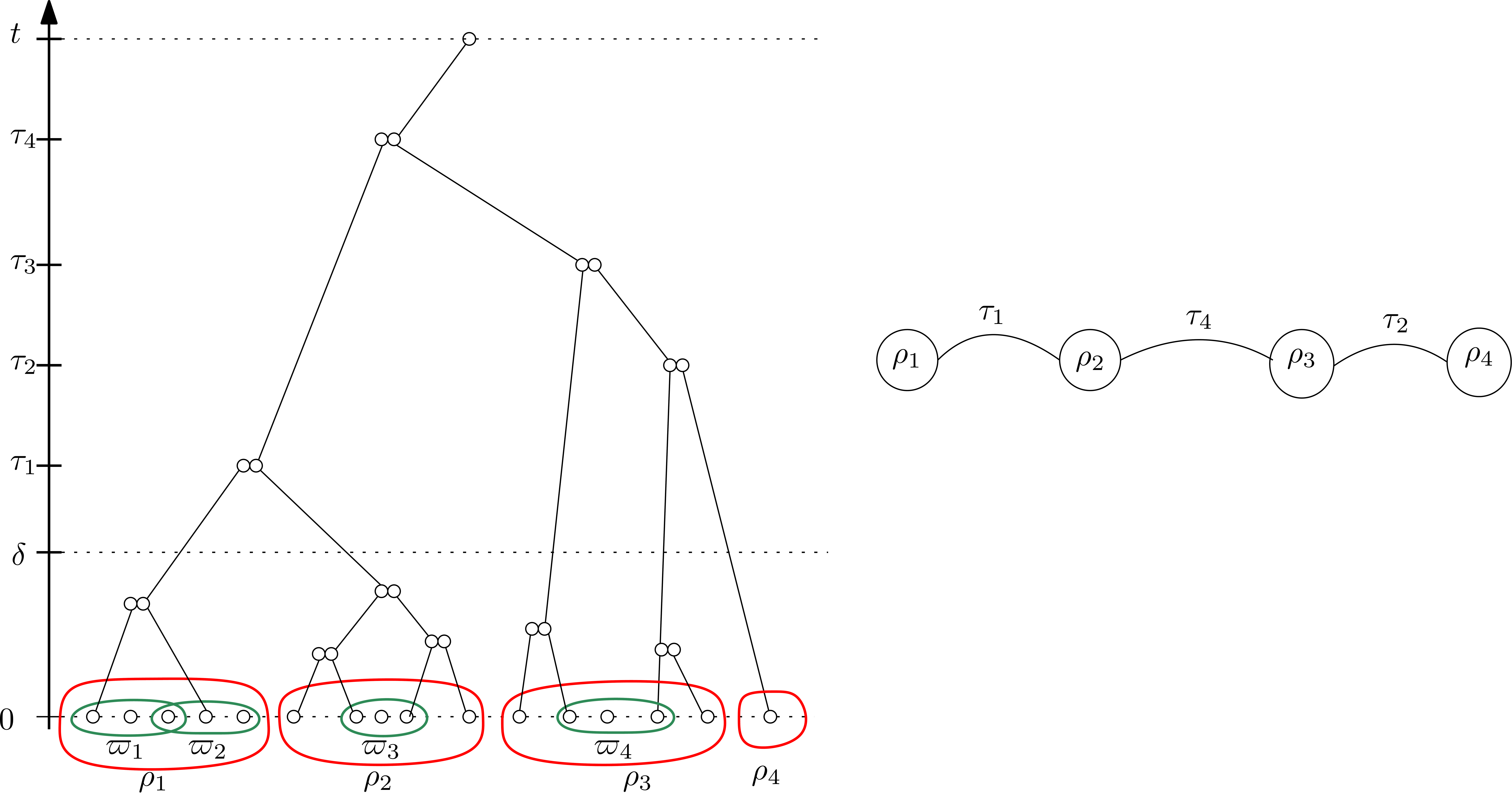}
		\caption{Example of construction of the clustering set.}
	\end{figure}
	
	We need to count the number of clustering collision of $T^>$ happening between time $\delta$ and time $\theta$. If $\gr{r}>n_k$, all the $\gr{r}-1$ collisions in $T^>$ cannot correspond to the $n_k-1$ annihilations of the time interval $[(k'+1)\delta,t-t_s]$. Thus at least $(\gr{r}-n_k)_+$ collision happen in $[\delta,(k'+1)\delta]\subset[0,2\theta]$.
	
	We construct now an other representation of collision graph. Let $L_0$ be equal to $\{\{1\},\cdots,\{\gr{r}\}\}$ and we construct the $L_i$ and $(\nu_{(i)},\bar{\nu}_{(i)})$ sequentially. Suppose that $L_{i-1}=(c_1,\cdots,c_l)$, the $(c_j)$ forming a partition of $[1,r]$. The $i$-th collision happens between cluster $\nu_i\in c_a$ and $\bar{\nu}_i\in c_b$. Then we do the following construction:
	\begin{itemize}
		\item $L_i:=\big(L_{i-1}\setminus\{c_a,c_b\}\big)\cup\{c_a\cup c_b\}$,
		\item $\{\nu_{(i)},\bar{\nu}_{(i)}\}:=\{c_a,c_b\}$ with $\max \nu_{(i)} <\max \bar{\nu}_{(i)}$.
	\end{itemize}
	The $(\nu_{(i)},\bar{\nu}_{(i)})$ define a partition of $\mathcal{T}^>_{\gr{r}}$ (the set of ordered trees on $[1,\gr{r}]$).
	
	\begin{figure}[h]
		\centering
		\includegraphics[scale=0.35]{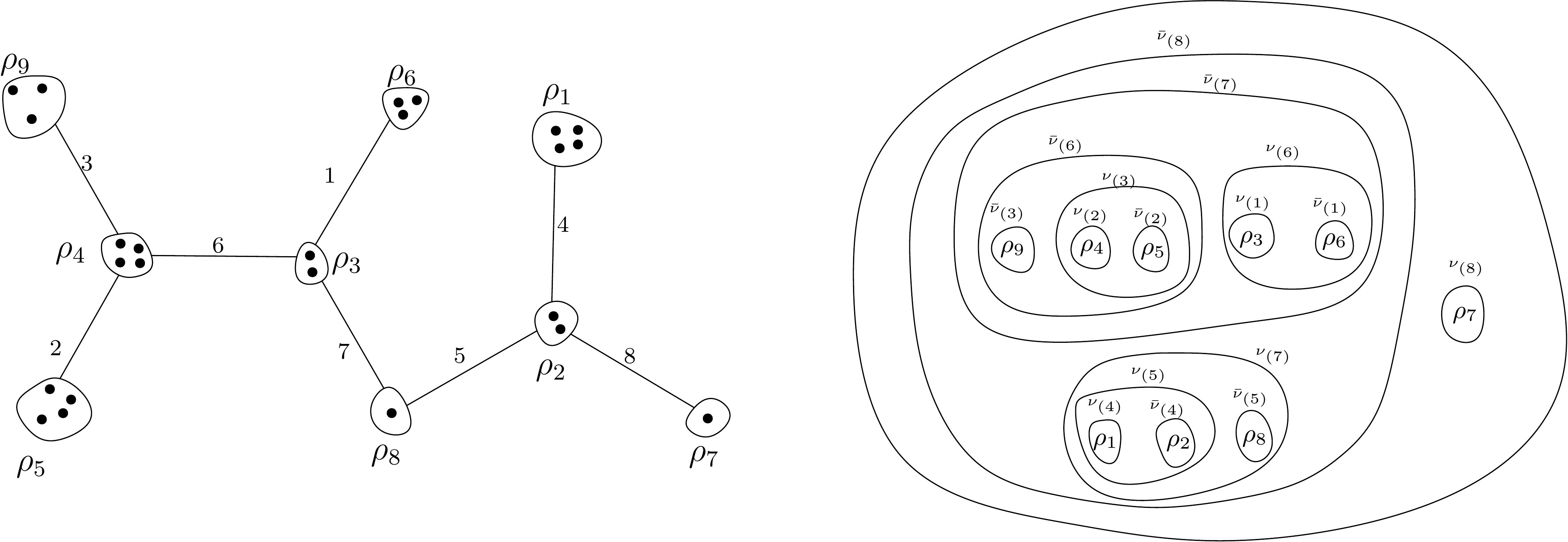}
		\caption{An example of construction of the representation $(\nu_{(i)},\bar{\nu}_{(i)})_i$ from a clustering graph.}
	\end{figure}
	
	We do then the following change of variable: 
	\[\forall i\in\{1,\cdots,\gr{r}-1\},~\hat{x}_i:=x_{\min\nu_{(i)}}-x_{\min\bar{\nu}_{(i)}},~\tilde{X}_i:=(x_j-x_{\min \rho_i})_{j\in\rho_i},\]
	\[X_{2,l}\mapsto(\hat{x}_1\cdots,\hat{x}_{\gr{r}-1},\tilde{X}_1,\cdots,\tilde{X}_{\gr{r}}).\]
	
	We begin  by integrating the condition $\mathcal{R}_{((s_i,\bar{s}_i),(\kappa_i))}$ with respect to $(\hat{x}_1,\cdots,\hat{x}_{\gr{r}-1})$ with the relative positions inside a cluster $\tilde{X}_i$ kept constant. The $(\Delta_{\mathfrak{p}_i})_i$ will be  will be integrating with respect to $(\tilde{X}_i)_i$ later.
	
	Fix $t_{i+1}$ the time of the $(i+1)$-th clustering collision and the relative positions $\hat{x}_{i-1},\cdots,\hat{x}_{1}$. We define the $i$-th clustering set
	\[B_i := \bigcup_{\substack{q\in \bigcup_{j\in\nu_{(i)}}\rho_j\\\bar{q}\in \bigcup_{\bar{\jmath}\in\bar{\nu}_{(i)}}\rho_{\bar{\jmath}}}} B_i^{q,\bar{q}}\]
	with
	\[B_i^{q,\bar{q}}:=\Big\{\hat{x}_i ~ \Big\vert~\exists t_i\in[0,t_{i+1}\wedge T_i],~|\text{x}_{\bar{q}}(t_i)-\text{x}_{\bar{q}}(t_i)|=\e\Big\}\]
	and $T_i:=2\theta$ for the the $(\gr{r}-n_k)_+$ first collisions, $t$ else.
	
	Up to time $t_i$ the curve $\text{x}_q$ and $\text{x}_{\bar{q}}$ are independant. Hence we can do the change of variable $\hat{x}_i\mapsto (t_i,\eta_i)$ with $t_i$ the minimal collision time and
	\[\eta_i = \frac{\text{x}_{\bar{q}}(t_i)-\text{x}_{\bar{q}}(t_i)}{|\text{x}_{\bar{q}}(t_i)-\text{x}_{\bar{q}}(t_i)|}.\]
	The Jacobian of this diffeomorphism is $\mu_\e^{-1}|(\text{v}_{\bar{q}}(t_i)-v_{\bar{q}}(t_i))\cdot\eta_i|dt_id\eta_i$. We integrate and we apply Cauchy-Schwarz inequality , using that kinetic energy associated with cluster $\rho_{\nu_{(i)}}$ is non-increasing (we can only remove particles) up to time $t_i$.
	
	Note that
	\[\begin{split}
	\sum_{\substack{q\in\nu_{(i)}\\\bar{q}\in\bar{\nu}_{(i)}}}|\text{v}_{\bar{q}}(t_i)-\text{v}_{\bar{q}}(t_i)|&\leq \|\text{V}_{\rho_{\nu_{(i)}}}(t_i)\|\,|\rho_{\nu_{(i)}}|^{1/2}|\rho_{\bar{\nu}_{(i)}}|+\|\text{V}_{\rho_{\bar{\nu}_{(i)}}}(t_i)\|\,|\rho_{\bar{\nu}_{(i)}}|^{1/2}|\rho_{\bar{\nu}_{(i)}}|\\[-15pt]
	& \leq\Big(|\rho_{\nu_{(i)}}|+\|V_{\rho_{\nu_{(i)}}}\|^2\Big)\Big(|\rho_{\bar{\nu}_{(i)}}|+\|V_{\rho_{\bar{\nu}_{(i)}}}\|^2\Big)\\
	&\leq\sum_{\substack{\nu_i\in\nu_{(i)}\\\bar{\nu}_{i}\in\bar{\nu}_{(i)}}}\Big(|\rho_{\nu_i}|+\|V_{\rho_{\nu_i}}\|^2\Big)\Big(|\rho_{\bar{\nu}_i}|+\|V_{\rho_{\bar{\nu}_i}}\|^2\Big).
	\end{split}\]
	This gives the following bound on $|B_i|$
	\[\begin{split}
	|B_i|&\leq \frac{C}{\mu_\e}\int^{t_{i+1}\wedge T_i}_0dt_i\sum_{q,\bar{q}}|\text{v}_{\bar{q}}(t_i)-\text{v}_{\bar{q}}(t_i)|\\
	&\leq\frac{C}{\mu_\e}\sum_{\substack{\nu_i\in\nu_{(i)}\\\bar{\nu}_{i}\in\bar{\nu}_{(i)}}}\Big(|\rho_{\nu_i}|+\|V_{\rho_{\nu_i}}\|^2\Big)\Big(|\rho_{\bar{\nu}_i}|+\|V_{\rho_{\bar{\nu}_i}}\|^2\Big)\int^{t_{i+1}\wedge T_i}_0dt_i.
	\end{split}\]
	
	Permuting the product and the sum, 
	\[\begin{split}
	\sum_{(\nu_{(i)},\bar{\nu}_{(i)})}\prod_{i=1}^ {\gr{r}-1}\Big(|\rho_{\nu_{(i)}}|+\|V_{\rho_{\nu_{(i)}}}\|^2\Big)\Big(|&\rho_{\bar{\nu}_{(i)}}|+\|V_{\rho_{\bar{\nu}_{(i)}}}\|^2\Big)\\[-10pt]
	&=\sum_{(\nu_{(i)},\bar{\nu}_{(i)})}\prod_{i=1}^ {\gr{r}-1}\sum_{\substack{\nu_i\in\nu_{(i)}\\\bar{\nu}_{i}\in\bar{\nu}_{(i)}}}\Big(|\rho_{\nu_i}|+\|V_{\rho_{\nu_i}}\|^2\big)\big(|\rho_{\bar{\nu}_i}|+\|V_{\rho_{\bar{\nu}_i}}\|^2\Big)\\[-5pt]
	&=\sum_{(\nu_{i},\bar{\nu}_{i})}\prod_{i=1}^ {\gr{r}-1}\big(|\rho_{\nu_i}|+\|V_{\rho_{\nu_i}}\|^2\big)\big(|\rho_{\bar{\nu}_i}|+\|V_{\rho_{\bar{\nu}_i}}\|^2\big).
	\end{split}\]
	
	Using that
	\[\forall a,b\in\mathbb{N},~\frac{(a+b)!}{a!b!}\leq 2^{a+b},\]
	we have 
	\[\begin{split}
	\int_0^tdt_{\gr{r}-1}\cdots\int_0^{t_2\wedge T_2}dt_1&\leq\frac{t^{n_k\wedge \gr{r}-1}}{(n_k\wedge \gr{r}-1)!}\frac{\theta^{(\gr{r}-n_k)_+}}{((\gr{r}-n_k)_+)!}\leq 2^{\gr{r}-1}\frac{t^{n_k\wedge \gr{r}-1}\theta^{(\gr{r}-n_k)_+}}{(\gr{r}-1)!}.
	\end{split}\]
	
	We can sum now on every clustering collision:
	\[\begin{split}
	\int&\ind_{\mathcal{R}^{\underline{\rho},\underline{\mathfrak{p}}}_{((s_i,\bar{s}_i),(\kappa_i)}}d\hat{x}_1\cdots\hat{x}_{\gr{r}-1}\leq \sum_{(\nu_{(i)},\bar{\nu}_{(i)})}\int d\hat{x}'_1 \ind_{B_1} \int d\hat{x}'_2\cdots\int d\hat{x}_{\gr{r}-1}\ind_{B_{\gr{r}-1}}\\
	&\leq\left(\frac{C}{\mu_\e}\right)^{\gr{r}-1}\int_0^tdt_{\gr{r}-1}\cdots\int_0^{t_2\wedge T_2}dt_1 \sum_{(\nu_{(i)},\bar{\nu}_{(i)})}\prod_{i=1}^ {\gr{r}-1}\Big(|\rho_{\nu_{(i)}}|+\|V_{\rho_{\nu_{(i)}}}\|^2\Big)\Big(|\rho_{\bar{\nu}_{(i)}}|+\|V_{\rho_{\bar{\nu}_{(i)}}}\|^2\Big)\\
	&\leq\left(\frac{2C}{\mu_\e}\right)^{\gr{r}-1}\frac{t^{n_k\wedge \gr{r}-1}\theta^{(\gr{r}-n_k)_+}}{(\gr{r}-1)!}\sum_{(\nu_{i},\bar{\nu}_{i})}\prod_{i=1}^ {\gr{r}-1}\Big(|\rho_{\nu_i}|+\|V_{\rho_{\nu_i}}\|^2\Big)\Big(|\rho_{\bar{\nu}_i}|+\|V_{\rho_{\bar{\nu}_i}}\|^2\Big).
	\end{split}\]
	
	Then denoting $d_i(G)$ the degree of vertices in a graph, $\mathcal{T}_{\gr{r}}$ the set of minimally (not ordinated) connected graph on $[1,\gr{r}]$,
	\[\begin{split}
	\int\ind_{\mathcal{R}^{\underline{\rho},\underline{\mathfrak{p}}}_{((s_i,\bar{s}_i),(\kappa_i)}}d\hat{x}_1\cdots\hat{x}_{\gr{r}-1}&\leq\left(\frac{2C}{\mu_\e}\right)^{\gr{r}-1}\frac{t^{n_k\wedge \gr{r}-1}\theta^{(\gr{r}-n_k)_+}}{(\gr{r}-1)!}\sum_{T^>\in\mathcal{T}^>_{\gr{r}}}\prod_{i=1}^{\gr{r}} \Big(|\rho_{i}|+\|V_{\rho_{i}}\|^2\Big)^{d_i(T^>)}\\
	&\leq\left(\frac{2C}{\mu_\e}\right)^{\gr{r}-1}t^{n_k\wedge \gr{r}-1}\theta^{(\gr{r}-n_k)_+}\sum_{T\in\mathcal{T}_{\gr{r}}}\prod_{i=1}^r \Big(|\rho_{i}|+\|V_{\rho_{i}}\|^2\Big)^{d_i(T)}.
	\end{split}\]
	
	For $A,B>0$, $x\in\mathbb{R}$, there exists a constant $C>0$ such that
	\[\left(A+x^2\right)^B e^{-\frac{x^2}{4}} \leq\left(\tfrac{4B}{e}\right)^B e^{\frac{A}{4}}.\]
	We use this inequality to bound 
	\begin{equation}
	\begin{split}
	\int\ind_{\mathcal{R}^{\underline{\rho},\underline{\mathfrak{p}}}_{((s_i,\bar{s}_i),(\kappa_i)}}&e^{-\frac{1}{4}\|V_p\|^2}d\hat{x}_1\cdots\hat{x}_{\gr{r}-1}\\
	&\leq\left(\frac{C}{\mu_\e}\right)^{\gr{r}-1}t^{n_k\wedge r-1}\theta^{(\gr{r}-n_k)_+}\sum_{T\in\mathcal{T}_{\gr{r}}}\prod_{i=1}^{\gr{r}} \Big(|\rho_{i}|+\|V_{\rho_{i}}\|^2\Big)^{d_i(T)}e^{-\frac{1}{4}\sum_{i=1}^{\gr{r}}\|V_{\rho_i}\|^2}\\
	&\leq \tilde{C}^{l}\frac{t^{n_k\wedge \gr{r}-1}\theta^{(\gr{r}-n_k)_+}}{\mu_\e^{\gr{r}-1}}\sum_{T\in\mathcal{T}_{\gr{r}}}\prod_{i=1}^{\gr{r}} d_i(T)^{d_i(T)}
	\end{split}
	\end{equation}
	
	We use now that for fixed $(d_1,\cdots,d_{\gr{r}})$ such that $\sum_i d_i = 2(n-1)$, 
	\begin{equation}
		\left|\left\{T\in\mathcal{T}_{\gr{r}}\big|\forall i\leq\gr{r},\,d_i(T)=T\right\}\right|=\frac{(\gr{r}-2)!}{(d_1-1)!\cdots(d_{\gr{r}}-1)!}
	\end{equation}
	(see section 2 of \cite{BGSS}), which leads to
	 can use now the following usual estimates:
	\begin{equation}\begin{split}
		\sum_{T\in\mathcal{T}_{\gr{r}}}\prod_{i=1}^{\gr{r}} d_i(T)^{d_i(T)}&=(\gr{r}-2)!\sum_{\substack{d_1,\cdots,d_{\gr{r}}\\ \gr{r}-1\geq d_i\geq 1\\ \sum_i d_i=2(\gr{r}-1)}}\prod_{i=1}^{\gr{r}} \frac{d_i^{d_i}}{(d_i-1)!}\\
		&\leq (\gr{r}-2)!C^{\gr{r}} \sum_{\substack{d_1,\cdots,d_{\gr{r}-1}\\ \gr{r}-1\geq d_i\geq 1\\ r-1\leq \sum_i d_i\leq 2\gr{r}-3}} 1\\
		&\leq C^{\gr{r}}(\gr{r}-2)!\frac{(2\gr{r}-3)^{\gr r-1}}{(\gr r -1)!}\leq \tilde{C}^{l}(\gr{r}-1)!.
	\end{split}\end{equation}
	
	We can integrate now the condition $\Delta_{\mathfrak{p}_i}(Z_{\rho_i})$. The particles in $Z_{\rho_i}$ have to form a distance cluster. Thus every particles in a ball of radius $|\rho_i|\delta\mathbb{V}$ in $\Lambda^{|\rho_i|-1}$ and because clusters are of size at most $\gamma$,
	\[\int_{\Lambda^{|\rho_i|-1}\times(\mathbb{R}^d)^{|\rho_i|}}\Delta_{\mathfrak{p}_i}(Z_{\rho_i}) \tfrac{e^{-\frac{1}{4}\|V_{\rho_i}\|^2}}{(2\pi)^{d|\rho_i|/2}}d\tilde{X}_{i}dV_{\rho_i}\leq C_\gamma \mu_\e^{-|\rho_i|+1}\left(\delta^d\mathbb{V}^d\mu_\e\right)^{|\rho_i|-1}.\]
	In addition, for at least one $\rho_i$, the set family $\underline{\varpi}^i$ is not empty. So we can apply estimate \eqref{estimation recollision} and combining the two estimations
	\[\begin{split}
	\int\ind_{\mathcal{R}^{\underline{\rho},\underline{\mathfrak{p}}}_{((s_i,\bar{s}_i),(\kappa_i)}}&(Z_{lp})\prod_{i=1}^{\gr{r}}\Delta_{\mathfrak{p}_i}(Z_{\rho_i})M^{\otimes p}(V_p)dX_{2,p}dV_{p}\\
	&\leq(\gr{r}-1)! \tilde{C}^{p}\frac{t^{n_k\wedge \gr{r}-1}\theta^{(\gr{r}-n_k)_+}}{\mu_\e^{\gr{r}-1}}\prod_{i=1}^{\gr{r}}\left(\int\Delta_{\mathfrak{p}_i}(Z_{\rho_i}) \tfrac{e^{-\frac{1}{4}\|V_{\rho_i}\|^2}}{(2\pi)^{d|\rho_i|/2}}d\tilde{X}_{i}dV_{\rho_i}\right)\\
	&\leq(\gr{r}-1)! C^{p}\frac{t^{n_k\wedge \gr{r}-1}\theta^{(\gr{r}-n_k)_+}}{\mu_\e^{\gr{r}-1}} \left(\frac{\delta^d\mathbb{V}^d\mu_\e}{\mu_\e}\right)^{\left(\sum_{i=1}^\gr{r}|\rho_{i}|-1\right)-2}\left(\frac{\delta}{\mu_\e}\right)^2\e^\alpha.
	\end{split}\]
	
	Every particles annihilated in the time interval $[0,\delta]$ have a clustering collision in this interval and thus is in a distance interval. Thus $\sum_{i=1}^\gr{r}(|\rho_{i}|-1)$ is bigger than $p-n'$. In addition we have choose $\theta$ bigger than $\delta^d\mathbb{V}^d\mu_\e$ (which is a power of $\e$) and
	\[\begin{split}
	\int&\ind_{\mathcal{R}^{\underline{\rho},\underline{\mathfrak{p}}}_{((s_i,\bar{s}_i),(\kappa_i)}}(Z_{p})\prod_{i=1}^{\gr{r}}\Delta_{\mathfrak{p}_i}(Z_{\rho_i})M^{\otimes p}dX_{2,p}dV_{p}\leq (\gr{r}-1)!\frac{C^{p}}{\mu_\e^{p-1}}t^{n_k-1} \theta^{(p-n_k-2)_+}\delta^2\e^\alpha.
	\end{split}\]
	
	We sum now on the parameters $((s_i,\bar{s}_i),(\kappa_j))$ and $(\mathfrak{p}_i)$. Because size of $\delta\mathbb{V}$-distance clusters are bounded by $\gamma$, the $|\mathfrak{P}(\rho_i)|$ are smaller than some $C_\gamma>0$ depending only on $\gamma$. The conditioning bound also the number of collision parameters  $((s_i,\bar{s}_i),(\kappa_j))$ by $(4\gamma)^{n''}$. Thus
	\[\int\big|\Phi_{\underline{n},n',n'',p}^r(Z_{p})\big|M^{\otimes p}dX_{2,p}dV_{p}\leq \frac{\|h\|(CC_\gamma 4\gamma)^{p}}{p!\mu_\e^{p-1}}t^{n_k-1}\theta^{(p-n_k-2)_+} \delta^2\e^\alpha\sum_{r=1}^{p}\sum_{\underline{\rho}\in\mathcal{P}^r_{p}}(\gr{r}-1)!\]
	
	\[\begin{split}
	\frac{1}{p!}\sum_{r=1}^{p}\sum_{\underline{\rho}\in\mathcal{P}^r_{p}}(\gr{r}-1)! = 	\frac{1}{p!}\sum_{\gr{r}=1}^{p}\sum_{\substack{k_1+\cdots+k_\gr{r}=p\\k_i\geq 1}} \frac{p!}{k_1!\cdots k_\gr{r}!}\frac{(\gr{r}-1)!}{\gr{r}!}\leq\sum_{\gr{r}=1}^{p}\sum_{\substack{k_1+\cdots+k_\gr{r}=p\\k_i\geq 1}} \frac{1}{k_1!\cdots k_\gr{r}!}\leq e^p
	\end{split}\]
	hence
	\[\int\big|\Phi_{\underline{n},n',n'',p}^r(Z_{p})\big|M^{\otimes p}dX_{2,p}dV_{p}\leq \frac{\|h\|p\big(e\tilde{C}\big)^{p}}{\mu_\e^{p-1}} t^{k-1} \theta^{(p-n_k-2)_+}\delta^2\e^\alpha\]
	
	Which ends the proof of the first inequality.\\~
	\end{proof}

	\begin{proof}[Proof of \eqref{Estimation reco loc 2}.] We begin applying \eqref{borne 1 sur phi l} to bound $|\Phi_{\underline{n},n',n'',p}^r(Z_{p})\Phi_{\underline{n},n',n'',p}^r(Z_{m},Z_{p+1,2p-m})\big|$:
	\begin{equation}
	\everymath={\displaystyle}
	\begin{array}{l@{}l}
	|\Phi_{\underline{n},n',n'',p}^{k'}(Z_{p})\Phi_{\underline{n},n',n'',p}^{k'}(Z_{p+1-m,2p-m})\big|\\[5pt]
	\leq\frac{\|h\|^2}{(p!)^2} \sum_{\substack{((s_i,\bar{s}_i),(\kappa_j))\\((s_i,\bar{s}_i),(\kappa_j))}}\sum_{\substack{ \omega_1\sqcup \omega_2 \sqcup \omega_3= [p]\\\substack{ \omega'_1\sqcup \omega'_2 \sqcup \omega'_3= [p+1-m,2p-m]\\|\omega_1|=|\omega_1'|=n'\\|\omega_2| = |\omega'_2|= p-n''}}}&\sum_{\gr{p},\gr{p}'\geq 1}\sum_{\substack{\underline{\varpi}\in\mathcal{Q}^{\gr{p}}_{\omega_1,\omega_2}\\\underline{\varpi}'\in\mathcal{Q}^{\gr{p}'}_{\omega_1',\omega_2'}}}\ind_{\mathcal{R}_{((s_i,\bar{s}_i),(\kappa_j))}}(Z_{\omega_1\cup\omega_3})\\[-8pt]
	&\times\ind_{\mathcal{R}_{((s'_i,\bar{s}'_i),(\kappa_j))}}(Z_{\omega'_1\cup\omega'_3})\prod_{i=1}^{\gr{p}}\chi(Z_{\varpi_i})\prod_{i=1}^{\gr{p}'}\chi(Z_{\varpi'_i}).\end{array}
	\end{equation}
	
	Note that the right hand side is invariant under translation. Thus one can fix $x_1 = 0$ and integrate with respect the other variables.
	
	For a position $Z_{2p-m}$, we consider $\underline{\rho}:=(\rho_1,\cdots,\rho_{\gr{r}})$ the $\delta\mathbb{V}$-cluster. We can then construct the parameters $\mathfrak{p}_i:=(\underline{\omega}^i,\underline{\omega}'{^i},\underline{\varpi}^i,\underline{\varpi}'{^i})$:
	\begin{itemize}
		\item $\underline{\omega}^i:=(\omega^i_1,\omega^i_2,\omega^i_3)$ is a partition of $\rho_i\cap[1,p]$ defined by $\underline{\omega}^i_j:=\omega_j\cap\rho_i$,
		\item $\underline{\omega}'{^i}:=(\omega^i_1,\omega^i_2,\omega^i_3)$ is a partition of $\rho_i\cap[p+1-m,2p+m]$ defined by $\underline{\omega}^i_j:=\omega_j\cap\rho_i$,
		\item$\underline{\varpi}^i:=\{\varpi_j {\rm ~such~that~}\varpi_j\subset\rho_i\}$ and 
		\item  $\underline{\varpi}'{^i}:=\{\varpi'_j {\rm ~such~that~} \varpi'_j\subset\rho_i\}$.
	\end{itemize}
	We denote now $\mathfrak{P}(\rho_i)$ the new set of possible parameter $\mathfrak{p}_i$ (this will not create a conflict with the previous section). Because each cluster $\rho_i$ is of size at most $\gamma$, $|\mathfrak{P}(\rho_i)|$ is bounded by some constant $C_\gamma$ depending only on $\gamma$. We define
	\[\Delta_{\mathfrak{p}_i}(Z_{\rho_i}):=\ind_{Z_{\rho_i}\text{\,form\,a\,distance\,cluster}}\prod_{j=1}^{|\underline{\varpi}^i|}\chi(Z_{\varpi_j^i})\prod_{j=1}^{|\underline{\varpi}'{^i}|}\chi(Z_{\varpi_j'{^i}})\text{~and}\]
	\[\mathcal{R}^{\underline{\rho},\underline{\mathfrak{p}}}_{\substack{((s_i,\bar{s}_i),(\kappa_i))\\((s'_i,\bar{s}'_i),(\kappa'_i))}} :=  \Big\{Z_{2p-m}\in\mathcal{D}^{\underline{\rho}}_\e,\,Z_{\omega_1\cup\omega_3}\in\mathcal{R}_{((s_i,\bar{s}_i),(\kappa_i)},Z_{\omega'_1\cup\omega'_3}\in\mathcal{R}_{((s'_i,\bar{s}'_i),(\kappa'_i)}\Big\}\]
	and we have has in the previous case
	\begin{equation*}
	\begin{split}
	|\Phi_{\underline{n},n',n'',p}^{k'}(Z_{p})&\Phi_{\underline{n},n',n'',p}^{k'}(Z_{m},Z_{p+1,2p-m})\big|
	\\
	&\leq \frac{\|h\|^2}{(p!)^2}\sum_{\gr{r}=1}^{2p-m}\sum_{\underline{\rho}\in\mathcal{P}^r_{p}}\sum_{\substack{((s_i,\bar{s}_i),(\kappa_j))\\((s'_i,\bar{s}'_i),(\kappa'_j))\\ \underline{\mathfrak{p}}\in\underset{i}{\prod}\mathfrak{P}(\rho_i)}}~\ind_{\mathcal{R}^{\underline{\rho},\underline{\mathfrak{p}}}_{\substack{((s_i,\bar{s}_i),(\kappa_i)\\((s'_i,\bar{s}'_i),(\kappa'_i)}}}(Z_{2p-m})\prod_{i=1}^{\gr{r}}\Delta_{\mathfrak{p}_i}(Z_{\rho_i}).
	\end{split}
	\end{equation*}
	Note for at least one $i$, $\underline{\varpi}^i$ is not empty. We construct now a clustering tree in order to estimates $\mathcal{R}^{\underline{\rho},\underline{\mathfrak{p}}}_{\substack{((s_i,\bar{s}_i),(\kappa_i)\\((s'_i,\bar{s}'_i),(\kappa'_i)}}$.
	
	Consider the collision graph associated with the first pseudotrajectory $\mathcal{G}^{[0,t-t_s]}_{\omega_1\cup\omega_3}$  and  the graph associated with second one $\mathcal{G}^{[0,t-t_s]}_{\omega_1'\cup\omega_3'}$. Merge them and identify vertices in a same cluster $\rho_i$. Finally we keep only the first clustering collisions, and we obtain the orientated tree $T^>:=(\nu_i,\bar{\nu}_i)_{1\leq i\leq \gr{r}-1}$. Note that these clustering collisions can happen in the first or in the second pseudotrajectory.
	
	As in the proof of \eqref{Estimation reco loc 1} we have to bound the number of grazing collisions of $T^>$ in the time interval $[0,2\tau]$. There are atmost $(n_k-1+p-m)$ collision in $[(k'+1)\delta,t-t_s]$ ($n_k-1$ for the first pseudotrajctory and we have to connect $p-m$ particles in the second). Thus there are at least $(\gr{r}-(n_k-1+p-m))_+$ clustering collisions in $[\delta,(k'+1)\delta]\subset[0,2\tau]$.
	
	We explain quickly how to estimate the $i$-th collision. As we in the previous paragraph we construct the modified tree parameters $(\nu_{(i)},\bar{\nu}_{(i)})$ and the change of variable 
	\[\forall i\in\{1,\cdots,\gr{r}-1\},~\hat{x}_i:=x_{\min\nu_{(i)}}-x_{\min\bar{\nu}_{(i)}},~\tilde{X}_i:=(x_j-x_{\min \rho_i})_{j\in\rho_i},\]
	\[X_{2,l}\mapsto(\hat{x}_1\cdots,\hat{x}_{\gr{r}-1},\tilde{X}_1,\cdots,\tilde{X}_r),\]
	and we integrate the clustering on the $(\hat{x}_i)$. 
	
	Collision can occur one of the two pseudotrajectories. The clustering set $B_i$ is defined as follows: fix $t_{i+1}$ the time of the $(i+1)$-th clustering collision and the relative positions $\hat{x}_{i-1},\cdots,\hat{x}_{1}$. We define the $i$-th clustering set
	\[B_i := \bigcup_{\substack{q\in \bigcup_{j\in\nu_{(i)}}\rho_j\\\overline{q}\in \bigcup_{\bar{\jmath}\in\bar{\nu}_{(i)}}\rho_{\bar{\jmath}}} }\Big(B_{i}^{q,\bar{q}}\cup B_{i}'^{q,\bar{q}}\Big)\]
	with
	\[B_{i}^{q,\bar{q}}:=\Big\{\hat{x}_i ~ \Big\vert~\exists t_i\in[0,t_{i+1}\wedge T_i],~|\text{x}_{\bar{q}}(t_i)-\text{x}_{\bar{q}}(t_i)|=\e\Big\},\]
	where $\text{x}_i(\tau)$ is the pseudotrajectory with respect to parameters $((s_i,\bar{s}_i)_i,(\kappa_j)_j)$ and $T_i:=2\theta$ for the the $(\gr{r}-n_k)_+$ first collisions, $t$ else, and $B_{i}'^{q,q'}$ is defined in the same way for the other pseudotrajectory. We can apply the estimation of the previous paragraph:
	\[\int \ind_{B_i}d\hat{x}_i\leq\frac{2C}{\mu_\e}\sum_{\substack{\nu_i\in\nu_{(i)}\\\bar{\nu}_i\in\bar{\nu}_{(i)}}}\Big(|\rho_{\nu_i}|+\|V_{\rho_{\nu_i}}\|^2\Big)\Big(|\rho_{\bar{\nu}_i}|+\|V_{\rho_{\bar{\nu}_i}}\|^2\Big)\int_{0}^{t_{i+1}\wedge T_i}dt_i.\]
	
	We finally come back to the situation of the estimation of \ref{Estimation reco loc 1}, and we can apply the same strategy:
	\[\begin{split}
	\int	|\Phi_{\underline{n},n',n'',p}^{k'}(Z_{p})\Phi_{\underline{n},n',n'',p}^{k'}&(Z_{p+1-m,2p-m})\big|M^{\otimes (2p-m)}dX_{2,2p-m}dV_{2p-m}\\
	&\leq \frac{(2p-m)!\|h\|^2}{(p!)^2\mu_\e^{2p-m-1}}C^{p}\delta^2\e^\alpha \tau^{(p-n_k-2)_+}t^{n_k-1+p-m}\\
	&\leq \frac{\|h\|^2}{p^m\mu_\e^{2p-m-1}}\tilde{C}^{p}\delta^2\e^\alpha \tau^{(p-n_k-2)_+}t^{n_k-1+p-m}
	\end{split}\]
	which concludes the proof.
	\end{proof}

	\begin{proof}[Proof of \eqref{Estimation reco loc 3}]
	In $\mathsf{f}_{p\leftarrow l}\big[\Phi_{\underline{n},n'}^{0,k'}[h]\big]\left(Z_{[1,l]}\right)\mathfrak{X}_{n',p}(Z_{[1,p]})$ we have three set of indices:
	\begin{itemize}
		\item  $[1,n']$ the set of particles created in the final pseudotrajectory,
		\item $[n'+1,p]$ the particles added in the treatment of local recollision and
		\item $[p+1,l]$ particles added in the dynamical cluster development.
	\end{itemize}
	
	Any permutation $\sigma$ which sends $[1,n']$, $[n'+1,p]$ and $[p+1,l]$ onto themselves stabilizes \[\mathsf{f}_{p\leftarrow l}\big[\Phi_{\underline{n},n'}^{0,k'}[h]\big]\left(Z_{[1,l]}\right)\mathfrak{X}_{n',p}(Z_{[1,p]})\] and
	
	\[\Phi_{\underline{n},n',p,l}^r(Z_{l}) = \frac{n'!\,(p-n')!\,(l-p)!}{l!}\sum_{\substack{ \omega_1\sqcup \omega_2 \sqcup \omega_3= [l]\\|\omega_1|=n'\\|\omega_2| = l-p}}\mathsf{f}_{p\leftarrow l}\big[\Phi_{\underline{n},n'}^{0,k'}[h]\big]\left(Z_{\omega_1\cup\omega_2},Z_{\omega_3}\right)\mathfrak{X}_{n',p}(Z_{\omega_1},Z_{\omega_2})\]
	
	We develop $\mathsf{f}_{p\leftarrow l}\big[\Phi_{\underline{n},n'}^{0,k'}[h]\big]$: for $\underline{\omega}=(\omega_1,\omega_2,\omega_3)$, $\underline{\lambda}$ two partitions of $[1,l]$  with $\omega_1\cup \omega_2\subset\lambda_1$ and $(s_i,\bar{s}_i)_{1\leq i\leq n'-1}$, we define $\mathcal{R}^{\underline{\omega},\underline{\lambda}}_{(s_i,\bar{s}_i)}\subset\mathcal{D}^l_\e$ the set of initial data such that particles in $\lambda_1$ form a $(\omega_1\cup\omega_2)$-cluster (see the previous part for the definition of cluster pseudotrajectories), and the tree pseudotrajectory $\Zt_{n'}(\tau,(s_i,\bar{s}_i),\Zc^{\lambda_1}_{\omega_1}(\delta))$ with $\sum_{i=1}^j k_i$ particles at time $\tau:= (k'+1)\delta+ (k-j)\tau$. Then we can write:
	\[\begin{split}
	&\Phi_{\underline{n},n',p,l}^{k'}(Z_{l})\\
	&\begin{split}=-\frac{1}{l!}\sum_{\substack{ \omega_1\sqcup \omega_2 \sqcup \omega_3= [l]\\|\omega_1|=n'\\|\omega_2| = p-n'}}\sum_{\gr{l}=1}^{l}\sum_{\substack{\lambda_1 \subset [l]\\ \omega_1\cup \omega_2 \subset \lambda_1}}\sum_{\substack{(\lambda_2,\cdots,\lambda_\gr{l})\\ \in\mathcal{P}^{\gr{l}-1}_{\lambda_1^c}}}\sum_{(s_i,\bar{s}_i)}\sum_{\substack{\gr{p}\geq1\\[1pt]\underline{\varpi}\in\mathcal{Q}^\gr{p}_{\omega_1,\omega_2}}}\ind_{\mathcal{R}^{\underline{\omega},\underline{\lambda}}_{(s_i,\bar{s}_i)}}h(\Zt_{n'}(\tau,(s_i,\bar{s}_i),\Zc^{\lambda_1}_{\omega_1}(\delta)))\prod_{i=1}^{\gr{p}}\chi\big(Z_{\varpi_i}\big)\\[-20pt]
	\times\prod_{i=1}^{n}\bar{s}_i\prod_{i=2}^{\gr{l}}\varphi_{|\lambda_i|}\big(Z_{\lambda_i}\big) \psi_{\gr{l}}\big(Z_{\lambda_1},\cdots,Z_{\lambda_{\gr{l}}}\big).
	\end{split}
	\end{split}\]
	We recall the Penrose's tree inequality (see for example the second section of \cite{BGSS} for a proof)
	\[\Big|\psi_{\gr{l}}\big(Z_{\lambda_1},\cdots,Z_{\lambda_{\gr{l}}}\big)\Big|=\left|\sum_{C\in\mathcal{C}(\omega)}\prod_{(i,j)\in E(C)}-\ind_{\lambda_i\so \lambda_j}\right|\leq\sum_{T\in\mathcal{T}_{\gr{l}}}\prod_{(i,j)\in E(T)}\ind_{\lambda_i\so\lambda_j}.\]
	Hence we obtain the following bound on $\Phi_{\underline{n},n',p,l}^{k'}$, invariant under translation:
	\begin{equation}
	\begin{split}
	\Big|\Phi_{\underline{n},n',p,l}^{k'}\Big|(Z_{l})\leq\frac{\|h\|}{l!}\sum_{\substack{\underline{\omega}\in\mathcal{P}^3_{l}\\|\omega_1|=n'\\|\omega_2|=p}}\sum_{\gr{l}=1}^{l}\sum_{\substack{\underline{\lambda}\in\mathcal{P}^{\gr{l}}_{l}\\\omega_1\cup\omega_2\subset\lambda_1}}\sum_{(s_i,\bar{s}_i)}\sum_{\gr{p}\geq 1}\ind_{\mathcal{R}^{\underline{\omega},\underline{\lambda}}_{(s_i,\bar{s}_i)}}\prod_{i=1}^{\gr{p}}\chi\big(Z_{\varpi_i}\big)\prod_{i=2}^{\gr{l}}\varphi_{|\lambda_i|}\big(Z_{\lambda_i}\big) \\[-20pt]
	\times\sum_{T\in\mathcal{T}_{\gr{l}}}\prod_{(i,j)\in E(T)}\ind_{\lambda_i\so\lambda_j}.
	\end{split}
	\end{equation}
	
	We will use again the distance cluster to control relation between particles in the time interval $[0,\delta]$. Let $\underline{\rho}:=(\rho_1,\cdots,\rho_{\gr{r}})$ the distance partition of $Z_{l}$. For each $\rho_i$, we construct the collision parameter $\mathfrak{p}:=(\underline{\omega}^i,\underline{\lambda}^i,\underline{\varpi}^i)$ with:
	\begin{itemize}
	\item $\underline{\omega}^i:=(\omega^i_1,\omega^i_2,\omega^i_3)$ is a partition of $\rho_i\cap[1,p]$ defined by $\underline{\omega}^i_j:=\omega_j\cap\rho_i$,
	\item $\underline{\lambda}^i:=\{\lambda_1^i:=\lambda_1\cap\rho_i\}\cup\{\lambda_j {\rm~for~}j\geq 2 {\rm~with~}\lambda_j\subset\rho_i\}$ a partition of $\rho^i$ and
	\item $\underline{\varpi}^i:=\{\varpi_j {\rm ~such~that~}\varpi_j\subset\rho_i\}$,
	\end{itemize}
	and we denote $\mathfrak{P}(\rho_i)$ the set of possible $\mathfrak{p}_i$.
	
	The global conditioning bound velocities so particles which make a collisionnal cluster have to be in a same distance cluster. Thus for each $\varpi_j$ and $\lambda_k,\,k\geq2$ there exists a $\rho_i$ containing $\lambda_k$ or $\varpi_j$. In addition for $i\neq i'$, particles in $\lambda_1^i$ do not interact with particles of $\lambda_1^{i'}$. The overlap are also contained in the distance cluster: if two dynamical clusters $\lambda_j$ and $\lambda_{j'}$ with $j,j'\geq 2$, there exists a $\rho_i$ containing the both, and if $\lambda_j\subset\rho_i$ has an overlap with $\lambda_1$, then $\lambda_j$ has an overlap with $\lambda_1^i$. This last property allows us to rewrite the overlap cumulant: on $\mathcal{D}^{\underline{\rho}}_\e$,
	\[\Big|\psi_{\gr{l}}\big(Z_{\lambda_1},\cdots,Z_{\lambda_{\gr{l}}}\big)\Big|\leq\sum_{T\in\mathcal{T}_{\gr{l}}}\prod_{(i,j)\in E(T)}\ind_{\lambda_i\so\lambda_j}\leq \prod_{i= 1}^{\gr{r}}\sum_{T_i\in\mathcal{T}_{|\rho_i|}}\prod_{(j,j')\in E(T_i)}\ind_{\lambda_j^i\so\lambda^i_{j'}}\leq\prod_{i= 1}^{\gr{l}}\Big|\mathcal{T}(\rho_i)\Big|.\]
	
	We have now the following bound
	\begin{equation*}\big|\Phi_{\underline{n},n',p,l}^{k'}(Z_{l})\big|
	\leq \frac{\|h\|}{l!}\sum_{\gr{r}=1}^{l}\sum_{\underline{\rho}\in\mathcal{P}^{\gr{r}}_{l}}\sum_{\substack{(s_i,\bar{s}_i)\\\underline{\mathfrak{p}}\in\underset{i}{\prod}\mathfrak{P}(\rho_i)}}~\ind_{\mathcal{R}^{\underline{\rho},\underline{\mathfrak{p}}}_{(s_i,\bar{s}_i)}}\prod_{i=1}^{\gr{r}}\Delta_{\mathfrak{p}_i}(Z_{\rho_i})
	\end{equation*}
	with 
	\[\Delta_{\mathfrak{p}_i}(Z_{\rho_i}):=\Big|\mathcal{T}(\rho_i)\Big|\ind_{Z_{\rho_i}\text{\,form\,a\,distance\,cluster}}\prod_{j=1}^{|\underline{\varpi}^i|}\chi(Z_{\varpi_j^i})\text{~and}\]
	\[\mathcal{R}^{\underline{\rho},\underline{\mathfrak{p}}}_{(s_i,\bar{s}_i)} :=  \Big\{Z_{l}\in\mathcal{D}^{\underline{\rho}}_\e,\,Z_{l}\in\mathcal{R}_{(s_i,\bar{s}_i)}^{\underline{\omega},\underline{\lambda}}\Big\}.\]
	
	Finally we have to construct a clustering tree : we consider the collision graph of the particles $\omega_1$ on the time interval $[\delta,t-t_s]$. Then we identify vertices in a same cluster $\rho_i$ and we keep only the first clustering collision. This constructs an ordered tree $T^>\in\mathcal{T}^>_{r}$. As in the previous cases, respecting the collision history $T^>$ depends only on the relative position at time $\delta$ which are the same than at time $0$ (cluster do not interact).
	We can apply the same method than in the estimation \eqref{Estimation reco loc 1} and we obtain the expected bound.
	\end{proof}

	\begin{proof}[Proof of \eqref{Estimation reco loc 4}]
	We have to adapt the proof of \eqref{Estimation reco loc 2} with the parametrisation of the previous part.
	\end{proof}

	\section{Treatment of the principal part}\label{Treatment of the main part}
	\subsection{Duality formula}
	We recall that
	\begin{equation*}
	G_\e^{\text{main}}(t) = \sum_{\substack{n_1\leq\cdots\leq n_K\\n_j-n_{j-1}\leq 2^j}} \mathbb{E}_\e\left[\mu_\e^{-1/2}\sum_{(i_1,\cdots,i_{n_K})}\Phi^0_{\underline{n}}[h]\left(\gr{Z}_{\underline{i}}(0)\right)\zeta^0_\e(g)\right]= \sum_{\substack{n_1\leq\cdots\leq n_K\\n_j-n_{j-1}\leq 2^j}} \mathbb{E}_\e\left[\mu_\e^{n_k}\,\hat{\Phi}^0_{\underline{n}}[h]\,\hat{g}\right]
	\end{equation*}
	where $\Phi^0_{\underline{n}}[h]$ is the development of $h(z_i(t))$ along pseudotrajectories tree with $n_k$ particles at time $t-n_k\theta$ and no recollision.
	
	We denotes 
	\begin{equation}
	g^\e_n(Z_n):= \left(\sum_{k=1}^n g(z_k)\right)~ \frac{1}{\mathcal{Z}_\e}\sum_{p\geq0}\frac{\mu_\e^p}{p!}\int \,e^{-\mathcal{V}_{n+p}(X_{n+p})}dX_{n+1,n+p}.
	\end{equation}
	Then using the equality \eqref{quasi covariance} and $L^1$ estimations on $\Phi^0_{\underline{n}}$ of \ref{Clustering estimations}, we have for $h$ and $g$ in $L^\infty$
	\begin{equation*}
	\begin{split}
	&G_\e^{\text{main}}(t) = \sum_{\substack{n_1\leq\cdots\leq n_K\\n_j-n_{j-1}\leq 2^j}} \mathbb{E}_\e\left[\mu_\e^{-1}\sum_{\ui_{n_K}}\Phi^0_{\underline{n}}[h]\left(\gr{Z}_{\ui_{n_k}}(0)\right)\sum_{j=1}^{n_K}g(\gr{z}_{i_j}(0))\right]+O\left(\e \sum_{\substack{\underline{n}}} (Ct)^{n_k}\|h\|\|g\|\right)\\
	&=\sum_{\substack{\underline{n}}} \int\mu_\e^{n_K-1}\Phi^0_{\underline{n}}[h]\left(Z_{n_K}\right)g_{n_K}^\e(Z_{n_K})\,\frac{e^{-\mathcal{H}_{n_K}(Z_{n_K})}dZ_{n_K}}{(2\pi)^\frac{n_Kd}{2}}+O\left(\e \left(K2^{K^2} (Ct)^{2^{K+1}}\|h\|\|g\|\right)\right).
	\end{split}
	\end{equation*}
	
	We want to compute the asymptotic of each terms in the sum.
	\[\begin{split}\int&\mu_\e^{n_K-1}\Phi^0_{\underline{n}}[h]\left(Z_{n_k}(0)\right)g_{n_k}^\e(Z_{n_k})\,\frac{e^{-\mathcal{H}_{n_K}(Z_{n_K})}dZ_{n_K}}{(2\pi)^\frac{n_Kd}{2}}\\
	&=\frac{\mu_\e^{n_K-1}}{n_K!}\int\sum_{(s_i,\bar{s}_i)_i} \prod_{i= 1}^{n_K-1} \bar{s}_i \ind_{\mathcal{R}_{(s_i,\bar{s}_i)}^{\underline{n}}}h\big(\Zt_{n_k}(t)\big)g_{n_K}^\e(Z_{n_K})\,M^{\otimes n_K}dZ_{n_K}\end{split}\]
	where $\mathcal{R}_{(s_i,\bar{s}_i)}^{\underline{n}}\subset\mathcal{D}^{n_K}_\e$ is the set of initial parameters such that for each $k\in[0,K]$, the pseudotrajectory $\Zt_{n_K}(\tau,(s_i,\bar{s}_i))$ has $n_k$ particles at time $t-k\theta$ and no recollision.
	
	We order now the annihilation. We fix an initial position $Z_{n_K}$. Given collision parameters $(s_i,\bar{s}_i)_i$, we can construct a collision tree $(a_i,b_i)$ where the $i$-th removed particle is $b_i$, after a collision with $a_i$. We have a one to one correspondence between the admissible $(a_i,b_i)_i$ and the $(s_i)_i$, thus we can change the collision parameters to $(a_i,b_i,\bar{s}_i)$. The $(b_i)_{1\leq i\leq n_K}$ is the annihilation order. Due to the symmetry of $g_{n_K}^\e$, we can reorder particle such $b_i = n_K-i+1$. Denoting $\tilde{a}_i:= a_{n_K-i+1}$, $\tilde{s}_i:= \bar{s}_{n_K-i+1}$ and $\mathcal{R}^{\underline{n}}_{(\tilde{a}_i,\tilde{s}_i)_i}$ the set of initial parameters respecting collision parameters $(\tilde{a}_i,\tilde{s}_i)_{2\leq i\leq n_K}$, 
	\[\begin{split}\int\mu_\e^{n_K-1}&\Phi^0_{\underline{n}}[h]\left(Z_{n_K}\right)g_{n_K}^\e(Z_{n_K})^{\otimes n_K}dZ_{n_K}\\
	&=\mu_\e^{n_K-1}\sum_{(\tilde{a}_i,\tilde{s}_i)_i} \prod_{i= 2}^{n_K} \tilde{s}_i\int_{\mathcal{R}^{\underline{n}}_{(\tilde{a}_i,\tilde{s}_i)_i}}h(\Zt_{n_K}(t)))g_{n_K}^\e(Z_{n_K})M^{\otimes n_K}dZ_{n_K}.\end{split}\]
	Note that the admissible $(a_i)_{2\leq i\leq n_K}$ verifies $a_i\in[1,i-1]$.
	
	We define now the \emph{backward speudocharacteristic}
	\[\xi^\e_{n_K}(\tau,(\tilde{a}_i,\tilde{s}_i)_i,z_1,(t_i,{\bar{v}}_i,\eta_i)_{2\leq i\leq n_K})\] 
	with a final point $z_1$ and parameters $(\tau_i,\bar{v}_i,\eta_i)_{2\leq i\leq n_K}\in (\mathbb{R}^+\times\mathbb{R}^d\times\mathbb{S}^{d-1})^{n_K-1}$ with $t>t_2>\cdots>t_{n_K}>0$. We construct sequentially the speudotrajectory on each $[t_{i+1},t_i]$. We begin at time $t$ with particle $1$ at $z_1$.  the coordinate of the  pseudocharacteristic at time $\tau\in(t_{i},t_{i-1})$. In the interval $(t_{i+1},t_i)$, there is $i-1$ particles $\xi^\e_{n_K}(\tau)=(z^\e_1(\tau),\cdots,z^\e_{i-1}(\tau))$ which move along straight  line (backwardly). At time $t_i^+$, we add particle $i$ at position $(x^\e_{\tilde{a}_i}(\tau)+\e\eta_i,\bar{v}_i)$. If $\tilde{s}_i=1$ we apply the scattering between particles $\tilde{a}_i$ and $i$, else the particles do not interact.
	
	Note that the $v_i^\e(\tau)$ does not depends on $\e$.
	
	We denote $\mathbb{G}^{\underline{n},0}_{(\tilde{a}_i,\tilde{s}_i)_i}(z_1)$ and $\mathbb{G}^{\underline{n},\e}_{(\tilde{a}_i,\tilde{s}_i)_i}(z_1)$ the definition set of pseudocharecteristics: for $z_1\in\Lambda\times\mathbb{R}^d$
	\[\begin{split}
	\mathbb{G}^{\underline{n},0}_{(\tilde{a}_i,\tilde{s}_i)_i}(z_1):=\Big\{&(t_i,{\bar{v}}_i,\eta_i)_{i}\in(\mathbb{R}\times\mathbb{R}^d\times\mathbb{S}^{d-1})^{n_K-1}\Big|\,t>t_2>\cdots>t_{n_K}>0,\\
	 &\forall i\in[n_{j-1}+1,n_j],t_i\in(t-j\theta,t-(j-1)\theta),~ (v^\e_i(t_i^+)-\bar{v}_i)\cdot\eta_i<0\Big\},
	 \end{split}\]
	and $\mathbb{G}^{\underline{n},\e}_{(\tilde{a}_i,\tilde{s}_i)_i}(z_1)$ the subset of $\mathbb{G}^{\underline{n},0}_{(\tilde{a}_i,\tilde{s}_i)_i}(z_1)$ such that distances between particles are bigger than $\e$ expect when a particle is created (the trajectories without \emph{overlap})
	
	Then we can do the change of variable
	\begin{equation}
	\begin{array}{c}
	\underset{z_1\in\mathbb{D}}\bigcup\{z_1\}\times\mathbb{G}^{\underline{n},0}_{(\tilde{a}_i,\tilde{s}_i)_i}(z_1)\longrightarrow \mathcal{R}^{\underline{n}}_{(\tilde{a}_i,\tilde{s}_i)_i}\\[6pt]
	(z_1,(t_i,{\bar{v}}_i,\eta_i)_{i})\longmapsto\xi^\e_{n_K}(\tau=0).
	\end{array}
	\end{equation}
	Because we have removed all the recollision, this map is a bijection. It is a local diffeomorphism hence a diffeomorphism. It sends measure
	\begin{equation}
	M(v_1)dz_1d\Lambda^{\underline{n}}_{(\tilde{a}_i,\tilde{s}_i)_i}:=M(v_1)dz_1~\prod_{i=2}^{n_K}\big((v^\e_{a(i)}(t_i^+)-\bar{v}_i)\cdot\eta_i\big)_+M(v_i)d\bar{v}_id\eta_idt_i\end{equation}
	onto $\mu_\e^{n_k-1}M^{\otimes n_K}dZ_{n_K}$. We will denote with a little abuse of notation:
	\[\mathbb{D}\times \mathbb{G}^{\underline{n},\e}_{(\tilde{a}_i,\tilde{s}_i)_i} := \bigcup_{z_1\in\mathbb{D}}\{z_1\}\times\mathbb{G}^{\underline{n},\e}_{(\tilde{a}_i,\tilde{s}_i)_i}(z_1).\]
	
	Finally we can write the following duality formula 
	\begin{equation}\begin{split}
	\mu_\e^{n_K-1}\int&\Phi^0_{\underline{n}}[h]\left(Z_{n_K}\right)g_{n_K}^\e(Z_{n_K})M^{\otimes n_K}(V_{n_K})dZ_{n_K}\\
	&=\sum_{(\tilde{a}_i,\tilde{s}_i)_i} \prod_{i= 1}^{n_K-1} \tilde{s}_i\int_{\mathbb{D}\times \mathbb{G}^{\underline{n},\e}_{(\tilde{a}_i,\tilde{s}_i)_i} }h(z_1)g_{n_K}^\e(\xi^\e_{n_K}(0))M(v_1)dz_1d\Lambda^{\underline{n}}_{(\tilde{a}_i,\tilde{s}_i)_i}.
	\end{split}\end{equation}
	
	Denoting
	\[g_{n_K}(Z_{n_K}):=\sum_{i=1}^{n_K}g(z_i),\]
	we have formally
	\[\begin{split}\int\mu_\e^{n_K-1}&\Phi^0_{\underline{n}}\left(Z_{n_K}\right)g_{n_K}^\e(Z_{n_K})M^{\otimes n_K}(V_{n_K})dZ_{n_K}\\
	&\underset{\e\rightarrow 0}{\longrightarrow}~\sum_{(\tilde{a}_i,\tilde{s}_i)_i} \prod_{i= 1}^{n_K-1} \tilde{s}_i\int_{\mathbb{D}\times \mathbb{G}^{\underline{n},0}_{(\tilde{a}_i,\tilde{s}_i)_i}}h(z_1)g_{n_K}(\xi^0_{n_K})M^{\otimes n_K}(V_{n_K})dZ_{n_K}d\Lambda^{\underline{n}}_{(\tilde{a}_i,\tilde{s}_i)_i}.\end{split}\]
	
	In order to have an explicit rates of convergence we decompose the error in three parts:
	\begin{equation}\begin{split}
	&\int\mu_\e^{n_K-1}\,\Phi^0_{\underline{n}}\,g_{n_K}^\e M^{\otimes n_K}(V_{n_K})dZ_{n_K}\\
	&=\sum_{(\tilde{a}_i,\tilde{s}_i)_i} \prod_{i= 1}^{n_K-1} \tilde{s}_i\int_{\mathbb{D}\times \mathbb{G}^{\underline{n},0}_{(\tilde{a}_i,\tilde{s}_i)_i}}h(z_1)g_{n_K}(\xi^0_{n_K}(0))M(v_1)dz_1d\Lambda^{\underline{n}}_{(\tilde{a}_i,\tilde{s}_i)_i}+R_1+R_2+R_3
	\end{split}\end{equation}
	
	\[R_1=\sum_{(\tilde{a}_i,\tilde{s}_i)_i} \prod_{i= 1}^{n_K-1} \tilde{s}_i\int_{\mathbb{D}\times \mathbb{G}^{\underline{n},0}_{(\tilde{a}_i,\tilde{s}_i)_i}}h(z_1)\left(g_{n_K}(\xi^\e_{n_K}(0))-g_{n_K}(\xi^0_{n_K}(0))\right)M(v_1)dz_1d\Lambda^{\underline{n}}_{(\tilde{a}_i,\tilde{s}_i)_i}\]
	\[R_2=-\sum_{(\tilde{a}_i,\tilde{s}_i)_i} \prod_{i= 1}^{n_K-1} \tilde{s}_i\int_{\mathbb{D}\times \mathbb{G}^{\underline{n},0}_{(\tilde{a}_i,\tilde{s}_i)_i}}h(z_1)g_{n_K}(\xi^\e_{n_K}(0))\left(1-\ind_{\mathbb{G}^{\underline{n},\e}_{(\tilde{a}_i,\tilde{s}_i)_i}(z_1)}\right)M(v_1)dz_1d\Lambda^{\underline{n}}_{(\tilde{a}_i,\tilde{s}_i)_i}\]
	\[R_3=\sum_{(\tilde{a}_i,\tilde{s}_i)_i} \prod_{i= 1}^{n_K-1} \tilde{s}_i\int_{\mathbb{D}\times \mathbb{G}^{\underline{n},\e}_{(\tilde{a}_i,\tilde{s}_i)_i}}h(z_1)\left(g^\e_{n_K}(\xi^\e_{n_K}(0))-g_{n_K}(\xi^\e_{n_K}(0))\right)M(v_1)dz_1d\Lambda^{\underline{n}}_{(\tilde{a}_i,\tilde{s}_i)_i}.\]

	They are estimated using the following usual estimations:
	\begin{lemma}\label{Borne sur la taille des parrametres d'arbre}
		Fix $\bar{n}:=(n_1,\cdots,n_k)$ and for any $\e>0$ sufficiently small, we have for $p\in[1,2]$ and $z_1\in \mathbb{D}$
		\begin{equation}\begin{split}
		\sum_{(\tilde{a}_i,\tilde{s}_i)_i} \int_{ \mathbb{G}^{\underline{n},0}_{(\tilde{a}_i,\tilde{s}_i)_i}(z_1)}\left(\prod_{i=2}^{n_K}\big\|v^\e_{\tilde{a}_i}(t_i^+)-\bar{v}_i\big\|^p\,M(\bar{v}_i)d\bar{v}_id\eta_idt_i\right)\frac{e^{-\frac{1}{2}\|v_1\|^2}}{(2\pi)^{d/2}}\\
		\leq  (C(K-1)\theta)^{n_{K-1}}(C\theta)^{n_K-n_{K-1}} e^{-\frac{\|v_1\|^2}{4}},	
		\end{split}\end{equation}
	\end{lemma}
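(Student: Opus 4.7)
The plan is to perform the integrations in the order $\eta_i$, $\bar v_i$, $t_i$, and then carry out the sums over the tree parameters $(\tilde a_i, \tilde s_i)$. The central ingredient will be the energy identity for the backward pseudocharacteristic, which is what prevents the chained dependence between velocity variables from producing combinatorial blowup.

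First I would integrate out the $\eta_i$: the constraint $(v^\e_{\tilde a_i}(t_i^+) - \bar v_i) \cdot \eta_i < 0$ restricts each to a half-sphere of bounded measure, contributing a total factor of $C^{n_K}$. Second, I invoke the fundamental energy identity: going backward in time, each scattering at $t_k$ preserves the kinetic energy of the colliding pair, while the insertion of particle $k$ with velocity $\bar v_k$ adds exactly $\|\bar v_k\|^2$ to the total. Consequently
\[
\|v^\e_{\tilde a_i}(t_i^+)\|^2 \leq E_i := \|v_1\|^2 + \sum_{k=2}^{i-1}\|\bar v_k\|^2.
\]
The elementary Gaussian moment bound $\int \|w - \bar v\|^p M(\bar v)\,d\bar v \leq C_d(1 + \|w\|^2)$, valid for $p \in [1,2]$, applied iteratively from the innermost index $i = n_K$ down to $i = 2$ and combined with Cauchy--Schwarz $\sum_{\tilde a_i = 1}^{i-1} \|v^\e_{\tilde a_i}(t_i^+)\|^2 \leq E_i$ at each level, bounds the accumulated velocity factors by a polynomial in $(\|v_1\|^2, \|\bar v_2\|^2, \ldots)$. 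This polynomial is absorbed against $M(v_1)\prod_i M(\bar v_i)$ via the elementary estimate $(1+x^2)^B e^{-x^2/4} \leq C^B e^{-x^2/8}$, sacrificing half the Gaussian decay and leaving $e^{-\|v_1\|^2/4}$ in the final bound.

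The time integrations are straightforward: the constraints $t_i \in (t - j\theta, t - (j-1)\theta)$ for $i \in [n_{j-1}+1, n_j]$ combined with intra-layer ordering yield $\prod_{j=1}^K \theta^{n_j - n_{j-1}}/(n_j - n_{j-1})!$. The sum over $\tilde a_i \in [1, i-1]$ contributes the factorial $(n_K - 1)!$, and $\sum_{\tilde s_i}$ gives $2^{n_K}$. Combining $(n_K - 1)!$ with the time factorials produces a multinomial coefficient, which after splitting off the last layer admits the bound
\[
\frac{(n_K - 1)!}{\prod_{j=1}^K (n_j - n_{j-1})!} \leq \binom{n_K - 1}{n_K - n_{K-1}}(K-1)^{n_{K-1}-1} \leq 2^{n_K}(K-1)^{n_{K-1}}.
\]
Multiplying the factors together yields the claimed estimate in the form $(C(K-1)\theta)^{n_{K-1}}(C\theta)^{n_K - n_{K-1}} e^{-\|v_1\|^2/4}$.

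The main obstacle is the chained dependence: $v^\e_{\tilde a_j}(t_j^+)$ for $j > i$ depends implicitly on $\bar v_i$ through the intervening scattering events, so the $\bar v_i$ integrations cannot simply be factorised. The energy identity is the cornerstone that converts this recursive dependence into a single uniform bound in terms of the total energy $E_\infty$, allowing the iterative estimate to close without producing factors of $n_K^{n_K}$ that would be incompatible with the target. A secondary technical point is the precise combinatorial bookkeeping needed so that the factor $(K-1)^{n_{K-1}}$ attaches to the first $K-1$ layers (reflecting that these times live in the interval of total length $(K-1)\theta$) rather than to the short interval $[0,\theta]$ corresponding to the last layer.
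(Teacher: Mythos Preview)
Your overall strategy---energy conservation for the backward pseudocharacteristic, Gaussian absorption, then time integration---is the same as the paper's. But there is a bookkeeping error that leaves an uncanceled factor of size $n_K^{n_K-1}$.

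The inequality you invoke, $(1+x^2)^B e^{-x^2/4}\le C^B e^{-x^2/8}$ with a $B$-independent constant, is false: the maximum of $(1+x^2)^B e^{-x^2/8}$ over $x$ is of order $(8B/e)^B$, so the correct bound is $(CB)^B e^{-x^2/8}$. With $B=n_K-1$ (the degree of your accumulated polynomial $\prod_i(1+E_i)\le(1+E_\infty)^{n_K-1}$), the absorption step therefore costs $(Cn_K)^{n_K-1}$, not $C^{n_K-1}$. Since you \emph{also} count the tree sum $\prod_i(i-1)=(n_K-1)!$ separately and use it to build the multinomial with the per-layer time factorials, your final estimate is
\[
(n_K-1)!\,(Cn_K)^{n_K-1}\frac{\theta^{n_K-1}}{\prod_j(n_j-n_{j-1})!}
\;\le\;(Cn_K)^{n_K-1}\,2^{n_K}(K-1)^{n_{K-1}}\theta^{n_K-1},
\]
which exceeds the stated bound by the factor $(Cn_K)^{n_K-1}$.

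The paper avoids this by performing the $\tilde a_i$-sum \emph{inside} the velocity estimate rather than separately. Peeling off a fraction $e^{-E_i/(8n_K)}$ of the Gaussian at level $i$ and using H\"older in the form $\sum_{\tilde a_i=1}^{i-1}\|v_{\tilde a_i}(t_i^+)\|^p\le (i-1)^{1-p/2}E_i^{p/2}$ gives
\[
\sum_{\tilde a_i=1}^{i-1}\big\|v_{\tilde a_i}(t_i^+)-\bar v_i\big\|^p\,e^{-E_i/(8n_K)-\|\bar v_i\|^2/8}\;\le\;Cn_K
\]
per index. The combined velocity--plus--tree contribution is thus $(Cn_K)^{n_K-1}$, and it is the \emph{time} factorials $1/[(n_{K-1}-1)!(n_K-n_{K-1})!]$ (from grouping the first $K-1$ layers into a single ordered block of length $(K-1)\theta$) that cancel $n_K^{n_K-1}$ via Stirling. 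In short: you may either use the tree sum to build the multinomial \emph{or} let the time factorials absorb the $(Cn_K)^{n_K-1}$ coming from the energy bound, but not both; your sketch does both and is one factorial too generous.
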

	\begin{proof}
		We use the same proof than the Lemma 4.2 in \cite{SP2}.
		
		For $i\in[2,n_K]$ we forget parameters $(\tilde{a}_j)_{i<j\leq n_K}$ and $(t_j,{\bar{v}}_j,\eta_j)_{i<j\leq n_K}$.
		\begin{align*}
		\sum_{\tilde{a}_i=1}^{i-1}&\big\|v^\e_{\tilde{a}_i}(t_i^+)-\bar{v}_i\big\|^pe^{-\frac{\|v_1\|^2+\sum_{j= 2}^{i-1}\|\bar{v}_j\|^2}{8n_K}-\frac{\|\bar{v}_i\|^2}{8}}\\
		&\leq 2^{p-1}\left[\sum_{j= 1}^{i-1}\|v_j(t_i^+)\|^p+(i-1)\|\bar{v}_i\|^p\right]e^{-\frac{\|v_1\|^2+\sum_{j= 2}^{i-1}\|\bar{v}_j\|^2}{8n_K}+\frac{\|\bar{v}_i\|^2}{8}}\\
		&\leq 2^{p-1} \left[\left(\sum_{j= 1}^{i-1}\|v_j(t_i^+)\|^2\right)^{p/2}(i-1)^{1-p/2}+(i-1)\|\bar{v}_i\|^p\right]e^{-\frac{\|v_1\|^2+\sum_{j= 2}^{i-1}\|\bar{v}_j\|^2}{8n_K}+\frac{\|\bar{v}_i\|^2}{8}}
		\end{align*}
		\begin{align*}
		\sum_{\tilde{a}_i=1}^{i-1}&\big\|v^\e_{\tilde{a}_i}(t_i^+)-\bar{v}_i\big\|^pe^{-\frac{\|v_1\|^2+\sum_{j= 2}^{i-1}\|\bar{v}_j\|^2}{8n_K}-\frac{\|\bar{v}_i\|^2}{8}}\\
		&\leq 2^{p-1} \left[\left(\|v_1\|^2+\sum_{j= 2}^{i-1}\|\bar{v}_j\|^2\right)^{p/2}(i-1)^{1-p/2}+(i-1)\|\bar{v}_i\|^p\right]e^{-\frac{\|v_1\|^2+\sum_{j= 2}^{i-1}\|\bar{v}_j\|^2}{8n_K}+\frac{\|\bar{v}_i\|^2}{8}}\\
		&\leq C \left[n_K^{p/2}(i-1)^{1-p/2}+(i-1)\right]\leq C n_K.
		\end{align*}
		Thus
		\[\begin{split}
		&\sum_{(\tilde{a}_i,\tilde{s}_i)_i}\int_{ \mathbb{G}^{\underline{n},0}_{(\tilde{a}_i,\tilde{s}_i)_i}(z_1)}\left(\prod_{i=2}^{n_K}\big\|v^\e_{\tilde{a}_i}(t_i^+)-\bar{v}_i\big\|^p\,M(\bar{v}_i)d\bar{v}_id\eta_idt_i\right)\frac{e^{-\frac{1}{2}\|v_1\|^2}}{(2\pi)^{d/2}}\\
		&\leq \int_{\theta}^{K\theta}dt_2\cdots\int_\theta^{t_{n_{K-1}-1}}dt_{n_{K-1}}\int_0^\theta dt_{n_{K-1}+1}\cdots\int_0^{t_{n_K}-1} dt_{n_K}\\
		&~~~~~~~~~~~~~~~~~~~~~~~~~~\times\int_{(\mathbb{S}^{d-1}\times\mathbb{R}^d)^{n_K-1}} C^{n_K-1}n_{K}^{n_K-1}e^{-\frac{\|v_1\|^2+\sum_{j= 2}^{i-1}\|\bar{v}_j\|^2}{4}}\left(\prod_{i=2}^{n_K}\frac{d\bar{v}_id\eta_i}{(2\pi)^{d/2}}\right)\\
		&\leq \frac{C(C(K-1) (n_K-1)\theta)^{n_{K-1}-1}}{(n_{K-1}-1)!}\frac{(C\theta n_{K})^{n_{K}-n_{K-1}}}{(n_{K}-n_{K-1})!}\leq (\tilde{C}t)^{n_K}.
		\end{split}\]
	\end{proof}
	
	\begin{lemma}\label{Borne sur les chevauchements}
		Fix $\bar{n}:=(n_1,\cdots,n_K)$ and for any $\e>0$ sufficiently small, we have
		\begin{equation}
		\sum_{(\tilde{a}_i,\tilde{s}_i)_i} \int_{\mathbb{D}\times \mathbb{G}^{\underline{n},0}_{(\tilde{a}_i,\tilde{s}_i)_i}}\Big|1-\ind_{\mathbb{G}^{\underline{n},\e}_{(\tilde{a}_i,\tilde{s}_i)_i}(z_1)}\Big|\,M(v_1)dz_1d\Lambda^{\underline{n}}_{(\tilde{a}_i,\tilde{s}_i)_i}\leq  (Ct)^{n_K}\e^\alpha,	
		\end{equation}
	\end{lemma}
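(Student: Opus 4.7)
The plan is to bound the overlap set in the same spirit as the recollision estimates of Section~\ref{Estimation of the long range recollisions.}, exploiting that pseudotrajectories in $\mathbb{G}^{\underline{n},0}$ consist of piecewise straight lines. By definition, $\mathbb{G}^{\underline{n},0}\setminus\mathbb{G}^{\underline{n},\e}$ is the set of parameters for which at least one pair of particles $(p,q)$ with $p<q$ comes within distance $\e$ at some time $\tau\neq t_q$ at which both particles are alive. A union bound then gives
\begin{equation*}
\big|1-\ind_{\mathbb{G}^{\underline{n},\e}_{(\tilde a_i,\tilde s_i)}(z_1)}\big|\le \sum_{1\le p<q\le n_K}\sum_{i\ge q}\ind_{\mathcal{O}_{p,q,i}}
\end{equation*}
where $\mathcal{O}_{p,q,i}$ is the event that $p$ and $q$ overlap in the straight-line segment $\tau\in(t_{i+1},t_i)$.

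I would next estimate each $\mathcal{O}_{p,q,i}$ by freezing every parameter except $\eta_q$ (the direction picked when particle $q$ is created at time $t_q$). At time $t_i$ (before the next addition), the relative position of $p$ and $q$ is an affine function of $\eta_q$ of the form $A+\e\eta_q$, and their relative velocity $w:=v_p^\e(t_i)-v_q^\e(t_i)$ is independent of $\eta_q$. An overlap in $(t_{i+1},t_i)$ corresponds to $A+\e\eta_q$ entering a cylinder of radius $\e$ and length $|w|(t_i-t_{i+1})$ around the direction $w$. Since $\eta_q$ ranges over $\mathbb{S}^{d-1}$, a standard geometric argument (as in the proof of Proposition~\ref{prop: recollision}, case of a parent) gives
\begin{equation*}
\int_{\mathbb{S}^{d-1}}\ind_{\mathcal{O}_{p,q,i}}\bigl((v_{\tilde a_q}^\e(t_q^+)-\bar v_q)\cdot\eta_q\bigr)_+\,d\eta_q \le C\,\e\,|w|\,|\log\e|\,(t_i-t_{i+1})+C\e\,\mathbb{V},
\end{equation*}
where the $|\log\e|$ absorbs the possible smallness of $|v_{\tilde a_q}^\e(t_q^+)-\bar v_q|$ when the latter appears in a denominator through the change of variable. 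Bounding $|w|$ by $2(\|V_{n_K}(t_i)\|+\mathbb{V})$ and using that kinetic energy is non-increasing along the pseudotrajectory gives an integrand uniformly controlled by $(1+\|V_{n_K}\|^2)^{1/2}$.

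Third, I integrate the remaining $\prod_{j\neq q}((v_{\tilde a_j}^\e(t_j^+)-\bar v_j)\cdot\eta_j)_+M(\bar v_j)d\bar v_jd\eta_jdt_j\,M(v_1)dz_1$ and apply the velocity-moment estimate of Lemma~\ref{Borne sur la taille des parrametres d'arbre} with $p=1$ or $p=2$ to absorb the extra factor $|w|$. This produces a bound of the form $C\e|\log\e|\,(Ct)^{n_K}$ for each triple $(p,q,i)$. Summing over the $O(n_K^3)$ triples and over the $(n_K-1)!$ collision trees $(\tilde a_i)$ (itself absorbed by the factorial denominators hidden in Lemma~\ref{Borne sur la taille des parrametres d'arbre}'s normalization), one obtains a bound of order $\e|\log\e|^2(Ct)^{n_K}\le (Ct)^{n_K}\e^\alpha$ for any $\alpha<1$.

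The main obstacle is case (ii): the trajectories of $p$ and $q$ may at some point prior to $t_i$ have been deflected together through a common ancestor collision, creating a dependence between $\eta_q$ and $v_p^\e(t_i)-v_q^\e(t_i)$. The cleanest way around this is to revert to the same parent/connector/tutor decomposition used in Proposition~\ref{prop: recollision}: pick as pivot the parent (in the sense of Definition~\ref{def-tutor}) of the overlapping pair, change variables from its creation direction $\eta_{\text{parent}}$ (rather than $\eta_q$) to capture the $\e$-smallness, and integrate the resulting singular velocity factor using Proposition~\ref{prop: singularity}. This is the only part of the proof that is not a routine straight-line geometric estimate, but it is essentially already worked out in \cite{BGSS1} for the genuine recollision case and applies a fortiori here, where no cycle can form.
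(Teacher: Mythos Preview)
Your proposal is essentially correct and matches the paper's approach: the paper gives no detailed proof of this lemma, stating only that the overlap estimate ``can be [done] in the same [way as] the estimation of recollision'' of Section~\ref{Estimation of the long range recollisions.}, i.e.\ via the parent/connector/tutor machinery of Propositions~\ref{prop: recollision}--\ref{prop: singularity}, which is precisely where you end up. One minor correction to your first attempt: the claim that the relative position is $A+\e\eta_q$ with relative velocity $w$ independent of $\eta_q$ already fails when $\tilde s_q=1$, since the scattering at $t_q$ makes both $v_q^\e$ and $v_{\tilde a_q}^\e$ depend on $\eta_q$ for all $\tau<t_q$; this is a more basic obstruction than the ``common ancestor'' issue you flag, but it is equally resolved by pivoting on the tutor instead of on $\eta_q$.
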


	The estimations \eqref{Borne sur les chevauchements} is an estimation of the set of parameter leading to an overlap. It can be in the same than the estimation of recollision of Section \ref{Quasi-orthogonality estimates}.
	
	From Lemma \ref{Borne sur les chevauchements} we deduce
	\[|R_1|\leq C(Ct)^{n_K}\e^\alpha\|g\|\,\|h\|.\]
	
	\begin{lemma}\label{Borne sur les chevauchements bis}
		Fix $\bar{n}:=(n_1,\cdots,n_k)$, $\e>0$ sufficiently small, and $X_{n_K}\in\Lambda^{n_K}$ such that for  $i\neq j$ \[|x_i-x_j|>\e.\]Then
		\begin{equation}
		~\Bigg| 1 - \frac{1}{\mathcal{Z}_\e}\sum_{p\geq0}\frac{\mu_\e^p}{p!}\int e^{-\mathcal{V}_{n_K+p}(X_{n_K},\bar{X}_p)}d\bar{X}_{p}\Bigg|\leq C^{n_K}\e
		\end{equation}
	\end{lemma}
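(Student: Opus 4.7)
The plan is to recognize this as a cluster-expansion estimate entirely parallel to the derivation in the proof of Theorem~\ref{theoreme de quasi orthogonalite}, specifically the steps leading from \eqref{decomposition e^V_n+p} through \eqref{dévelopement de l'esperence} and \eqref{Borne de l'integrale de psi np}. First I would note that after integrating out the Gaussian velocities in the definition of $\mathbb{E}_\e$, the normalisation is
\[
\mathcal{Z}_\e = \sum_{q\geq 0} \frac{\mu_\e^q}{q!} \int e^{-\mathcal{V}_q^\e(\bar X_q)}\,d\bar X_q.
\]
I would then apply the decomposition \eqref{decomposition e^V_n+p} with $n = n_K$ to write
\[
e^{-\mathcal{V}_{n_K+p}^\e(X_{n_K},\bar X_p)} = \sum_{\omega \subset [1,p]} e^{-\mathcal{V}_{|\omega^c|}^\e(\bar X_{\omega^c})}\,\psi_{|\omega|}^{n_K}(X_{n_K},\bar X_\omega),
\]
and use the factorisation trick from \eqref{dévelopement de l'esperence} (splitting $p = k + q$ with $k = |\omega|$) to obtain
\[
\frac{1}{\mathcal{Z}_\e}\sum_{p\geq 0}\frac{\mu_\e^p}{p!}\int e^{-\mathcal{V}_{n_K+p}^\e(X_{n_K},\bar X_p)}\,d\bar X_p = \sum_{k\geq 0}\frac{\mu_\e^k}{k!}\int \psi_k^{n_K}(X_{n_K},\bar X_k)\,d\bar X_k.
\]

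Next I would isolate the $k=0$ term: since $\psi_0^{n_K}(X_{n_K}) = e^{-\mathcal{V}_{n_K}^\e(X_{n_K})}$ and, by the hypothesis $|x_i-x_j|>\e$ for $i\neq j$, this equals $1$. Hence the quantity to bound reduces to
\[
\Bigg|\sum_{k\geq 1}\frac{\mu_\e^k}{k!}\int \psi_k^{n_K}(X_{n_K},\bar X_k)\,d\bar X_k\Bigg|.
\]
Applying Penrose tree inequality and the degree-weighted tree count exactly as in \eqref{Borne de l'integrale de psi np}, but without the freedom of an integration $dx_1$ (since $X_{n_K}$ is fixed), I get for each $k\geq 1$
\[
\bigg|\int \psi_k^{n_K}(X_{n_K},\bar X_k)\,d\bar X_k\bigg| \leq (k-1)!\,(\gr{c}_d\e^d)^k\,n_K\,e^{n_K+k},
\]
the factor $n_K$ arising from the super-vertex $X_{n_K}$ through $\sum_{d_0\geq 1} n_K^{d_0}/(d_0-1)! = n_K e^{n_K}$.

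Finally I use the Boltzmann-Grad scaling $\mu_\e\e^d = \e$ to sum:
\[
\sum_{k\geq 1}\frac{\mu_\e^k}{k!}(k-1)!(\gr{c}_d\e^d)^k n_K e^{n_K+k} = n_K\,e^{n_K}\sum_{k\geq 1}\frac{(\gr{c}_d e\,\e)^k}{k},
\]
which for $\e$ small enough is bounded by $C\,n_K e^{n_K}\,\e \leq C^{n_K}\e$, yielding the claimed estimate. There is no real obstacle here: the argument is a direct transcription of the bound \eqref{borne espérence} in the proof of Theorem~\ref{theoreme de quasi orthogonalite}, specialised to the case where one ``super-vertex'' is kept fixed and all other $\mathcal V^\e$ contributions on $X_{n_K}$ vanish by the separation hypothesis. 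The only point requiring care is to verify that $\psi_0^{n_K}(X_{n_K})=1$ under this hypothesis, which makes the $k=0$ term exactly cancel the ``$1$'' in the statement.
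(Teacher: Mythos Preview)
Your proof is correct and follows essentially the same route as the paper's own argument: both invoke the cluster decomposition \eqref{decomposition e^V_n+p}, factor out $\mathcal{Z}_\e$ as in \eqref{dévelopement de l'esperence}, use the separation hypothesis $|x_i-x_j|>\e$ to identify the $k=0$ term with $1$, and then bound the tail $\sum_{k\geq 1}$ via the Penrose tree estimate \eqref{Borne de l'integrale de psi np} together with $\mu_\e\e^d=\e$. Your remark that the $dx_1$ integration is absent here is harmless since $|\Lambda|=1$ and the integrand bound is translation invariant.
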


	\begin{proof}
		Using the formula \eqref{decomposition e^V_n+p}, for any $X_{n_K}\in\Lambda^{n_K}$ with $|x_i-x_j|>\e$ for  $i\neq j$,
		\[\begin{split}
		\exp\left(-\mathcal{V}_{n_K+p}^\e(X_{n_K},\underline{X}_p)\right)&=\sum_{\substack{\omega\subset[1,p]}}e^{-\mathcal{V}_{n_K}^\e(X_{n_K})-\mathcal{V}_{|\omega^c|}^\e(\underline{X}_{\omega^c})}\psi_p^{n_K}(X_{n_K},\underline{X}_\omega)\\
		&=\sum_{\substack{\omega\subset[1,p]}}e^{-\mathcal{V}_{|\omega^c|}^\e(\underline{X}_{\omega^c})}\psi_p^{n_K}(X_{n_K},\underline{X}_\omega).
		\end{split}\]
		Then
		\[\begin{split}
		\sum_{p\geq0}\frac{\mu_\e^p}{p!}\int e^{-\mathcal{V}_{n_K+p}(X_{n_K},\bar{X}_p)}d\bar{X}_{p} &= \sum_{p\geq0}\sum_{p_1+p_2=p}\frac{\mu_\e^p}{p!}\frac{p!}{p_1!p_2!}\int e^{-\mathcal{V}_{p_2}^\e(\underline{X}'_{p_2})}\psi_{p_1}^{n_K}(X_{n_K},\underline{X}_{p_1})d\bar{X}_{p_1}d\bar{X}'_{p_2}\\
		&=\mathcal{Z}_\e \sum_{p\geq0}\frac{\mu_\e^p}{p!}\int \psi_p^{n_K}(X_{n_K},\underline{X}_{p})d\bar{X}_{p}\\
		&=\mathcal{Z}_\e\bigg(1+\sum_{p\geq1}\frac{\mu_\e^p}{p!}\int \psi_p^{n_K}(X_{n_K},\underline{X}_{p})d\bar{X}_{p}\bigg).
		\end{split}\]
		Using the estimation  \eqref{Borne de l'integrale de psi np}, 
		\[\sum_{p\geq1}\frac{\mu_\e^p}{p!}\int \psi_p^{n_K}(X_{n_K},\underline{X}_{p})d\bar{X}_{p}\leq\sum_{p\geq1}\frac{\mu_\e^p}{p!}  (p-1)!\big(Ce\e^d\big)^pn_Ke^{n_K}\leq \sum_{p\geq1}\big(C'\e\big)^pn_Ke^{n_K}\leq 2\e n_Ke^{n_K}\]
		for $\e$ small enough. This conclude the proof.
	\end{proof}
	Using Lemmata \ref{Borne sur la taille des parrametres d'arbre} and \ref{Borne sur les chevauchements bis} we obtain
	\[|R_3| = C(Ct)^{n_K}\e \|g\|\,\|h\|.\]
	
	\begin{lemma}\label{Borne entre la trajectoire limite et la trajectoire d'enskog}
		Fix $\bar{n}:=(n_1,\cdots,n_k)$, $\e>0$ and $(z_1,(t_i,{\bar{v}}_i,\eta_i)_{i})\in\mathbb{D}\times\mathbb{G}^{\underline{n},\e}_{ {(\tilde{a}_i,\tilde{s}_i)_i}}$, we have
		\begin{equation}
		\big|\xi^\e_{n_K}(0)-\xi^0_{n_K}(0)\big|\leq n_K^{3/2}\e
		\end{equation}
	\end{lemma}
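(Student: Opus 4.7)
The key input, already noted in the paper, is that the velocities along the backward pseudocharacteristic are independent of $\e$. I will first establish this rigorously by induction on the creation times, going backward from $t$: at each $t_i$ the scattering law is applied with the prescribed impact parameter $\eta_i$ and the pre-collision velocities of $\tilde a_i$ and of the newly created particle $\bar v_i$. By the inductive hypothesis these coincide in the $\e$- and the $0$-trajectories, and between creation times both flows are free transport; hence $v_j^\e(\tau)=v_j^0(\tau)$ for every existing particle $j$ and every $\tau$. The absence of overlap guaranteed by the hypothesis $(z_1,(t_i,\bar v_i,\eta_i)_i)\in\mathbb{D}\times\mathbb{G}^{\underline n,\e}_{(\tilde a_i,\tilde s_i)_i}$ ensures that no spurious collision disrupts this identification.

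Next, define the position error $\delta_j(\tau):=x^\e_j(\tau)-x^0_j(\tau)$. Since all velocities coincide, $\delta_j$ is \emph{constant} on the maximal interval $[0,t_j]$ on which particle $j$ exists (it was introduced at time $t_j$ going backward and is never deflected again, because only its velocity — not its offset with respect to the $0$-trajectory — is affected by scatterings, and those velocity updates are already identical). At the creation time $t_i$, the $\e$-trajectory places particle $i$ at $x^\e_{\tilde a_i}(t_i)+\e\eta_i$ whereas the $0$-trajectory places it at $x^0_{\tilde a_i}(t_i)$, giving
\[
\delta_i(t_i^+)=\delta_{\tilde a_i}(t_i)+\e\eta_i,\qquad \|\delta_i(t_i^+)\|\le \|\delta_{\tilde a_i}(t_{\tilde a_i}^+)\|+\e,
\]
where I used that $\delta_{\tilde a_i}$ was already frozen after its own creation time $t_{\tilde a_i}>t_i$.

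Setting $f(i):=\|\delta_i(t_i^+)\|/\e$, this becomes the recursion $f(i)\le f(\tilde a_i)+1$ with $f(1)=0$ (particle $1$ starts at $z_1$ in both trajectories). Since the admissible parameters satisfy $\tilde a_i\in[1,i-1]$, the ancestor chain $i\mapsto \tilde a_i\mapsto\tilde a_{\tilde a_i}\mapsto\cdots\mapsto 1$ is strictly decreasing and has length at most $i-1$, yielding $f(i)\le i-1$ and therefore $\|\delta_i(0)\|=\|\delta_i(t_i^+)\|\le(i-1)\e$. Summing the squared contributions over particles,
\[
\big\|\xi^\e_{n_K}(0)-\xi^0_{n_K}(0)\big\|^2=\sum_{i=1}^{n_K}\|\delta_i(0)\|^2\le \e^2\sum_{i=1}^{n_K}(i-1)^2\le n_K^3\,\e^2,
\]
which gives the announced $n_K^{3/2}\e$ bound. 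The only delicate point — and the one that must be argued carefully — is the velocity-matching statement, since it is what turns an \emph{a priori} nonlinear propagation of the error into the additive recursion above; once this is granted, the rest is a short combinatorial induction along the ancestor tree.
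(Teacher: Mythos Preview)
Your proof is correct and follows essentially the same approach as the paper: both rely on the observation that the velocities of $\xi^\e_{n_K}$ and $\xi^0_{n_K}$ coincide, so each creation introduces only an additive $\e$-shift, yielding $\|x_i^\e(\tau)-x_i^0(\tau)\|\le (i-1)\e$ and hence $\|\xi^\e_{n_K}(0)-\xi^0_{n_K}(0)\|^2\le n_K^3\e^2$. Your writeup is more detailed---making explicit the constancy of the offsets $\delta_j$ and the ancestor-chain recursion $f(i)\le f(\tilde a_i)+1$---but the argument is the same.
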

	\begin{proof}
		We recall first that the two trajectories $\xi^\e_{n_K}(\tau)$ and $\xi^0_{n_K}(\tau)$ have same velocities and at each creation of a particle a there is a new shift of size $\e$. Thus for any $i$ bigger than $1$, $\|x_i^\e(\tau)-x_i^0(\tau)\|\leq (i-1)\e$ and summing it, \[\|\xi^\e_{n_K}(\tau)-\xi^0_{n_K}(\tau)\|^2 \leq n_K^3\e^2.\]
	\end{proof}
	For if $g$ is uniformly Lipschitz, we can applied Lemmata \ref{Borne sur la taille des parrametres d'arbre} and \ref{Borne entre la trajectoire limite et la trajectoire d'enskog},
	\[|R_1| = C(Ct)^{n_K}\e \|\nabla g\|\,\|h\|.\]
	
	Finally we gets for $h$ and $g$ Lipschitz
	\[\begin{split}
	\int\mu_\e^{n_K-1}\,\Phi^0_{\underline{n}}\,g_{n_K}^\e M^{\otimes n_K}dZ_{n_K}	=\sum_{(\tilde{a}_i,\tilde{s}_i)_i} \prod_{i= 1}^{n_K-1} \tilde{s}_i\int_{\mathbb{D}\times \mathbb{G}^{\underline{n},0}_{(\tilde{a}_i,\tilde{s}_i)_i}}h(z_1)g_{n_K}(\xi^0_{n_K}(0))M(v_1)dz_1d\Lambda^{\underline{n}}_{(\tilde{a}_i,\tilde{s}_i)_i}\\
	+O\bigg(\e^\alpha (Ct)^{n_K}\|h\|\big(\|g\|+ \|\nabla g\|\big)\bigg). \end{split}\]
	and summing it,
	\begin{equation}\label{Estimation morceau 5}
	\begin{split}
	G_\e^{\text{main}}(t) =\sum_{\substack{n_1\leq\cdots\leq n_K\\n_j-n_{j-1}\leq 2^j}}\sum_{(\tilde{a}_i,\tilde{s}_i)_i} \prod_{i= 1}^{n_K-1} \tilde{s}_i\int_{\mathbb{D}\times \mathbb{G}^{\underline{n},0}_{(\tilde{a}_i,\tilde{s}_i)_i}}h(z_1)g_{n_K}(\xi^0_{n_K})M(v_1)dz_1d\Lambda^{\underline{n}}_{(\tilde{a}_i,\tilde{s}_i)_i}\\
	+\,O\left(\e^\alpha K2^{K^2} (Ct)^{2^{K+1}}\|h\|\big(\|g\|+\|\nabla g\|\big)\right).
	\end{split}
	\end{equation}
	
	\subsection{Linearized Boltzmann equation}
	Let $\gr{g}(t)$ be the solution of the linearized Boltzmann equation:
	\begin{equation*}
	\begin{array}{c}
	\partial_t \gr{g}(t) + v\cdot\nabla_x f(t) = \mathcal{L} f(t) \text{~for~}(t,x,v)\in[0,\infty)\times\mathbb{D}\\[6pt]
	\gr{g}(t=0)=g \text{~on~}\mathbb{D},
	\end{array}
	\end{equation*}
	and $\mathcal{L}$ is the linearized Boltzmann operator:
	\begin{equation*}
	\mathcal{L} g(v) := \int_{\mathbb{S}^{d-1}\times\mathbb{R}^d} \big(g(v')+g(\bar{v}')-g(v)-g(\bar{v})\big)((v-\bar{v})\cdot\eta)_+M(\bar{v})d\eta\,d\bar{v}
	\end{equation*}
	and $(v',\bar{v}')$ defined by the scattering \eqref{scatering}.
	
	We can this equation in the Duhamel form: denoting $S(\tau)$ the semigroup associated with $v\cdot\nabla_x$,
	\[\gr{g}(t)=S(t)g +\int_0^tS(t-\tau_1)\mathcal{L}f(\tau_1)d\tau_1.\]
	We want to iterate this formula, whiles steel cutting trees with surexponential growth of number of annihilation time (as in the hard sphere system): defining 
	\[Q_{m,n}(\tau)[g] = \int_0^\tau dt_{m+1}\int_0^{t_{m+1}}\cdots\int_0^{t_{n-1}}dt_n S(t-t_{m+1})\mathcal{L}S(t_{m+1}-t_{m+2})\cdots\mathcal{L}S(t_n)g,\]
	for $\underline{n}:=(n_1,\cdots,n_k)$ with $1\leq n_1\leq \cdots\leq n_k$,
	\[Q_{\underline{n}}(\tau)g=Q_{1,n_1}(\tfrac{\tau}{k})Q_{n_1,n_2}(\tfrac{\tau}{k})\cdots Q_{n_{k-1},n_k}(\tfrac{\tau}{k})[g],\]
	we have
	\begin{equation}\label{decompo g(t)}
	\begin{split}
	\gr{g}(t)=\sum_{\substack{n_1\leq\cdots\leq n_K\\n_j-n_{j-1}\leq 2^j}} Q_{\underline{n}}(t) [g]+\sum_{k=1}^K \sum_{\substack{n_1\leq\cdots\leq n_{k-1}\\n_j-n_{j-1}\leq 2^j}}\sum_{n_k>2^k}Q_{\underline{n}}(k\tau) [\gr{g}(t-k\theta)].
	\end{split}
	\end{equation}
	
	If $g$ is continuous and bounded, we have the following characteristic formula for $Q_{1,2}(t)[g]$
	\[\begin{split}
	Q_{1,2}(\tau)&[g](x,v ) = \int_0^\tau d\tau_2 S(\tau-\tau_2) \mathcal{L}S(\tau_2)[g](x,v)\\
	&=\int_0^\tau d\tau_2 \int_{\mathbb{S}^2\times\mathbb{R}^3} \Big(S(\tau_2)[g](x-(t-\tau_2)v,v')+S(\tau_2)[g](x-(t-\tau_2)v,\bar{v}_2')\\
	&-S(\tau_2)[g](x-(t-\tau_2)v,\bar{v})-S(\tau_2)[g](x-(t-\tau_2)v,v)\Big) \big((v-\bar{v})\cdot\eta\big)_+M(v_*)d\eta d\bar{v}d\tau_2\\
	&=\int_{\mathbb{G}^{(2),0}_{(1,1)}}g_2(\xi_2^0)d\Lambda^{(2)}_{(1,1)}-\int_{\mathbb{G}^{(2),0}_{(1,-1)}}g_2(\xi_2^0)d\Lambda^{(2)}_{(1,-1)}
	\end{split}\]
	where we denote as in the previous paragraph
	\[g_n(\theta,Z_n):=\sum_{i= 1}^ng(t,z_i).\]
	We can iterate this construction:
	\begin{equation}
	Q_{\underline{n}}(t)[g](z_1) =\sum_{(\tilde{a}_i,\tilde{s}_i)_i} \prod_{i= 1}^{n_K-1} \tilde{s}_i\int_{\mathbb{G}^{\underline{n},0}_{(\tilde{a}_i,\tilde{s}_i)_i}(z_1)}g_{n_K}(\xi^0_{n_K})~d\Lambda^{\underline{n}}_{(\tilde{a}_i,\tilde{s}_i)_i}.
	\end{equation}
	
	This formula gives to things: first term of \eqref{decompo g(t)} correspond to the main part of $\mathbb{E}_\e\big[\zeta_\e^t(h)\zeta_\e^0(g)\big]$ in \eqref{Estimation morceau 6}. Second it give the following $L^2$ estimation:
	\begin{prop}
		There exists a constant $C$ such that for any $g\in L^2(M(v)dz)$, and $\underline{n}:=(n_1,\cdots,n_k)$,
		\begin{equation}
		\big\|Q_{\underline{n}}(k\theta)g \big\|_{L^2(M^2(v)dz)}\leq \big(C(k-1)\theta\big)^{\frac{n_{k-1}}{2}}\big(C\theta\big)^{\frac{n_k-n_{k-1}}{2}}\,\|g\|_{L^2(M(v)dz)}.
		\end{equation}
	\end{prop}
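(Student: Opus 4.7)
The plan starts from the characteristic formula
$$Q_{\underline{n}}(k\theta)[g](z_1) =\sum_{(\tilde{a}_i,\tilde{s}_i)_i} \prod_{i= 1}^{n_K-1} \tilde{s}_i\int_{\mathbb{G}^{\underline{n},0}_{(\tilde{a}_i,\tilde{s}_i)_i}(z_1)}g_{n_K}(\xi^0_{n_K}(0))\,d\Lambda^{\underline{n}}_{(\tilde{a}_i,\tilde{s}_i)_i}$$
derived in the previous subsection. Applying Cauchy--Schwarz jointly on the discrete sum and the continuous integration (permitted since $|\prod_i \tilde{s}_i|=1$) yields
$$|Q_{\underline{n}}(k\theta)g(z_1)|^2 \;\leq\; \Bigl(\sum_{(\tilde a, \tilde s)}\int d\Lambda^{\underline n}\Bigr)\Bigl(\sum_{(\tilde a, \tilde s)} \int |g_{n_K}(\xi^0_{n_K}(0))|^2\, d\Lambda^{\underline n}\Bigr).$$
The first factor is then controlled by Lemma \ref{Borne sur la taille des parrametres d'arbre} applied with $p=1$: using $((v-\bar v)\cdot \eta)_+ \leq \|v-\bar v\|$ and the finite volume of $\mathbb{S}^{d-1}$, one obtains
$$M(v_1) \sum_{(\tilde a, \tilde s)}\int d\Lambda^{\underline n} \;\leq\; C^{n_K}\bigl(C(k-1)\theta\bigr)^{n_{k-1}}\bigl(C\theta\bigr)^{n_k - n_{k-1}} e^{-|v_1|^2/4}.$$

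For the second factor, I would expand $|g_{n_K}|^2 \leq n_K \sum_{j=1}^{n_K} |g(\xi^0_{n_K,j}(0))|^2$ and, for each $j$, perform the change of variables from the creation parameters $(v_1, \bar v_2,\ldots,\bar v_{n_K})$ to the initial velocities $(v^0_1,\ldots,v^0_{n_K})$. Since this map is a composition of elastic scatterings, it is a measure-preserving linear involution conserving total kinetic energy, so
$$M(v_1)\prod_{i=2}^{n_K}M(\bar v_i) \;=\; \prod_{\ell=1}^{n_K} M(v^0_\ell).$$
The corresponding position change $x_1 \mapsto x^0_j$, at fixed velocities, times and angles, is a translation on the torus and hence has unit Jacobian. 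After this substitution, the factor $|g(x^0_j, v^0_j)|^2 M(v^0_j) dz^0_j$ integrates out to $\|g\|^2_{L^2(Mdz)}$, while the remaining collision weights $\prod ((v-\bar v)\cdot\eta)_+$ against the residual Maxwellians $\prod_{\ell \neq j} M(v^0_\ell)$ are of exactly the form treated by Lemma \ref{Borne sur la taille des parrametres d'arbre} with $v^0_j$ now playing the role of root velocity (the reversibility of the Boltzmann tree dynamics allows the same estimate to apply, since the time ordering $t_2>\cdots>t_{n_K}$ is preserved under relabelling of the root). This delivers an additional factor $C^{n_K}(C(k-1)\theta)^{n_{k-1}}(C\theta)^{n_k-n_{k-1}}$.

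Combining the two factors, multiplying by $M^2(v_1) = M(v_1)\cdot M(v_1)$ (so that one $M(v_1)$ is consumed by each of the two Lemma applications) and integrating over $z_1$ gives, after extracting the square root, the claimed inequality with $C$ now absorbing the polynomial-in-$n_K$ combinatorial prefactors. The main obstacle is the bookkeeping in the change of variables of the second paragraph: the residual collision rates $\prod((v-\bar v)\cdot\eta)_+$ appearing after isolating $|g(x^0_j, v^0_j)|^2 M(v^0_j)$ produce polynomial-in-$|v^0_j|$ growth, which must be absorbed by the Gaussian reservoir provided by the leftover $e^{-|v_1|^2/4}$ from the first Lemma application. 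This is precisely why the norm on the left is weighted by $M^2$ rather than $M$: the extra factor of $M(v_1)$ provides the Gaussian decay that, after the symmetry-breaking change of variables, is redistributed as an $e^{-|v^0_j|^2/4}$-type weight controlling the asymmetry between the root $v_1$ and any other particle $v^0_j$ in the tree.
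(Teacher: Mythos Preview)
Your Cauchy--Schwarz decomposition coincides with the paper's up to one crucial detail: the paper does not apply Cauchy--Schwarz with the bare measure $d\Lambda^{\underline n}$ but instead splits the collision cross-section, writing $d\Lambda^{\underline n}$ as the geometric mean of a ``heavy'' measure $d\Lambda^{q,\underline n}$ (carrying an extra factor $(1+\|v_{a(i)}-\bar v_i\|)$ per collision) and a ``light'' measure $d\Lambda^{b,\underline n}$ (carrying the bounded kernel $\tfrac{((v-\bar v)\cdot\eta)_+}{1+\|v-\bar v\|}$). The heavy measure goes into the first factor and is controlled by Lemma~\ref{Borne sur la taille des parrametres d'arbre} essentially with $p=2$; the light measure goes into the second factor, where the bounded kernel makes the associated gain--loss operator $|L^b|$ bounded on $L^1(M\,dz)$, so that iterating it against $g_{n_K}^2\leq n_K\,(g^2)_{n_K}$ yields $\|g\|_{L^2(M\,dz)}^2$ directly, with no change of variables or re-rooting needed.

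Your second factor, by contrast, still carries the unbounded cross-section $\prod_i((v_{a(i)}-\bar v_i)\cdot\eta_i)_+$, and the re-rooting argument does not close. Take already $n_K=2$, $\tilde s_2=-1$, and the term $j=2$: then $v^0_2=\bar v_2$, and after integrating out $\eta_2,t_2$ and translating $x_1\mapsto x^0_2$ one is left with
\[
\int |g(z^0_2)|^2\,M(\bar v_2)\,dz^0_2\int e^{-|v_1|^2/4}M(v_1)\,|v_1-\bar v_2|\,dv_1
\;\sim\;C\int |g(z^0_2)|^2\,(1+|\bar v_2|)\,M(\bar v_2)\,dz^0_2.
\]
The factor $(1+|v^0_2|)$ survives: the Gaussian reservoir $e^{-|v_1|^2/4}$ lives in the \emph{root} velocity $v_1$, not in $v^0_j$, and energy conservation only gives $|v_1|^2\leq\sum_\ell|v^0_\ell|^2$, which is the wrong direction for transferring decay from $v_1$ to $v^0_j$. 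Hence your bound is in terms of $\|g\|_{L^2((1+|v|)M\,dz)}$ rather than $\|g\|_{L^2(M\,dz)}$. The same obstruction persists for general $n_K$ and cannot be repaired by relabelling the root of the tree: the iterated collision operator with hard-sphere cross-section is simply not bounded on $L^1(M\,dz)$. The weight splitting $d\Lambda=\sqrt{d\Lambda^b\,d\Lambda^q}$ is precisely the missing ingredient.
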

	
	\begin{proof}
		The proof is given in section 4.4 of \cite{BGS}. We suppose that $g$ is continuous in order to use the pseudocharacteristic formula and we conclude by density.
		
		Using Cauchy-Schwartz inequality,
		\[\begin{split}
		&\big\|Q_{\underline{n}}(k\theta)g \big\|^2_{L^2(M^2(v)dz)}   \\
		&=\int_{\mathbb{D}} \Bigg(\sum_{(\tilde{a}_i,\tilde{s}_i)_i} \prod_{i= 1}^{n_K-1} \tilde{s}_i\int_{\mathbb{G}^{\underline{n},0}_{(\tilde{a}_i,\tilde{s}_i)_i}(z_1)}g_{n_K}(\xi^0_{n_K})~d\Lambda^{\underline{n}}_{(\tilde{a}_i,\tilde{s}_i)_i}\Bigg)^2 M^2(v_1)dz_1\\
		&\leq\int_{\mathbb{D}} \Bigg(M(z_1)\sum_{(\tilde{a}_i,\tilde{s}_i)_i} \int_{\mathbb{G}^{\underline{n},0}_{(\tilde{a}_i,\tilde{s}_i)_i}(z_1)}\,d\Lambda^{q,\underline{n}}_{(\tilde{a}_i,\tilde{s}_i)_i}\Bigg)\sum_{(\tilde{a}_i,\tilde{s}_i)_i} \int_{\mathbb{G}^{\underline{n},0}_{(\tilde{a}_i,\tilde{s}_i)_i}(z_1)}g_{n_K}^2(\xi^0_{n_K})~d\Lambda^{b,\underline{n}}_{(\tilde{a}_i,\tilde{s}_i)_i}M(v_1)dz_1
		\end{split}\]
		where
		\[d\Lambda^{b,\underline{n}}_{(\tilde{a}_i,\tilde{s}_i)_i}:=M(v_1)dz_1~\prod_{i=2}^{n_K}\frac{\big((v^\e_{a(i)}(t_i^+)-\bar{v}_i)\cdot\eta_i\big)_+}{1+\big\|v^\e_{a(i)}(t_i^+)-\bar{v}_i\big\|}M(\bar{v}_i)d\bar{v}_id\eta_idt_i,\]
		\[d\Lambda^{q,\underline{n}}_{(\tilde{a}_i,\tilde{s}_i)_i}:=M(v_1)dz_1~\prod_{i=2}^{n_K}{\big((v^\e_{a(i)}(t_i^+)-\bar{v}_i)\cdot\eta_i\big)_+}\left(1+\big\|v^\e_{a(i)}(t_i^+)-\bar{v}_i\big\|\right) M(\bar{v}_i)d\bar{v}_id\eta_idt_i.\]
		
		From \eqref{Borne sur les chevauchements} we have the bound 
		\[\Bigg(M(z_1)\sum_{(\tilde{a}_i,\tilde{s}_i)_i} \int_{\mathbb{G}^{\underline{n},0}_{(\tilde{a}_i,\tilde{s}_i)_i}(z_1)}\,d\Lambda^{q,\underline{n}}_{(\tilde{a}_i,\tilde{s}_i)_i}\Bigg)\leq \big(C(k-1)\theta\big)^{n_{k-1}}\big(C\theta\big)^{n_k-n_{k-1}}.\]
		
		On the other hand, using the representation formula in the reverse sens,
		\[\begin{split}
		&\sum_{(\tilde{a}_i,\tilde{s}_i)_i} \int_{\mathbb{G}^{\underline{n},0}_{(\tilde{a}_i,\tilde{s}_i)_i}(z_1)}g_{n_K}^2(\xi^0_{n_K})~d\Lambda^{b,\underline{n}}_{(\tilde{a}_i,\tilde{s}_i)_i}\\
		&\leq n_K \sum_{(\tilde{a}_i,\tilde{s}_i)_i} \int_{\mathbb{G}^{\underline{n},0}_{(\tilde{a}_i,\tilde{s}_i)_i}(z_1)}\left(g^2\right)_{n_K}(\xi^0_{n_K})~d\Lambda^{b,\underline{n}}_{(\tilde{a}_i,\tilde{s}_i)_i}\\
		&\leq n_K\int_\theta^{k\theta}dt_2\cdots\int_{\theta}^{t_{n_{k-1}-1}}dt_{n_K}\int_0^\theta dt_{n_{k-1}+1}\cdots\int_0^{t_{n_k-1}}dt_{n_k}S(t-t_{2})|L^b|\cdots|L^b|S(t_n)g^2
		\end{split}\]
		with
		\[|L^b|g(v):=\int_{\mathbb{S}^{d-1}\times\mathbb{R}^d} \big(g(v')+g(\bar{v}')+g(v)+g(\bar{v})\big)\frac{((v-\bar{v})\cdot\eta)_+}{1+\|v-\bar{v}\|}M(\bar{v})d\eta\,d\bar{v}\]
		and
		\[(g^2)_{n_K}(Z_{n_K}):=\sum_{i= 1}^{n_k}g^2(z_i).\]
		
		\begin{lemma}
			The operator $|L^b|: L^1(M(v)dz)\to L^1(M(v)dz)$ is a bounded.
		\end{lemma}
		\begin{proof}
			For $f\in L^1(M(v)dz)$, using that the change of variables $(v,\bar{v},\eta)\mapsto(v',\bar{v}',\eta)$ sending $(v-\bar{v})\cdot\eta)_+dv\,d\bar{v}\,d\eta\to (v'-\bar{v}')\cdot\eta)_-dv'\,d\bar{v}'\,d\eta$,
			
			\[\int_{\mathbb{D}} |L^b|f(z)M(v)dz= 4\int\int_{\mathbb{D}\times\mathbb{S}^2\times\mathbb{R}^3} f(z)M(v)M(\bar{v})dzd\eta d\bar{v}\leq 16\pi\|f\|_{L^1(M(v)dz)}.\]
		\end{proof}
		We use now that $S(t)$ conserves the $L^1(M(v)dz)$ norm, and integrating the times variables. Hence
		\[\begin{split}\int_{\mathbb{D}} \sum_{(\tilde{a}_i,\tilde{s}_i)_i} \int_{\mathbb{G}^{\underline{n},0}_{(\tilde{a}_i,\tilde{s}_i)_i}(z_1)}&g_{n_K}^2(\xi^0_{n_K})~d\Lambda^{b,\underline{n}}_{(\tilde{a}_i,\tilde{s}_i)_i}M(v_1)dz_1\\
		&\leq \frac{(C(k-1)\theta)^{n_{k-1}}(C\theta)^{n_k-n_{k-1}}}{n_{k-1}!(n_k-n_{k-1})!}\|g\|_{L^2(M(v)dz)}\end{split}\]
		This conclude the proof of the proposition.
		\end{proof}
	
	Because $\|\gr{g}(t)\|_{L^2(M(z)dz)}$ is decreasing, we have for $\|h\|<\infty$ (we use here the weight of the norm $\|h\| \approx\sup \left| M^{-1} g\right|$).
	\begin{equation}\label{Estimation morceau 6}
	\begin{split}
	\Bigg|\Bigg<h,\sum_{k=1}^K \sum_{\substack{(n_j)_{j\leq k-1}\\n_j\leq 2^j}}\sum_{n_k>2^k}Q_{\underline{n}}(k\theta) \gr{g}(t-k\theta)\Bigg>_{L^2(M(v)dz)}\Bigg|&\leq \sum_{k= 1}^K (C^2 t \theta)^{k/2} \|h\| \|g\|_{L^2(M(v)dz)}\\
	&\leq C
	t^{1/2} \theta^{1/2} \|h\| \|g\|.
	\end{split}
	\end{equation}
	
	Using all the estimations \eqref{Estimation morceau 1}, \eqref{Estimation morceau 3}, \eqref{Estimation morceau 4}, \eqref{Estimation morceau 5} and  \eqref{Estimation morceau 6}, we gets that 
	\begin{equation}
	\mathbb{E}_\e\Big[\zeta^t_\e(h)\zeta^0_\e(g)\Big] = \Big<h,\,\gr {g}(t)\Big>_{L^2(M(v)dz)} +O\Big( \Big( C t \theta^{1/2} + (Ct)^{2^{t/\theta}}\e^{\alpha/2}\Big)\|h\|\big(\|g\|+\|\nabla g\|\big)\Big).
	\end{equation}
	
	This conclude the proof of the main theorem.

	\bibliographystyle{abbrv}
	
\end{document}